\documentclass[draft=false,bibliography=totoc,index=totoc]{scrartcl}

\usepackage{amsmath}
\usepackage{amssymb}
\usepackage{amsthm}
\usepackage{cite}
\usepackage[show]{ed}
\usepackage[T1]{fontenc}
\usepackage{hyperref}
\usepackage{mathtools}
\usepackage{microtype}
\usepackage[automark]{scrpage2}

\usepackage{114arxiv}

 \newtheorem{thm}{Theorem}[section]
 
 \newtheorem{lem}[thm]{Lemma}
 
\theoremstyle{definition}
 \newtheorem{defn}[thm]{Definition}
 
\theoremstyle{remark}
 \newtheorem{rem}[thm]{Remark}
 
 \newtheorem{exa}[thm]{Example}
 
\mathtoolsset{mathic=true}

\numberwithin{equation}{section}
\pagestyle{scrheadings}

\sloppy

\title{On a Simultaneous Approach to the Even and Odd Truncated Matricial \tStieltjes{} Moment Problem~I:}
\subtitle{An $\alpha$-\tSchur{}-\tStieltjes{}-type algorithm for sequences of complex matrices}
\author{Bernd Fritzsche \and Bernd Kirstein \and Conrad M\"adler}
\date{}

\begin{document}
\maketitle

\begin{abstract}
 The characterization of the solvability of matrix versions of truncated \tStieltjes{}-Type moment problems led to the class of \taSnnd{\alpha} sequences of complex \tqqa{matrices}. In~\zita{MR3014201}, a parametrization of this class was introduced, the so-called \tasp{\alpha}. The main topic of this first part of the paper is the construction of a \tSchur{}-type algorithm which produces exactly the \tasp{\alpha}.
\end{abstract}

\section{Introduction}\label{S1417}
 This paper which is divided into two parts is a direct continuation of the authors' former investigations on matrix versions of classical power moment problems and related questions (see~\zitas{MR2570113,MR2735313,MR2805417,MR3014201,MR3014199,MR3014197,MR2988005,MR3133464,MR3380267}). Now our aim is to study two truncated matricial power moment problems on semi-infinite intervals. The approach is based on \tSchur{} analysis. More precisely, we will work out two interrelated versions of \tSchur{}-type algorithms similar as in our former investigations on truncated matrix moment problems of \tHamburger{}-Type (see~\zitas{MR3014199,MR3380267}). In the first part, we will construct a \tSchur{}-type algorithm for finite sequences of complex \tqqa{matrices}. In the second part of the paper, we will construct a \tSchur{}-type algorithm for a special class of holomorphic matrix functions, which turns out to be intimately related to the truncated matricial moment problems under consideration. The starting point of studying power moment problems on semi-infinite intervals was the famous two parts memoir of \tStieltjes{}~\zitas{MR1508159,MR1508160} where the author's investigations on questions for special continued fractions led him to the power moment problem on the interval \([0,+\infty)\). A complete theory of the treatment of power moment problems on semi-infinite intervals in the scalar case was developed by M.~G.~Krein in collaboration with A.~A.~Nudelman (see~\zitaa{MR0044591}{\cSect{10}},~\zita{MR0233157},~\zitaa{MR0458081}{\cchap{V}}). What concerns a modern operator-theoretic treatment of the power moment problems named after Hamburger and \tStieltjes{} and its interrelations, we refer the reader to Simon~\zita{MR1627806}. The matrix version of the classical \tStieltjes{} moment problem was studied in Adamyan/Tkachenko~\zitas{MR2155645,MR2215856}, And\^o~\zita{MR0290157}, Bolotnikov~\zitas{MR975671,MR1362524,MR1433234}, Bolotnikov/Sakhnovich~\zita{MR1722780}, Chen/Hu~\zita{MR1807884}, Chen/Li~\zita{MR1670527}, Dyukarev~\zitas{Dyu81,MR686076}, Dyukarev/Katsnel{\('$}son~\zitas{MR645305,MR752057}, and Hu/Chen~\zita{MR2038751}. The considerations of this paper deal with the case of a semi-infinite interval \([\alpha,+\infty)\) and continue former work done in~\zitas{MR2735313,MR3014201,MR3133464}.

 In order to properly formulate these problems, we first review some notation. Let \(\C\)\index{c@$\C$}, \(\R\)\index{r@$\R$}, \(\NO\)\index{n@$\NO$}, and \(\N\)\index{n@$\N$} be the set of all complex numbers, the set of all real numbers, the set of all \tnn{} integers, and the set of all positive integers, respectively. Further, for all \(\alpha,\beta\in\R\cup\set{-\infty,+\infty}\), let \(\mn{\alpha}{\beta}\)\index{z@$\mn{\alpha}{\beta}$} be the set of all integers \(k\) for which \(\alpha\leq k\leq\beta\) holds. Throughout this paper, let \(p,q\in\N\)\index{p@$p$}\index{p@$p$}. If \(\cX\) is a nonempty set, then \(\cX^\pxq\)\index{$\cX^\pxq$} stands for the set of all \tpqa{matrices} each entry of which belongs to \(\cX\) and \(\cX^p\)\index{$\cX^p$} is short for \(\cX^\xx{p}{1}\). If \(k\in\Z\) and if \(\kappa\in\mn{k}{+\infty}\cup\set{+\infty}\), then we denote by \(\setseqauu{\mathcal{X}}{k}{\kappa}\)\index{$\setseqauu{\mathcal{X}}{k}{\kappa}$} the set of all sequences \(\seq{x_j}{j}{k}{\kappa}\) where \(x_j\in\mathcal{X}\) for all \(j\in\mn{k}{\kappa}\). If \((\cX,\gA)\) is a measurable space, then each countably additive mapping defined on \(\gA\) with values in the set \(\Cggq\)\index{c@$\Cggq$} of all \tnnH{} complex \tqqa{matrices} is called a \tnnH{} \tqqa{measure} on \((\cX,\gA)\).

 Let \(\alpha\in\R\) and let \(\Bori{[\alpha,+\infty)}\)\index{b@$\Bori{[\alpha,+\infty)}$} be the \(\sigma\)\nbd algebra of all Borel subsets of \([\alpha,+\infty)\). Further, let \(\Mggqa{[\alpha,+\infty)}\)\index{m@$\Mggqa{[\alpha,+\infty)}$} be the set of all \tnnH{} \tqqa{measures} on \(([\alpha,+\infty),\Bori{[\alpha,+\infty)})\) and, for all \(\kappa\in\NO\cup\set{+\infty}\), let \(\Mgguqa{\kappa}{[\alpha,+\infty)}\)\index{m@$\Mgguqa{\kappa}{[\alpha,+\infty)}$} be the set of all \(\sigma\in\Mggqa{[\alpha,+\infty)}\) such that the integral
\[
 \suo{j}{\sigma}
 \defg\int_{[\alpha,+\infty)} t^j\sigma(\dif t)
\]
\index{s@$\suo{j}{\sigma}$}exists for all \(j\in\mn{0}{\kappa}\). The above-mentioned matricial moment problems can be formulated as follows:
\begin{description}
 \item[\mproblem{\iraa{\alpha}}{\kappa}{=}]\index{k@\mproblem{\iraa{\alpha}}{\kappa}{=}} Let \(\alpha\in\R\), let \(\kappa\in\NO\cup\set{+\infty}\), and let \(\seq{\su{j}}{j}{0}{\kappa}\) be a sequence of complex \tqqa{matrices}. Parametrize the set \(\Mggqaag{\iraa{\alpha}}{\seq{\su{j}}{j}{0}{\kappa}}\)\index{k@$\Mggqaag{\iraa{\alpha}}{\seq{\su{j}}{j}{0}{\kappa}}$} of all \tnnH{} measures \(\sigma\in\Mgguqa{\kappa}{\iraa{\alpha}}\) for which \(\suo{j}{\sigma}=\su{j}\) is fulfilled for all \(j\in\mn{0}{\kappa}\).
 \item[\mproblem{\iraa{\alpha}}{m}{\leq}]\index{m@\mproblem{\iraa{\alpha}}{m}{\leq}} Let \(\alpha\in\R\), let \(m\in\NO\), and let \(\seq{\su{j}}{j}{0}{m}\) be a sequence of complex \tqqa{matrices}. Parametrize the set \(\Mggqaakg{\iraa{\alpha}}{\seq{\su{j}}{j}{0}{m}}\)\index{m@$\Mggqaakg{\iraa{\alpha}}{\seq{\su{j}}{j}{0}{m}}$} of all \tnnH{} measures \(\sigma\in\Mgguqa{m}{\iraa{\alpha}}\) for which \(\su{m}-\suo{m}{\sigma}\) is \tnnH{} and, in the case \(m\geq1\), moreover \(\suo{j}{\sigma}=\su{j}\) is fulfilled for all \(j\in\mn{0}{m-1}\).
\end{description}
The particular case \(\alpha=0\) is treated by several authors. Parametrizations of the set \(\MggqSakg{\seq{\su{j}}{j}{0}{m}}\) are given, under a certain non-degeneracy condition, in~\zita{MR2053150}. In the general case, descriptions of \(\MggqSakg{\seq{\su{j}}{j}{0}{m}}\) are stated by Bolotnikov~\zita{MR975671},~\zitaa{MR1362524}{\ctheo{1.5}} and by Chen/Hu in~\zitaa{MR1807884}{\ctheo{2.4}}, whereas descriptions of the set \(\MggqSag{\seq{\su{j}}{j}{0}{m}}\) are given by Hu/Chen in~\zitaa{MR2038751}{\ctheo{4.1}, \clemmss{2.3}{2.4}}. Moreover, for arbitrary real numbers \(\alpha\), the cases \(\Mggqaag{\iraa{\alpha}}{\seq{\su{j}}{j}{0}{m}}\neq\emptyset\) and \(\Mggqaakg{\iraa{\alpha}}{\seq{\su{j}}{j}{0}{m}}\neq\emptyset\) are characterized in~\zitaa{MR2735313}{\ctheoss{1.3}{1.4}}.

Before we explain these criterions of solvability, we introduce certain sets of sequences of complex \tqqa{matrices} which are determined by the properties of particular \tHankel{} matrices built of them. For all \(n\in\NO\), let \(\Hggqu{2n}\)\index{h@$\Hggqu{2n}$} (resp.\ \(\Hgqu{2n}\)\index{h@$\Hgqu{2n}$}) be the set of all sequences \(\seq{\su{j}}{j}{0}{2n}\) of complex \tqqa{matrices} such that the block Hankel matrix
\bgl{H}
 \Hu{n}
 \defg\matauuo{\su{j+k}}{j,k}{0}{n}
\eg
 \index{h@$\Huo{n}{s}\), \(\Hu{n}$}is \tnnH{} (resp.\ \tpH{}). Furthermore, let \(\Hggqinf\)\index{h@$\Hggqinf$} (resp.\ \(\Hgqinf\)\index{h@$\Hgqinf$}) be the set of all sequences \(\seq{\su{j}}{j}{0}{\infty}\) of complex \tqqa{matrices} such that for all \(n\in\NO\) the sequence \(\seq{\su{j}}{j}{0}{2n}\) belongs to \(\Hggqu{2n}\) (resp.\ \(\Hgqu{2n}\)). For all \(n\in\NO\), let \(\Hggequ{2n}\)\index{h@$\Hggequ{2n}$} be the set of all sequences \(\seq{\su{j}}{j}{0}{2n}\) of complex \tqqa{matrices} for which there exist complex \tqqa{matrices} \(\su{2n+1}\) and \(\su{2n+2}\) such that \(\seq{\su{j}}{j}{0}{2(n+1)}\) belongs to \(\Hggqu{2(n+1)}\). Furthermore, for all \(n\in\NO\), we will use \(\Hggequ{2n+1}\)\index{h@$\Hggequ{2n+1}$} to denote the set of all sequences \(\seq{\su{j}}{j}{0}{2n+1}\) of complex \tqqa{matrices} for which there exists a complex \tqqa{matrix} \(\su{2n+2}\) such that \(\seq{\su{j}}{j}{0}{2(n+1)}\) belongs to \(\Hggqu{2(n+1)}\). Let \(\Hggeqinf\defg\Hggqinf\)\index{h@$\Hggeqinf$}. If \(\kappa\in\NO\cup\set{+\infty}\), then we call a sequence \(\seq{\su{j}}{j}{0}{2\kappa}\) of complex \tqqa{matrices} \emph{\tHnnd{}} (resp.\ \emph{\tHpd{}}) if it belongs to \(\Hggqu{2\kappa}\) (resp.\ \(\Hgqu{2\kappa}\)), and we call a sequence \(\seq{\su{j}}{j}{0}{\kappa}\) of complex \tqqa{matrices} \emph{\tHnnde} if it belongs to \(\Hggequ{\kappa}\).

Besides the just introduced classes of sequences of complex \tqqa{matrices} we need analogous classes of sequences of complex \tqqa{matrices} which take into account the influence of the prescribed number \(\alpha\in\R\): Let \(\seq{\su{j}}{j}{0}{\kappa}\) be a sequence of complex \tpqa{matrices}. Then, for all \(n\in\NO\) with \(2n+1\leq\kappa\), we introduce the block Hankel matrix
\bgl{K}
 \Ku{n}
 \defg\matauuo{\su{j+k+1}}{j,k}{0}{n}.
\eg
\index{k@$\Kuo{n}{s}\), \(\Ku{n}$}If \(\kappa\geq1\), then, for all \(\alpha\in\C\), let the sequence \(\seq{\sau{j}}{j}{0}{\kappa-1}\)\index{$\seq{\sau{j}}{j}{0}{\kappa-1}$} be given by
\bgl{s_a}%
 \sau{j}
 \defg-\alpha\su{j}+\su{j+1}
\eg
for all \(j\in\mn{0}{\kappa-1}\). Then \(\seq{\sau{j}}{j}{0}{\kappa-1}\) is called the \emph{\tsgfbrsas{\(\seq{\su{j}}{j}{0}{\kappa}$}{\alpha}}. (An analogous left-sided version is discussed in~\zitaa{MR3014201}{\cdefn{2.1}}.)

Let \(\alpha\in\R\). Now we will introduce several classes of finite or infinite sequences of complex \tqqa{matrices} which are characterized by the sequences \(\seq{\su{j}}{j}{0}{\kappa}\) and \(\seq{\sau{j}}{j}{0}{\kappa-1}\). Let \(\Kggqualpha{0}\defg\Hggqu{0}\)\index{k@$\Kggqualpha{0}$}, and, for all \(n\in\N\), let \(\Kggquu{2n}{\alpha}\) be the set of all sequences \(\seq{\su{j}}{j}{0}{2n}\) of complex \tqqa{matrices} for which the block \tHankel{} matrices \(\Hu{n}\) and \(-\alpha\Hu{n-1}+\Ku{n-1}\) are both \tnnH{}, i.\,e.,
\bgl{Kgg2n}
 \Kggqualpha{2n}
 \defg\SetaA{\seq{\su{j}}{j}{0}{2n}\in\Hggqu{2n}}{\seq{\sau{j}}{j}{0}{2(n-1)}\in\Hggqu{2(n-1)}}.
\eg
\index{k@$\Kggquu{2n}{\alpha}$}Furthermore, for all \(n\in\NO\), let \(\Kggquu{2n+1}{\alpha}\) be the set of all sequences \(\seq{\su{j}}{j}{0}{2n+1}\) of complex \tqqa{matrices} for which the block \tHankel{} matrices \(\Hu{n}\) and \(-\alpha\Hu{n}+\Ku{n}\) are both \tnnH{}, i.\,e.,
\bgl{Kgg2n+1}
 \Kggqualpha{2n+1}
 \defg\SetaA{\seq{\su{j}}{j}{0}{2n+1}\in\setseqauu{(\Cqq)}{0}{2n+1}}{\Set{\seq{\su{j}}{j}{0}{2n},\seq{\sau{j}}{j}{0}{2n}}\subseteq\Hggqu{2n}}.
\eg
\index{k@$\Kggqualpha{2n+1}$}Formulas \eqref{Kgg2n} and \eqref{Kgg2n+1} show that the sets \(\Kggqualpha{2n}\) and \(\Kggqualpha{2n+1}\) are determined by two conditions. The condition \(\seq{\su{j}}{j}{0}{2n}\in\Hggqu{2n}\) ensures that a particular \tHamburger{} moment problem associated with the sequence \(\seq{\su{j}}{j}{0}{2n}\) is solvable (see, e.\,g.~\zitaa{MR2570113}{\ctheo{4.16}}). The second condition \(\seq{\sau{j}}{j}{0}{2(n-1)}\in\Hggqu{2(n-1)}\) (resp.\ \(\seq{\sau{j}}{j}{0}{2n}\in\Hggqu{2n}\)) controls that the original sequences \(\seq{\su{j}}{j}{0}{2n}\) and \(\seq{\su{j}}{j}{0}{2n+1}\) are well adapted to the interval \([\alpha,+\infty)\). Let \(m\in\NO\). Then, let \(\Kggequu{m}{\alpha}\)\index{k@$\Kggequu{m}{\alpha}$} be the set of all sequences \(\seq{\su{j}}{j}{0}{m}\) of complex \tqqa{matrices} for which there exists a complex \tqqa{matrix} \(\su{m+1}\) such that \(\seq{\su{j}}{j}{0}{m+1}\) belongs to \(\Kggquu{m+1}{\alpha}\). We call a sequence \(\seq{\su{j}}{j}{0}{m}\) of complex \tqqa{matrices} \emph{\taSnnd{\alpha}} if it belongs to \(\Kggqualpha{m}\) and \emph{\taSnnde{\alpha}} if it belongs to \(\Kggequalpha{m}\).

 Let us recall solvability criterions for the Problems~\mproblem{\iraa{\alpha}}{m}{=} and~\mproblem{\iraa{\alpha}}{m}{\leq}:
\begin{thm}[\zitaa{MR2735313}{\ctheoss{1.3}{1.4}}]\label{T1423}
 Let \(\alpha\in\R\), let \(m\in\NO\), and let \(\seq{\su{j}}{j}{0}{m}\) be a sequence of complex \tqqa{matrices}. Then:
 \benui
  \item \(\Mggqaag{\iraa{\alpha}}{\seq{\su{j}}{j}{0}{m}}\neq\emptyset\) if and only if \(\seq{\su{j}}{j}{0}{m}\in\Kggequu{m}{\alpha}\).
  \item \(\Mggqaakg{\iraa{\alpha}}{\seq{\su{j}}{j}{0}{m}}\neq\emptyset\) if and only if \(\seq{\su{j}}{j}{0}{m}\in\Kggquu{m}{\alpha}\).
 \eenui
\etheo
 \rtheo{T1423} provides a first impression on the importance of the classes of \taSnnd{\alpha} sequences and \taSnnde{\alpha} sequences in the framework of truncated matricial \tStieltjes{}-type moment problems related to the interval \([\alpha,+\infty)\). The role of these classes is by far not restricted to these solvability conditions. A thorough study of these classes forms an essential cornerstone in our approach to describe the solution sets of both moment problems via \tSchur{} analysis methods. For this reason, we extend our former investigations (see~\zitas{MR2735313,MR3014201}) on \taSnnd{\alpha} sequences and remarkable subclasses by constructing a special \tSchur{}-type algorithm for finite sequences of complex \tpqa{matrices}. It should be mentioned that we will consider the matricial moment problems under consideration in the most general case. The so-called non-degenerate case is connected to the class of so-called \taSpd{\alpha} sequences (see \rsect{S1432}).

 In this paper, we construct a \tSchur{}-type algorithm for sequences belonging to the class \(\Kggquu{m}{\alpha}\). It turns out that this algorithm produces exactly the \tasp{\alpha} of such sequences which was introduced in~\zita{MR3014201}. Particular attention is paid to several natural subclasses of \(\Kggquu{m}{\alpha}\). We indicate that the \tSchur{}-type algorithm under consideration is used in~\zita{141arxiv} to give a parametrization of the solution set of Problem~\mproblem{\iraa{\alpha}}{m}{=}.

 This paper is organized as follows: In \rsect{S1432}, we introduce the classes of sequences of complex \tpqa{matrices} under consideration and recall the notion of \tasp{\alpha}. Similarly as in~\zita{MR3014199}, the algorithm we are going to construct will be based on the concept of \tsraute{}s. For this reason, we summarize in \rsect{S0850} some basic facts on the arithmetics of \tsraute{}s. Furthermore, we introduce the classes of \tftd{} sequences and \tnftd{} sequences of complex \tpqa{matrices} (see \rdefiss{D1658}{D1459}, respectively). The main result of \rsect{S0850} is \rprop{P1442}, which contains several interrelations between important subclasses of \(\Kggqkappaalpha\) and the classes of (nearly) \tftd{} sequences. \rsect{S1441} marks an essential stage for the construction of our algorithm. Let us first mention what happened in~\zita{MR3014199}. There we started with a sequence \(\seq{\su{j}}{j}{0}{2n}\) from \(\Cpq\) and considered its corresponding \tsraute{}. Now we have to ensure that our construction takes into account the influence of \(\alpha\) from the very beginning. In view of the definition of the class \(\Kggqkappaalpha\), we should use, in some way, the operation of \trsas{\alpha} of sequences given via \eqref{s_a}. If \(\seq{\su{j}}{j}{0}{\kappa}\) is a sequence from \(\Cpq\), then the \trsas{\alpha} \(\seq{\sau{j}}{j}{0}{\kappa-1}\) of \(\seq{\su{j}}{j}{0}{\kappa}\) is a sequence of length \(\kappa-1\). Since our construction should start with a sequence of length \(\kappa\), we consider a slight modification of \(\seq{\sau{j}}{j}{0}{\kappa-1}\). This leads us to the \tsraute{} \(\seq{\su{j}^\reza{\alpha}}{j}{0}{\kappa}\) corresponding to the \tsplusat{\alpha} \(\seq{\su{j}^\splusalpha}{j}{0}{\kappa}\). \rsect{S1459} plays an analogous role as \cSect{6} in~\zita{MR3014199}. The main goal is to derive identities between various block \tHankel{} matrices built from the sequences \(\seq{\su{j}^\splusalpha}{j}{0}{\kappa}\) and \(\seq{\su{j}^\reza{\alpha}}{j}{0}{\kappa}\) (see \rthmss{T1640}{T1701}). \rsect{S1356} should be compared with~\zitaa{MR3014199}{\cSect{7}}, where we considered a sequence \(\seq{\su{j}}{j}{0}{2n}\) from \(\Cqq\) together with its corresponding \tsraute{} \(\seq{\su{j}^\rez}{j}{0}{2n}\). From~\zitaa{MR3014199}{\cpropss{8.25}{8.26}} we see that the membership of \(\seq{\su{j}}{j}{0}{2n}\) to the class of \tHnnd{} sequences and some of its distinguished subclasses will be preserved for the sequence \(\seq{-\su{j+2}^\rez}{j}{0}{2(n-1)}\). \rsect{S1356} is aimed to realize a construction suitable for the purposes of this paper, where a one-step algorithm is used. The main result of \rsect{S1356} is \rprop{P1542}, which indicated that the membership of \(\seq{\su{j}}{j}{0}{\kappa}\) to the class of \taSnnd{\alpha} sequences and its prominent subclasses is preserved by the sequence \(\seq{-\su{j+1}^\rezalpha}{j}{0}{\kappa-1}\). In \rsect{S1649}, the shortened negative \tsraute{} \(\seq{-\su{j+1}^\rezalpha}{j}{0}{\kappa-1}\) will be replaced by a slightly modified sequence. In this way, we consider a transformation which ensures that an arbitrary sequence \(\seq{\su{j}}{j}{0}{\kappa}\) belonging to \(\Kggqkappaalpha\) and \(\Kggeqkappaalpha\), respectively, is transformed into a sequence \(\seq{\su{j}^\sntaa{1}{\alpha}}{j}{0}{\kappa-1}\) belonging to \(\Kggqualpha{\kappa-1}\) and \(\Kggequalpha{\kappa-1}\), respectively (see \rtheo{P1546}). The iteration of this transformation, discussed in \rsect{S1649}, leads us to a particular algorithm for finite or infinite sequences of complex \tpqa{matrices} (see \rsect{S1401}). We show that this algorithm preserves the membership of a sequence to the class of \taSnnd{\alpha} sequences and its prominent subclasses (see \rtheo{P1410}). In \rsect{S1402}, we prove that this algorithm produces exactly the \trasp{\alpha}. Central results of the paper are \rthmss{T1016}{T1615}, which contain explicit constructions of the \trasp{\alpha} of a sequence belonging to one of the classes \(\Kggqualpha{m}\) or \(\Kggequalpha{m}\). \rthmss{T1000}{T1658} are 
focused on the 
determination of the \trasp{\alpha} of the described transform of a sequence belonging to one of the classes \(\Kggqkappaalpha\) or \(\Kggeqkappaalpha\). These results indicate that the \trasp{\alpha} can be interpreted as a \tSchur{}-type parametrization. In \rsect{S1009}, we study several aspects of some inverse transformation.

As already mentioned above, the main goal of this paper is to construct a special \tSchur{}-type algorithm for finite or infinite sequences of complex \tpqa{matrices}. We are guided by our former experiences in constructing another version of \tSchur{}-type algorithm for finite or infinite sequences of complex \tpqa{matrices} in~\zita{MR3014199}, which is a two-step algorithm. In this paper, we discuss a one-step algorithm. In the case \(\alpha=0\), such an algorithm was developed already in~\zita{MR1807884}. An essential feature of the \tSchur{}-type algorithm constructed in~\zita{MR3014199} is its intimate connection to the \tcHp{} of \tHnnd{} sequences. We will demonstrate that the \tSchur{}-type algorithm discussed in this paper has a similar connection to the \trasp{\alpha} of \taSnnd{\alpha} sequences. Although the basic techniques in this paper are strongly influenced by the investigations in~\zita{MR3014199}, it should be remarked that the situation is now more complicated. Indeed, now we have to control the interrelation between two \tHnnd{} sequences from \(\Cqq\). This interplay is governed by a real number \(\alpha\). For this reason, we will meet rather new effects in comparison with~\zita{MR3014199}. In order to manage them, a substantial refinement of the techniques used in~\zita{MR3014199} is necessary. In this way, we are forced to introduce various constructions and transforms depending on \(\alpha\).

\section{Right \hasp{\alpha}}\label{S1432}
In this section, we recall the concept of \trasp{\alpha} of finite or infinite sequences which was developed in~\zitas{MR2735313,MR3014201}.

\bremal{R1440}
 Let \(\alpha\in\R\), let \(m\in\NO\), and let \(\seq{\su{j}}{j}{0}{m}\in\Kggqualpha{m}\) (resp.\ \(\Kggequalpha{m}\)). Then we easily see that \(\seq{\su{j}}{j}{0}{l}\in\Kggqualpha{l}\) (resp.\ \(\Kggequalpha{l}\)) for all \(l\in\mn{0}{m}\).
\erema

Let \(\alpha\in\R\). In view of \rrema{R1440}, let \(\Kggqualpha{\infty}\)\index{k@$\Kggqinfalpha$} be the set of all sequences \(\seq{\su{j}}{j}{0}{\infty}\) of complex \tqqa{matrices} such that \(\seq{\su{j}}{j}{0}{m}\in\Kggqualpha{m}\) for all \(m\in\NO\). Further, let \(\Kggequalpha{\infty}\defg\Kggqualpha{\infty}\)\index{k@$\Kggequalpha{\infty}$}. Obviously, for all \(n\in\N\), we have
\[
 \Kggequalpha{2n}
 =\SetaA{\seq{\su{j}}{j}{0}{2n}\in\Hggqu{2n}}{\seq{\sau{j}}{j}{0}{2n-1}\in\Hggequ{2n-1}}
\]
and, for all \(n\in\NO\), furthermore
\bgl{Kgge2n+1}
 \Kggequalpha{2n+1}
 =\SetaA{\seq{\su{j}}{j}{0}{2n+1}\in\Hggequ{2n+1}}{\seq{\sau{j}}{j}{0}{2n}\in\Hggqu{2n}}.
\eg
Let \(\Kgqualpha{0}\defg\Hgqu{0}\)\index{k@$\Kgqualpha{0}$}, and, for all \(n\in\N\), let \(\Kgquu{2n}{\alpha}\) be the set of all sequences \(\seq{\su{j}}{j}{0}{2n}\) of complex \tqqa{matrices} for which the block \tHankel{} matrices \(\Hu{n}\) and \(-\alpha\Hu{n-1}+\Ku{n-1}\) are \tpH{}, i.\,e., \(\Kgquu{2n}{\alpha}\defg\setaa{\seq{\su{j}}{j}{0}{2n}\in\Hgqu{2n}}{\seq{\sau{j}}{j}{0}{2(n-1)}\in\Hgqu{2(n-1)}}\)\index{k@$\Kgquu{2n}{\alpha}$}. Furthermore, for all \(n\in\NO\), let \(\Kgquu{2n+1}{\alpha}\) be the set of all sequences \(\seq{\su{j}}{j}{0}{2n+1}\) of complex \tqqa{matrices} for which the block \tHankel{} matrices \(\Hu{n}\) and \(-\alpha\Hu{n}+\Ku{n}\) are \tpH{}, i.\,e.,
\[%
 \Kgqualpha{2n+1}
 \defg\SetaA{\seq{\su{j}}{j}{0}{2n+1}\in\setseqauu{(\Cqq)}{0}{2n+1}}{\Set{\seq{\su{j}}{j}{0}{2n},\seq{\sau{j}}{j}{0}{2n}}\subseteq\Hgqu{2n}}.
\]
 \index{k@$\Kgqualpha{2n+1}$}Let \(\Kgqinfalpha\)\index{k@$\Kgqinfalpha$} be the set of all sequences \(\seq{\su{j}}{j}{0}{\infty}\) of complex \tqqa{matrices} such that \(\seq{\su{j}}{j}{0}{m}\in\Kgqualpha{m}\) for all \(m\in\NO\). For all \(n\in\NO\), let
\begin{gather}\label{Hggcd}
 \Hggdqu{2n}
 \defg\SetAa{\seq{\su{j}}{j}{0}{2n}\in\Hggqu{2n}}{\Lu{n}=\Oqq},\\
 \label{Kggcd2n}
 \Kggdqualpha{2n}
 \defg\Kggqualpha{2n}\cap\Hggdqu{2n},
\end{gather}
\index{h@$\Hggdqu{2n}$}\index{k@$\Kggdqualpha{2n}$}and let
\bgl{Kggcd2n+1}
 \Kggdqualpha{2n+1}
 \defg\SetaA{\seq{\su{j}}{j}{0}{2n+1}\in\Kggqualpha{2n+1}}{\seq{\sau{j}}{j}{0}{2n}\in\Hggdqu{2n}}.
\eg
\index{k@$\Kggdqualpha{2n+1}$}Furthermore, let \(\Kggdqinfalpha\) be the set of all sequences \(\seq{\su{j}}{j}{0}{\infty}\in\Kggqinfalpha\)\index{k@$\Kggdqinfalpha$} for which there exists a number \(m\in\NO\) such that \(\seq{\su{j}}{j}{0}{m}\in\Kggdqualpha{m}\). For all \(\kappa\in\NO\cup\set{+\infty}\) and all \(m\in\mn{0}{\kappa}\), let
\bgl{Kggcdm}
 \Kggdoqkappaalpha{m}
 \defg\SetaA{\seq{\su{j}}{j}{0}{\kappa}\in\Kggqkappaalpha}{\seq{\su{j}}{j}{0}{m}\in\Kggdqualpha{m}}.
\eg
\index{k@$\Kggdoqkappaalpha{m}$}Obviously,
\bgl{G1023}
 \bigcup_{m=0}^\infty\Kggdoqinfalpha{m}
 =\Kggdqinfalpha.
\eg

\bremal{R1303}
 Let \(\alpha\in\R\), let \(\kappa\in\NO\cup\set{+\infty}\), let \((s_j)_{j=0}^{\kappa}\in\Kggqualpha{\kappa}\) (resp.\ \(\Kggequalpha{\kappa}\)), and let \(A\in\Cqp\). Then \((A^\ad s_jA)_{j=0}^{\kappa}\in\Kgguuu{p}{\kappa}{\alpha}\) (resp.\ \(\Kggeuuu{p}{\kappa}{\alpha}\)).
\erema

We write \(\CHq\)\index{c@$\CHq$} for the set of all \tH{} complex \tqqa{matrices} and we use the L\"owner semi-ordering in \(\CHq\), i.\,e., we write \(A\geq B\)\index{$A\geq B$} (resp.\ \(A>B\)\index{$A>B$}) in order to indicate that \(A\) and \(B\) are \tH{} complex matrices such that \(A-B\) is \tnn{} (resp.\ positive) \tH{}. We denote by \(\Kerna{A}\)\index{n@$\Kerna{A}$} and \(\Bilda{A}\)\index{r@$\Bilda{A}$} the \tKern{} and the \tBild{} of a complex matrix \(A\), respectively. For all \(\beta\in\R\), let \(\lfloor\beta\rfloor\defg\max\setaa{k\in\Z}{k\leq\beta}\)\index{$\lfloor\beta\rfloor$} be the largest integer not greater than \(\beta\). It is useful to state the following technical results:
\begin{lem}[\zitaa{MR3014201}{\clemm{2.9}}]\label{L1738}
 Let \(\alpha\in\R\), let \(\kappa\in\NO\cup\set{+\infty}\), and let \(\seq{\su{j}}{j}{0}{\kappa}\in\Kggqualpha{\kappa}\). Then:
 \benui
  \il{L1738.a} \(\su{j}\in\CHq\) for all \(j\in\mn{0}{\kappa}\) and \(\sau{j}\in\CHq\) for all \(j\in\mn{0}{\kappa-1}\).
  \il{L1738.b} \(\su{2k}\in\Cggq\) for all \(k\in\mn{0}{\frac{\kappa}{2}}\) and \(\sau{2k}\in\Cggq\) for all \(k\in\mn{0}{\frac{\kappa-1}{2}}\).
  \il{L1738.c} \(\Kerna{\su{2k}}\subseteq\Kerna{\su{j}}\) and \(\Bilda{\su{j}}\subseteq\Bilda{\su{2k}}\) for all \(k\in\mn{0}{\frac{\kappa}{2}}\) and all \(j\in\mn{2k}{2\lfloor\frac{\kappa}{2}\rfloor-1}\).
  \il{L1738.d}  \(\Kerna{\sau{2k}}\subseteq\Kerna{\sau{j}}\) and \(\Bilda{\sau{j}}\subseteq\Bilda{\sau{2k}}\) for all \(k\in\mn{0}{\frac{\kappa-1}{2}}\) and all \(j\in\mn{2k}{2\lfloor\frac{\kappa-1}{2}\rfloor-1}\).
 \eenui
\end{lem}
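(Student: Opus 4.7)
My plan is to derive all four parts from two standard facts about nonnegative Hermitian (nnH) block matrices, applied to the Hankel-type matrices implicit in the hypothesis \((s_j)_{j=0}^\kappa\in\Kggqualpha{\kappa}\). By the definition of \(\Kggqualpha{\kappa}\) and \rrema{R1440}, for every \(n\in\NO\) with \(2n\le\kappa\) the block Hankel matrix \(\Hu{n}\) from~\eqref{H} is nnH; likewise, the block Hankel matrix built from the \(\alpha\)-shifted sequence \(\seq{\sau{j}}{j}{0}{\kappa-1}\) of~\eqref{s_a} is nnH up to the appropriate size depending on the parity of \(\kappa\). These will be the only inputs I use.

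For part~(a), I would exploit that the \((j,k)\)- and \((k,j)\)-entries of \(\Hu{n}\) both equal \(\su{j+k}\), so Hermicity of \(\Hu{n}\) forces \(\su{m}=\su{m}^\ad\) for every \(m\in\mn{0}{2n}\). When \(\kappa=2n+1\), the single remaining index is handled via \(\su{2n+1}=\sau{2n}+\alpha\su{2n}\) from~\eqref{s_a} together with \(\alpha\in\R\) and the Hermicity of \(\sau{2n}\) coming from the Hankel matrix of \(s_a\). The Hermicity of the \(\sau{j}\) is obtained in identical fashion. For part~(b), diagonal blocks of nnH matrices are nnH; since the \((k,k)\)-block of \(\Hu{n}\) is \(\su{2k}\), this immediately yields \(\su{2k}\in\Cggq\) for \(k\in\mn{0}{n}\), and the analogous argument for the Hankel matrix of \(s_a\) gives the corresponding claim for \(\sau{2k}\).

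The core of the proof is part~(c), with~(d) analogous. The key tool is the absorption principle for nnH block matrices: if \(M\ge 0\) and \(v\in\Kerna{M_{kk}}\), then embedding \(v\) in the \(k\)-th block slot of the zero vector produces a vector \(u\) with \(u^\ad Mu=0\), whence \(Mu=\Oqq\), i.\,e., \(M_{jk}v=\Oqq\) for every block-row index \(j\). Applied to \(\Hu{n}\), this yields for each \(k\in\mn{0}{n}\) the inclusion
\bgl{starplan}
 \Kerna{\su{2k}}\subseteq\bigcap_{l=0}^n\Kerna{\su{k+l}},
\eg
covering the range \(j\in\mn{k}{n+k}\). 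This already matches the claim when \(k\ge n-1\); for smaller \(k\), I would iterate, noting that~\eqref{starplan} in particular supplies \(\Kerna{\su{2k}}\subseteq\Kerna{\su{2k'}}\) for every even index \(2k'\in\mn{k}{n+k}\), so that reapplying~\eqref{starplan} at such \(k'\) enlarges the captured set of \(j\). A short induction then delivers \(\Kerna{\su{2k}}\subseteq\Kerna{\su{j}}\) for all \(j\in\mn{2k}{2n-1}\). The \(\Bilda{}\)-inclusion is automatic from the kernel inclusion, since by~(a) all \(\su{j}\) involved are Hermitian. Part~(d) follows by running the same argument for the Hankel matrix built from \(s_a\).

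The main obstacle I anticipate is the index bookkeeping in this iterative chaining: one must check that the procedure terminates exactly at \(j=2\lfloor\kappa/2\rfloor-1\) and does not attempt to reach the opposite extreme diagonal index \(j=2\lfloor\kappa/2\rfloor\), where no such kernel inclusion between two extreme diagonal blocks can be expected in general.
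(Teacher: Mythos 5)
Your argument is correct, but there is no in-paper proof to compare it with: \rlemm{L1738} is quoted from~\zitaa{MR3014201}{\clemm{2.9}} and imported without proof, so your derivation is necessarily a reconstruction from first principles, and it is a legitimate one. The two inputs you use are exactly what the hypothesis supplies, namely \(\Hu{n}\in\Cggo{(n+1)q}\) for all \(n\in\NO\) with \(2n\leq\kappa\) and \(\Hau{n}\in\Cggo{(n+1)q}\) for all \(n\in\NO\) with \(2n+1\leq\kappa\) (via \eqref{Kgg2n}, \eqref{Kgg2n+1}, \rrema{R1440}, and \eqref{-aH+K=Ha}). The absorption principle is sound: \(M\in\Cggo{(n+1)q}\) and \(u^\ad Mu=0\) force \(Mu=0\), so placing \(v\in\Kerna{\su{2k}}\) in the \(k\)\nobreakdash-th block slot yields \(\Kerna{\su{2k}}\subseteq\Kerna{\su{m}}\) for all \(m\in\mn{k}{n+k}\). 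Your iterative chaining also terminates exactly where claimed: with \(n\defg\lfloor\kappa/2\rfloor\), \(k_0\defg k\), and \(k_{i+1}\defg\lfloor(n+k_i)/2\rfloor\), one has \(k\leq k_i\leq n\) throughout, the sequence is strictly increasing as long as \(k_i\leq n-2\), and its fixed point is \(n-1\); hence the captured index set stabilizes at \(\mn{k}{2n-1}\supseteq\mn{2k}{2\lfloor\kappa/2\rfloor-1}\), and the procedure correctly cannot (and need not) reach \(j=2\lfloor\kappa/2\rfloor\). The passage from kernel inclusions to \(\Bilda{\su{j}}\subseteq\Bilda{\su{2k}}\) via orthogonal complements is fine because all matrices involved are Hermitian by your part for~\eqref{L1738.a}.

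Two small points you should make explicit. First, in part~\eqref{L1738.a} your phrase ``identical fashion'' hides an asymmetry: for even \(\kappa=2n\) the matrix \(\Hau{n-1}\) only yields \(\sau{j}=\sau{j}^\ad\) for \(j\leq 2n-2\), and the top entry requires the mirror of the trick you spelled out for odd \(\kappa\), namely \(\sau{2n-1}=-\alpha\su{2n-1}+\su{2n}\) together with \(\alpha\in\R\) and \(\su{2n-1},\su{2n}\in\CHq\). Second, the case \(\kappa=+\infty\) deserves one sentence: all four assertions there follow by running your finite argument for arbitrary \(n\in\NO\), since any claimed index is covered once \(n\) is large enough.
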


\begin{lem}[see~\zitaa{MR2735313}{\clemmss{4.7}{4.11}}]\label{L1722}
 Let \(\alpha\in\R\) and let \(n\in\NO\). Then \(\Kggequalpha{2n}\subseteq\Hggequ{2n}\). Furthermore, if \(\seq{\su{j}}{j}{0}{2n+1}\in\Kggequalpha{2n+1}\), then \(\seq{\sau{j}}{j}{0}{2n}\in\Hggequ{2n}\).
\end{lem}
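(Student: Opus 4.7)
My plan is to combine the solvability criteria of \rtheo{T1423} with the existence of finitely supported (molecular) solutions to solvable truncated matricial \tStieltjes{} moment problems on \(\iraa{\alpha}\). The key point is that a molecular solution admits moments of arbitrarily high order, so the block \tHankel{} matrices formed from its moments are automatically \tnnH{}, which is exactly what the extendability assertions \(\seq{\cdot}{j}{0}{2n}\in\Hggequ{2n}\) demand.

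For the first inclusion, assume \(\seq{\su{j}}{j}{0}{2n}\in\Kggequalpha{2n}\). By part~(i) of \rtheo{T1423}, \(\Mggqaag{\iraa{\alpha}}{\seq{\su{j}}{j}{0}{2n}}\) is nonempty, and I would pick a molecular \(\sigma\) from it. Since \(\sigma\) is a finite sum of weighted point masses in \(\iraa{\alpha}\), the integrals \(\suo{j}{\sigma}\) exist for every \(j\in\NO\), and the block \tHankel{} matrix \(\matauuo{\suo{j+k}{\sigma}}{j,k}{0}{n+1}\) is \tnnH{}. Setting \(\su{2n+1}\defg\suo{2n+1}{\sigma}\) and \(\su{2n+2}\defg\suo{2n+2}{\sigma}\), the extended sequence lies in \(\Hggqu{2n+2}\), witnessing \(\seq{\su{j}}{j}{0}{2n}\in\Hggequ{2n}\). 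For the second inclusion, starting from \(\seq{\su{j}}{j}{0}{2n+1}\in\Kggequalpha{2n+1}\), \rtheo{T1423} again yields a molecular \(\sigma\in\Mggqaag{\iraa{\alpha}}{\seq{\su{j}}{j}{0}{2n+1}}\); I would then pass to the weighted measure \(\tau\) on \(\iraa{\alpha}\) given by \(\tau(\dif t)\defg(t-\alpha)\sigma(\dif t)\). This \(\tau\) is a \tnnH{}, finitely supported matricial measure on \(\iraa{\alpha}\) (because \(t-\alpha\geq0\) there), and satisfies \(\suo{j}{\tau}=-\alpha\suo{j}{\sigma}+\suo{j+1}{\sigma}=\sau{j}\) for every \(j\in\mn{0}{2n}\). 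Repeating the preceding argument with \(\tau\) in place of \(\sigma\) and \(\seq{\sau{j}}{j}{0}{2n}\) in place of \(\seq{\su{j}}{j}{0}{2n}\) then produces the required Hankel extension and gives \(\seq{\sau{j}}{j}{0}{2n}\in\Hggequ{2n}\).

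The principal obstacle is the matricial Tchakaloff/Richter-type statement that every nonempty solution set \(\Mggqaag{\iraa{\alpha}}{\seq{\su{j}}{j}{0}{m}}\) contains a molecular element. In the scalar case this is classical; in the matricial setting it follows by vectorization from the scalar result, or by a direct analysis of the extreme rays of the convex cone of solutions. Such a statement should be available in the companion work~\zita{MR2735313} from which the present lemma is cited, and once granted, the remainder of the proof reduces to the measure-theoretic bookkeeping sketched above.
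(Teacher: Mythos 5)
Your measure-theoretic route is essentially sound in its bookkeeping, but it is a genuinely different road from the one behind the statement: the paper gives no proof of its own and imports the lemma from~\zitaa{MR2735313}{\clemmss{4.7}{4.11}}, where the inclusions are established by purely algebraic block-\tHankel{} arguments (\tSchur{}-complement and range-inclusion manipulations on \(\Hu{n}\) and \(\Hau{n}\)), with no representing measure in sight. Your steps that are correct: \rtheo{T1423} yields \(\sigma\in\Mggqaag{\iraa{\alpha}}{\seq{\su{j}}{j}{0}{2n}}\) (resp.\ for \(\seq{\su{j}}{j}{0}{2n+1}\)); a molecular \(\sigma\) has power moments of every order and \(\matauuo{\suo{j+k}{\sigma}}{j,k}{0}{n+1}\) is \tnnH{}, so the extension by \(\suo{2n+1}{\sigma}\), \(\suo{2n+2}{\sigma}\) certifies membership in \(\Hggequ{2n}\); and the weighted measure \(\tau(\dif t)\defg(t-\alpha)\sigma(\dif t)\) is again a \tnnH{} measure on \(\iraa{\alpha}\) with \(\suo{j}{\tau}=\sau{j}\) for \(j\in\mn{0}{2n}\), which is exactly the right device for the shifted sequence. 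What the algebraic proof buys is independence from the solvability theorem: in~\zita{MR2735313} the lemma is proved first and then used on the way to Theorems~1.3 and~1.4, so your derivation, while non-circular inside the present paper (which treats \rtheo{T1423} as a black box), inverts the logical order of the original development and could not stand in for the cited proof there.

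The one soft spot is your key external ingredient. The matricial Tchakaloff-type statement you need (every nonempty solution set of a truncated matricial moment problem contains a molecular element) is true and citable, e.g.\ from Kimsey--Woerdeman's work on truncated matrix-valued \(K\)-moment problems or from molecular-solution results in the Fritzsche--Kirstein--M\"adler \tHamburger{} papers; but your proposed justification \emph{by vectorization from the scalar result} does not work as stated, since splitting a \tnnH{} \tqqa{measure} into \(q^2\) scalar measures loses the constraint that the recombined atoms must carry \tnnH{} matrix weights, so scalar Tchakaloff/Bayer--Teichmann cannot be applied entrywise. You can in fact avoid molecularity altogether: any \(\sigma\) as above is in particular a \tnnH{} measure on \(\R\) solving the matricial \tHamburger{} problem with prescribed moments \(\seq{\su{j}}{j}{0}{2n}\), and the known \tHamburger{} solvability criterion (solvability of the problem with equality holds if and only if the sequence is \tHnnde{}, i.e.\ belongs to \(\Hggequ{2n}\)) gives the conclusion with no moments beyond those prescribed; the same applies to \(\tau\), whose moments up to order \(2n\) only use \(\sigma\)'s moments up to order \(2n+1\), which are given in the odd case. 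That shortcut preserves the structure of your argument, including the \((t-\alpha)\)-weighting, while replacing its deepest ingredient by a result of the same kind as those already quoted in this paper's introduction.
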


 In the sequel, we will need the Moore-Penrose inverse \(A^\MP\)\index{$A^\MP$} of a complex \tpqa{matrix} \(A\). In order to formulate some essential observations on the above introduced sets, it is useful to introduce now some further constructions of matrices, which will play a central role in the following. If \(n\in\N\), if \(\seq{p_j}{j}{1}{n}\) is a sequence of positive integers, and if \(A_j\in\Coo{p_j}{q}\) for all \(j\in\mn{1}{n}\), then let
\[
 \col\seq{A_j}{j}{1}{n}
 \defg
 \bCol
  A_1\\
  A_2\\
  \vdots\\
  A_n
 \eCol.
\]
\index{c@$\col\seq{A_j}{j}{1}{n}$}Let \(\kappa\in\NO\cup\set{+\infty}\) and let \(\seq{\su{j}}{j}{0}{\kappa}\) be a sequence of complex \tpqa{matrices}. We will associate with \(\seq{\su{j}}{j}{0}{\kappa}\) several matrices which we will often need in our subsequent considerations: For all \(l,m\in\NO\) with \(l\leq m\leq\kappa\), let
\begin{align}\label{yz}
 \yuuo{l}{m}{s}&\defg\col\seq{\su{j}}{j}{l}{m}&
 &\text{and}&
 \zuuo{l}{m}{s}&\defg\brow\su{l},\su{l+1},\dotsc,\su{m}\erow.
\end{align}
\index{y@$\yuuo{l}{m}{s}\), \(\yuu{l}{m}$}\index{z@$\zuuo{l}{m}{s}\), \(\zuu{l}{m}$}For all \(n\in\NO\) with \(2n\leq\kappa\), let
\bgl{Hs}
 \Huo{n}{s}
 \defg\matauuo{\su{j+k}}{j,k}{0}{n},
\eg
\index{h@$\Huo{n}{s}\), \(\Hu{n}$}for all \(n\in\NO\) with \(2n+1\leq\kappa\), let
\bgl{Ks}
 \Kuo{n}{s}
 \defg\matauuo{\su{j+k+1}}{j,k}{0}{n},
\eg
\index{k@$\Kuo{n}{s}\), \(\Ku{n}$}and, for all \(n\in\NO\) with \(2n+2\leq\kappa\), let
\bgl{G}
 \Guo{n}{s}
 \defg\matauuo{\su{j+k+2}}{j,k}{0}{n}.
\eg
\index{g@$\Guo{n}{s}\), \(\Gu{n}$}Let
\begin{align}\label{L}
 \Luo{0}{s}&\defg\su{0},&
 \Luo{n}{s}&\defg\su{2n}-\zuuo{n}{2n-1}{s}(\Huo{n-1}{s})^\MP\yuuo{n}{2n-1}{s},
\end{align}
\index{l@$\Luo{n}{s}\), \(\Lu{n}$}and let
\bgl{LL}
 \LLuo{n}{s}
 \defg\Guo{n-1}{s}-\yuuo{1}{n}{s}\su{0}^\MP\zuuo{1}{n}{s}
\eg
\index{l@$\LLuo{n}{s}\), \(\LLu{n}$}for all \(n\in\N\) with \(2n\leq\kappa\). Let
\begin{align}\label{The}
 \Thetauo{0}{s}&\defg\Opq&
 &\text{and}&
 \Thetauo{n}{s}&\defg\zuuo{n}{2n-1}{s}(\Huo{n-1}{s})^\MP\yuuo{n}{2n-1}{s}
\end{align}
\index{t@$\Thetauo{n}{s}\), \(\Thetau{n}$}for all \(n\in\N\) with \(2n-1\leq\kappa\), where \(\Opq\)\index{0@$\Opq$} denotes the zero matrix in \(\Cpq\). In situations in which it is clear which sequence \(\seq{\su{j}}{j}{0}{\kappa}\) of complex matrices is meant, we will write \(\yuu{l}{m}\), \(\zuu{l}{m}\), \(\Hu{n}\), \(\Ku{n}\), \(\Gu{n}\), \(\Lu{n}\), \(\LLu{n}\), and  \(\Thetau{n}\) instead of \(\yuuo{j}{k}{s}\), \(\zuuo{j}{k}{s}\), \(\Huo{n}{s}\), \(\Kuo{n}{s}\), \(\Guo{n}{s}\), \(\Luo{n}{s}\), \(\LLuo{n}{s}\), and \(\Thetauo{n}{s}\), respectively.

Let \(\alpha\in\C\) and let \(\kappa\geq1\). Then the sequence \(\seq{v_j}{j}{0}{\kappa-1}\) given by \(v_j\defg\sau{j}\) and \eqref{s_a} for all \(j\in\mn{0}{\kappa-1}\) plays a key role in our subsequent considerations. We define
\bgl{Thea}
 \Thetaau{n}
 \defg\Thetauo{n}{v}
\eg
\index{t@$\Thetaau{n}$}for all \(n\in\NO\) with \(2n\leq\kappa\),
\bgl{Ha}
 \Hau{n}
 \defg\Huo{n}{v}
\eg
and
\bgl{La}
 \Lau{n}
 \defg\Luo{n}{v}
\eg
\index{h@$\Hau{n}$}\index{l@$\Lau{n}$}for all \(n\in\NO\) with \(2n+1\leq\kappa\),
\[
 \Kau{n}
 \defg\Kuo{n}{v}
\]
\index{k@$\Kau{n}$}for all \(n\in\NO\) with \(2n+2\leq\kappa\), and
\begin{align}\label{yaza}
 \yauu{l}{m}&\defg\yuuo{l}{m}{v}&
 &\text{and}&
 \zauu{l}{m}&\defg\zuuo{l}{m}{v}
\end{align}
\index{y@$\yauu{l}{m}$}\index{z@$\zauu{l}{m}$}for all \(l,m\in\NO\) with \(l\leq m\leq\kappa\). In view of \eqref{H}, \eqref{K}, \eqref{s_a}, and \eqref{Ha}, then
\bgl{-aH+K=Ha}
 -\alpha\Hu{n}+\Ku{n}
 =\Hau{n}
\eg
for all \(n\in\NO\) with \(2n+1\leq\kappa\).

Let us recall useful characterizations of the set \(\Kggequalpha{m}\):
\begin{lem}[\zitaa{MR2735313}{\clemm{4.15}}]\label{L0912}
 Let \(\alpha\in\R\), let \(n\in\NO\), and let \(\seq{\su{j}}{j}{0}{2n+1}\in\Kggqualpha{2n+1}\). Then \(\seq{\su{j}}{j}{0}{2n+1}\in\Kggequalpha{2n+1}\) if and only if \(\Kerna{\Lu{n}}\subseteq\Kerna{\Lau{n}}\).
\end{lem}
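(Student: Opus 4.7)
The plan is first to reduce the extendability question to a kernel inclusion between $\Hu{n}$ and $\Hau{n}$, and then to relate this to the kernel inclusion between the Schur complements $\Lu{n}$ and $\Lau{n}$. By definition, $\seq{\su{j}}{j}{0}{2n+1}\in\Kggequalpha{2n+1}$ means that there exists $\su{2n+2}\in\Cqq$ with $\seq{\su{j}}{j}{0}{2n+2}\in\Kggqualpha{2n+2}$. Unfolding the two defining conditions of $\Kggqualpha{2n+2}$, the Stieltjes-type condition $\Hau{n}\geq0$ does not involve $\su{2n+2}$ and is already supplied by the hypothesis, so this reduces to the existence of $\su{2n+2}$ with $\Hu{n+1}\geq0$. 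Since $\Hu{n}\geq0$, the classical Schur-complement criterion provides such an $\su{2n+2}$ if and only if $\Kerna{\Hu{n}}\subseteq\Kerna{\zuu{n+1}{2n+1}}$. Now $\zuu{n+1}{2n+1}$ is the last block row of $\Ku{n}$, while the preceding $n$ block rows of $\Ku{n}$ coincide with block rows $1,\dotsc,n$ of $\Hu{n}$, so this inclusion is equivalent to $\Kerna{\Hu{n}}\subseteq\Kerna{\Ku{n}}$; using $\Hau{n}=-\alpha\Hu{n}+\Ku{n}$, it is in turn equivalent to the clean condition
\[
 \Kerna{\Hu{n}}\subseteq\Kerna{\Hau{n}}.
\]

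Next I show that $\Kerna{\Hu{n}}\subseteq\Kerna{\Hau{n}}$ is equivalent to $\Kerna{\Lu{n}}\subseteq\Kerna{\Lau{n}}$. Using the block decompositions of $\Hu{n}$ and $\Hau{n}$ around $\Hu{n-1}$ and $\Hau{n-1}$, together with the range inclusions $\Bilda{\yuu{n}{2n-1}}\subseteq\Bilda{\Hu{n-1}}$ and $\Bilda{\yauu{n}{2n-1}}\subseteq\Bilda{\Hau{n-1}}$ guaranteed by positivity, a vector $x$ with first $n$ blocks $w$ and last block $u$ lies in $\Kerna{\Hu{n}}$ if and only if $u\in\Kerna{\Lu{n}}$ and $w=-\Hu{n-1}^\MP\yuu{n}{2n-1}u+w'$ for some $w'\in\Kerna{\Hu{n-1}}$. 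For the forward direction, pick $u\in\Kerna{\Lu{n}}$, set $w'=0$, and use the hypothesis to place the resulting $x$ in $\Kerna{\Hau{n}}$: the first block row of $\Hau{n}x=0$ yields the auxiliary identity
\[
 \Hau{n-1}\Hu{n-1}^\MP\yuu{n}{2n-1}u=\yauu{n}{2n-1}u,
\]
and a brief projection argument using this identity together with $\Kerna{\Hau{n-1}}\subseteq\Kerna{\zauu{n}{2n-1}}$ (from $\Hau{n}\geq0$) converts the second block row of $\Hau{n}x=0$ into $\Lau{n}u=0$.

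For the converse, I first obtain the auxiliary inclusion $\Kerna{\Hu{n-1}}\subseteq\Kerna{\Hau{n-1}}$ without circularity: by \rrema{R1440} the truncated sequence $\seq{\su{j}}{j}{0}{2n-1}$ lies in $\Kggqualpha{2n-1}$, and it extends via the given $\su{2n}$ to $\seq{\su{j}}{j}{0}{2n}\in\Kggqualpha{2n}$, so $\seq{\su{j}}{j}{0}{2n-1}\in\Kggequalpha{2n-1}$; the reduction of the first paragraph, applied at level $n-1$ and independent of the present lemma, then yields the inclusion. Next, for each $u\in\Kerna{\Lu{n}}$, the key Hankel identity
\[
 \Ku{n-1}\Hu{n-1}^\MP\yuu{n}{2n-1}u=\yuu{n+1}{2n}u
\]
is verified by setting $\xi\defg\Hu{n-1}^\MP\yuu{n}{2n-1}u$: the relation $\Hu{n-1}\xi=\yuu{n}{2n-1}u$, combined with the shift structure of $\Ku{n-1}$, pins down the first $n-1$ blocks of $\Ku{n-1}\xi$, while $\Lu{n}u=0$ pins down the last block as $\su{2n}u$. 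Together with $\Hau{n-1}=-\alpha\Hu{n-1}+\Ku{n-1}$, this reproduces the auxiliary identity above. Consequently, for any $x\in\Kerna{\Hu{n}}$ with $w'\in\Kerna{\Hu{n-1}}\subseteq\Kerna{\Hau{n-1}}$, the first block equation of $\Hau{n}x=0$ holds by this identity, and the second reduces, by the same projection argument as in the forward direction, to the standing assumption $\Lau{n}u=0$.

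The main obstacle is the Hankel identity $\Ku{n-1}\Hu{n-1}^\MP\yuu{n}{2n-1}u=\yuu{n+1}{2n}u$ on $\Kerna{\Lu{n}}$: it is the computation that reconciles Schur complements built over $\Hu{n-1}$ with those built over $\Hau{n-1}$, and it is where the defining kernel condition $\Lu{n}u=0$ enters nontrivially. The auxiliary inclusion $\Kerna{\Hu{n-1}}\subseteq\Kerna{\Hau{n-1}}$ is obtained without circularity because the reduction to $\Kerna{\Hu{n}}\subseteq\Kerna{\Hau{n}}$ in the first paragraph uses only the definition of the classes $\Kggqualpha{m}$ and the Schur-complement criterion.
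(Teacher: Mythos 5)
Your proof is correct. Note first that this paper does not prove the lemma itself; it is imported from~\zitaa{MR2735313}{\clemm{4.15}}, so there is no in-text argument to compare against. Judged on its own merits, your route --- reduce extendability to the kernel inclusion \(\Kerna{\Hu{n}}\subseteq\Kerna{\Hau{n}}\) via the Schur-complement completion criterion, then translate that into \(\Kerna{\Lu{n}}\subseteq\Kerna{\Lau{n}}\) through the block decompositions around \(\Hu{n-1}\) and \(\Hau{n-1}\) --- is the natural one, and every step checks out. In particular: the first-paragraph reduction is sound because \(\Hau{n}=-\alpha\Hu{n}+\Ku{n}\) does not involve \(\su{2n+2}\), and because \(\su{j}^\ad=\su{j}\) (\rlemp{L1738}{L1738.a}) makes \(\zuu{n+1}{2n+1}\) the adjoint of \(\yuu{n+1}{2n+1}\), so that for the \tnnH{} matrix \(\Hu{n}\) the completion condition \(\Bilda{\yuu{n+1}{2n+1}}\subseteq\Bilda{\Hu{n}}\) is indeed the same as \(\Kerna{\Hu{n}}\subseteq\Kerna{\zuu{n+1}{2n+1}}\), hence as \(\Kerna{\Hu{n}}\subseteq\Kerna{\Ku{n}}\). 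Your key Hankel identity \(\Ku{n-1}\Hu{n-1}^\MP\yuu{n}{2n-1}u=\yuu{n+1}{2n}u\) for \(u\in\Kerna{\Lu{n}}\) verifies exactly as you indicate (rows \(0,\dotsc,n-2\) of \(\Ku{n-1}\) are rows \(1,\dotsc,n-1\) of \(\Hu{n-1}\), and the last row is \(\zuu{n}{2n-1}\), where \(\Lu{n}u=0\) enters), and combined with \(\Hau{n-1}=-\alpha\Hu{n-1}+\Ku{n-1}\) it yields \(\Hau{n-1}\Hu{n-1}^\MP\yuu{n}{2n-1}u=\yauu{n}{2n-1}u\), which is precisely what your projection argument needs (the projection \(\Hau{n-1}^\MP\Hau{n-1}\) acts as the identity under \(\zauu{n}{2n-1}\) since \(\Kerna{\Hau{n-1}}\subseteq\Kerna{\zauu{n}{2n-1}}\) by \(\Hau{n}\in\Cggo{(n+1)q}\)). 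The auxiliary inclusion \(\Kerna{\Hu{n-1}}\subseteq\Kerna{\Hau{n-1}}\) is also genuinely non-circular: \(\seq{\su{j}}{j}{0}{2n-1}\) lies in \(\Kggqualpha{2n-1}\) by \rrema{R1440}, extends to level \(2n\) by the given \(\su{2n}\), and your first-paragraph equivalence at level \(n-1\) uses only the completion criterion.

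One small repair: your second and third paragraphs tacitly assume \(n\geq1\), since the decomposition around \(\Hu{n-1}\) is meaningless for \(n=0\). But in that case \(\Lu{0}=\Hu{0}=\su{0}\) and \(\Lau{0}=\Hau{0}=\sau{0}\), so the asserted equivalence is already the conclusion of your first paragraph; one sentence to that effect closes the gap, and the proof is then complete for all \(n\in\NO\).
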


\begin{lem}[\zitaa{MR2735313}{\clemm{4.16}}]\label{L0919}
 Let \(\alpha\in\R\), let \(n\in\N\), and let \(\seq{\su{j}}{j}{0}{2n}\in\Kggqualpha{2n}\). Then \(\seq{\su{j}}{j}{0}{2n}\in\Kggequalpha{2n}\) if and only if \(\Kerna{\Lau{n-1}}\subseteq\Kerna{\Lu{n}}\).
\end{lem}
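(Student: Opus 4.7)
By the definition of $\Kggequalpha{2n}$ together with~\eqref{Kgg2n+1} and~\eqref{-aH+K=Ha}, $\seq{\su{j}}{j}{0}{2n}$ belongs to $\Kggequalpha{2n}$ if and only if there exists $\su{2n+1}\in\Cqq$ such that $\Hu{n}\geq 0$ (already assumed) and $\Hau{n}\geq 0$. My plan is to unpack this condition via the block form
\[
 \Hau{n}=\begin{pmatrix}\Hau{n-1}&\yauu{n}{2n-1}\\\zauu{n}{2n-1}&\sau{2n}\end{pmatrix},
\]
where only the corner $\sau{2n}=-\alpha\su{2n}+\su{2n+1}$ depends on the new parameter $\su{2n+1}$. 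Since $\Hau{n-1}\geq 0$, the Schur-complement criterion for nonnegative Hermitian matrices yields that such a corner exists if and only if $\Bilda{\yauu{n}{2n-1}}\subseteq\Bilda{\Hau{n-1}}$ (one then picks $\sau{2n}\geq\zauu{n}{2n-1}\Hau{n-1}^\MP\yauu{n}{2n-1}$); taking adjoints, this range inclusion is equivalent to $\Kerna{\Hau{n-1}}\subseteq\Kerna{\zauu{n}{2n-1}}$.

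It remains to prove the equivalence $\Kerna{\Hau{n-1}}\subseteq\Kerna{\zauu{n}{2n-1}}\iff\Kerna{\Lau{n-1}}\subseteq\Kerna{\Lu{n}}$. I would use the block Schur decomposition of $\Hau{n-1}\geq 0$ relative to $\Hau{n-2}$: an element $\begin{pmatrix}x_0\\u\end{pmatrix}$ lies in $\Kerna{\Hau{n-1}}$ precisely when $u\in\Kerna{\Lau{n-1}}$ and $x_0+\Hau{n-2}^\MP\yauu{n-1}{2n-3}u\in\Kerna{\Hau{n-2}}$. Substituting into $\zauu{n}{2n-1}\begin{pmatrix}x_0\\u\end{pmatrix}=\zauu{n}{2n-2}x_0+\sau{2n-1}u$ and exploiting the range/kernel inclusions from~\rlemm{L1738} applied to both the original and the $\alpha$-shifted sequence (where both $\Hu{n}\geq 0$ and $\Hau{n-1}\geq 0$ come into play), the equivalence reduces to the algebraic identity
\[
 \sau{2n-1}u-\zauu{n}{2n-2}\Hau{n-2}^\MP\yauu{n-1}{2n-3}u=\Lu{n}u\qquad\text{for every } u\in\Kerna{\Lau{n-1}}.
\]

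Verifying this identity is the main obstacle. I would expand both sides using $\sau{j}=-\alpha\su{j}+\su{j+1}$ and the shift relation $\Hau{n-1}=-\alpha\Hu{n-1}+\Ku{n-1}$, reducing the problem to a comparison between $\Hu{n-1}^\MP\yuu{n}{2n-1}u$ and $\Hau{n-2}^\MP\yauu{n-1}{2n-3}u$ on $\Kerna{\Lau{n-1}}$. The delicate part is handling the Moore-Penrose inverses in the singular case: repeated use of consistency identities such as $\Hu{n-1}\Hu{n-1}^\MP\yuu{n}{2n-1}u=\yuu{n}{2n-1}u$ (valid by the range inclusions of~\rlemm{L1738}) will be needed to keep the formulas well-defined. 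Once this identity is in hand, both directions of the equivalence, and hence the lemma, follow immediately.
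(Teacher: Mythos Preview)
The paper does not supply its own proof of this lemma: it is quoted verbatim as \clemm{4.16} of~\zita{MR2735313}, with no argument reproduced here. So there is no in-paper proof against which to compare your attempt.

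On the substance of your proposal: the first two paragraphs are sound. Membership in $\Kggequalpha{2n}$ is indeed equivalent to the existence of $\su{2n+1}$ with $\Hau{n}\geq 0$, and by the Albert--Schur-complement criterion (together with $\Hau{n-1}\geq 0$) this reduces to the range inclusion $\Bilda{\yauu{n}{2n-1}}\subseteq\Bilda{\Hau{n-1}}$, equivalently $\Kerna{\Hau{n-1}}\subseteq\Kerna{\zauu{n}{2n-1}}$.

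The gap is exactly where you place it. The displayed identity
\[
 \sau{2n-1}u-\zauu{n}{2n-2}\Hau{n-2}^\MP\yauu{n-1}{2n-3}u=\Lu{n}u\qquad(u\in\Kerna{\Lau{n-1}})
\]
is asserted but not proved, and you yourself call it ``the main obstacle''. The left-hand side is assembled entirely from the $\alpha$-shifted data $\seq{\sau{j}}{}{}{}$ and the Moore--Penrose inverse of $\Hau{n-2}$, while the right-hand side is the Schur complement taken inside the \emph{unshifted} Hankel matrix $\Hu{n}$, involving $\Hu{n-1}^\MP$. Passing from one to the other requires a nontrivial structural relation between $\Hu{n-1}^\MP$ and $\Hau{n-2}^\MP$ that you have not supplied; the sentence ``reducing the problem to a comparison between $\Hu{n-1}^\MP\yuu{n}{2n-1}u$ and $\Hau{n-2}^\MP\yauu{n-1}{2n-3}u$'' is a restatement of the difficulty, not a resolution of it. (Note also that your decomposition of $\Kerna{\Hau{n-1}}$ via $\Hau{n-2}$ tacitly assumes $n\geq 2$; the case $n=1$ would need separate treatment.) As it stands the proposal is an outline with its central step missing, not a proof.
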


Our current focus on matricial moment problems for intervals of the type \([\alpha,+\infty)\) or \((-\infty,\alpha]\), where \(\alpha\) is an arbitrary real number, motivates us to look for corresponding one-sided \(\alpha\)\nobreakdash-analogues of the \tcHp{} of sequences of complex \tpqa{matrices}. Our above considerations show that, for reasons of symmetry, we can mainly concentrate on the right case. In this case, we use the matrices defined in \eqref{L} and \eqref{La} to introduce the notion in \rdefi{D1021} below, which will turn out to be one of the central objects of this paper.

\bremal{R1702}
 Let \(\alpha\in\C\), let \(\kappa\in\NO\cup\set{+\infty}\), and let \(\seq{\su{j}}{j}{0}{\kappa}\) be a sequence of complex \tpqa{matrices}. Then one can easily see that there is a unique sequence \(\seq{\Spu{j}}{j}{0}{\kappa}\) of complex \tpqa{matrices} such that \(\su{2k}=\Thetau{k}+\Spu{2k}\) for all \(k\in\NO\) with \(2k\leq\kappa\) and \(\su{2k+1}=\alpha\su{2k}+\Thetaau{k}+\Spu{2k+1}\) for all \(k\in\NO\) with \(2k+1\leq\kappa\). In particular, we see that \(\Spu{2k}=\Lu{k}\) for all \(k\in\NO\) with \(2k\leq\kappa\) and moreover \(\Spu{2k+1}=\Lau{k}\) for all \(k\in\NO\) with \(2k+1\leq\kappa\).
\erema

\rrema{R1702} leads us to the following notion, which was introduced in~\zita{MR3014201}.
\bdefil{D1021}
 Let \(\alpha\in\C\), let \(\kappa\in\NO\cup\set{+\infty}\), and let \(\seq{\su{j}}{j}{0}{\kappa}\) be a sequence of complex \tpqa{matrices}. Then the sequence \(\seq{\Spu{j}}{j}{0}{\kappa}\)\index{q@$\seq{\Spu{j}}{j}{0}{\kappa}$} given by \(\Spu{2k}\defg\Lu{k}\) for all \(k\in\NO\) with \(2k\leq\kappa\) and by \(\Spu{2k+1}\defg\Lau{k}\) for all \(k\in\NO\) with \(2k+1\leq\kappa\) is called the \emph{\trasp{\alpha} of \(\seq{\su{j}}{j}{0}{\kappa}$}. In the case \(\alpha=0\), the sequence \(\seq{\Spu{j}}{j}{0}{\kappa}\) is simply called the \emph{\trSpa{\seq{\su{j}}{j}{0}{\kappa}}}.
\edefi

\bremal{R0929}
 Let \(\alpha\in\C\), let \(\kappa\in\NO\cup\set{+\infty}\), and let \(\seq{\Spu{j}}{j}{0}{\kappa}\) be a sequence of complex \tpqa{matrices}. Then it can be immediately checked by induction that there is a unique sequence \(\seq{\su{j}}{j}{0}{\kappa}\) from \(\Cpq\) such that \(\seq{\Spu{j}}{j}{0}{\kappa}\) is the \traspa{\alpha}{\seq{\su{j}}{j}{0}{\kappa}}, namely the sequence \(\seq{\su{j}}{j}{0}{\kappa}\) recursively given by \(\su{2k}=\Thetau{k}+\Spu{2k}\) for all \(k\in\NO\) with \(2k\leq\kappa\) and \(\su{2k+1}=\alpha\su{2k}+\Thetaau{k}+\Spu{2k+1}\) for all \(k\in\NO\) with \(2k+1\leq\kappa\).
\erema

\bremal{R1516}
 Let \(\alpha\in\C\), let \(\kappa\in\NO\cup\set{+\infty}\), and let \(\seq{\su{j}}{j}{0}{\kappa}\) be a sequence of complex \tpqa{matrices}. Denote by \(\seq{\Spu{j}}{j}{0}{\kappa}\) the \traspa{\alpha}{\seq{\su{j}}{j}{0}{\kappa}}. For all \(m\in\mn{0}{\kappa}\), then \(\seq{\Spu{j}}{j}{0}{m}\) is exactly the \traspa{\alpha}{\seq{\su{j}}{j}{0}{m}} .
\erema

The following result shows that the membership of a sequence \(\seq{\su{j}}{j}{0}{\kappa}\) of complex \tqqa{matrices} to one of the classes \(\Kggqkappaalpha\) and \(\Kggeqkappaalpha\) can be effectively characterized in terms of its \trasp{\alpha}.
\begin{thm}[see~\zitaa{MR3014201}{\ctheo{4.12}}]\label{121.P1337}
 Let \(\alpha\in\R\), let \(\kappa\in\NO\cup\set{+\infty}\), and let \(\seq{\si{j}}{j}{0}{\kappa}\)  be a sequence of complex \tqqa{matrices} with \tasp{\alpha} \((\Spu{j})_{j=0}^\kappa\). Then:
 \benui
  \il{121.P1337.b} The sequence \(\seq{\si{j}}{j}{0}{\kappa}\) belongs to \(\Kggquu{\kappa}{\alpha}\) if and only if \(\Spu{j}\in\Cggq\) for all \(j\in\mn{0}{\kappa}\) and, in the case \(\kappa\geq2\), furthermore \(\Kerna{\Spu{j}}\subseteq\Kerna{\Spu{j+1}}\) for all \(j\in\mn{0}{\kappa-2}\).
  \il{121.P1337.c} The sequence \(\seq{\si{j}}{j}{0}{\kappa}\) belongs to \(\Kggequu{\kappa}{\alpha}\) if and only if \(\Spu{j}\in\Cggq\) for all \(j\in\mn{0}{\kappa}\) and, in the case \(\kappa\geq1\), furthermore \(\Kerna{\Spu{j}}\subseteq\Kerna{\Spu{j+1}}\) for all \(j\in\mn{0}{\kappa-1}\).
  \il{121.P1337.d} The sequence \(\seq{\si{j}}{j}{0}{\kappa}\) belongs to \(\Kgquu{\kappa}{\alpha}\) if and only if \(\Spu{j}\in\Cgq\) for all \(j\in\mn{0}{\kappa}\).
 \eenui
\end{thm}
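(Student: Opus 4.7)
The plan is to verify the three characterizations by combining an iterated Schur complement analysis of the block-Hankel matrices $\Hu{n}$ and $\Hau{n}$ with the definitions~\eqref{Kgg2n}/\eqref{Kgg2n+1} of the classes and with Lemmas~\ref{L1738}, \ref{L0912}, \ref{L0919}. Recalling the identity~\eqref{-aH+K=Ha}, membership in $\Kggqualpha{\kappa}$ is precisely the non-negativity of both $\Hu{n}$ and of $\Hau{n-1}$ (resp.\ $\Hau{n}$) for the appropriate $n$; analogously for $\Kgqualpha{\kappa}$ in the strictly positive setting. By Remark~\ref{R1516} it suffices to treat finite $\kappa$.

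For part~\eqref{121.P1337.b}, the central tool is the block decomposition
\[
\Hu{n}
=\begin{pmatrix} \Hu{n-1} & \yuu{n}{2n-1} \\ \zuu{n}{2n-1} & \su{2n} \end{pmatrix},
\]
from which the standard Schur complement equivalence yields that $\Hu{n}\in\Cggq$ if and only if $\Hu{n-1}\in\Cggq$, $\Bilda{\yuu{n}{2n-1}}\subseteq\Bilda{\Hu{n-1}}$, and $\Lu{n}\in\Cggq$. Iterating this in $n$, and then applying the same reasoning to $\Hau{n}$ and $\Lau{k}$, shows that $\seq{\su{j}}{j}{0}{2n}\in\Hggqu{2n}$ is equivalent to $\Lu{k}\in\Cggq$ for all $k\in\mn{0}{n}$ together with $\Kerna{\Lu{k}}\subseteq\Kerna{\Lu{k+1}}$, and analogously for $\Hau{n}$ in terms of $\Lau{k}$. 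Since $\Spu{2k}=\Lu{k}$ and $\Spu{2k+1}=\Lau{k}$, combining these two chains via the definitions of $\Kggqualpha{2n}$ and $\Kggqualpha{2n+1}$ yields $\Spu{j}\in\Cggq$ for all $j\in\mn{0}{\kappa}$; the cross inclusions $\Kerna{\Spu{2k}}\subseteq\Kerna{\Spu{2k+1}}$ and, when $2k+3\leq\kappa$, $\Kerna{\Spu{2k+1}}\subseteq\Kerna{\Spu{2k+2}}$, follow from the refined kernel information in parts (c) and (d) of Lemma~\ref{L1738}, applied to both $\seq{\su{j}}{j}{0}{\kappa}$ and $\seq{\sau{j}}{j}{0}{\kappa-1}$.

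Part~\eqref{121.P1337.c} is then deduced from~\eqref{121.P1337.b} by adding exactly one further kernel inclusion at the end of the chain: for $\kappa=2n$ this is $\Kerna{\Lau{n-1}}\subseteq\Kerna{\Lu{n}}$, which by Lemma~\ref{L0919} is precisely the extra condition defining $\Kggequalpha{2n}$; for $\kappa=2n+1$ it is $\Kerna{\Lu{n}}\subseteq\Kerna{\Lau{n}}$, the content of Lemma~\ref{L0912}. Part~\eqref{121.P1337.d} proceeds along the same Schur complement scheme but applied to positive definite block-Hankel matrices: every intermediate Schur complement $\Lu{k}$, $\Lau{k}$ is invertible, so all kernel inclusions hold trivially and the characterization reduces to $\Spu{j}\in\Cgq$ for all $j\in\mn{0}{\kappa}$.

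The main obstacle will be the careful bookkeeping of kernel inclusions at the interface between even and odd indices: translating the two separate chains for $\Lu{k}$ and $\Lau{k}$ into the single consecutive chain $\Kerna{\Spu{j}}\subseteq\Kerna{\Spu{j+1}}$ requires the cross inclusions, and verifying those demands exploiting the Schur complement relations in $\Hu{n}$ and in $\Hau{n}=-\alpha\Hu{n}+\Ku{n}$ simultaneously and combining them through the linear relation~\eqref{s_a}. Once this interplay is under control, the remaining arguments reduce to standard facts about Schur complements of Hermitian block matrices.
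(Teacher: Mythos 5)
The paper itself offers no proof of this theorem --- it is imported verbatim from~\zitaa{MR3014201}{\ctheo{4.12}} --- so your argument has to stand on its own, and it does not: the intermediate equivalence you iterate in part~(b) is false. You claim that \(\seq{\su{j}}{j}{0}{2n}\in\Hggqu{2n}\) is equivalent to \(\Lu{k}\in\Cggq\) for all \(k\in\mn{0}{n}\) together with \(\Kerna{\Lu{k}}\subseteq\Kerna{\Lu{k+1}}\). Both implications fail already for \(q=1\), \(n=1\). For \((\su{0},\su{1},\su{2})=(0,0,1)\) the matrix \(\Hu{1}=\bigl[\begin{smallmatrix}0&0\\0&1\end{smallmatrix}\bigr]\) is \tnnH{}, yet \(\Lu{0}=0\), \(\Lu{1}=1\), so \(\Kerna{\Lu{0}}=\C\nsubseteq\Kerna{\Lu{1}}=\set{0}\). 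Conversely, for \((\su{0},\su{1},\su{2})=(0,1,0)\) one has \(\Lu{0}=0\) and \(\Lu{1}=\su{2}-\su{1}\su{0}^\MP\su{1}=0\), so both Schur complements are \tnnH{} and the kernel chain holds trivially, yet \(\Hu{1}=\bigl[\begin{smallmatrix}0&1\\1&0\end{smallmatrix}\bigr]\) is indefinite. The reason is that the iterated Schur-complement criterion for \tnnH{}ness requires at each step the range condition \(\Bilda{\yuu{k}{2k-1}}\subseteq\Bilda{\Hu{k-1}}\), and within a single Hankel chain this neither implies nor is implied by kernel inclusions among the \(\Lu{k}\). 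It is instructive that the second example is handled correctly by the theorem itself: for \(\alpha=0\) the interleaved odd parameter is \(\Spu{1}=\sau{0}=\su{1}=1\), and what fails is the cross inclusion \(\Kerna{\Spu{0}}\subseteq\Kerna{\Spu{1}}\). So the cross inclusions between the \(\Lu{k}\)-chain and the \(\Lau{k}\)-chain are the entire content of part~(b), not an interface bookkeeping issue; note also that the chain in~(b) deliberately stops at \(j=\kappa-2\), the one absent inclusion being exactly the extendability criterion of Lemmas \ref{L0912} and \ref{L0919} --- which is why your deduction of~(c) from~(b) is the one sound piece of the architecture.

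Your proposed repair via parts~(c) and~(d) of Lemma~\ref{L1738} cannot close the gap: those statements control kernels and ranges of the matrices \(\su{j}\) and \(\sau{j}\) themselves, not of the Schur complements \(\Spu{j}\), and no passage between the two is given (nor is one routine). Moreover, Lemma~\ref{L1738} presupposes membership in \(\Kggquu{\kappa}{\alpha}\), so it could at best serve the ``only if'' direction; for the converse of~(b) --- reconstructing the \tnnH{}ness of both \(\Hu{n}\) and \(\Hau{n}\) from \(\Spu{j}\in\Cggq\) plus the kernel chain --- you would have to derive the range inclusions \(\Bilda{\yuu{k}{2k-1}}\subseteq\Bilda{\Hu{k-1}}\) and their \(\alpha\)-analogues by a joint induction exploiting both chains simultaneously, for instance through the recursion of Remark~\ref{R0929} expressing \(\su{2k}\) and \(\su{2k+1}\) via \(\Thetau{k}\), \(\Thetaau{k}\) and the parameters, and your proposal contains no mechanism for this. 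Part~(d) as sketched is essentially fine (all nested Schur complements invertible, classical positive-definiteness criterion), and the reduction to finite \(\kappa\) via Remark~\ref{R1516} is sound; but~(b), on which~(c) rests, must be rebuilt around the genuine interplay of the two Hankel structures rather than two decoupled Schur-complement chains.
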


At the end of this section, it should be mentioned that, in the case \(\alpha=0\), there is a particular parametrization of sequences belonging to the class \(\Kgqinfalpha\) which originates in Yu.~M.~Dyukarev's paper~\zita{MR2053150}, where the moment problem~\mproblem{[0,+\infty)}{\infty}{=} was studied. One of his main results is a generalization of a classical criterion due to \tStieltjes{}~\zitas{MR1508159,MR1508160} for the indeterminacy of this moment problem. Yu.~M.~Dyukarev had to look for a convenient matricial generalization of the parameter sequence which \tStieltjes{} obtained from the considerations of particular continued fractions associated with the sequence \(\seq{\su{j}}{j}{0}{\infty}\). In this way, Yu.~M.~Dyukarev found an interesting inner parametrization of sequences belonging to \(\Kgqinfalpha\). This parametrization was reconsidered by the authors in~\zitaa{MR3133464}{\cdefn{8.2}}. The main theme of~\zitaa{MR3133464}{\cSect{8}} was to state interrelations between the Dyukarev-\tStieltjes{} parametrization and our \trasp{0} introduced in \rdefi{D1021}.

\section{The concept of \tsraute{s}}\label{S0850}
The concept used in this section of constructing a special transformation for finite and infinite sequences of complex \tpqa{matrices} is presented in~\zita{MR3014197}. The cited paper deals with the question of invertibility as it applies to finite and infinite sequences of complex \tpqa{matrices}. Two special notions, introduced there will prove to be of particular importance throughout this paper. That's why we recall the definitions.
  
\begin{defn}[\zitaa{MR3014197}{\cdefn{4.13}}]\label{D1430}
 Let \(\kappa \in \NO \cup \set{+\infty}\) and let \(\seq{\su{j}}{j}{0}{\kappa}\) be a sequence of complex \tpqa{matrices}. The sequence \(\seq{\su{j}^\rez}{j}{0}{\kappa}\)\index{$\seq{\su{j}^\rez}{j}{0}{\kappa}$} given by \(s_0^\rez\defg\su{0}^\MP\) and \(s_j^\rez\defg-\su{0}^\MP \sum_{l=0}^{j-1} s_{j-l} \su{l}^\rez\) for all \(j\in\mn{1}{\kappa}\) is called the \emph{\tsrautea{\(\seq{\su{j}}{j}{0}{\kappa}$}}.
\end{defn}

\bremal{R1022}
 Let \(\kappa\in\NO\cup\set{+\infty}\) and let \(\seq{\su{j}}{j}{0}{\kappa}\) be a sequence of complex \tpqa{matrices} with \tsraute{} \(\seq{\su{j}^\rez}{j}{0}{\kappa}\). For all \(m\in\mn{0}{\kappa}\) then \(\seq{\su{j}^\rez}{j}{0}{m}\) is the \tsrautea{\(\seq{\su{j}}{j}{0}{m}$}.
\erema

\begin{defn}[\zitaa{MR3014197}{\cdefn{4.3}}]\label{D1658}
 Let \(\kappa\in\NO\cup\set{+\infty}\) and let \(\seq{\su{j}}{j}{0}{\kappa}\) be a sequence of complex \tpqa{matrices}. We then say that \(\seq{\su{j}}{j}{0}{\kappa}\) is \emph{\tftd{}} if \(\Kerna{\su{0}}\subseteq\bigcap_{j=0}^\kappa\Kerna{\su{j}}\) and \(\bigcup_{j=0}^\kappa\Bilda{\su{j}}\subseteq\Bilda{\su{0}}\). The set of all \tftd{} sequences \(\seq{\su{j}}{j}{0}{\kappa}\) of complex \tpqa{matrices} will be denoted by \(\Dpqu{\kappa}\)\index{d@$\Dpqu{\kappa}$}.
\end{defn}

\begin{rem}\label{R1446}
 Let \(\kappa\in\NO\cup\set{+\infty}\) and let \(\seq{\su{j}}{j}{0}{\kappa}\) be a sequence of complex \tpqa{matrices}. Then \(\seq{\su{j}}{j}{0}{\kappa}\in\Dpqu{\kappa}\) if and only if for \(j\in\mn{0}{\kappa}\) the equations \(\su{j}\su{0}^\MP\su{0}=\su{j}\) and \(\su{0}\su{0}^\MP\su{j}=\su{j}\) hold true (see \rrema{R1631}).
\end{rem}

Given a number \(\kappa \in \NO \cup \set{ +\infty }\) and a sequence \(\seq{\su{j}}{j}{0}{\kappa}\) of complex \tpqa{matrices}, we consider, for all \(m \in \mn{0}{\kappa}\), the triangular block Toeplitz matrices \(\Ssuo{m}{s}\) and \(\SSuo{m}{s}\) defined by
\begin{align}\label{S}%
      \Ssuo{m}{s}
      &\defg
      \bma
        \su{0}    & 0               & 0             & \hdots    & 0   \\
        \su{1}    & \su{0}      & 0             & \hdots    & 0   \\
        \su{2}    & \su{1}      & \su{0}    & \hdots    & 0   \\
        \vdots        & \vdots          & \vdots        & \ddots    & \vdots  \\
        \su{m}    & \su{m-1}  & \su{m-2}& \hdots    & \su{0}
      \ema,&%
   \SSuo{m}{s}
   &\defg
   \bma
     \su{0}    & \su{1}      & \su{2}    & \hdots    & \su{m}   \\
     0             & \su{0}      & \su{1}    & \hdots    & \su{m-1}   \\
     0             & 0               & \su{0}    & \hdots    & \su{m-2}  \\
     \vdots        & \vdots          & \vdots        & \ddots    & \vdots  \\
     0             & 0               & 0             & \hdots    & \su{0}
   \ema,
\end{align}
\index{s@$\Ssuo{m}{s}\), \(\Ssu{m}$}\index{s@$\SSuo{m}{s}\), \(\SSu{m}$}respectively. Whenever it is clear which sequence is meant, we will write \(\Ssu{m}\) and \(\SSu{m}\) instead of \(\Ssuo{m}{s}\) and \(\SSuo{m}{s}\), respectively. Furthermore, for all \(m\in\mn{0}{\kappa}\), we set
\begin{align}\label{Srez}
 \Ssu{m}^\rez&\defg\Ssuo{m}{r}&
 &\text{and}&
 \SSu{m}^\rez&\defg\SSuo{m}{r},
\end{align}
\index{s@$\Ssu{m}^\rez$}\index{s@$\SSu{m}^\rez$}where \(\seq{r_j}{j}{0}{\kappa}\) denotes the \tsrautea{\(\seq{\su{j}}{j}{0}{\kappa}$}, i.\,e., \(r_j\defg\su{j}^\rez\) for all \(j\in\mn{0}{\kappa}\). Now we introduce some block \tHankel{} matrices. We will use the notations
\bgl{Hrez}
 \Hu{n}^\rez
 \defg\matauuo{\su{j+k}^\rez}{j,k}{0}{n}
\eg
\index{h@$\Hu{n}^\rez$}for all \(n\in\NO\) with \(2n\leq\kappa\) and \(\Ku{n}^\rez\defg\matauuo{\su{j+k+1}^\rez}{j,k}{0}{n}\)\index{k@$\Ku{n}^\rez$} for all \(n\in\NO\) with \(2n+1\leq\kappa\). Furthermore, let
\bgl{Grez}
 \Gu{n}^\rez
 \defg\matauuo{\su{j+k+2}^\rez}{j,k}{0}{n}
\eg
\index{g@$\Gu{n}^\rez$}for all \(n\in\NO\) with \(2n+2\leq\kappa\) and let
\begin{align}\label{yzrez}
 \yuu{l}{m}^\rez
 &\defg
 \col\seq{\su{j}^\rez}{j}{l}{m}&
 &\text{and}&
 \zuu{l}{m}^\rez
 &\defg\brow\su{l}^\rez,\su{l+1}^\rez,\dotsc,\su{m}^\rez\erow
\end{align}
\index{y@$\yuu{l}{m}^\rez$}\index{z@$\zuu{l}{m}^\rez$}for all \(l,m\in\NO\) with \(l\leq m\leq\kappa\). In the following, \(\Iq\)\index{i@$\Iq$} stands for the identity matrix in \(\Cqq\) and we will write \(A\kp B\) for the Kronecker product \(A\kp B \defg\brow a_{jk} B\erow_{\substack{j=1,\dotsc,p\\k=1,\dotsc, q}}\)\index{$A\kp B$} of two complex matrices 
\(A =\brow a_{jk}\erow_{\substack{j=1,\ldots ,p\\ k=1,\ldots , q}} \in\Cpq\) and \(B\in\Coo{r}{s}\). Observe that, for all \(m\in\N\) and all \(B\in\Coo{r}{s}\), then \(\Iu{m}\otimes B\) coincides with the block diagonal matrix \(\diag(B,\dotsc,B)\) with \(m\) diagonal blocks \(B\).

\bpropl{P1256}
 Let \(\kappa\in\NO\cup\set{+\infty}\) and let \(\seq{\su{j}}{j}{0}{\kappa}\in\Dpqu{\kappa}\). For all \(m\in\mn{0}{\kappa}\), then \(\seq{\su{j}}{j}{0}{m}\) belongs to \(\Dpqu{m}\) and the equations
 \begin{align}\label{P1256.A}
  \Ssu{m}^\MP&=\Ssu{m}^\rez,&
  \SSu{m}^\MP&=\SSu{m}^\rez
 \end{align}
 and
 \begin{align*}
  \Ssu{m}\Ssu{m}^\MP&=I_{m+1}\kp (s_0s_0^\MP)=\SSu{m} \SSu{m}^\MP,&
  \Ssu{m}^\MP \Ssu{m}&=I_{m+1}\kp (s_0^\MP s_0)=\SSu{m}^\MP \SSu{m}
 \end{align*}
 hold true. If, furthermore, \(p=q\) and if \(\Bilda{\su{0}^\ad}=\Bilda{\su{0}}\), then \(\Ssu{m}\Ssu{m}^\MP=\Ssu{m}^\MP\Ssu{m}\) and \(\SSu{m}\SSu{m}^\MP=\SSu{m}^\MP\SSu{m}\) for all \(m\in\mn{0}{\kappa}\).
\eprop
\bproof
 Use~\zitaa{MR3014197}{\crema{4.6}, \ctheo{4.21}, and \clemm{3.6}}.
\eproof

The equations in \eqref{P1256.A} are characteristic for the class \(\Dpqu{m}\), in some sense:
\bpropl{P1437}
 Let \(m\in\NO\) and let \(\seq{\su{j}}{j}{0}{m}\) be a sequence of complex \tpqa{matrices}. Suppose that there is a sequence \(\seq{t_j}{j}{0}{m}\) of complex \tqpa{matrices} such that \(\Ssu{m}^\MP=\Ssuo{m}{t}\) or \(\SSu{m}^\MP=\SSuo{m}{t}\) holds true. Then \(\seq{\su{j}}{j}{0}{m}\) belongs to \(\Dpqu{m}\).
\eprop
\bproof
 Use~\zitaa{MR3014197}{\cdefn{2.2}, \cnota{2.5}, and \cpropss{2.18}{4.4}}.
\eproof

In anticipation of later applications to \tHnnd{} sequences of matrices, we introduced in~\zitaa{MR3014199}{\cdefn{4.16}} a slight modification of first term domination.
\begin{defn}\label{D1459}
 Let \(m\in\N\). A sequence \(\seq{\su{j}}{j}{0}{m}\) of complex \tpqa{matrices} with \(\seq{\su{j}}{j}{0}{m-1}\in\Dpqu{m-1}\) is called \emph{\tnftd{}}. The set of all \tnftd{} sequences  \(\seq{\su{j}}{j}{0}{m}\) will be denoted by \(\Dtpqu{m}\)\index{d@$\Dtpqkappa$}. We also set \(\Dtpqu{0} \defg \Dpqu{0}\) and \(\Dtpqinf\defg\Dpqinf\).
\end{defn}

Now we state some inclusions between the classes of sequences of complex \tqqa{matrices} introduced above.
\bpropl{P1442}
 Let \(\alpha\in\R\) and let \(\kappa\in\NO\cup\set{+\infty}\). Then:
 \benui
  \il{P1442.a} \(\Kggequalpha{\kappa}\subseteq\Hggequ{\kappa}\subseteq\Dqqu{\kappa}\subseteq\Dtqqu{\kappa}\).
  \il{P1442.b} \(\Kggqualpha{2\kappa}\subseteq\Hggqu{2\kappa}\subseteq\Dtqqu{2\kappa}\).
  \il{P1442.c} \(\Kggequalpha{\kappa}\subseteq\Kggqualpha{\kappa}\subseteq\Dtqqu{\kappa}\).
  \il{P1442.d} \(\Kgqualpha{\kappa}\cup\Kggdqkappaalpha\subseteq\Kggequalpha{\kappa}\).
 \eenui
\eprop
\bproof
 For all \(n\in\NO\) we have \(\Kggequalpha{2n}\subseteq\Hggequ{2n}\) by \rlemm{L1722} and \(\Kggequalpha{2n+1}\subseteq\Hggequ{2n+1}\) by \eqref{Kgge2n+1}. In view of~\zitaa{MR3014199}{\cprop{4.24}} and the \rdefiss{D1459}{D1658}, hence \(\Kggequalpha{m}\subseteq\Hggequ{m}\subseteq\Dqqu{m}\subseteq\Dtqqu{m}\) for all \(m\in\NO\). We have \(\Kggqualpha{0}\subseteq\Hggqu{0}\subseteq\Dtqqu{0}\) by the definition of \(\Kggqualpha{0}\), \(\Hggqu{0}\), and \(\Dtqqu{0}\) and, for all \(n\in\N\), furthermore \(\Kggqualpha{2n}\subseteq\Hggqu{2n}\subseteq\Dtqqu{2n}\) by \eqref{Kgg2n} and~\zitaa{MR3014199}{\cprop{4.25}}. Because of the definition of \(\Kggqinfalpha\) and \(\Hggqinf\), hence \(\Kggqinfalpha\subseteq\Hggqinf\), which, in view of the definition of \(\Kggeqinfalpha\), \(\Hggeqinf\), and \(\Dtqqinf\) and~\zitaa{MR3014199}{\cprop{4.24}}, implies \(\Kggeqinfalpha\subseteq\Hggeqinf\subseteq\Dqqinf\subseteq\Dtqqinf\) and \(\Kggqinfalpha\subseteq\Hggqinf\subseteq\Dtqqinf\). Thus,~\eqref{P1442.a} and~\eqref{P1442.b} are proved. From the definition of \(\Kggeqkappaalpha\) and \rrema{R1440} we conclude \(\Kggequalpha{\kappa}\subseteq\Kggqualpha{\kappa}\). Now let \(n\in\NO\) and \(\seq{\su{j}}{j}{0}{2n+1}\in\Kggqualpha{2n+1}\). Then we get \(\set{\seq{\su{j}}{j}{0}{2n},\seq{\sau{j}}{j}{0}{2n}}\subseteq\Hggqu{2n}\subseteq\Dtqqu{2n}\) by \eqref{Kgg2n+1} and~\eqref{P1442.b}. In the case \(n=0\), we have then \(\seq{\su{j}}{j}{0}{0}\in\Dqqu{0}\), which, in view of \rdefi{D1459}, implies \(\seq{\su{j}}{j}{0}{1}\in\Dtqqu{1}\). Now suppose \(n\geq1\). Because of \eqref{s_a} and \rdefi{D1459}, we obtain then \(\set{\seq{\su{j}}{j}{0}{2n-1},\seq{-\alpha\su{j}+\su{j+1}}{j}{0}{2n-1}}\subseteq\Dqqu{2n-1}\), which, in view of \rdefi{D1658}, implies
 \bsp
  \Kerna{\su{0}}
  &\subseteq\bigcap_{j=0}^{2n-1}\Kerna{\su{j}}
  \subseteq\lek\bigcap_{j=0}^{2n-1}\Kerna{\su{j}}\rek\cap\Kerna{-\alpha\su{0}+\su{1}}\\
  &\subseteq\lek\bigcap_{j=0}^{2n-1}\Kerna{\su{j}}\rek\cap\Kerna{-\alpha\su{2n-1}+\su{2n}}
  \subseteq\bigcap_{j=0}^{2n}\Kerna{\su{j}}
 \esp
 and, analogously, \(\bigcup_{j=0}^{2n}\Bilda{\su{j}}\subseteq\Bilda{\su{0}}\). Hence, in view of \rdefi{D1658}, we have then \(\seq{\su{j}}{j}{0}{2n}\in\Dqqu{2n}\). Consequently, because of \rdefi{D1459}, we get \(\seq{\su{j}}{j}{0}{2n+1}\in\Dtqqu{2n+1}\). Taking additionally into account~\eqref{P1442.b}, we have thus proved~\eqref{P1442.c}. Furthermore,  we know from~\zitaa{MR3014201}{\cpropss{2.20}{5.9}} that~\eqref{P1442.d} holds true. 
\eproof

\section{The \hsplusalphat{} of a matrix sequence}\label{S1441}
This section is of technical nature. We study a particular transform of finite or infinite sequences of \tpqa{matrices}. This transform plays an intermediate role in our construction of a \tSchur{}-\tStieltjes{}-type algorithm for sequences of matrices.

\begin{defn}\label{D1455}
 Let \(\alpha\in\C\), let \(\kappa\in\NO\cup\set{+\infty}\), and let \(\seq{\su{j}}{j}{0}{\kappa}\) be a sequence of complex \tpqa{matrices}. Then we call the sequence \(\seq{\su{j}^\splusalpha}{j}{0}{\kappa}\)\index{$\seq{\su{j}^\splusalpha}{j}{0}{\kappa}$} which is given, for all \(j\in\mn{0}{\kappa}\), by \(\su{j}^\splusalpha\defg-\alpha\su{j-1}+\su{j}\) where \(\su{-1}\defg\Opq\), the \emph{\tsplusalphata{\seq{\su{j}}{j}{0}{\kappa}}}.
\end{defn}

Obviously, the \tsplusalphata{\seq{\su{j}}{j}{0}{\kappa}} is connected with the sequence \(\seq{\sau{j}}{j}{0}{\kappa-1}\) given in \eqref{s_a} via \(\su{j+1}^\splusalpha=\sau{j}\) for all \(j\in\mn{0}{\kappa-1}\). Furthermore, we have
\bgl{[+]_0}
 \su{0}^\splusalpha
 =\su{0}.
\eg
Let \(n\in\NO\). Then, let
\bgl{T}
 \Tqu{n}
 \defg\matauuo{\Kronu{j,k+1}\Iq}{j,k}{0}{n}
\eg
\index{t@$\Tqu{n}$}where \(\Kronu{lm}\) is the Kronecker delta given by \(\Kronu{lm}\defg1\) if \(l=m\) and \(\Kronu{lm}\defg0\) if \(l\neq m\)\index{d@$\Kronu{lm}$}. Obviously, the function \(\Rqu{n}\colon\C\to\Coo{(n+1)q}{(n+1)q}\)\index{r@$\Rqu{n}$} defined by
\[
 \Rqua{n}{z}
 \defg(\Iu{(n+1)q}-z\Tqu{n})^\inv
\]
admits the block representation
\bgl{Rb}
 \Rqua{n}{z}
 =
 \bMat
  \Iq & 0 & 0 &\hdots & 0\\
  z\Iq & \Iq & 0 &\hdots & 0\\
 z^2\Iq & z\Iq & \Iq&\hdots & 0\\
  \vdots & \vdots & \vdots & \ddots & \vdots\\
  z^n\Iq & z^{n-1}\Iq & z^{n-2}\Iq & \hdots &\Iq
 \eMat
\eg
for all \(z\in\C\).

\bremal{R0816:A-C}
 Let \(\alpha\in\C\), let \(\kappa\in\NO\cup\set{+\infty}\), and let \((s_j)_{j=0}^\kappa\) be a sequence of complex \tpqa{matrices} with \tsplusalphat{} \(\seq{t_j}{j}{0}{\kappa}\). Then:
 \benui
  \il{R0816.a} In view of \rdefi{D1455}, for all \(j\in\mn{0}{\kappa}\), the equation \(\su{j}=\sum_{l=0}^j \alpha^{j-l}t_l\) holds true and, in particular, \(\Bilda{\su{j}}\subseteq\sum_{l=0}^j\Bilda{t_l}\) and \(\bigcap_{l=0}^j\Kerna{t_l}\subseteq\Kerna{\su{j}}\).
  \il{R0816.b} In view of \eqref{Rb}, \eqref{S}, and~\eqref{R0816.a}, for all \(m\in\mn{0}{\kappa}\), the block Toeplitz matrices \(\Ssu{m}^\sra{\alpha}\defg\Ssuo{m}{t}\)\index{s@$\Ssu{m}^\sra{\alpha}$} and \(\SSu{m}^\sra{\alpha}\defg\SSuo{m}{t}\)\index{s@$\SSu{m}^\sra{\alpha}$} admit the representations
  \(
   \swu{m}^\sra{\alpha}
   =\swu{m}[\Rqu{m} (\alpha)]^{-1}
   =[\Rpua{m}{\alpha}]^\inv\Ssu{m}
  \)
  and
  \(
   \neu{m}^\sra{\alpha}
   =\SSu{m}[\Rqua{m}{\ko\alpha}]^\invad
   =[\Rpu{m} (\ko\alpha)]^{-\ast}\neu{m}.
  \)
  \il{R0816.c} In view of~\eqref{R0816.a}, \eqref{[+]_0}, and \rdefi{D1455}, the inclusion \(\bigcup_{j=0}^\kappa\Bilda{t_j}\subseteq\Bilda{t_0}\) holds true if and only if \(\bigcup_{j=0}^\kappa\Bilda{\su{j}}\subseteq\Bilda{\su{0}}\), and, furthermore, \(\Kerna{t_0}\subseteq \bigcap_{j=0}^\kappa\Kerna{t_j}\) if and only if \(\Kerna{\su{0}}\subseteq\bigcap_{j=0}^\kappa\Kerna{\su{j}}\).
 \eenui
\erema

\bremal{R0816:D-H}
 Let \(\alpha\in\C\), let \(\kappa\in\NO\cup\set{+\infty}\), and let \((s_j)_{j=0}^\kappa\) be a sequence of complex \tpqa{matrices} with \tsplusalphat{} \(\seq{t_j}{j}{0}{\kappa}\). Then:
 \benui
  \il{R0816.d} \rdefi{D1658} and \rremp{R0816:A-C}{R0816.c} show that \(\seq{t_j}{j}{0}{\kappa}\in\Dpqu{\kappa}\) if and only if \(\seq{\su{j}}{j}{0}{\kappa}\in\Dpqu{\kappa}\).
  \il{R0816.e} \rdefi{D1459} and~\eqref{R0816.d} show that \(\seq{t_j}{j}{0}{\kappa}\in\Dtpqu{\kappa}\) if and only if \(\seq{\su{j}}{j}{0}{\kappa}\in\Dtpqu{\kappa}\).
  \il{R0816.f} By \rdefi{D1455}, then \(\seq{t_j^\ad}{j}{0}{\kappa}\) is exactly the \tsplusata{\ko\alpha}{\seq{\su{j}^\ad}{j}{0}{\kappa}}.
  \il{R0816.g} If \(p=q\), in view of~\eqref{R0816.f} and \rremp{R0816:A-C}{R0816.a}, then \(s_j^\ad=s_j\) for all \(j\in\mn{0}{\kappa}\) if and only if \(\seq{t_j^\ad}{j}{0}{\kappa}\) is the \tsplusata{\ko\alpha}{\seq{\su{j}}{j}{0}{\kappa}}.
 \il{R0816.h} If \(p=q\) and \(\alpha\in\R\), in view of~\eqref{R0816.g}, then \(s_j^\ad=s_j\) for all \(j\in\mn{0}{\kappa}\) if and only if \(t_j^\ad=t_j\) for all \(j\in\mn{0}{\kappa}\).
 \eenui
\erema

\bremal{R1439}
 Let \(\alpha\in\C\), let \(\kappa\in\NO\cup\set{+\infty}\), and let \(n\in\N\). For all \(m\in\mn{1}{n}\), let \(p_m,q_m\in\N\), let \(\seq{\su{j}^{(m)}}{j}{0}{\kappa}\) be a sequence of complex \taaa{p_m}{q_m}{matrices}, let \(\seq{t_j^{(m)}}{j}{0}{\kappa}\) be its \tsplusalphat{}, let \(L_m\in\Coo{p}{p_m}\), and let \(R_m\in\Coo{q_m}{q}\). Then \(\seq{\sum_{m=1}^nL_mt_j^{(m)}R_m}{j}{0}{\kappa}\) is exactly the \tsplusalphata{\seq{\sum_{m=1}^nL_m\su{j}^{(m)}R_m}{j}{0}{\kappa}}.
\erema

\bremal{R1438}
 Let \(\alpha\in\C\), let \(\kappa\in\NO\cup\set{+\infty}\), and let \(n\in\N\). For all \(m\in\mn{1}{n}\), let \(p_m,q_m\in\N\) and let \(\seq{\su{j}^{(m)}}{j}{0}{\kappa}\) be a sequence of complex \taaa{p_m}{q_m}{matrices} with \tsplusalphat{} \(\seq{t_j^{(m)}}{j}{0}{\kappa}\). Then \(\seq{\diag\matauuo{t_j^{(m)}}{m}{1}{n}}{j}{0}{\kappa}\) is exactly the \tsplusalphata{\seq{\diag\matauuo{\su{j}^{(m)}}{m}{1}{n}}{j}{0}{\kappa}}.
\erema

Let \(\alpha\in\C\). In order to prepare the basic construction in \rSect{S1356}, we study the \tsraute{} corresponding to the \tsplusalphat{} of a sequence. Let \(\kappa\in\NO\cup\set{+\infty}\), and let \(\seq{\su{j}}{j}{0}{\kappa}\) be a sequence of complex \tpqa{matrices} with \tsplusalphat{} \(\seq{u_j}{j}{0}{\kappa}\). Then we define \(\seq{\su{j}^\reza{\alpha}}{j}{0}{\kappa}\)\index{$\seq{\su{j}^\reza{\alpha}}{j}{0}{\kappa}$} by
\bgl{reza}
 \su{j}^\reza{\alpha}
 \defg u_j^\rez
\eg
for all \(j\in\mn{0}{\kappa}\), i.\,e., the sequence \(\seq{\su{j}^\reza{\alpha}}{j}{0}{\kappa}\) is defined to be the \tsrautea{the \tsplusalphata{\seq{\su{j}}{j}{0}{\kappa}}}. The following result describes a useful interrelation between the sequence introduced in \eqref{reza} and the \tsraute{} corresponding to the original sequence \(\seq{\su{j}}{j}{0}{\kappa}\):
\begin{lem}\label{L1445}
 Let \(\alpha\in\C\), let \(\kappa\in\NO\cup\set{+\infty}\), and let \((s_j)_{j=0}^{\kappa}\) be a sequence of complex \tpqa{matrices}. For all \(j\in\mn{0}{\kappa}\), then
 \bgl{L1445.A}
  s_j^\reza{\alpha}
  =\sum_{l=0}^j \alpha^{j-l} s_l^\rez.
 \eg
\end{lem}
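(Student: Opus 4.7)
The plan is to prove \eqref{L1445.A} by induction on \(j\). Write \(\seq{u_j}{j}{0}{\kappa}\) for the \tsplusalphata{\seq{\su{j}}{j}{0}{\kappa}}, so that \(\su{j}^\reza{\alpha}=u_j^\rez\) by \eqref{reza}. The base case \(j=0\) is immediate: \rdefi{D1455} gives \(u_0=\su{0}\), whence \(u_0^\rez=u_0^\MP=\su{0}^\MP=\su{0}^\rez\), which is the right-hand side \(\alpha^0 \su{0}^\rez\) of \eqref{L1445.A} for \(j=0\).

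For the inductive step, fix \(j\in\mn{1}{\kappa}\) and assume \eqref{L1445.A} holds for all smaller indices. The recursive definition of the \tsraute{} (\rdefi{D1430}) applied to \(\seq{u_j}{j}{0}{\kappa}\) yields \(u_j^\rez=-\su{0}^\MP\sum_{l=0}^{j-1}u_{j-l}u_l^\rez\). Substitute \(u_{j-l}=\su{j-l}-\alpha\su{j-l-1}\) (with \(\su{-1}\defg\Opq\), from \rdefi{D1455}) and the inductive hypothesis \(u_l^\rez=\sum_{m=0}^l\alpha^{l-m}\su{m}^\rez\) into this expression. After performing the index shift \(l\mapsto l+1\) inside the \(\alpha\)-term and regrouping summands by the value \(k\defg j-l\), the intermediate contributions (those with \(1\leq k\leq j-1\)) telescope and one is left with
\[
 \sum_{l=0}^{j-1}u_{j-l}u_l^\rez=\sum_{l=0}^{j-1}\su{j-l}\su{l}^\rez-\su{0}\sum_{m=0}^{j-1}\alpha^{j-m}\su{m}^\rez.
\]
Multiplying by \(-\su{0}^\MP\), the first term collapses to \(\su{j}^\rez\) by the defining recursion of the \tsraute{}, and the second term becomes \(\su{0}^\MP\su{0}\sum_{m=0}^{j-1}\alpha^{j-m}\su{m}^\rez\).

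To finish, I will use the auxiliary identity \(\su{0}^\MP\su{0}\su{m}^\rez=\su{m}^\rez\), valid for every \(m\in\mn{0}{\kappa}\). For \(m=0\) this is the Moore-Penrose law \(\su{0}^\MP\su{0}\su{0}^\MP=\su{0}^\MP\); for \(m\geq1\), \rdefi{D1430} exhibits \(\su{m}^\rez\) as \(-\su{0}^\MP\) times a matrix, and the same Moore-Penrose law again forces the claim. Combining both contributions gives \(u_j^\rez=\su{j}^\rez+\sum_{m=0}^{j-1}\alpha^{j-m}\su{m}^\rez=\sum_{m=0}^{j}\alpha^{j-m}\su{m}^\rez\), which is precisely \eqref{L1445.A} at index \(j\). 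The main obstacle is the index bookkeeping in the telescoping step, which is mechanical but requires care with the endpoints; conceptually, the identity reflects the formal power-series relation \(\sum_j u_j z^j=(1-\alpha z)\sum_j \su{j}z^j\), inverted to convolve \(\seq{\su{j}^\rez}{j}{0}{\kappa}\) with the geometric sequence \((\alpha^j)_{j\geq 0}\), and the induction above supplies the rigorous version needed when \(\su{0}\) is not invertible.
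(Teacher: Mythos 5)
Your proof is correct and takes essentially the same route as the paper's: induction on \(j\), expansion of \(\su{j}^\reza{\alpha}\) via the defining recursion of \rdefi{D1430} applied to the \tsplusalphat{}, an index shift producing the same telescoping, and the identity \(\su{0}^\MP\su{0}\su{m}^\rez=\su{m}^\rez\) to absorb the leftover factor. The only difference is cosmetic: you factor \(-\su{0}^\MP\) out and telescope \(\sum_{l=0}^{j-1}u_{j-l}u_l^\rez\) first, whereas the paper carries \(\su{0}^\MP\) through the computation.
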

\begin{proof}
 For \(j=0\) equation \eqref{L1445.A} holds obviously. If \(\kappa =0\), then the proof is finished.
 
 Let \(\kappa\geq 1\). According to \rdefi{D1430}, we have \(s_j^\rez=s_0^\MP s_0 s_j^\rez\) for all \(j\in\mn{0}{\kappa}\) and hence, in view of \eqref{[+]_0}, furthermore
 \[
  s_1^\reza{\alpha}
  =-s_0^\MP  (-\alpha s_0 + s_1) s_0^\rez
  =\alpha s_0^\rez  + s_1^\rez
  =\sum_{l=0}^1 \alpha^{1-l} s_l^\rez .
 \]
 Thus, if \(\kappa =1\), then the proof is complete.
   
 Now suppose \(\kappa \geq 2\). Then the considerations above show that there is a number \(k\in\mn{1}{\kappa-1}\) such that \eqref{L1445.A} holds true for all \(j\in\mn{0}{k}\). Consequently,
 \bsp
  s_{k+1}^\reza{\alpha}
  &=- \sum_{j=0}^{k} s_0^\MP  (-\alpha s_{k-j} + s_{k+1-j}) \sum_{l=0}^j \alpha^{j-l} s_l^\rez\\
  & =s_0^\MP  s_0 \sum_{l=0}^k \alpha^{k+1-l} s_l^\rez  + \sum_{j=0}^{k-1} s_0^\MP s_{k-j} \sum_{l=0}^j \alpha^{j-l+1} s_l^\rez  - \sum_{j=0}^k s_0^\MP  s_{k+1-j}\sum_{l=0}^j \alpha^{j-l} s_l^\rez\\
  &=\sum_{l=0}^k \alpha^{k+1-l} s_l^\rez  + \sum_{j=1}^{k} s_0^\MP  s_{k+1-j} \sum_{l=0}^{j-1} \alpha^{j-l} s_l^\rez  - \sum_{j=0}^k s_0^\MP  s_{k+1-j}\sum_{l=0}^j \alpha^{j-l} s_l^\rez\\
  &=\sum_{l=0}^k \alpha^{k+1-l} s_l^\rez  - \sum_{j=0}^{(k+1)-1} s_0^\MP  s_{k+1-j} s_j^\rez
  =\sum_{l=0}^{k+1} \alpha^{k+1-l} s_l^\rez.
 \esp
 Thus, the assertion follows inductively.
\end{proof}

\begin{rem}\label{R1456}
 Let \(\alpha\in\C\), let \(\kappa\in\NO\cup\set{+\infty}\), and let \((s_j)_{j=0}^\kappa\) be a sequence of complex \tpqa{matrices}. Let the sequence \(\seq{r_j}{j}{0}{\kappa}\) be given by \(r_j\defg\su{j}^\rezalpha\) for all \(j\in\mn{0}{\kappa}\). Then we define, for all \(n\in\mn{0}{\kappa}\), the block Toeplitz matrices \(\Ssu{n}^\reza{\alpha}\defg\Ssuo{n}{r}\)\index{s@$\Ssu{n}^\reza{\alpha}$} and \(\SSu{n}^\reza{\alpha}\defg\SSuo{n}{r}\)\index{s@$\SSu{n}^\reza{\alpha}$}. In view of \eqref{S}, \eqref{Rb}, and \rlemm{L1445}, one can easily see then that
 \begin{align*}
  \swu{n}^\reza{\alpha}&=\lek\Rqu{n} (\alpha)\rek\swu{n}^\rez=\Ssu{n}^\rez\lek\Rpua{n}{\alpha}\rek&
  &\text{and}&
  \neu{n}^\reza{\alpha}&=\lek\Rqua{n}{\ko\alpha}\rek^\ad\SSu{n}^\rez=\neu{n}^\rez\lek\Rpua{n}{\ko\alpha}\rek^\ad
 \end{align*}
 hold true for all \(n\in\mn{0}{\kappa}\). If \((s_j)_{j=0}^\kappa\in\Dpqkappa\), then from \rprop{P1256} we see that, for all \(n\in\mn{0}{\kappa}\) these equations admit the reformulation
 \begin{align*}
  \swu{n}^\reza{\alpha}&=\lek\Rqu{n} (\alpha)\rek\swu{n}^\MP=\Ssu{n}^\MP\lek\Rpua{n}{\alpha}\rek&
  &\text{and}&
  \neu{n}^\reza{\alpha}&=\lek\Rqua{n}{\ko\alpha}\rek^\ad\SSu{n}^\MP=\neu{n}^\MP\lek\Rpua{n}{\ko\alpha}\rek^\ad.
 \end{align*}
\end{rem}

\blemml{L1352}
 Let \(\alpha\in\C\), let \(\kappa\in\NO\cup\set{+\infty}\), and let \(\seq{\su{j}}{j}{0}{\kappa}\) be a sequence of complex \tpqa{matrices}. Then \([\Bilda{\su{0}}]^\bot\subseteq\bigcap_{j=0}^{\kappa}\Kerna{\su{j}^\rezalpha}\) and \(\bigcup_{j=0}^{\kappa}\Bilda{\su{j}^\rezalpha}\subseteq[\Kerna{\su{0}}]^\bot\). In particular, \(\seq{\su{j}^\rezalpha}{j}{0}{\kappa}\) belongs to \(\Dqpu{\kappa}\). 
\elemm
\bproof
 \rremp{R1631}{R1631.a} shows that
 \begin{align}\label{L1352.1}
  \lek\Bilda{\su{0}}\rek^\bot&=\Kerna{\su{0}^\ad}=\Kerna{\su{0}^\MP}&
  &\text{and}&
  \lek\Kerna{\su{0}}\rek^\bot&=\Bilda{\su{0}^\ad}=\Bilda{\su{0}^\MP}.
 \end{align}
 Thus, from \rdefi{D1430} we see that \([\Bilda{\su{0}}]^\bot\subseteq\Kerna{\su{k}^\rez}\) for all \(k\in\mn{0}{\kappa}\). Taking \rlemm{L1445} into account, we then get \([\Bilda{\su{0}}]^\bot\subseteq\bigcap_{j=0}^{\kappa}\Kerna{\su{j}^\rezalpha}\). Furthermore, from \rlemm{L1445}, \rdefi{D1430}, and \eqref{L1352.1} we see that \(\bigcup_{j=0}^{\kappa}\Bilda{\su{j}^\rezalpha}\subseteq\bigcup_{j=0}^{\kappa}\Bilda{\su{j}^\rez}\subseteq\Bilda{\su{0}^\MP}=[\Kerna{\su{0}}]^\bot\). Taking into account that \(\su{0}^\rezalpha=(\su{0}^\splusalpha)^\MP=\su{0}^\MP\) holds true, the proof is complete.
\eproof

 Now we study the \tsplusalphat{} of the sequence introduced in \eqref{reza}.
\blemml{L1453}
 Let \(\alpha\in\C\), let \(\kappa\in\NO\cup\set{+\infty}\), and let \(\seq{\su{j}}{j}{0}{\kappa}\) be a sequence of complex \tpqa{matrices}. Then the \tsplusalphata{\seq{\su{j}^\rezalpha}{j}{0}{\kappa}} is exactly the \tsraute{} \(\seq{\su{j}^\rez}{j}{0}{\kappa}\) corresponding to \(\seq{\su{j}}{j}{0}{\kappa}\).
\elemm
\bproof
 Let the sequence \(\seq{r_j}{j}{0}{\kappa}\) be given by \(r_j\defg\su{j}^\rezalpha\) for all \(j\in\mn{0}{\kappa}\). From \rlemm{L1445} we know then that \(r_j=\sum_{l=0}^j \alpha^{j-l}s_l^\rez\) holds true for all \(j\in\mn{0}{\kappa}\). Using \eqref{[+]_0} and \rdefi{D1455}, we obtain then \(r_0^\splusalpha=\su{0}^\rez\) and, in the case \(\kappa\geq1\), for all \(j\in\mn{1}{\kappa}\), furthermore
 \[
  r_j^\sra{\alpha}
  =-\alpha r_{j-1}+r_j
  =-\alpha\sum_{l=0}^{j-1}\alpha^{(j-1)-l}\su{l}^\rez+\sum_{l=0}^j\alpha^{j-l}\su{l}^\rez
  =\su{j}^\rez.\qedhere
 \]
\eproof

\blemml{L1454}
 Let \(\alpha\in\C\), let \(\kappa\in\NO\cup\set{+\infty}\), and let \(\seq{\su{j}}{j}{0}{\kappa}\) and \(\seq{t_j}{j}{0}{\kappa}\) be sequences of complex \tpqa{matrices}. Then \(\su{j}^\rezalpha=t_j^\rezalpha\) for all \(j\in\mn{0}{\kappa}\) if and only if \(\su{0}\su{0}^\MP\su{j}\su{0}^\MP\su{0}=t_0t_0^\MP t_jt_0^\MP t_0\) for all \(j\in\mn{0}{\kappa}\).
\elemm
\bproof
 According to \rlemmss{L1453}{L1445}, the equation \(\su{j}^\rezalpha=t_j^\rezalpha\) holds for all \(j\in\mn{0}{\kappa}\) if and only if \(\su{j}^\rez=t_j^\rez\) for all \(j\in\mn{0}{\kappa}\). The application of~\zitaa{MR3014197}{\cprop{5.11}} completes the proof.
\eproof

We finish this section with some considerations on the arithmetics of the \tsplusalphat{}.
\blemml{L1451}
 Let \(\alpha\in\C\), let \(\kappa\in\N\cup\set{+\infty}\), and let \(\seq{\su{j}}{j}{0}{\kappa}\) be a sequence of complex \tpqa{matrices}.
 \benui
  \il{L1451.a} If \(\gamma\in\C\), then \(\seq{(\gamma\su{j})^\rezalpha}{j}{0}{\kappa}=\seq{\gamma^\MP\su{j}^\rezalpha}{j}{0}{\kappa}\) and \(\seq{(\gamma^j\su{j})^\rezalpha}{j}{0}{\kappa}=\seq{\gamma^j\su{j}^\rezalpha}{j}{0}{\kappa}\).
  \il{L1451.b} If \(m\in\N\) and \(L\in\Coo{m}{p}\) with \(\Bilda{L^\ad}=\Bilda{\su{0}}\), then \(\seq{(L\su{j})^\rezalpha}{j}{0}{\kappa}=\seq{\su{j}^\rezalpha L^\MP}{j}{0}{\kappa}\).
  \il{L1451.c} If \(n\in\N\) and \(R\in\Coo{q}{n}\) with \(\Bilda{\su{0}^\ad}=\Bilda{R}\), then \(\seq{(\su{j}R)^\rezalpha}{j}{0}{\kappa}=\seq{R^\MP\su{j}^\rezalpha}{j}{0}{\kappa}\).
  \il{L1451.d} If \(m,n\in\N\), \(L\in\Coo{m}{p}\) with \(\Bilda{L^\ad}=\Bilda{\su{0}}\), and \(R\in\Coo{q}{n}\) with \(\Bilda{\su{0}^\ad}=\Bilda{R}\), then \(\seq{(L\su{j}R)^\rezalpha}{j}{0}{\kappa}=\seq{R^\MP\su{j}^\rezalpha L^\MP}{j}{0}{\kappa}\).
  \il{L1451.e} If \(m,n\in\N\), \(U\in\Coo{m}{p}\) with \(U^\ad U=\Ip\), and \(V\in\Coo{q}{n}\) with \(VV^\ad=\Iq\), then \(\seq{(U\su{j}V)^\rezalpha}{j}{0}{\kappa}=\seq{V^\ad\su{j}^\rezalpha U^\ad}{j}{0}{\kappa}\).
  \il{L1451.f} \((\su{j}^\rezalpha)^\ad=t_j^\reza{\ko\alpha}\) for all \(j\in\mn{0}{\kappa}\) where the sequence \(\seq{t_j}{j}{0}{\kappa}\) is given by \(t_j\defg\su{j}^\ad\) for all \(j\in\mn{0}{\kappa}\).
 \eenui
\elemm
\bproof
 \rPart{L1451.a} follows from \rlemm{L1445} and~\zitaa{MR3014197}{\cremass{5.8}{5.9}}. \rPartsss{L1451.b}{L1451.c}{L1451.d} are immediate consequences of \rlemm{L1445} and~\zitaa{MR3014197}{\clemm{5.18}}. In view of \rlemm{L1445} and~\zitaa{MR3014197}{\clemm{5.19}}, we see that~\eqref{L1451.e} holds true. Finally, using \eqref{reza},~\zitaa{MR3014197}{\cprop{5.16}}, and \rremp{R0816:D-H}{R0816.f}, we obtain~\eqref{L1451.f}.
\eproof

\bremal{R1452}
 Let \(\alpha\in\C\), let \(\kappa\in\NO\cup\set{+\infty}\), and let \(n\in\N\). For all \(m\in\mn{1}{n}\), let \(p_m,q_m\in\N\) and let \(\seq{\su{j}^{(m)}}{j}{0}{\kappa}\) be a sequence of complex \taaa{p_m}{q_m}{matrices}. Then from \rrema{R1022}, \rlemm{L1445}, and~\zitaa{MR3014197}{\crema{5.20}} we easily see that \(\seq{\diag\matauuo{(\su{j}^{(m)})^\rezalpha}{m}{1}{n}}{j}{0}{\kappa}=\seq{(\diag\matauuo{\su{j}^{(m)}}{m}{1}{n})^\rezalpha}{j}{0}{\kappa}\).
\erema

\section{Some identities for special block \tHankel{} matrices}\label{S1459}
Similar as in~\zitaa{MR3014199}{\cSect{6}}, the main theme of this section is the investigation of the interplay between various block \tHankel{} matrices. In order to describe a typical situation, let \(n\in\N\) and let \(\seq{\su{j}}{j}{0}{2n}\) be a sequence from \(\Cqq\). Denote by \((u_j)_{j=0}^{2n}\) the \tsplusalphata{\seq{\su{j}}{j}{0}{2n}} and by \((u_j^\rez)_{j=0}^{2n}\) the \tsrautea{\((u_j)_{j=0}^{2n}$}. Then we are going to find formulas which connect various block \tHankel{} matrices built from the sequences \((u_j)_{j=0}^{2n}\) and \((u_j^\rez)_{j=0}^{2n}\). This section should be compared with~\zitaa{MR3014199}{\cSect{6}}, where formulas connecting several block \tHankel{} matrices formed from the sequence \((\su{j})_{j=0}^{2n}\) and its corresponding \tsraute{} \((\su{j}^\rez)_{j=0}^{2n}\) were derived. Our strategy of proof is essentially based on using results from~\zitaa{MR3014199}{\cSect{6}}.

Let \(\vqu{0}\defg\Iq\) and, for all \(n\in\N\), let
\begin{align}\label{vDN}
 \vqu{n}
 &\defg
 \bma
  \Iq\\
  \Ouu{nq}{q}
 \ema,&
 \IOqu{n}
 &\defg
 \bma
  \Iu{nq}\\
  \Ouu{q}{nq}
 \ema,&
 &\text{and}&
 \OIqu{n}
 &\defg
 \bma
  \Ouu{q}{nq}\\
  \Iu{nq}
 \ema.
\end{align}
\index{v@$\vqu{n}$}\index{d@$\IOqu{n}$}\index{$\OIqu{n}$}

\blemml{L1606}
 Let \(\alpha\in\C\), let \(\kappa\in\NO\cup\set{+\infty}\), and let \((s_j)_{j=0}^\kappa\) be a sequence of complex \tpqa{matrices} with \tsplusalphat{} \(\seq{t_j}{j}{0}{\kappa}\).
 \benui
  \il{L1606.a} Let \(n\in\N\) with \(2n\leq\kappa\). Then the block Hankel matrix \(\Hu{n}^\sra{\alpha}\defg\Huo{n}{t}\)\index{h@$\Hu{n}^\sra{\alpha}$} admits the representations
  \bspl{H[+]}
   \Hu{n}^\sra{\alpha}
   &=\lek \Rpua{n}{\alpha}\rek^\inv\Hu{n}\lek\Rqua{n}{\ko\alpha}\rek^\invad+\alpha\OIpu{n}(-\alpha\Hu{n-1} +\Ku{n-1}) \OIqu{n}^\ad\\
   &=\lek \Rpua{n}{\alpha}\rek^\inv\Hu{n}\lek\Rqua{n}{\ko\alpha}\rek^\invad+\alpha\OIpu{n}\Hau{n-1}\OIqu{n}^\ad.
  \esp
  \il{L1606.b} Let \(n\in\NO\) with \(2n+1\leq\kappa\). Then the block Hankel matrix \(\Ku{n}^\sra{\alpha}\defg\Kuo{n}{t}\)\index{k@$\Ku{n}^\sra{\alpha}$} admits the representation \(\Ku{n}^\sra{\alpha}=-\alpha\Hu{n} +\Ku{n}\), i.\,e., \(\Ku{n}^\sra{\alpha}=\Hau{n}\) holds true.
  \il{L1606.c} Let \(n\in\NO\) with \(2n+2\leq\kappa\). Then the block Hankel matrix \(\Gu{n}^\sra{\alpha}\defg\Guo{n}{t}\)\index{g@$\Gu{n}^\sra{\alpha}$} admits the representation \(\Gu{n}^\sra{\alpha}=-\alpha\Ku{n} +\Gu{n}\), i.\,e., \(\Gu{n}^\sra{\alpha}=\Kau{n}\) holds true.
  \il{L1606.d} Let \(n\in\N\) with \(2n\leq\kappa\). Then the matrix \(\LLu{n}^\splusalpha\defg\LLuo{n}{t}\)\index{l@$\LLu{n}^\splusalpha$} admits the representation \(\LLu{n}^\splusalpha=\Kau{n-1}-\yauu{0}{n-1}\su{0}^\MP\zauu{0}{n-1}\).
 \eenui
\elemm
\begin{proof}
 Using the block representation \(\Hu{n}=\btmat\su{0}&\zuu{1}{n}\\\yuu{1}{n}&\Gu{n-1}\etmat\) of \(\Hu{n}\), one can easily see that
 \bspl{L1606.1}
  H_n^\sra{\alpha}
  &=\bma
  s_0 & -\alpha z_{0,n-1} + z_{1,n}\\
  -\alpha y_{0,n-1} + y_{1,n} & -\alpha K_{n-1} +\Gu{n-1}
  \ema\\
   & =-\alpha
  \bma
  \Oqq  & \Ouu{q}{nq}\\
  y_{0,n-1} & K_{n-1}
  \ema  -\alpha \bma
  \Oqq  & z_{0,n-1}\\
  \Ouu{nq}{q}& K_{n-1}
  \ema  + \alpha \bma
  \Oqq  & \Ouu{q}{nq}\\
  \Ouu{nq}{q} & K_{n-1}
  \ema  + H_n\\
  & =-\alpha\Tpu{n}H_n -\alpha H_n\Tqu{n}^\ad + \alpha \OIpu{n} K_{n-1} \OIqu{n}^\ad + H_n.
 \esp
 Taking into account
 \bsp
  \lek\Rpu{n} (\alpha)\rek^{-1} H_n\lek\Rqu{n} (\ko\alpha)\rek^{-\ast}
  & =(\Iu{(n+1)p}- \alpha\Tpu{n}) H_n (\Iu{(n+1)q}-\ko\alpha\Tqu{n})^\ad\\
  & =H_n -\alpha\Tpu{n}H_n - \alpha H_n\Tqu{n}^\ad + \alpha^2\Tpu{n}H_{n}\Tqu{n}^\ad
 \esp
 and \(\Tpu{n}H_n\Tqu{n}^\ad =\OIpu{n} H_{n-1} \OIqu{n}^\ad\), from \eqref{L1606.1} we then conclude that \eqref{H[+]} holds true. Thus,~\eqref{L1606.a} is verified. The proof of the \rpartsss{L1606.b}{L1606.c}{L1606.d} is straightforward.
\end{proof}

In order to state interesting identities for block \tHankel{} matrices, it seems to be useful to introduce some further notation. Let \(\kappa\in\NO\cup\set{+\infty}\) and let \(\seq{\su{j}}{j}{0}{\kappa}\) be a sequence of complex \tpqa{matrices}. For all \(n\in\NO\) and all \(m\in\mn{0}{\kappa}\), we then set
\bgl{Xi}
 \Thetauuo{n}{m}{s}
 \defg
 \begin{cases}
  \su{m}-\su{0}\su{0}^\MP\su{m}\su{0}^\MP\su{0}\incase{n=0}\\
  \diag\brow\Ouu{np}{nq},\su{m}-\su{0}\su{0}^\MP\su{m}\su{0}^\MP\su{0}\erow\incase{n\geq1}
 \end{cases}
\eg
\index{x@$\Thetauuo{n}{m}{s}\), \(\Thetauu{n}{m}$}and write \(\Thetauu{n}{m}\) instead of \(\Thetauuo{n}{m}{s}\) if it is clear which sequence \(\seq{\su{j}}{j}{0}{\kappa}\) of complex matrices is meant.

\bremal{R1433}
 Let \(\kappa\in\NO\cup\set{+\infty}\), let \(\seq{\su{j}}{j}{0}{\kappa}\) be a sequence of complex \tpqa{matrices}, let \(n\in\NO\), and let \(m\in\mn{0}{\kappa}\) be such that \(\Kerna{\su{0}}\subseteq\Kerna{\su{m}}\) and \(\Bilda{\su{m}}\subseteq\Bilda{\su{0}}\). In view of \rpartss{R1631.b}{R1631.c} of \rrema{R1631}, then \(\su{m}\su{0}^\MP\su{0}=\su{m}\) and \(\su{0}\su{0}^\MP\su{m}=\su{m}\). Consequently, \(\Thetauu{n}{m}=\Ouu{(n+1)p}{(n+1)q}\).
\erema

\bremal{R1608}
 Let \(\kappa\in\NO\cup\set{+\infty}\) and let \(\seq{\su{j}}{j}{0}{\kappa}\in\Dpqkappa\). In view of \rdefi{D1658} and \rrema{R1433}, then \(\Thetauu{n}{m}=\Ouu{(n+1)p}{(n+1)q}\) for all \(n\in\NO\) and all \(m\in\mn{0}{\kappa}\).
\erema

Let \(\alpha\in\C\), let \(\kappa\in\NO\cup\set{+\infty}\), and let \(\seq{\su{j}}{j}{0}{\kappa}\) be a sequence of complex \tpqa{matrices}. Furthermore, let the sequence \(\seq{\su{j}^\rezalpha}{j}{0}{\kappa}\) be defined via \eqref{reza}. Then let \(\Hu{n}^\rezalpha\defg\matauuo{\su{j+k}^\rezalpha}{j,k}{0}{n}\)\index{h@$\Hu{n}^\rezalpha$} for all \(n\in\NO\) with \(2n\leq\kappa\), let
\bgl{Kreza}
 \Ku{n}^\rezalpha
 \defg\matauuo{\su{j+k+1}^\rezalpha}{j,k}{0}{n}
\eg
\index{k@$\Ku{n}^\rezalpha$}for all \(n\in\NO\) with \(2n+1\leq\kappa\), and let
\bgl{Greza}
 \Gu{n}^\rezalpha
 \defg\matauuo{\su{j+k+2}^\rezalpha}{j,k}{0}{n}
\eg
\index{g@$\Gu{n}^\rezalpha$}for all \(n\in\NO\) with \(2n+2\leq\kappa\). Further, for all \(l,m\in\NO\) with \(l\leq m\leq\kappa\), let \(\yuu{l}{m}^\rezalpha\defg\col\seq{\su{j}^\rezalpha}{j}{l}{m}\)\index{y@$\yuu{l}{m}^\rezalpha$} and \(\zuu{l}{m}^\rezalpha\defg\brow\su{l}^\rezalpha,\su{l+1}^\rezalpha,\dotsc,\su{m}^\rezalpha\erow\)\index{z@$\zuu{l}{m}^\rezalpha$} and let \(\yuu{l}{m}^\splusalpha\defg\yuuo{l}{m}{t}\)\index{y@$\yuu{l}{m}^\splusalpha$} and \(\zuu{l}{m}^\splusalpha\defg\zuuo{l}{m}{t}\)\index{z@$\zuu{l}{m}^\splusalpha$} where \(\seq{t_j}{j}{0}{\kappa}\) denotes the \tsplusalphata{\seq{\su{j}}{j}{0}{\kappa}}.

\bpropl{L1438}
 Let \(\alpha\in\C\), let \(n\in\NO\), and let \((s_j)_{j=0}^{2n}\in\Dtpqu{2n}\). Then
 \bgl{L1438.A}
  \Hu{n}^\rezalpha+\Ssu{n}^\rezalpha\Hu{n}^\splusalpha\SSu{n}^\rezalpha
  =\yuu{0}{n}^\rezalpha\vpu{n}^\ad+\vqu{n}\zuu{0}{n}^\rezalpha
 \eg
 and
 \bgl{L1438.B}
  \Hu{n}^\splusalpha+\Ssu{n}^\splusalpha\Hu{n}^\rezalpha\SSu{n}^\splusalpha
  =\yuu{0}{n}^\splusalpha\vqu{n}^\ad+\vpu{n}\zuu{0}{n}^\splusalpha+\Thetauu{n}{2n}.
 \eg
 Furthermore, if \(n\geq1\), then
 \begin{multline}\label{L1438.C}
  \Hu{n}^\rezalpha+\Ssu{n}^\MP\lrk\Hu{n}+\alpha\Rpua{n}{\alpha}\OIpu{n}\Hau{n-1}\lek\Rqua{n}{\ko\alpha}\OIqu{n}\rek^\ad\rrk\SSu{n}^\MP\\
  =\Rqua{n}{\alpha}\yuu{0}{n}^\rez\vpu{n}^\ad+\vqu{n}\zuu{0}{n}^\rez\lek\Rpua{n}{\ko\alpha}\rek^\ad
 \end{multline}
 and
 \begin{multline}\label{L1438.D}
  \Hu{n}+\Ssu{n}\Hu{n}^\rezalpha\SSu{n}+\alpha\Rpua{n}{\alpha}\OIpu{n}\Hau{n-1}\lek\Rqua{n}{\ko\alpha}\OIqu{n}\rek^\ad\\
  =\yuu{0}{n}\lek\Rqua{n}{\ko\alpha}\vqu{n}\rek^\ad+\Rpua{n}{\alpha}\vpu{n}\zuu{0}{n}+\Thetauu{n}{2n}.
 \end{multline}
\eprop
\bproof
 Because of \(\seq{\su{j}}{j}{0}{2n}\in\Dtpqu{2n}\) and \rrema{R1446}, we have
 \begin{multline*}
  \su{2n}^\splusalpha-\su{0}^\splusalpha(\su{0}^\splusalpha)^\MP\su{2n}^\splusalpha(\su{0}^\splusalpha)^\MP\su{0}^\splusalpha\\
  =-\alpha\su{2n-1}+\su{2n}+\alpha\su{0}\su{0}^\MP\su{2n-1}\su{0}^\MP\su{0}-\su{0}\su{0}^\MP\su{2n}\su{0}^\MP\su{0}
  =\su{2n}-\su{0}\su{0}^\MP\su{2n}\su{0}^\MP\su{0}
 \end{multline*}
 and, according to \rremp{R0816:D-H}{R0816.e}, the sequence \(\seq{\su{j}^\splusalpha}{j}{0}{2n}\) belongs to \(\Dtpqu{2n}\). Thus, \eqref{L1438.A} and \eqref{L1438.B} immediately follow from~\zitaa{MR3014199}{\ctheo{6.1}}.
 
 Now assume \(n\geq1\). Then \(\seq{\su{j}}{j}{0}{n}\) belongs to \(\Dpqu{n}\) and, thus, \rprop{P1256} yields \(\Ssu{n}^\MP=\Ssu{n}^\rez\) and \(\SSu{n}^\MP=\SSu{n}^\rez\). Consequently, from these equations, from \rrema{R1456}, and from \rlemp{L1606}{L1606.a} we get
 \bspl{L1438.1}
  &\Ssu{n}^\rezalpha\Hu{n}^\splusalpha\SSu{n}^\rezalpha\\
  &=\Ssu{n}^\rez\Rpua{n}{\alpha}\lrk\lek \Rpua{n}{\alpha}\rek^\inv\Hu{n}\lek\Rqua{n}{\ko\alpha}\rek^\invad+\alpha\OIpu{n}\Hau{n-1}\OIqu{n}^\ad\rrk\lek\Rqua{n}{\ko\alpha}\rek^\ad\SSu{n}^\rez\\
  &=\Ssu{n}^\MP\lrk\Hu{n}+\alpha\Rpua{n}{\alpha}\OIpu{n}\Hau{n-1}\lek\Rqua{n}{\ko\alpha}\OIqu{n}\rek^\ad\rrk\SSu{n}^\MP.
 \esp
 Using \eqref{vDN} and \rrema{R1456}, we also see
 \bgl{L1438.2}
  \yuu{0}{n}^\rezalpha
  =\Ssu{n}^\rezalpha\vpu{n}
  =\Rqua{n}{\alpha}\Ssu{n}^\rez\vpu{n}
  =\Rqua{n}{\alpha}\yuu{0}{n}^\rez
 \eg
 and, similarly, \(\zuu{0}{n}^\rezalpha=\zuu{0}{n}^\rez[\Rpua{n}{\ko\alpha}]^\ad\). Thus, \eqref{L1438.A}, \eqref{L1438.1}, and \eqref{L1438.2} imply \eqref{L1438.C}. From \rlemp{L1606}{L1606.a} we know that \eqref{H[+]} holds true. \rremp{R0816:A-C}{R0816.b} shows
 \bgl{L1438.4}
  \Ssu{n}^\splusalpha\Hu{n}^\rezalpha\SSu{n}^\splusalpha
  =\lek\Rpua{n}{\alpha}\rek^\inv\Ssu{n}\Hu{n}^\rezalpha\SSu{n}\lek\Rqua{n}{\ko\alpha}\rek^\invad.
 \eg
 Because of \rremp{R0816:A-C}{R0816.b}, we have
 \bgl{L1438.5}
  \yuu{0}{n}^\splusalpha
  =\Ssu{n}^\splusalpha\vqu{n}
  =\lek\Rpua{n}{\alpha}\rek^\inv\Ssu{n}\vqu{n}
  =\lek\Rpua{n}{\alpha}\rek^\inv\yuu{0}{n}
 \eg
 and, similarly,
 \bgl{L1438.6}
  \zuu{0}{n}^\splusalpha
  =\zuu{0}{n}\lek\Rqua{n}{\ko\alpha}\rek^\invad.
 \eg
 It is readily checked that \([\Rpua{n}{\alpha}]\Thetauu{n}{2n}[\Rqua{n}{\ko\alpha}]^\ad=\Thetauu{n}{2n}\). Thus, multiplying equation~\eqref{L1438.B} from the left by \(\Rpua{n}{\alpha}\) and from the right by \(\lek\Rqua{n}{\ko\alpha}\rek^\ad\), and using \eqref{H[+]}, \eqref{L1438.4}, \eqref{L1438.5}, and \eqref{L1438.6}, we see that \eqref{L1438.D} holds true.
\eproof

\bpropl{T2047}
 Let \(\alpha\in\C\), let \(n\in \NO\), and let \(\seq{\su{j}}{j}{0}{2n}\in\Dtpqu{2n}\). Then:
 \benui
  \il{T2047.a}
  \bgl{T2047.A}
   \Hu{n}^\rezalpha
   =-\Ssu{n}^\rezalpha\lek\Hu{n}^\splusalpha-(\yuu{0}{n}^\splusalpha\vqu{n}^\ad+\vpu{n}\zuu{0}{n}^\splusalpha)\rek\SSu{n}^\rezalpha
  \eg
  and
  \bgl{T2047.B}
   \Hu{n}^\splusalpha
   =\Thetauu{n}{2n}-\Ssu{n}^\splusalpha\lek\Hu{n}^\rezalpha-(\yuu{0}{n}^\rezalpha\vqu{n}^\ad+\vpu{n}\zuu{0}{n}^\rezalpha)\rek\SSu{n}^\splusalpha.
  \eg
  \il{T2047.b} If \(n\geq1\), then
  \begin{multline}\label{T2047.C}
   \Hu{n}^\rezalpha
   =-\Ssu{n}^\MP\bigr[\Hu{n}+\alpha\Rpua{n}{\alpha}\OIpu{n}\Hau{n-1}\lek\Rqua{n}{\ko\alpha}\OIqu{n}\rek^\ad\\-\lrk\yuu{0}{n}\lek\Rqua{n}{\ko\alpha}\vqu{n}\rek^\ad+\Rpua{n}{\alpha}\vpu{n}\zuu{0}{n}\rrk\bigl]\SSu{n}^\MP
  \end{multline}
  and
  \begin{multline}\label{T2047.D}
   \Hu{n}+\alpha\Rpua{n}{\alpha}\OIpu{n}\Hau{n-1}\lek\Rqua{n}{\ko\alpha}\OIqu{n}\rek^\ad.\\
   =\Thetauu{n}{2n}-\Ssu{n}\lek\Hu{n}^\rezalpha-\lrk\Rqua{n}{\alpha}\yuu{0}{n}^\rez\vpu{n}^\ad+\vqu{n}\zuu{0}{n}^\rez\lek\Rpua{n}{\ko\alpha}\rek^\ad\rrk\rek\SSu{n}.
  \end{multline}
  \il{T2047.c} \(\rank(\Hu{n}^\rezalpha)=\rank[\Hu{n}^\splusalpha-(\yuu{0}{n}^\splusalpha\vqu{n}^\ad+\vpu{n}\zuu{0}{n}^\splusalpha)-\Thetauu{n}{2n}]\).%
  
  \il{T2047.d} If \(p=q\), then
  \[
   \det(\Hu{n}^\rezalpha)
   =\lek(\det\su{0})^\MP\rek^{2n+2}\det\lrk-\lek\Hu{n}^\splusalpha-(\yuu{0}{n}^\splusalpha\vqu{n}^\ad+\vpu{n}\zuu{0}{n}^\splusalpha)\rek\rrk.
  \]
 \eenui
\eprop
\bproof
 \eqref{T2047.a} Since \(\seq{\su{j}}{j}{0}{2n}\) belongs to \(\Dtpqu{2n}\), \rprop{L1438} shows that \eqref{L1438.A} and \eqref{L1438.B} hold true. Furthermore, \rremp{R0816:D-H}{R0816.e} yields \(\seq{\su{j}^\splusalpha}{j}{0}{2n}\in\Dtpqu{2n}\) and, consequently, \(\seq{\su{j}^\splusalpha}{j}{0}{n}\in\Dpqu{n}\). From \rprop{P1256} we see then
 \begin{align}\label{T2047.1}
  \Ssu{n}^\rezalpha&=(\Ssu{n}^\splusalpha)^\MP&
  &\text{and}&
  \SSu{n}^\rezalpha&=(\SSu{n}^\splusalpha)^\MP.
 \end{align}
 Thus, \rprop{P1256} yields
 \begin{align*}
  \Ssu{n}^\rezalpha\Ssu{n}^\splusalpha&=\Iu{n+1}\kp(\su{0}^\rezalpha\su{0}^\splusalpha)&
  &\text{and}&
  \SSu{n}^\splusalpha\SSu{n}^\rezalpha&=\Iu{n+1}\kp(\su{0}^\splusalpha\su{0}^\rezalpha).
 \end{align*}
 This implies
 \begin{align*}
  \Ssu{n}^\rezalpha\Ssu{n}^\splusalpha\vqu{n}&=\vqu{n}\su{0}^\rezalpha\su{0}^\splusalpha&
  &\text{and}&
  \vpu{n}^\ad\SSu{n}^\splusalpha\SSu{n}^\rezalpha&=\su{0}^\splusalpha\su{0}^\rezalpha\vpu{n}^\ad.
 \end{align*}
 Consequently, we conclude
 \bgl{T2047.2}
  \Ssu{n}^\rezalpha\yuu{0}{n}^\splusalpha
  =\Ssu{n}^\rezalpha\Ssu{n}^\splusalpha\vqu{n}
  =\vqu{n}\su{0}^\rezalpha\su{0}^\splusalpha
 \eg
 and
 \bgl{T2047.3}
  \zuu{0}{n}^\splusalpha\SSu{n}^\rezalpha
  =\vpu{n}^\ad\SSu{n}^\splusalpha\SSu{n}^\rezalpha
  =\su{0}^\splusalpha\su{0}^\rezalpha\vpu{n}^\ad.
 \eg
 From~\zitaa{MR3014199}{\cprop{4.9(b)}} we see that
 \begin{align}\label{T2047.4}%
  \su{0}^\rezalpha\su{0}^\splusalpha\zuu{0}{n}^\rezalpha
  &=\zuu{0}{n}^\rezalpha&
  &\text{and}&
  \yuu{0}{n}^\rezalpha\su{0}^\splusalpha\su{0}^\rezalpha
  &=\yuu{0}{n}^\rezalpha.
 \end{align}
 Using \eqref{T2047.2}, \eqref{T2047.3}, and \eqref{T2047.4}, we get then
 \begin{multline}\label{T2047.5}
  \Ssu{n}^\rezalpha\lek\yuu{0}{n}^\splusalpha\vqu{n}^\ad+\vpu{n}\zuu{0}{n}^\splusalpha\rek\SSu{n}^\rezalpha
  =\Ssu{n}^\rezalpha\yuu{0}{n}^\splusalpha\zuu{0}{n}^\rezalpha+\yuu{0}{n}^\rezalpha\zuu{0}{n}^\splusalpha\SSu{n}^\rezalpha\\
  =\vqu{n}\su{0}^\rezalpha\su{0}^\splusalpha\zuu{0}{n}^\rezalpha+\yuu{0}{n}^\rezalpha\su{0}^\splusalpha\su{0}^\rezalpha\vpu{n}^\ad
  =\vqu{n}\zuu{0}{n}^\rezalpha+\yuu{0}{n}^\rezalpha\vpu{n}^\ad.
 \end{multline}
 Applying \eqref{T2047.5} to equation~\eqref{L1438.A}, we then obtain \eqref{T2047.A}. Because of \eqref{T2047.1} and \rprop{P1256}, we have
 \begin{align*}
  \Ssu{n}^\splusalpha\Ssu{n}^\rezalpha&=\Iu{n+1}\kp(\su{0}^\splusalpha\su{0}^\rezalpha)&
  &\text{and}&
  \SSu{n}^\rezalpha\SSu{n}^\splusalpha&=\Iu{n+1}\kp(\su{0}^\rezalpha\su{0}^\splusalpha).
 \end{align*}
 This provides us
 \bgl{T2047.6}
  \Ssu{n}^\splusalpha\yuu{0}{n}^\rezalpha
  =\Ssu{n}^\splusalpha\Ssu{n}^\rezalpha\vpu{n}
  =\vpu{n}\su{0}^\splusalpha\su{0}^\rezalpha
 \eg
 and
 \bgl{T2047.7}
  \zuu{0}{n}^\rezalpha\SSu{n}^\splusalpha
  =\vqu{n}^\ad\SSu{n}^\rezalpha\SSu{n}^\splusalpha
  =\su{0}^\rezalpha\su{0}^\splusalpha\vqu{n}^\ad.
 \eg
 Since \(\seq{\su{j}^\splusalpha}{j}{0}{2n}\) belongs to \(\Dtpqu{2n}\), from \rrema{R1631} we see that the equations
 \begin{align}\label{T2047.8}
  \yuu{0}{n}^\splusalpha\su{0}^\rezalpha\su{0}^\splusalpha&=\yuu{0}{n}^\splusalpha&
  &\text{and}&
  \su{0}^\splusalpha\su{0}^\rezalpha\zuu{0}{n}^\splusalpha&=\zuu{0}{n}^\splusalpha
 \end{align}
 hold true. Using \eqref{T2047.6}, \eqref{T2047.7}, and \eqref{T2047.8}, we infer
 \begin{multline}\label{T2047.9}
  \Ssu{n}^\splusalpha(\yuu{0}{n}^\rezalpha\vpu{n}^\ad+\vqu{n}\zuu{0}{n}^\rezalpha)\SSu{n}^\splusalpha
  =\Ssu{n}^\splusalpha\yuu{0}{n}^\rezalpha\zuu{0}{n}^\splusalpha+\yuu{0}{n}^\splusalpha\zuu{0}{n}^\rezalpha\SSu{n}^\splusalpha\\
  =\vpu{n}\su{0}^\splusalpha\su{0}^\rezalpha\zuu{0}{n}^\splusalpha+\yuu{0}{n}^\splusalpha\su{0}^\rezalpha\su{0}^\splusalpha\vqu{n}^\ad
  =\vpu{n}\zuu{0}{n}^\splusalpha+\yuu{0}{n}^\splusalpha\vqu{n}^\ad.
 \end{multline}
 From \eqref{L1438.B} and \eqref{T2047.9} we then get \eqref{T2047.B}.
 
 \eqref{T2047.b} Now let \(n\geq1\). \rprop{L1438} shows that \eqref{L1438.C} and \eqref{L1438.D} are valid. Because of \(\seq{\su{j}}{j}{0}{2n}\in\Dtpqu{2n}\), we have \(\seq{\su{j}}{j}{0}{n}\in\Dpqu{n}\). From \eqref{S}, \eqref{vDN}, and \eqref{yz}, we get \(\Ssu{n}\vqu{n}=\yuu{0}{n}\). \rrema{R1456} yields \([\Rqua{n}{\ko\alpha}]^\ad\SSu{n}^\MP=\SSu{n}^\rezalpha=\SSu{n}^\rez[\Rpua{n}{\ko\alpha}]^\ad\). According to \rprop{P1256}, we have \(\Ssu{n}^\MP\Ssu{n}=\Iu{n+1}\kp(\su{0}^\MP\su{0})\). Using \eqref{vDN}, \eqref{Srez}, \eqref{S}, and \eqref{yzrez}, we obtain \(\vqu{n}^\ad\SSu{n}^\rez=\zuu{0}{n}^\rez\). Keeping in mind \eqref{vDN}, we conclude \([\Iu{n+1}\kp(\su{0}^\MP\su{0})]\vqu{n}=\vqu{n}\su{0}^\MP\su{0}\). In view of \eqref{yzrez} and \rdefi{D1430}, we have \(\su{0}^\MP\su{0}\zuu{0}{n}^\rez=\zuu{0}{n}^\rez\). Thus, we infer
 \bspl{T2047.12}
  \Ssu{n}^\MP\yuu{0}{n}\vqu{n}^\ad\lek\Rqua{n}{\ko\alpha}\rek^\ad\SSu{n}^\MP
  &=\Ssu{n}^\MP\Ssu{n}\vqu{n}\vqu{n}^\ad\SSu{n}^\rez\lek\Rpua{n}{\ko\alpha}\rek^\ad\\
 &=\lek\Iu{n+1}\kp(\su{0}^\MP\su{0})\rek\vqu{n}\zuu{0}{n}^\rez\lek\Rpua{n}{\ko\alpha}\rek^\ad\\
 &=\vqu{n}\su{0}^\MP\su{0}\zuu{0}{n}^\rez\lek\Rpua{n}{\ko\alpha}\rek^\ad
 =\vqu{n}\zuu{0}{n}^\rez\lek\Rpua{n}{\ko\alpha}\rek^\ad.
 \esp
 Similarly, we see that
 \bgl{T2047.13}
  \Ssu{n}^\MP\Rpua{n}{\alpha}\vpu{n}\zuu{0}{n}\SSu{n}^\MP
  =\Rqua{n}{\alpha}\yuu{0}{n}^\rez\vpu{n}^\ad
 \eg
 holds true. According to \rremp{R0816:A-C}{R0816.b}, we have \([\Rpua{n}{\alpha}]^\inv\Ssu{n}=\Ssu{n}[\Rqua{n}{\alpha}]^\inv\). Using \eqref{Srez}, \eqref{S}, \eqref{vDN}, and \eqref{yzrez}, we get \(\Ssu{n}^\rez\vpu{n}=\yuu{0}{n}^\rez\). In view of \eqref{vDN}, \eqref{S}, and \eqref{yz}, we obtain \(\vpu{n}^\ad\SSu{n}=\zuu{0}{n}\). \rprop{P1256} yields \(\Ssu{n}\Ssu{n}^\rez=\Ssu{n}\Ssu{n}^\MP=\Iu{n+1}\kp(\su{0}\su{0}^\MP)\). Keeping in mind \eqref{vDN}, we conclude \([\Iu{n+1}\kp(\su{0}\su{0}^\MP)]\vpu{n}=\vpu{n}\su{0}\su{0}^\MP\). Since the sequence \(\seq{\su{j}}{j}{0}{n}\) belongs to \(\Dpqu{n}\), we have, in view of \eqref{yz}, \rdefi{D1658}, and \rremp{R1631}{R1631.c}, furthermore \(\su{0}\su{0}^\MP\zuu{0}{n}=\zuu{0}{n}\). Thus, we infer
 \bsp
  \lek\Rpua{n}{\alpha}\rek^\inv\Ssu{n}\Rqua{n}{\alpha}\yuu{0}{n}^\rez\vpu{n}^\ad\SSu{n}
  &=\Ssu{n}\lek\Rqua{n}{\alpha}\rek^\inv\Rqua{n}{\alpha}\Ssu{n}^\rez\vpu{n}\zuu{0}{n}
  =\Ssu{n}\Ssu{n}^\rez\vpu{n}\zuu{0}{n}\\
  &=\lek\Iu{n+1}\kp(\su{0}\su{0}^\MP)\rek\vpu{n}\zuu{0}{n}
  =\vpu{n}\su{0}\su{0}^\MP\zuu{0}{n}
  =\vpu{n}\zuu{0}{n}.
 \esp
 Similarly, we get \(\Ssu{n}\vqu{n}\zuu{0}{n}^\rez\lek\Rpua{n}{\ko\alpha}\rek^\ad\SSu{n}\lek\Rqua{n}{\ko\alpha}\rek^\invad=\yuu{0}{n}\vqu{n}^\ad\). Hence,
 \begin{equation}\label{T2047.14}
  \Ssu{n}\lrk\Rqua{n}{\alpha}\yuu{0}{n}^\rez\vpu{n}^\ad+\vqu{n}\zuu{0}{n}^\rez\lek\Rpua{n}{\ko\alpha}\rek^\ad\rrk\SSu{n}
  =\yuu{0}{n}\vqu{n}^\ad\lek\Rqua{n}{\ko\alpha}\rek^\ad+\Rpua{n}{\alpha}\vpu{n}\zuu{0}{n}.
 \end{equation}
 Applying \eqref{T2047.12} and \eqref{T2047.13} to equation~\eqref{L1438.C}, we see that \eqref{T2047.C} holds true, whereas \eqref{L1438.D} and \eqref{T2047.14} yield \eqref{T2047.D}.
 
 \eqref{T2047.c} Because of \(\su{0}^\rezalpha(\su{2n}-\su{0}\su{0}^\MP\su{2n}\su{0}^\MP\su{0})\su{0}^\rezalpha=\su{0}^\MP(\su{2n}-\su{0}\su{0}^\MP\su{2n}\su{0}^\MP\su{0})\su{0}^\MP=\Oqp\), we have \(\Ssu{n}^\rezalpha\Thetauu{n}{2n}\SSu{n}^\rezalpha=\Ouu{(n+1)q}{(n+1)p}\). Thus, \eqref{T2047.A} shows that
 \[
  \rank\Hu{n}^\rezalpha
  \leq\rank\lek\Hu{n}^\splusalpha-(\yuu{0}{n}^\splusalpha\vqu{n}^\ad+\vpu{n}\zuu{0}{n}^\splusalpha)-\Thetauu{n}{2n}\rek
 \]
 holds true. On the other hand, the inequality
 \[
  \rank\lek\Hu{n}^\splusalpha-(\yuu{0}{n}^\splusalpha\vqu{n}^\ad+\vpu{n}\zuu{0}{n}^\splusalpha)-\Thetauu{n}{2n}\rek
  \leq\rank\Hu{n}^\rezalpha
 \]
 immediately follows from \eqref{L1438.B}.
 
 \eqref{T2047.d} \rPart{T2047.d} is a direct consequence of \eqref{T2047.A} and \(\su{0}^\rezalpha=\su{0}^\MP\).
\eproof

\btheol{T1640}
 Let \(\alpha\in\C\), let \(n\in\NO\), and let \(\seq{\su{j}}{j}{0}{2n+1}\in\Dtpqu{2n+1}\). Then:
 \benui
  \il{T1640.a} \(\Ku{n}^\rezalpha=-\Ssu{n}^\rezalpha\Ku{n}^\splusalpha\SSu{n}^\rezalpha\) and \(\Ku{n}^\rezalpha=-\Rqua{n}{\alpha}\Ssu{n}^\MP\Hau{n}\SSu{n}^\MP[\Rpua{n}{\ko\alpha}]^\ad\).
  \il{T1640.b} \(\Ku{n}^\splusalpha=\Thetauu{n}{2n+1}-\Ssu{n}^\splusalpha\Ku{n}^\rezalpha\SSu{n}^\splusalpha\).
  \il{T1640.c} \(\rank\Ku{n}^\rezalpha=\rank(\Ku{n}^\splusalpha-\Thetauu{n}{2n+1})\).
  \il{T1640.d} If \(p=q\), then \(\det(\Ku{n}^\rezalpha)=[(\det\su{0})^\MP]^{2n+2}\det(-\Ku{n}^\splusalpha)\).
 \eenui
\etheo
\bproof
 \rremp{R0816:D-H}{R0816.e} shows that \(\seq{\su{j}^\splusalpha}{j}{0}{2n+1}\) belongs to \(\Dtpqu{2n+1}\). Thus, the first equation in~\eqref{T1640.a} immediately follows from~\zitaa{MR3014199}{\ctheo{6.9(a)}}, whereas the second one is a consequence of this first equation, \rrema{R1456}, and \rlemp{L1606}{L1606.b}. Since \(\seq{\su{j}^\splusalpha}{j}{0}{2n+1}\) belongs to \(\Dtpqu{2n+1}\), from \rrema{R1631} we get
 \bsp
  \su{2n+1}^\splusalpha-\su{0}^\splusalpha(\su{0}^\splusalpha)^\MP\su{2n+1}^\splusalpha(\su{0}^\splusalpha)^\MP\su{0}^\splusalpha
  &=-\alpha\su{2n}+\su{2n+1}-\su{0}\su{0}^\MP(-\alpha\su{2n}+\su{2n+1})\su{0}^\MP\su{0}\\
  &=\su{2n+1}-\su{0}\su{0}^\MP\su{2n+1}\su{0}^\MP\su{0}.
 \esp
 Consequently, using~\zitaa{MR3014199}{\ctheo{6.9}}, we get~\eqref{T1640.b},~\eqref{T1640.c}, and~\eqref{T1640.d}.
\eproof

\bpropl{P1620}
 Let \(n\in\N\) and let \((s_j)_{j=0}^{2n}\in\Dtpqu{2n}\). Then the block Hankel matrix \(G_{n-1}^\rez\) admits the representation
 \begin{equation}\label{P1620.A}
  G_{n-1}^\rez
  =-\OIqu{n}^\ad\swu{n}^\MP  H_n\neu{n}^\MP  \OIpu{n}.
 \end{equation}
\eprop
\begin{proof}
 From~\zitaa{MR3014199}{\ctheo{6.1(a)}} we know that
 \begin{equation}\label{P1620.2}
  H_{n}^\rez
  =-\swu{n}^\MP  H_n \neu{n}^\MP  + y_{0,n}^\rez\vpu{n}^\ad +\vqu{n}z_{0,n}^\rez 
 \end{equation}
 holds true. Combining \(\OIqu{n}^\ad  H_n^\rez  \OIpu{n}=G_{n-1}^\rez\), \eqref{P1620.2}, and \(\vpu{n}^\ad \OIpu{n} =\Ouu{p}{np}\), we obtain then \eqref{P1620.A}.
\end{proof}
 
\btheol{T1701}
 Let \(\alpha\in\C\), let \(n\in \NO\), and let \(\seq{\su{j}}{j}{0}{2n+2}\in\Dtpqu{2n+2}\). Then:
 \benui
  \il{T1701.a} \(\Gu{n}^\rezalpha=-\Ssu{n}^\rezalpha\LLu{n+1}^\splusalpha\SSu{n}^\rezalpha\) and
  \begin{align}
   \Gu{n}^\rezalpha&=-\Rqua{n}{\alpha}\Ssu{n}^\MP(\Kau{n}-\yauu{0}{n}\su{0}^\MP\zauu{0}{n})\SSu{n}^\MP\lek\Rpua{n}{\ko\alpha}\rek^\ad,\nonumber\\
   \label{T1701.A}
   \Gu{n}^\rezalpha&=-\OIqu{n+1}^\ad\Ssu{n+1}^\MP\Hu{n+1}\SSu{n+1}^\MP\OIpu{n+1}-\alpha \Rqua{n}{\alpha}\Ssu{n}^\MP\Hau{n}\SSu{n}^\MP\lek\Rpua{n}{\ko\alpha}\rek^\ad.
  \end{align}
  \il{T1701.b} \(\Gu{n}^\splusalpha=\Thetauu{n}{2n+2}-\Ssu{n}^\splusalpha\LLu{n+1}^\rezalpha\SSu{n}^\splusalpha\), where \(\LLu{n+1}^\rezalpha\) is defined via \eqref{LL} using the \tsraute{} \(\seq{\su{j}^\reza{\alpha}}{j}{0}{2n+2}\) corresponding to the \tsplusalphata{\seq{\su{j}}{j}{0}{2n+2}}.
  \il{T1701.c} \(\rank\Gu{n}^\rezalpha=\rank(\LLu{n+1}^\splusalpha-\Thetauu{n}{2n+2})\) and \(\rank\Gu{n}^\rezalpha=\rank(\Kau{n}-\yauu{0}{n}\su{0}^\MP\zauu{0}{n}-\Thetauu{n}{2n+2})\).
  \il{T1701.d} If \(p=q\), then \(\det(\Gu{n}^\rezalpha)=[(\det\su{0})^\MP]^{2n+2}\det(-\LLu{n+1}^\splusalpha)\) and \(\det(\Gu{n}^\rezalpha)=[(\det\su{0})^\MP]^{2n+2}\det[-(\Kau{n}-\yauu{0}{n}\su{0}^\MP\zauu{0}{n})]\).
 \eenui
\etheo
\bproof
 The strategy of our proof is based on appropriate applications of~\zitaa{MR3014199}{\ctheo{6.13}}. Denote by \(\seq{t_j}{j}{0}{2n+2}\) the \tsplusalphata{\seq{\su{j}}{j}{0}{2n+2}}. \rremp{R0816:D-H}{R0816.e} shows that
 \bgl{T1701.1}
  \seq{t_j}{j}{0}{2n+2}
  \in\Dtpqu{2n+2}
 \eg
 holds true. Using \eqref{LL} and \rlemp{L1606}{L1606.d}, we obtain
 \bgl{T1701.21}
  \Guo{n}{t}-\yuuo{1}{n+1}{t}t_0^\MP\zuuo{1}{n+1}{t}
  =\LLuo{n+1}{t}
  =\LLu{n+1}^\splusalpha
  =\Kau{n}-\yauu{0}{n}\su{0}^\MP\zauu{0}{n}.
 \eg
 Because of \eqref{T1701.1} and \rdefi{D1459}, we have
 \begin{align}\label{T1701.22}
  \seq{\su{j}}{j}{0}{2n+1}&\in\Dpqu{2n+1}&
  &\text{and}&
  \seq{t_j}{j}{0}{2n+1}&\in\Dpqu{2n+1}.
 \end{align}
 Denote by \(\seq{r_j}{j}{0}{2n+2}\) the \tsrautea{\(\seq{t_j}{j}{0}{2n+2}$}. In view of \eqref{G}, \eqref{reza}, \eqref{Greza}, and \eqref{LL}, we get then
 \begin{align}\label{T1701.23}
  \Guo{n}{r}&=\Gu{n}^\rezalpha&
  &\text{and}&
  \Guo{n}{r}-\yuuo{1}{n+1}{r}r_0^\MP\zuuo{1}{n+1}{r}
  &=\LLuo{n+1}{r}
  =\LLu{n+1}^\rezalpha.
 \end{align}
 Because of \eqref{T1701.22} and \rrema{R1456}, we have furthermore
 \begin{align}\label{T1701.24}
  \Ssuo{n}{r}&=\Ssu{n}^\rezalpha=\lek\Rqu{n} (\alpha)\rek\swu{n}^\MP&
  &\text{and}&
  \SSuo{n}{r}&=\SSu{n}^\rezalpha=\neu{n}^\MP\lek\Rpua{n}{\ko\alpha}\rek^\ad.
 \end{align}
 According to \eqref{T1701.1}, we can apply~\zitaa{MR3014199}{\ctheo{6.13(a)}} to the sequence \(\seq{t_j}{j}{0}{2n+2}\) and obtain, by means of \eqref{T1701.23}, \eqref{T1701.24}, and \eqref{T1701.21}, the first two equations stated in~\eqref{T1701.a}.
 
 In view of \eqref{T1701.1}, the application of \rprop{P1620} to the sequence \(\seq{t_j}{j}{0}{2n+2}\) yields
 \bgl{T1701.2}
  \Guo{n}{r}
  =-\OIqu{n+1}^\ad(\Ssuo{n+1}{t})^\MP\Huo{n+1}{t}(\SSuo{n+1}{t})^\MP\OIpu{n+1}.
 \eg
 Because of \eqref{T1701.22}, \rprop{P1256} provides us
 \begin{align}\label{T1701.3}
  \Ssu{m}^\MP&=\Ssu{m}^\rez,&
  \SSu{m}^\MP&=\SSu{m}^\rez,&
  (\Ssuo{m}{t})^\MP&=\Ssuo{m}{r},&
  &\text{and}&
  (\SSuo{m}{t})^\MP&=\SSuo{m}{r}
 \end{align}
 for all \(m\in\mn{0}{2n+1}\). Keeping in mind \eqref{vDN}, \eqref{Srez}, \eqref{S}, and \eqref{yzrez}, this implies
 \begin{align}\label{T1701.4}
  \OIqu{n+1}^\ad\Ssu{n+1}^\MP
  &=\brow\yuu{1}{n+1}^\rez,\Ssu{n}^\MP\erow&
  &\text{and}&
  \SSu{n+1}^\MP\OIpu{n+1}
  &=
  \bMat
   \zuu{1}{n+1}^\rez\\
   \SSu{n}^\MP
  \eMat.
 \end{align}
 According to \eqref{T1701.3} and \eqref{T1701.22}, \rrema{R1456} shows that
 \begin{align}\label{T1701.5}
  (\Ssuo{n+1}{t})^\MP&=\Ssu{n+1}^\MP\lek\Rpua{n+1}{\alpha}\rek,&
  (\SSuo{n+1}{t})^\MP&=\lek\Rqua{n+1}{\ko\alpha}\rek^\ad\SSu{n+1}^\MP
 \end{align}
 and
 \begin{align}\label{T1701.6}
  \Ssu{n}^\MP\lek\Rpua{n}{\alpha}\rek&=\lek\Rqua{n}{\alpha}\rek\Ssu{n}^\MP,&
  \lek\Rqua{n}{\ko\alpha}\rek^\ad\SSu{n}^\MP&=\SSu{n}^\MP\lek\Rpua{n}{\ko\alpha}\rek^\ad
 \end{align}
 hold true. By virtue of \rlemp{L1606}{L1606.a}, the matrix \(\Huo{n+1}{t}\) can be represented via
 \[
  \Huo{n+1}{t}
  =\lek\Rpua{n+1}{\alpha}\rek^\inv\Hu{n+1}\lek\Rqua{n+1}{\ko\alpha}\rek^\invad+\alpha\OIpu{n+1}\Hau{n}\OIqu{n+1}^\ad.
 \]
 Using additionally \eqref{T1701.23}, \eqref{T1701.2}, and \eqref{T1701.5}, we conclude
 \bspl{T1701.7}
  &\Gu{n}^\rezalpha
  =\Guo{n}{r}
  =-\OIqu{n+1}^\ad\Ssu{n+1}^\MP\lek\Rpua{n+1}{\alpha}\rek\Huo{n+1}{t}\lek\Rqua{n+1}{\ko\alpha}\rek^\ad\SSu{n+1}^\MP\OIpu{n+1}\\
  &=-\OIqu{n+1}^\ad\Ssu{n+1}^\MP\Hu{n+1}\SSu{n+1}^\MP\OIpu{n+1}\\
  &\qquad-\alpha\OIqu{n+1}^\ad\Ssu{n+1}^\MP\lek\Rpua{n+1}{\alpha}\rek\OIpu{n+1}\Hau{n}\OIqu{n+1}^\ad\lek\Rqua{n+1}{\ko\alpha}\rek^\ad\SSu{n+1}^\MP\OIpu{n+1}.
 \esp
 From \eqref{Rb} and \eqref{vDN} we get \([\Rpua{n+1}{\alpha}]\OIpu{n+1}=\btmat\Ouu{p}{(n+1)p}\\\Rpua{n}{\alpha}\etmat\) and \( \OIqu{n+1}^\ad[\Rqua{n+1}{\ko\alpha}]^\ad=\brow\Ouu{(n+1)q}{q},[\Rqua{n}{\ko\alpha}]^\ad\erow\). Using additionally \eqref{T1701.4}, and \eqref{T1701.6}, we obtain
 \bgl{T1701.9}
  \OIqu{n+1}^\ad\Ssu{n+1}^\MP\lek\Rpua{n+1}{\alpha}\rek\OIpu{n+1}
  =\Ssu{n}^\MP\lek\Rpua{n}{\alpha}\rek
  =\lek\Rqua{n}{\alpha}\rek\Ssu{n}^\MP
 \eg
 and
 \bgl{T1701.16}
  \OIqu{n+1}^\ad\lek\Rqua{n+1}{\ko\alpha}\rek^\ad\SSu{n+1}^\MP\OIpu{n+1}
  =\lek\Rqua{n}{\ko\alpha}\rek^\ad\SSu{n}^\MP
  =\SSu{n}^\MP\lek\Rpua{n}{\ko\alpha}\rek^\ad.
 \eg
 Thus, equation~\eqref{T1701.A} follows from \eqref{T1701.7}, \eqref{T1701.9}, and \eqref{T1701.16}.
 
 In view of \eqref{[+]_0}, \rlemp{L1606}{L1606.c}, and the notations given in \rremp{R0816:A-C}{R0816.b}, we have
 \begin{align}\label{T1701.20}
  t_0&=s_0,&
  \Guo{n}{t}&=\Gu{n}^\splusalpha,&
  \Ssuo{n}{t}&=\Ssu{n}^\splusalpha,&
  &\text{and}&
  \SSuo{n}{t}&=\SSu{n}^\splusalpha.
 \end{align}
 According to \eqref{T1701.22}, \rpartss{R1631.c}{R1631.b} of \rrema{R1631} yield \(\su{0}\su{0}^\MP\su{2n+1}\su{0}^\MP\su{0}=\su{2n+1}\). By virtue of \rdefi{D1455}, we get hence
 \[
  \begin{split}
   t_{2n+2}-t_{0}t_{0}^\MP t_{2n+2}t_{0}^\MP t_{0}
   &=(-\alpha\su{2n+1}+\su{2n+2})-\su{0}\su{0}^\MP(-\alpha\su{2n+1}+\su{2n+2})\su{0}^\MP\su{0}\\
   &=\su{2n+2}-\su{0}\su{0}^\MP\su{2n+2}\su{0}^\MP\su{0},
  \end{split}  
 \]
 which, in view of \eqref{Xi}, implies
 \bgl{T1701.25}
  \Thetauuo{n}{2n+2}{t}
  =\Thetauu{n}{2n+2}.
 \eg
 Because of \eqref{T1701.1}, we can apply parts~(b)--(d) of~\zitaa{MR3014199}{\ctheo{6.13}} to the sequence \(\seq{t_j}{j}{0}{2n+2}\) and obtain, by means of \eqref{T1701.20}, \eqref{T1701.21}, \eqref{T1701.23}, and \eqref{T1701.25}, consequently~\eqref{T1701.b}--\eqref{T1701.d}.
\eproof

\section{The shortened negative \tsraute{} corres-\\ponding to the \hsplusalphat{} of a sequence from $\Dpqkappa$}\label{S1356}
This section should be compared with~\zitaa{MR3014199}{\cSect{7}}. The main goal of this section is to prepare the elementary step of the later \tSchur{}-type algorithm. As in~\zitaa{MR3014199}{\cSect{7}}, the main tool will be to use appropriately chosen \tsraute{s}. However, in~\zita{MR3014199} a two-step algorithm was applied. In this section, we develope a one-step algorithm where, in addition, the influence of the given real number \(\alpha\) has to be regarded.

Let \(\alpha\in\C\), let \(\kappa\in\N\cup\set{+\infty}\), and let \(\seq{\su{j}}{j}{0}{\kappa}\) be a sequence of complex \tpqa{matrices}. Then, using \rdefi{D1455} and \eqref{reza}, let the sequence \(\seq{\su{j}^\shortalpha}{j}{0}{\kappa-1}\)\index{$\seq{\su{j}^\shortalpha}{j}{0}{\kappa-1}$} be defined by
\bgl{(1)}
 \su{j}^\shortalpha
 \defg-\su{j+1}^\rezalpha
\eg
for all \(j\in\mn{0}{\kappa-1}\). Now we state some observations on the arithmetics of the transform just introduced.

\bremal{R1309}
 Let \(\alpha\in\C\), let \(\kappa\in\N\cup\set{+\infty}\), and let \(\seq{\su{j}}{j}{0}{\kappa}\) be a sequence of complex \tpqa{matrices}. In view of \eqref{(1)}, \eqref{reza}, \rdefi{D1455}, and \rrema{R1022}, one can easily see that, for all \(m\in\mn{0}{\kappa-1}\), the sequence \(\seq{\su{j}^\shortalpha}{j}{0}{m}\) depends only on the matrices \(\su{0},\su{1},\dotsc,\su{m+1}\).
\erema

\bremal{R1256}
 Let \(\alpha\in\C\), let \(\kappa\in\N\cup\set{+\infty}\), and let \(\seq{\su{j}}{j}{0}{\kappa}\) be a sequence of complex \tpqa{matrices}. In view of \eqref{(1)}, \rdefi{D1658},~\zitaa{MR3014197}{\cprop{5.10(a)}}, \eqref{reza}, \rdefi{D1430}, \eqref{[+]_0}, and \rremp{R1631}{R1631.a}, then \(\bigcup_{j=0}^{\kappa-1}\Bilda{\su{j}^\shortalpha}\subseteq\Bilda{\su{0}^\ad}\) and \(\Kerna{\su{0}^\ad}\subseteq\bigcap_{j=0}^{\kappa-1}\Kerna{\su{j}^\shortalpha}\).
\erema

\bremal{R1156}
 Let \(\alpha\in\C\), let \(\kappa\in\N\cup\set{+\infty}\), and let \(\seq{\su{j}}{j}{0}{\kappa}\) be a sequence of complex \tpqa{matrices}. In view of \eqref{(1)} and \rlemm{L1451}, then:
 \benui
  \il{R1156.a} If \(\gamma\in\C\), then \(\seq{(\gamma\su{j})^\shortalpha}{j}{0}{\kappa-1}=\seq{\gamma^\MP\su{j}^\shortalpha}{j}{0}{\kappa-1}\) and \(\Seq{(\gamma^j\su{j})^\shortalpha}{j}{0}{\kappa-1}=\seq{\gamma^{j+1}\su{j}^\shortalpha}{j}{0}{\kappa-1}\).
  \il{R1156.b} If \(m\in\N\) and \(L\in\Coo{m}{p}\) with \(\Bilda{L^\ad}=\Bilda{\su{0}}\), then \(\seq{(L\su{j})^\shortalpha}{j}{0}{\kappa-1}=\seq{\su{j}^\shorta{\alpha}L^\MP}{j}{0}{\kappa-1}\).
  \il{R1156.c} If \(n\in\N\) and \(R\in\Coo{q}{n}\) with \(\Bilda{\su{0}^\ad}=\Bilda{R}\), then \(\seq{(\su{j}R)^\shortalpha}{j}{0}{\kappa-1}=\seq{R^\MP\su{j}^\shorta{\alpha}}{j}{0}{\kappa-1}\).
  \il{R1156.d} If \(m,n\in\N\), \(L\in\Coo{m}{p}\) with \(\Bilda{L^\ad}=\Bilda{\su{0}}\) and \(R\in\Coo{q}{n}\) with \(\Bilda{\su{0}^\ad}=\Bilda{R}\), then \(\seq{(L\su{j}R)^\shortalpha}{j}{0}{\kappa-1}=\seq{R^\MP\su{j}^\shorta{\alpha}L^\MP}{j}{0}{\kappa-1}\).
  \il{R1156.e} If \(m,n\in\N\), \(U\in\Coo{m}{p}\) with \(U^\ad U=\Ip\) and \(V\in\Coo{q}{n}\) with \(VV^\ad=\Iq\), then \(\seq{(U\su{j}V)^\shorta{\alpha}}{j}{0}{\kappa-1}=\seq{V^\ad\su{j}^\shorta{\alpha}U^\ad}{j}{0}{\kappa-1}\).
  \il{R1156.f} If the sequence \(\seq{t_j}{j}{0}{\kappa}\) is given by \(t_j\defg\su{j}^\ad\) for all \(j\in\mn{0}{\kappa}\), then \((\su{j}^\shortalpha)^\ad=t_j^\shorta{\ko\alpha}\) for all \(j\in\mn{0}{\kappa-1}\).
 \eenui
\erema

\bremal{R1223}
 Let \(\alpha\in\C\), let \(\kappa\in\N\cup\set{+\infty}\), and let \(n\in\N\). For all \(m\in\mn{1}{n}\), let \(p_m,q_m\in\N\) and let \(\seq{\su{j}^{(m)}}{j}{0}{\kappa}\) be a sequence of complex \taaa{p_m}{q_m}{matrices}. In view of \eqref{(1)} and \rrema{R1452}, then \(\seq{\diag\matauuo{(\su{j}^{(m)})^\shortalpha}{m}{1}{n}}{j}{0}{\kappa-1}=\seq{(\diag\matauuo{\su{j}^{(m)}}{m}{1}{n})^\shortalpha}{j}{0}{\kappa-1}\).
\erema

\bremal{R1319}
 Let \(\alpha\in\C\), let \(\kappa\in\N\cup\set{+\infty}\), and let \((s_j)_{j=0}^\kappa\) be a sequence of complex \tpqa{matrices}. In view of \eqref{(1)}, \eqref{reza}, \rdefi{D1430}, \eqref{[+]_0}, and \rlemm{L1445}, then it is readily checked that
 \begin{align*}
  \su{j}^\shortalpha
  &=\su{0}^\MP\sum_{l=0}^j\su{j+1-l}^\splusalpha\su{l}^\rezalpha&
  &\text{and}&
  s_j^\shortalpha
  &=-\sum_{l=0}^{j+1} \alpha^{j+1-l} s_l^\rez&\text{for all }j&\in\mn{0}{\kappa-1}.
 \end{align*}
\erema

\bremal{R1347}
 Let \(\alpha\in\C\), let \(\kappa\in\N\cup\set{+\infty}\), and let \((s_j)_{j=0}^\kappa\) be a sequence of complex \tpqa{matrices}. Then, in view of \rrema{R1319}, \eqref{reza}, \rdefi{D1430}, \eqref{[+]_0}, and \eqref{(1)}, one can easily see that
 \begin{align*}
  \su{0}^\shortalpha
  &=\su{0}^\MP\su{1}^\splusalpha\su{0}^\MP&
  &\text{and}&
  s_j^\shortalpha
  &=\su{0}^\MP\su{j+1}^\splusalpha\su{0}^\MP-\su{0}^\MP\sum_{l=0}^{j-1}\su{j-l}^\splusalpha\su{l}^\shortalpha&\text{for all }j&\in\mn{1}{\kappa-1}.
 \end{align*}
\erema

\blemml{L1437}
 Let \(\alpha\in\C\), let \(\kappa\in\N\cup\set{+\infty}\), and let \(\seq{\su{j}}{j}{0}{\kappa}\) be a sequence of complex \tpqa{matrices}. For all \(j\in\mn{1}{\kappa}\), then
 \[
  \su{0}^\MP\su{j}^\sra{\alpha}\su{0}^\MP
  =\su{0}^\MP\sum_{l=0}^{j-1}\su{j-1-l}^\sra{\alpha}\su{l}^\shortalpha.
 \]
\elemm
\bproof
 In view of \eqref{[+]_0} and \rrema{R1347}, we have \(\su{0}^\MP\su{0}^\sra{\alpha}\su{l}^\shortalpha =\su{l}^\shortalpha \) for all \(l\in\mn{0}{j-1}\). Hence, \rrema{R1347} yields \(\su{0}^\MP\su{0}^\sra{\alpha}\su{0}^\shortalpha =\su{0}^\MP\su{1}^\splusalpha\su{0}^\MP\) and, in the case \(\kappa\geq2\), for all \(j\in\mn{2}{\kappa}\), furthermore
 \bsp
  \su{0}^\MP\sum_{l=0}^{j-1}\su{j-1-l}^\sra{\alpha}\su{l}^\shortalpha 
  &=\su{0}^\MP\sum_{l=0}^{j-2}\su{j-1-l}^\sra{\alpha}\su{l}^\shortalpha +\su{0}^\MP\su{0}^\sra{\alpha}\su{j-1}^\shortalpha \\
  &=\su{0}^\MP\su{j}^\sra{\alpha}\su{0}^\MP-\su{j-1}^\shortalpha +\su{j-1}^\shortalpha 
  =\su{0}^\MP\su{j}^\sra{\alpha}\su{0}^\MP.\qedhere
 \esp
\eproof

\blemml{L1601}
 Let \(\alpha\in\C\), let \(\kappa\in\N\cup\set{+\infty}\), and let \(\seq{\su{j}}{j}{0}{\kappa}\) and \(\seq{t_j}{j}{0}{\kappa}\) be sequences of complex \tpqa{matrices}. Then the following statements are equivalent:
 \baeqi{0}
  \il{L1601.i} \(\su{j}^\shortalpha=t_j^\shortalpha\) for all \(j\in\mn{0}{\kappa-1}\) and \(\su{0}=t_0\).
  \il{L1601.ii} \(\su{0}\su{0}^\MP\su{j}\su{0}^\MP\su{0}=t_0t_0^\MP t_jt_0^\MP t_0\) for all \(j\in\mn{0}{\kappa}\).
 \eaeqi
\elemm
\bproof
 According to \eqref{[+]_0}, \rdefi{D1430}, \eqref{reza}, and \rremp{R1631}{R1631.a}, statement~\rstat{L1601.i} is equivalent to:
 \baeqi{2}
  \il{L1601.iii} \(\su{j}^\shortalpha=t_j^\shortalpha\) for all \(j\in\mn{0}{\kappa-1}\) and \(\su{0}^\rezalpha=t_0^\rezalpha\).
 \eaeqi
 In view of \eqref{(1)} and \rlemm{L1454}, statement~\rstat{L1601.iii} is equivalent to~\rstat{L1601.ii}.
\eproof

\bremal{R1005}
 Let \(\alpha\in\C\), let \(\kappa\in\N\cup\set{+\infty}\), let \(\seq{\su{j}}{j}{0}{\kappa}\) be a sequence from \(\Cpq\), and let \(n\in\mn{0}{\kappa-1}\). In view of \eqref{S}, \eqref{(1)}, \rrema{R1456}, \eqref{vDN} and \eqref{T}, then
 \bsp
  \Ssuo{n}{s^\shortalpha}
  &=-\OIqu{n+1}^\ad\lrk\lek\Rqua{n+1}{\alpha}\rek\Ssu{n+1}^\rez-\lek\Iu{n+2}\kp(\su{0}^\MP)\rek\rrk\IOpu{n+1}\\
  &=\lek\Tuu{1}{n}^\ad\kp(\su{0}^\MP)\rek-\OIqu{n+1}^\ad\lek\Rqua{n+1}{\alpha}\rek\Ssu{n+1}^\rez\IOpu{n+1}
 \esp
 and
 \bsp
  \SSuo{n}{s^\shortalpha}
  &=-\IOqu{n+1}^\ad\lrk\SSu{n+1}^\rez\lek\Rpua{n+1}{\ko\alpha}\rek^\ad-\lek\Iu{n+2}\kp(\su{0}^\MP)\rek\rrk\OIpu{n+1}\\
  &=\lek\Tuu{1}{n}\kp(\su{0}^\MP)\rek-\IOqu{n+1}^\ad\SSu{n+1}^\rez\lek\Rpua{n+1}{\ko\alpha}\rek^\ad\OIpu{n+1}.
 \esp
\erema

Let \(\alpha\in\C\), let \(\kappa\in\N\cup\set{+\infty}\), and let \(\seq{\su{j}}{j}{0}{\kappa}\) be a sequence from \(\Cpq\). Then, let
\bgl{H(1)}
 \Hu{n}^\shortalpha
 \defg\matauuo{\su{j+k}^\shortalpha}{j,k}{0}{n}
\eg
\index{h@$\Hu{n}^\shortalpha$}for all \(n\in\NO\) with \(2n\leq\kappa-1\) and let
\bgl{K(1)}
 \Ku{n}^\shortalpha
 \defg\matauuo{\su{j+k+1}^\shortalpha}{j,k}{0}{n}
\eg
\index{k@$\Ku{n}^\shortalpha$}for all \(n\in\NO\) with \(2n+1\leq\kappa-1\). Now we derive some formulas for several block \tHankel{} matrices associated with the sequence introduced in \eqref{(1)}.
\blemml{L1513}
 Let \(\alpha\in\C\), let \(n\in\NO\), and let \(\seq{\su{j}}{j}{0}{2n+1}\in\Dtpqu{2n+1}\). Then:
 \benui
  \il{L1513.a} \(\Hu{n}^\shortalpha=-\Ku{n}^\rezalpha=[\Rqua{n}{\alpha}]\Ssu{n}^\MP\Hau{n}\SSu{n}^\MP[\Rpua{n}{\ko\alpha}]^\ad\).
  \il{L1513.b} \(\rank(\Hu{n}^\shortalpha)=\rank(\Hau{n}-\Thetauu{n}{2n+1})\).
  \il{L1513.c} If \(p=q\), then \(\det(\Hu{n}^\shortalpha)=[(\det\su{0})^\MP]^{2n+2}\det\Hau{n}\).
 \eenui
\elemm
\bproof
 \eqref{L1513.a} Use \eqref{H(1)}, \eqref{(1)}, \eqref{Kreza} and \rthmp{T1640}{T1640.a}.
 
 \eqref{L1513.b} Use the first equation in~\eqref{L1513.a}, \rthmp{T1640}{T1640.c} and \rlemp{L1606}{L1606.b}.
 
 \eqref{L1513.c} Use the first equation in~\eqref{L1513.a}, \rthmp{T1640}{T1640.d} and \rlemp{L1606}{L1606.b}.
\eproof

\blemml{L1104}
 Let \(\alpha\in\C\), let \(n\in\N\), and let \(\seq{\su{j}}{j}{0}{2n}\in\Dtpqu{2n}\). Then:
 \benui
  \il{L1104.a} \(-\alpha\Hu{n-1}^\shortalpha+\Ku{n-1}^\shortalpha=\OIqu{n}^\ad\Ssu{n}^\MP\Hu{n}\SSu{n}^\MP\OIpu{n}\) and \(-\alpha\Hu{n-1}^\shortalpha+\Ku{n-1}^\shortalpha=\Ssu{n-1}^\MP\LLu{n}\SSu{n-1}^\MP\).
  \il{L1104.b} \(\rank(-\alpha\Hu{n-1}^\shortalpha+\Ku{n-1}^\shortalpha)=\rank(\Hu{n}-\Thetauu{n}{2n})-\rank\su{0}\) and \(\rank(-\alpha\Hu{n-1}^\shortalpha+\Ku{n-1}^\shortalpha)=\rank(\LLu{n}-\Thetauu{n-1}{2n})\).
  \il{L1104.c} \(\det(-\alpha\Hu{n-1}^\shortalpha+\Ku{n-1}^\shortalpha)=[(\det\su{0})^\MP]^{2n+1}\det\Hu{n}\) and \(\det(-\alpha\Hu{n-1}^\shortalpha+\Ku{n-1}^\shortalpha)=[(\det\su{0})^\MP]^{2n}\det\LLu{n}\).
 \eenui
\elemm
\bproof
 \eqref{L1104.a} In view of \eqref{K(1)}, \eqref{(1)} and \eqref{Greza}, we have \(\Ku{n-1}^\shortalpha=-\Gu{n-1}^\rezalpha\). Taking additionally into account \rlemp{L1513}{L1513.a}, we obtain \(-\alpha\Hu{n-1}^\shortalpha+\Ku{n-1}^\shortalpha=\alpha\Ku{n-1}^\rezalpha-\Gu{n-1}^\rezalpha\). Hence, the first equation in~\eqref{L1104.a} is an immediate consequence of \rthmp{T1640}{T1640.a} and \rthmp{T1701}{T1701.a}. From~\zitaa{MR3014199}{\ctheo{6.1(a)}} we know that \(
  \Hu{n}^\rez+\Ssu{n}^\MP\Hu{n}\SSu{n}^\MP
  =\yuu{0}{n}^\rez\vpu{n}^\ad+\vqu{n}\zuu{0}{n}^\rez
 \). In view of \eqref{vDN}, we have \(\OIqu{n}^\ad(\yuu{0}{n}^\rez\vpu{n}^\ad+\vqu{n}\zuu{0}{n}^\rez)\OIpu{n}=0\) and, consequently,
 \bgl{L1104.0}
  \Gu{n-1}^\rez
  =\OIqu{n}^\ad\Hu{n}^\rez\OIpu{n}
  =-\OIqu{n}^\ad\Ssu{n}^\MP\Hu{n}\SSu{n}^\MP\OIpu{n}.
 \eg
 Furthermore,~\zitaa{MR3014199}{\ctheo{6.13(a)}} and \eqref{LL} yield \(\Gu{n-1}^\rez=-\Ssu{n-1}^\MP\LLu{n}\SSu{n-1}^\MP\). Thus, \eqref{L1104.0} and the first equation in~\eqref{L1104.a} yield the second one.
 
 \eqref{L1104.b} Obviously, \(\su{0}^\MP(\su{2n}-\su{0}\su{0}^\MP\su{2n}\su{0}^\MP\su{0})\su{0}^\MP=0\). Since \rprop{P1256} yields
 \begin{align}\label{L1104.1}
  \Ssu{k}^\MP&=\Ssu{k}^\rez&
  &\text{and}&
  \SSu{k}^\MP&=\SSu{k}^\rez&
  \text{for all }k&\in\mn{0}{n},
 \end{align}
 we then have \(\Ssu{n-1}^\MP\Thetauu{n-1}{2n}\SSu{n-1}^\MP=0\). In view of \rpart{L1104.a}, this implies
 \bgl{L1104.2}
  -\alpha\Hu{n-1}^\shortalpha+\Ku{n-1}^\shortalpha
  =\Ssu{n-1}^\MP(\LLu{n}-\Thetauu{n-1}{2n})\SSu{n-1}^\MP.
 \eg
 In particular,
 \bgl{L1104.3}
  \rank(-\alpha\Hu{n-1}^\shortalpha+\Ku{n-1}^\shortalpha)
  \leq\rank(\LLu{n}-\Thetauu{n-1}{2n}).
 \eg
 Multiplying equation~\eqref{L1104.2} from the left by \(\Ssu{n-1}\) and from the right by \(\SSu{n-1}\) and using \rprop{P1256}, we conclude
 \bgl{L1104.4}
  \Ssu{n-1}(-\alpha\Hu{n-1}^\shortalpha+\Ku{n-1}^\shortalpha)\SSu{n-1}
  =\lek\Iu{n}\kp(\su{0}\su{0}^\MP)\rek(\LLu{n}-\Thetauu{n-1}{2n})\lek\Iu{n}\kp(\su{0}^\MP\su{0})\rek.
 \eg
 Since \(\seq{\su{j}}{j}{0}{2n}\) belongs to \(\Dtpqu{2n}\), from \rpartss{R1631.c}{R1631.b} of \rrema{R1631} we get \(\su{0}\su{0}^\MP\su{j}=\su{j}\) and \(\su{j}\su{0}^\MP\su{0}=\su{j}\) hold for all \(j\in\mn{0}{2n-1}\). Because the \tqqa{block} in the right lower corner of \(\LLu{n}-\Thetauu{n-1}{2n}\) is exactly \(\su{0}\su{0}^\MP\su{2n}\su{0}^\MP\su{0}-\su{n}\su{0}^\MP\su{n}\), then from \eqref{LL} it follows that
 \[
  \lek\Iu{n}\kp(\su{0}\su{0}^\MP)\rek(\LLu{n}-\Thetauu{n-1}{2n})\lek\Iu{n}\kp(\su{0}^\MP\su{0})\rek
  =\LLu{n}-\Thetauu{n-1}{2n}.
 \]
 Combining this with \eqref{L1104.4}, we get
 \bgl{L1104.5}
  \rank(-\alpha\Hu{n-1}^\shortalpha+\Ku{n-1}^\shortalpha)
  \geq\rank(\LLu{n}-\Thetauu{n-1}{2n}).
 \eg
 From \eqref{L1104.3} and \eqref{L1104.5} we infer the second equation in~\eqref{L1104.b}. Taking into account that
 \[
  \Hu{n}-\Thetauu{n}{2n}
  =
  \bMat
   \su{0}&\zuu{1}{n}\\
   \yuu{1}{n}&\Gu{n-1}-\Thetauu{n-1}{2n}
  \eMat
 \]
 holds true and that \(\seq{\su{j}}{j}{0}{2n}\in\Dtpqu{2n}\) implies \(\Kerna{\su{0}}\subseteq\Kerna{\yuu{1}{n}}\) and \(\Bilda{\zuu{1}{n}}\subseteq\Bilda{\su{0}}\), we get from~\zitaa{MR1152328}{\clemm{1.1.7(a)}} that
 \bgl{L1104.6}
  \rank(\Hu{n}-\Thetauu{n}{2n})
  =\rank\su{0}+\rank(\LLu{n}-\Thetauu{n-1}{2n}).
 \eg
 Similarly, we see that
 \bgl{L1104.7}
  \det\Hu{n}
  =\det(\su{0})\det(\LLu{n})
 \eg
 is valid. The second equation in~\eqref{L1104.b} and \eqref{L1104.6} imply the first equation in~\eqref{L1104.b}.
 
 \eqref{L1104.c} Use the second equation in~\eqref{L1104.a} and \eqref{L1104.1} to get the second equation. The first equation follows in the case \(\det\su{0}\neq0\) from the second one and \eqref{L1104.7} and in the case \(\det\su{0}=0\) from the second one and \(n\geq1\).
\eproof

\blemml{L1611}
 Let \(\alpha\in\C\), let \(n\in\N\), and let \(\seq{\su{j}}{j}{0}{2n}\in\Dtpqu{2n}\). Then
 \bgl{L1611.A}
  \Ssu{n}^\MP\Hu{n}\SSu{n}^\MP
  =\diaga{\su{0}^\rez,-\alpha\Hu{n-1}^\shortalpha+\Ku{n-1}^\shortalpha}.
 \eg
\elemm
\bproof
 From~\zitaa{MR3014199}{\ctheo{6.1(a)}} we get \(\Hu{n}^\rez+\Ssu{n}^\MP\Hu{n}\SSu{n}^\MP=\yuu{0}{n}^\rez\vpu{n}^\ad+\vqu{n}\zuu{0}{n}^\rez\). In view of \eqref{Hrez} ,\eqref{Grez}, \eqref{yzrez}, and \eqref{vDN}, this implies \(
  \Ssu{n}^\MP\Hu{n}\SSu{n}^\MP
  =\diaga{\su{0}^\rez,-\Gu{n-1}^\rez}
 \). Taking additionally into account \rlemp{L1104}{L1104.a} and \eqref{vDN}, we obtain in particular
 \[
  -\Gu{n-1}^\rez
  =\OIqu{n}^\ad\Ssu{n}^\MP\Hu{n}\SSu{n}^\MP\OIpu{n}
  =-\alpha\Hu{n-1}^\shortalpha+\Ku{n-1}^\shortalpha.\qedhere
 \]
\eproof

The following proposition plays a key role in our further considerations. In its proof we will make essential use of \rlemmss{L1513}{L1104}.
\bpropl{P1542}
 Let \(\alpha\in\R\), let \(\kappa\in\N\cup\set{+\infty}\), and let \(\seq{\su{j}}{j}{0}{\kappa}\) be a sequence of complex \tqqa{matrices}. Then:
 \benui
  \il{P1542.a} If \(\seq{\su{j}}{j}{0}{\kappa}\in\Kggqualpha{\kappa}\), then \(\seq{\su{j}^\shortalpha}{j}{0}{\kappa-1}\in\Kggqualpha{\kappa-1}\).
  \il{P1542.b} If \(\seq{\su{j}}{j}{0}{\kappa}\in\Kggequalpha{\kappa}\), then \(\seq{\su{j}^\shortalpha}{j}{0}{\kappa-1}\in\Kggequalpha{\kappa-1}\).
  \il{P1542.c} If \(\seq{\su{j}}{j}{0}{\kappa}\in\Kgqualpha{\kappa}\), then \(\seq{\su{j}^\shortalpha}{j}{0}{\kappa-1}\in\Kgqualpha{\kappa-1}\).
  \il{P1542.d} If \(m\in\mn{0}{\kappa}\) and \(\seq{\su{j}}{j}{0}{\kappa}\in\Kggdoqualpha{m}{\kappa}\), then \(\seq{\su{j}^\shortalpha}{j}{0}{\kappa-1}\in\Kggdoqualpha{\max\set{0,m-1}}{\kappa-1}\).
  \il{P1542.e} If \(\seq{\su{j}}{j}{0}{\kappa}\in\Kggdqualpha{\kappa}\), then \(\seq{\su{j}^\shortalpha}{j}{0}{\kappa-1}\in\Kggdqualpha{\kappa-1}\).
 \eenui
\eprop
\bproof
 \eqref{P1542.a} Let \(\seq{\su{j}}{j}{0}{\kappa}\in\Kggqkappaalpha\). From \rpropp{P1442}{P1442.c} we then know that \(\seq{\su{j}}{j}{0}{\kappa}\in\Dtqqkappa\). Consequently, for all \(n\in\NO\) with \(2n+1\leq\kappa\), \rlemp{L1513}{L1513.a} yields
 \bgl{P1542.1}
  \Hu{n}^\shortalpha
  =\lek\Rqua{n}{\alpha}\rek\Ssu{n}^\MP\Hau{n}\SSu{n}^\MP\lek\Rqua{n}{\alpha}\rek^\ad.
 \eg
 \rlemp{L1104}{L1104.a} shows that, for all \(n\in\N\) with \(2n\leq\kappa\) the equation 
 \bgl{P1542.2}
  -\alpha\Hu{n-1}^\shortalpha+\Ku{n-1}^\shortalpha
  =\OIqu{n}^\ad\Ssu{n}^\MP\Hu{n}\SSu{n}^\MP\OIqu{n}
 \eg
 holds true. Since \(\seq{\su{j}}{j}{0}{\kappa}\) belongs to \(\Kggqkappaalpha\), we see from \rlemp{L1738}{L1738.a}, that \(\su{j}^\ad=\su{j}\) holds for all \(j\in\mn{0}{\kappa}\), which implies \(\Sbbu{n}^\MP=(\Sbu{n}^\MP)^\ad\) for all \(n\in\mn{0}{\kappa}\). Because of \(\seq{\su{j}}{j}{0}{\kappa}\in\Kggqkappaalpha\), we also have \(\Hu{n}\in\Cggo{(n+1)q}\) for all \(n\in\NO\) with \(2n\leq\kappa\) and \(\Hau{n}\in\Cggo{(n+1)q}\) for all \(n\in\NO\) with \(2n+1\leq\kappa\). Thus, we conclude that, for all \(n\in\NO\) with \(2n+1\leq\kappa\), the right-hand side of \eqref{P1542.1} is \tnnH{} and that, for all \(n\in\N\) with \(2n\leq\kappa\), the right-hand side of \eqref{P1542.2} is \tnnH{}. Consequently, \(\Hu{n}^\shortalpha\in\Cggo{(n+1)q}\) for all \(n\in\NO\) with \(2n+1\leq\kappa\) and \(-\alpha\Hu{n-1}^\shortalpha+\Ku{n-1}^\shortalpha\in\Cggo{nq}\) for all \(n\in\N\) with \(2n\leq\kappa\). Hence \(\seq{\su{j}^\shortalpha}{j}{0}{\kappa-1}\) belongs to \(\Kggqualpha{\kappa-1}\).
 
 \eqref{P1542.b} In the case \(\kappa=+\infty\), \rpart{P1542.b} is already proved in \rpart{P1542.a}. Now let \(m\in\NO\) and let \(\seq{\su{j}}{j}{0}{m}\in\Kggequalpha{m}\). Then there is an \(\su{m+1}\in\Cqq\) such that \(\seq{\su{j}}{j}{0}{m+1}\in\Kggqualpha{m+1}\). According to \rpart{P1542.a}, then we see that \(\seq{\su{j}^\shortalpha}{j}{0}{m}\) belongs to \(\Kggqualpha{m}\), which implies \(\seq{\su{j}^\shortalpha}{j}{0}{m-1}\in\Kggequalpha{m-1}\). Thus, in view of \rrema{R1309}, \rpart{P1542.b} is proved.
 
 \eqref{P1542.c} Let \(\seq{\su{j}}{j}{0}{\kappa} \in \Kgqkappaalpha\). \rPartss{P1442.d}{P1442.a} of \rprop{P1442} then yields \(\seq{\su{j}}{j}{0}{\kappa} \in \Dqqkappa\), which, in view of the \rdefiss{D1658}{D1459}, implies \(\seq{\su{j}}{j}{0}{m}\in\Dtqqu{m}\) for all \(m\in\mn{0}{\kappa}\). \rPartss{P1442.d}{P1442.c} of \rprop{P1442} and~\eqref{P1542.a} show that \(\seq{\su{j}^\shortalpha}{j}{0}{\kappa-1}\) belongs to \(\Kggqualpha{\kappa-1}\). In other words, we have:
 \bAeqi{0}
  \il{P1542.I} For all \(n\in \NO\) with \(2n+ 1 \leq \kappa\), the matrix \(\Hu{n}^\shortalpha\) is \tnnH{}.
 \eAeqi
 and
 \bAeqi{1}
  \il{P1542.II} For all \(n\in \N\) with \(2n\leq \kappa\), the matrix \(-\alpha\Hu{n-1}^\shortalpha+\Ku{n-1}^\shortalpha\) is \tnnH{}.
 \eAeqi
 Since \(\seq{\su{j}}{j}{0}{\kappa}\) belongs to \(\Kgqkappaalpha\), the following two statements hold true:
 \bAeqi{2}
  \il{P1542.III} For all \(n\in \NO\) with \(2n\leq \kappa\), the matrix \(\Hu{n}\) is \tpH{}.
  \il{P1542.IV} For all \(n\in \NO\) with \(2n+ 1 \leq \kappa\), the matrix \(\Hau{n}\) is \tpH{}.
 \eAeqi
 Using \rlemp{L1513}{L1513.c} and~\rstat{P1542.IV}, we see that:
 \bAeqi{4}
  \il{P1542.V} For all \(n\in \NO\) with \(2n+ 1 \leq \kappa\), the matrix \(\Hu{n}^\shortalpha\) is \tns{}.
 \eAeqi
 From~\rstat{P1542.I} and~\rstat{P1542.V} we see that:
 \bAeqi{5}
  \il{P1542.VI} For all \(n\in \NO\) with \(2n+ 1 \leq \kappa\), the matrix \(\Hu{n}^\shortalpha\) is \tpH{}.
 \eAeqi
 Using \rlemp{L1104}{L1104.c} and~\rstat{P1542.III}, we get that:
 \bAeqi{6}
  \il{P1542.VII} For all \(n\in \N\) with \(2n\leq \kappa\), the matrix \(-\alpha\Hu{n-1}^\shortalpha+\Ku{n-1}^\shortalpha\) is \tns{}.
 \eAeqi
 From~\rstat{P1542.II} and~\rstat{P1542.VII} we obtain that:
 \bAeqi{7}
  \il{P1542.VIII} For all \(n\in \N\) with \(2n\leq \kappa\), the matrix \(-\alpha\Hu{n-1}^\shortalpha+\Ku{n-1}^\shortalpha\) is \tpH{}.
 \eAeqi
 Because of~\rstat{P1542.VI} and~\rstat{P1542.VIII}, the sequence \(\seq{\su{j}^\shortalpha}{j}{0}{\kappa-1}\) belongs to \(\cKpd{q}{\kappa - 1}{\alpha}\).
 
 \eqref{P1542.d} Let \(m\in\mn{0}{\kappa}\) and let \(\seq{\su{j}}{j}{0}{\kappa}\in\Kggdoqkappaalpha{m}\). Because of \eqref{Kggcdm}, then \(\seq{\su{j}}{j}{0}{\kappa}\in\Kggqkappaalpha\), which, in view of~\eqref{P1542.a}, implies \(\seq{\su{j}^\shortalpha}{j}{0}{\kappa-1}\in\Kggqualpha{\kappa-1}\).
 Furthermore, \eqref{Kggcdm} yields \(\seq{\su{j}}{j}{0}{m}\in\Kggdqualpha{m}\). According to \rpartss{P1442.d}{P1442.a} of \rprop{P1442}, we have then \(\seq{\su{j}}{j}{0}{m}\in\Dqqu{m}\), which, in view of the \rdefiss{D1658}{D1459}, implies \(\seq{\su{j}}{j}{0}{l}\in\Dtqqu{l}\) for all \(l\in\mn{0}{m}\) and \(\su{0}\su{0}^\MP\su{j}=\su{j}\) and \(\su{j}\su{0}^\MP\su{0}=\su{j}\) for all \(j\in\mn{0}{m}\). In view of \eqref{Xi}, this implies
 \begin{align}\label{P1542.3}
  \Thetauu{n}{l}&=0&\text{for all }n&\in\NO\text{ and all }l\in\mn{0}{m}.
 \end{align}
 
 If \(m=0\), then \(\su{0}=\Oqq\). Thus, in view of \rrema{R1256}, the assertion holds true in the case \(m=0\).
 
 We now consider the case that \(m=2n+1\) with some \(n\in\NO\). Because of \eqref{Kggcd2n+1}, then \(\seq{\sau{j}}{j}{0}{2n}\in\Hggdqu{2n}\), which, in view of \eqref{Hggcd}, implies \(\Lau{n}=\Oqq\) and that the matrix \(\Hau{n}\) is \tnnH{}. If \(n\geq1\), then \(\Hau{n}\) admits the block representation
 \[
  \Hau{n}
  =
  \bMat
   \Hau{n - 1}   & \yauu{n}{2n-1} \\
   \zauu{n}{2n-1}   & \sau{2n}
  \eMat,
 \]
 which, in view of~\zitaa{MR1152328}{\clemmss{1.1.9}{1.1.7}}, implies \(\rank\Hau{n}=\rank\Hau{n-1}+\rank\Lau{n}\). Since \(\Hau{0}=\sau{0}=\Lau{0}\) holds true, we have thus \(\rank\Hau{n}=0\) in the case \(n=0\) and, in the case \(n\geq1\), furthermore \(\rank\Hau{n}=\rank\Hau{n-1}\). Because of \rlemp{L1513}{L1513.b}, \eqref{P1542.3}, and \rrema{R1309}, this implies \(\rank\Hu{n}^\shortalpha=0\) in the case \(n=0\) and, in the case \(n\geq1\), furthermore \(\rank\Hu{n}^\shortalpha=\rank\Hu{n-1}^\shortalpha\). Because of \rrema{R1440} and \eqref{Kgg2n}, we have \(\seq{\su{j}^\shortalpha}{j}{0}{2n}\in\Kggqualpha{2n}\subseteq\Hggqu{2n}\). Thus, similar to the considerations above, from~\zitaa{MR1152328}{\clemmss{1.1.9}{1.1.7}}, we conclude that \(\rank{\Luo{n}{s^\shortalpha}}=0\) and hence \(\Luo{n}{s^\shortalpha}=\Oqq\). According to \eqref{Hggcd}, we obtain then \(\seq{\su{j}^\shortalpha}{j}{0}{2n}\in\Hggdqu{2n}\) and, in view of \eqref{Kggcd2n}, consequently \(\seq{\su{j}^\shortalpha}{j}{0}{2n}\in\Kggdqualpha{2n}\).
 
 Now we consider the case that \(m=2n\) with some \(n\in\N\). Because of \eqref{Kggcd2n}, then \(\seq{\su{j}}{j}{0}{2n}\in\Hggdqu{2n}\), which, in view of \eqref{Hggcd}, implies \(\Lu{n}=\Oqq\) and that the block \tHankel{} matrices \(\Hu{n}\) and \(\Hu{n-1}\) are \tnnH{}. Obviously,
 \begin{align*}
  \Hu{n}
  &=
  \bMat
   \Hu{n-1}&\yuu{n}{2n-1}\\
   \zuu{n}{2n-1}&\su{2n}
  \eMat&
  &\text{and}&
  \Hu{n}
  =
  \bMat
   \su{0}&\zuu{1}{n}\\
   \yuu{1}{n}&\Gu{n-1}
  \eMat.
 \end{align*}
 In the case \(n\geq2\), the matrix \(\Hu{n-1}\) admits the block representation
 \[
  \Hu{n-1}
  =
  \bMat
   \su{0}&\zuu{1}{n-1}\\
   \yuu{1}{n-1}&\Gu{n-2}
  \eMat.
 \]
 Consequently, from~\zitaa{MR1152328}{\clemmss{1.1.9}{1.1.7}} we get \(\rank\Hu{n}=\rank\Hu{n-1}+\rank\Lu{n}=\rank\Hu{n-1}\) and \(\rank\Hu{n}=\rank\su{0}+\rank\LLu{n}\). Furthermore, \(\rank\Hu{n-1}=\rank\su{0}\) in the case \(n=1\) and, in the case \(n\geq2\), moreover \(\rank\Hu{n-1}=\rank\su{0}+\rank\LLu{n-1}\). In view of \eqref{P1542.3}, the last equations imply \(\rank(\LLu{n}-\Thetauu{n-1}{2n})=0\) in the case \(n=1\) and, in the case \(n\geq2\), furthermore \(\rank(\LLu{n}-\Thetauu{n-1}{2n})=\rank(\LLu{n-1}-\Thetauu{n-2}{2n})\). Using \rlemp{L1104}{L1104.b}, we then conclude  that \(\rank(-\alpha\Hu{n-1}^\shortalpha+\Ku{n-1}^\shortalpha)=0\) in the case \(n=1\) and, in the case \(n\geq2\), that \(\rank(-\alpha\Hu{n-1}^\shortalpha+\Ku{n-1}^\shortalpha)=\rank(-\alpha\Hu{n-2}^\shortalpha+\Ku{n-2}^\shortalpha)\). Because of \rrema{R1440}, we have \(\seq{\su{j}^\shortalpha}{j}{0}{2n-1}\in\Kggqualpha{2n-1}\), which, in view of \eqref{s_a} and \eqref{Kgg2n+1}, implies, that the sequence \(\seq{t_j}{j}{0}{2n-2}\) defined by \(t_j\defg-\alpha\su{j}^\shortalpha+\su{j+1}^\shortalpha\) for all \(j\in\mn{0}{2n-2}\) belongs to \(\Hggqu{2n-2}\). Then \(\rank\Luo{n-1}{t}=0\) follows immediately in the case of \(n=1\) and in the case \(n\geq2\) from~\zitaa{MR1152328}{\clemmss{1.1.9}{1.1.7}}. Thus, we have \(\Luo{n-1}{t}=\Oqq\). According to \eqref{Hggcd}, we obtain then \(\seq{t_j}{j}{0}{2n-2}\in\Hggdqu{2n-2}\) and, in view of \eqref{Kggcd2n+1}, consequently \(\seq{\su{j}^\shortalpha}{j}{0}{2n-1}\in\Kggdqualpha{2n-1}\). Thus, we have proved that \(\seq{\su{j}^\shortalpha}{j}{0}{\max\set{0,m-1}}\) belongs to \(\Kggqualpha{\max\set{0,m-1}}\), which, in view of \eqref{Kggcdm}, then implies \(\seq{\su{j}^\shortalpha}{j}{0}{\kappa-1}\in\Kggdoqualpha{\max\set{0,m-1}}{\kappa-1}\).
 
 \eqref{P1542.e} Let \(\seq{\su{j}}{j}{0}{\kappa}\in\Kggdqualpha{\kappa}\). We first consider the case \(\kappa\in\N\). In view of \eqref{Kggcd2n}, \eqref{Kggcd2n+1}, and \eqref{Kggcdm}, we have then \(\seq{\su{j}}{j}{0}{\kappa}\in\Kggdoqkappaalpha{\kappa}\), which, according to~\eqref{P1542.d}, implies \(\seq{\su{j}^\shortalpha}{j}{0}{\kappa-1}\in\Kggdoqualpha{\kappa-1}{\kappa-1}\). Because of \eqref{Kggcdm}, hence \(\seq{\su{j}^\shortalpha}{j}{0}{\kappa-1}\in\Kggdqualpha{\kappa-1}\). Now we consider the case \(\kappa=+\infty\).  In view of \eqref{G1023}, then \(\seq{\su{j}}{j}{0}{\kappa}\in\Kggdoqinfalpha{m}\) for some \(m\in\NO\), which, according to~\eqref{P1542.d}, implies \(\seq{\su{j}^\shortalpha}{j}{0}{\infty}\in\Kggdoqinfalpha{\max\set{0,m-1}}\). Because of \eqref{G1023}, hence \(\seq{\su{j}^\shortalpha}{j}{0}{\infty}\in\Kggdqinfalpha\).
\eproof

\section{The first \hascht{\alpha} of a sequence of complex matrices}\label{S1649}
This section plays a similar role as~\zitaa{MR3014199}{\cSect{8}}. Guided by our experiences from~\zita{MR3014199}, we will choose a convenient two-sided normalization of the sequence introduced in \eqref{(1)}. This construction gives us the tool to realize the elementary step in the \tSchur{}-type algorithm we are striving for. The following notion is one of the central objects in this paper. It describes the basic step of the \tSchur{}-type algorithm which will be developed in \rsect{S1401}.
\begin{defn}\label{D1059}
 Let \(\alpha\in\C\), let \(\kappa\in\N\cup\set{+\infty}\), and let \(\seq{\su{j}}{j}{0}{\kappa}\) be a sequence of complex \tpqa{matrices}. Let the sequence \(\seq{\su{j}^\shortalpha}{j}{0}{\kappa-1}\) be given by \eqref{(1)}. Then the sequence \(\seq{\su{j}^\sntaa{1}{\alpha}}{j}{0}{\kappa-1}\)\index{$\seq{\su{j}^\sntaa{1}{\alpha}}{j}{0}{\kappa-1}$} defined by
 \begin{align*}
  \su{j}^\sntaa{1}{\alpha}&\defg\su{0}\su{j}^\shortalpha\su{0}&\text{for all }j&\in\mn{0}{\kappa-1}
 \end{align*}
 is called the \emph{first \tascht{\alpha}} (or short \emph{\tlasnt{\alpha}}) \emph{of \(\seq{\su{j}}{j}{0}{\kappa}$}.
\end{defn}

 Our next considerations are aimed at studying the arithmetics of the first \tlasnt{\alpha}.
\bremal{R1013}
 Let \(\alpha\in\C\), let \(\kappa\in\N\cup\set{+\infty}\), and let \(\seq{\su{j}}{j}{0}{\kappa}\) be a sequence from \(\Cpq\). Denote by \(\seq{\su{j}^\sntaa{1}{\alpha}}{j}{0}{\kappa-1}\) the first \tlasnta{\alpha}{\seq{\su{j}}{j}{0}{\kappa}}.
 In view of \rdefi{D1059} and \rremass{R1256}{R1631}, then \(\su{j}^\shortalpha=\su{0}^\MP\su{j}^\sntaa{1}{\alpha}\su{0}^\MP\) for all \(j\in\mn{0}{\kappa-1}\).
\erema

\bremal{R1012}
 Let \(\alpha\in\C\), let \(\kappa\in\N\cup\set{+\infty}\), and let \(\seq{\su{j}}{j}{0}{\kappa}\) be a sequence of complex \tpqa{matrices}. Denote by \(\seq{\su{j}^\sntaa{1}{\alpha}}{j}{0}{\kappa-1}\) the first \tlasnt{\alpha} of \(\seq{\su{j}}{j}{0}{\kappa}\). In view of \rrema{R1309}, one can easily see that, for all \(m\in\mn{1}{\kappa}\), the sequence \(\seq{\su{j}^\sntaa{1}{\alpha}}{j}{0}{m-1}\) depends only on the matrices \(\su{0},\su{1},\dotsc,\su{m}\) and, hence, it coincides with the \tlasnt{\alpha} of \(\seq{\su{j}}{j}{0}{m}\).
\erema

\bremal{R1040}
 Let \(\alpha\in\C\), let \(\kappa\in\N\cup\set{+\infty}\), and let \(\seq{\su{j}}{j}{0}{\kappa}\) be a sequence of complex \tpqa{matrices}. In view of \rdefi{D1059}, for all \(j\in\mn{0}{\kappa-1}\), then \(\bigcup_{j=0}^{\kappa-1}\Bilda{\su{j}^\sntaa{1}{\alpha}}\subseteq\Bilda{\su{0}}\) and \(\Kerna{\su{0}}\subseteq\bigcap_{j=0}^{\kappa-1}\Kerna{\su{j}^\sntaa{1}{\alpha}}\).
\erema

\blemml{L1228}
 Let \(\alpha\in\C\), let \(\kappa\in\N\cup\set{+\infty}\), and let \(\seq{\su{j}}{j}{0}{\kappa}\) be a sequence of complex \tpqa{matrices}. Then:
 \benui
  \il{L1228.a} If \(\gamma\in\C\), then \(\seq{(\gamma\su{j})^\seinsalpha}{j}{0}{\kappa-1}=\seq{\gamma\su{j}^\seinsalpha}{j}{0}{\kappa-1}\) and \(\seq{(\gamma^j\su{j})^\seinsalpha}{j}{0}{\kappa-1}=\seq{\gamma^{j+1}\su{j}^\seinsalpha}{j}{0}{\kappa-1}\)
  \il{L1228.b} If \(m\in\N\) and \(L\in\Coo{m}{p}\) with \(\Bilda{L^\ad}=\Bilda{\su{0}}\), then \(\seq{(L\su{j})^\seinsalpha}{j}{0}{\kappa-1}=\seq{L(\su{j}^\seinsalpha)}{j}{0}{\kappa-1}\).
  \il{L1228.c} If \(n\in\N\) and \(R\in\Coo{q}{n}\) with \(\Bilda{\su{0}^\ad}=\Bilda{R}\), then \(\seq{(\su{j}R)^\seinsalpha}{j}{0}{\kappa-1}=\seq{\su{j}^\seinsalpha R}{j}{0}{\kappa-1}\).
  \il{L1228.d} If \(m,n\in\N\), \(L\in\Coo{m}{p}\) with \(\Bilda{L^\ad}=\Bilda{\su{0}}\) and \(R\in\Coo{q}{n}\) with \(\Bilda{\su{0}^\ad}=\Bilda{R}\), then \(\seq{(L\su{j}R)^\seinsalpha}{j}{0}{\kappa-1}=\seq{L\su{j}^\seinsalpha R}{j}{0}{\kappa-1}\).
  \il{L1228.e} If \(m,n\in\N\), \(U\in\Coo{m}{p}\) with \(U^\ad U=\Ip\), and \(V\in\Coo{q}{n}\) with \(VV^\ad=\Iq\), then \(\seq{(U\su{j}V)^\seinsalpha}{j}{0}{\kappa-1}=\seq{U\su{j}^\seinsalpha V}{j}{0}{\kappa-1}\).
  \il{L1228.f} If the sequence \(\seq{t_j}{j}{0}{\kappa}\) is given by \(t_j\defg\su{j}^\ad\) for all \(j\in\mn{0}{\kappa}\), then \((\su{j}^\seinsalpha)^\ad=t_j^\saa{1}{\ko\alpha}\) for all \(j\in\mn{0}{\kappa-1}\).
 \eenui
\elemm
\bproof
 \eqref{L1228.a} Use \rremp{R1156}{R1156.a}.
 
 \eqref{L1228.b} Because of \(\Bilda{L^\ad}=\Bilda{\su{0}}\) we have \(L^\MP L\su{0}=\su{0}\). Using \rremp{R1156}{R1156.b} for all \(j\in\mn{0}{\kappa-1}\), we get then
 \[
  (L\su{j})^\seinsalpha
  =(L\su{0})(L\su{j})^\shortalpha(L\su{0})
  =L\su{0}\su{j}^\shortalpha L^\MP L\su{0}
  =L\su{0}\su{j}^\shortalpha\su{0}
  =L\su{j}^\seinsalpha.
 \]
 
 \eqref{L1228.c} and~\eqref{L1228.d} can be proved like~\eqref{L1228.b} using \rpartss{R1156.c}{R1156.d} of \rrema{R1156}.
 
 \eqref{L1228.e} and~\eqref{L1228.f} can be proved using \rpartss{R1156.e}{R1156.f} of \rrema{R1156}.
\eproof

\bremal{R1229}
 Let \(\alpha\in\C\), let \(\kappa\in\N\cup\set{+\infty}\), and let \(n\in\N\). For all \(m\in\mn{1}{n}\), let \(p_m,q_m\in\N\) and let \(\seq{\su{j}^{(m)}}{j}{0}{\kappa}\) be a sequence of complex \taaa{p_m}{q_m}{matrices} with \tseinsalpha{} \(\seq{t_j^{(m)}}{j}{0}{\kappa-1}\). Then \rrema{R1223} shows that \(\seq{\diag\matauuo{t_j^{(m)}}{m}{1}{n}}{j}{0}{\kappa-1}\) is exactly the \tseinsalphaa{\seq{\diag\matauuo{\su{j}^{(m)}}{m}{1}{n}}{j}{0}{\kappa}}.
\erema

\bremal{R1645}
 Let \(\alpha\in\R\), let \(\kappa\in\N\cup \set{+\infty}\), and let \((s_j)_{j=0}^\kappa\) be a sequence of complex \tpqa{matrices}. Further, let \((s_j^\seinsalpha )_{j=0}^{\kappa-1}\) be the first \tlasnt{\alpha} of \((s_j)_{j=0}^\kappa\). In view of \rrema{R1319}, for all \(j\in\mn{0}{\kappa-1}\), then
 \begin{align*}
  \su{j}^\seinsalpha
  &=\su{0}\su{0}^\MP\sum_{l=0}^j\su{j+1-l}^\splusalpha\su{l}^\rezalpha\su{0}&
  &\text{and}&
  s_j^\seinsalpha
  &=-\sum_{l=0}^{j+1} \alpha^{j+1-l}\su{0}\su{l}^\rez\su{0}.
 \end{align*}
\erema

\begin{lem}\label{L1613}
 Let \(\alpha\in\C\), let \(\kappa\in\N\cup\set{+\infty}\), and let \((s_j)_{j=0}^{\kappa}\) be a sequence of complex \tpqa{matrices}. For all \(j\in\mn{1}{\kappa-1}\),
 \begin{align*}
  \su{0}^\seinsalpha
  &=\su{0}\su{0}^\MP\su{1}^\splusalpha\su{0}^\MP\su{0}&
  &\text{and}&
  s_j^\seinsalpha
  &=\su{0}\su{0}^\MP\lrk\su{j+1}^\splusalpha\su{0}^\MP\su{0}-\sum_{l=0}^{j-1}\su{j-l}^\splusalpha\su{0}^\MP\su{l}^\seinsalpha\rrk.
 \end{align*}
 
\end{lem}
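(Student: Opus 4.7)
The plan is to read the claimed identities as a consequence of the analogous recurrence for the shortened negative \tsraute{} $\seq{\su{j}^\shortalpha}{j}{0}{\kappa-1}$ recorded in \rrema{R1347}, transferred through the defining relation $\su{j}^\seinsalpha=\su{0}\su{j}^\shortalpha\su{0}$ from \rdefi{D1059}.

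First I would note that \rrema{R1347} gives $\su{0}^\shortalpha=\su{0}^\MP\su{1}^\splusalpha\su{0}^\MP$ and, provided $\kappa\geq 2$, the recurrence $\su{j}^\shortalpha=\su{0}^\MP\su{j+1}^\splusalpha\su{0}^\MP-\su{0}^\MP\sum_{l=0}^{j-1}\su{j-l}^\splusalpha\su{l}^\shortalpha$ for every $j\in\mn{1}{\kappa-1}$. Sandwiching the first identity between two factors $\su{0}$ and invoking \rdefi{D1059} immediately produces the asserted formula for $\su{0}^\seinsalpha$. For the general index $j$, the same sandwiching yields
\[
 \su{j}^\seinsalpha
 =\su{0}\su{0}^\MP\su{j+1}^\splusalpha\su{0}^\MP\su{0}-\su{0}\su{0}^\MP\sum_{l=0}^{j-1}\su{j-l}^\splusalpha\su{l}^\shortalpha\su{0},
\]
so the task reduces to replacing each factor $\su{l}^\shortalpha\su{0}$ in the sum by $\su{0}^\MP\su{l}^\seinsalpha$ and then pulling $\su{0}\su{0}^\MP$ out of the bracket.

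The only point demanding some care is this replacement. According to \rrema{R1013} one has $\su{l}^\shortalpha=\su{0}^\MP\su{l}^\seinsalpha\su{0}^\MP$, hence $\su{l}^\shortalpha\su{0}=\su{0}^\MP\su{l}^\seinsalpha\su{0}^\MP\su{0}$. Combined with the inclusion $\Kerna{\su{0}}\subseteq\Kerna{\su{l}^\seinsalpha}$ from \rrema{R1040} and the fact that $\su{0}^\MP\su{0}$ is the orthogonal projection onto $[\Kerna{\su{0}}]^\bot$, this collapses to $\su{l}^\shortalpha\su{0}=\su{0}^\MP\su{l}^\seinsalpha$, which is exactly what is needed to bring the identity into the desired form. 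No genuine obstacle arises; the proof is essentially a bookkeeping exercise with Moore--Penrose idempotents.
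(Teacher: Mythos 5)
Your proposal is correct and follows essentially the same route as the paper: sandwich the recurrence of \rrema{R1347} between two factors \(\su{0}\) via \rdefi{D1059} and then replace \(\su{l}^\shortalpha\su{0}\) by \(\su{0}^\MP\su{l}^\seinsalpha\). The paper effects this last replacement by inserting the projection \(\su{0}^\MP\su{0}\) in front of \(\su{l}^\shortalpha\) (justified by the range inclusion of \rrema{R1256}) and recognizing \(\su{0}\su{l}^\shortalpha\su{0}=\su{l}^\seinsalpha\), whereas you invoke \rrema{R1013} together with the kernel inclusion of \rrema{R1040}; these rest on the same underlying Moore--Penrose facts, so the difference is purely cosmetic.
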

\begin{proof}
 In view of \rremass{R1013}{R1347}, we get \(
  s_0^\seinsalpha 
  =\su{0}\su{0}^\shortalpha\su{0}
  =\su{0}\su{0}^\MP\su{1}^\splusalpha\su{0}^\MP\su{0}
 \) and, in the case \(\kappa\geq2\), for all \(j\in\mn{1}{\kappa-1}\), furthermore
 \bsp
  &s_j^\seinsalpha 
  =\su{0}\su{j}^\shortalpha\su{0}
  =\su{0}\lrk\su{0}^\MP\su{j+1}^\splusalpha\su{0}^\MP-\su{0}^\MP\sum_{l=0}^{j-1}\su{j-l}^\splusalpha\su{l}^\shortalpha\rrk\su{0}\\
  &=\su{0}\su{0}^\MP\lrk\su{j+1}^\splusalpha\su{0}^\MP\su{0}-\sum_{l=0}^{j-1}\su{j-l}^\splusalpha\su{l}^\shortalpha\su{0}\rrk
  =\su{0}\su{0}^\MP\lrk\su{j+1}^\splusalpha\su{0}^\MP\su{0}-\sum_{l=0}^{j-1}\su{j-l}^\splusalpha\su{0}^\MP\su{0}\su{l}^\shortalpha\su{0}\rrk\\
  &=\su{0}\su{0}^\MP\lrk\su{j+1}^\splusalpha\su{0}^\MP\su{0}-\sum_{l=0}^{j-1}\su{j-l}^\splusalpha\su{0}^\MP\su{l}^\seinsalpha\rrk.\qedhere
 \esp
\end{proof}

\begin{lem}\label{L1447}
 Let \(\alpha\in\C\), let \(\kappa\in\N\cup\set{+\infty}\), and let \((s_j)_{j=0}^{\kappa}\in\Dpqu{\kappa}\). Then \(\su{0}^\seinsalpha=\su{1}^\splusalpha\) and, for all \(j\in\mn{1}{\kappa-1}\), furthermore \(s_j^\seinsalpha=\su{j+1}^\splusalpha-\sum_{l=0}^{j-1}\su{j-l}^\splusalpha\su{0}^\MP\su{l}^\seinsalpha\).
\end{lem}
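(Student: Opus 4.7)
The plan is to reduce this lemma to Lemma~\ref{L1613} by showing that, under the first-term dominance hypothesis, all the compressions by $\su{0}\su{0}^\MP$ and $\su{0}^\MP\su{0}$ that appear in Lemma~\ref{L1613} act as the identity on the relevant matrices.

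First I would invoke Lemma~\ref{L1613}, which gives without any hypothesis the formulas
\[
 \su{0}^\seinsalpha=\su{0}\su{0}^\MP\su{1}^\splusalpha\su{0}^\MP\su{0}
\]
and
\[
 s_j^\seinsalpha=\su{0}\su{0}^\MP\lrk\su{j+1}^\splusalpha\su{0}^\MP\su{0}-\sum_{l=0}^{j-1}\su{j-l}^\splusalpha\su{0}^\MP\su{l}^\seinsalpha\rrk
\]
for $j\in\mn{1}{\kappa-1}$. So everything reduces to showing that, for every $k\in\mn{0}{\kappa}$, the $\alpha$-plus-transformed term $\su{k}^\splusalpha$ satisfies
\bgl{plan.A}
 \su{0}\su{0}^\MP\su{k}^\splusalpha=\su{k}^\splusalpha
 \qquad\text{and}\qquad
 \su{k}^\splusalpha\su{0}^\MP\su{0}=\su{k}^\splusalpha.
\eg

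To obtain \eqref{plan.A}, I would proceed as follows. Since $\seq{\su{j}}{j}{0}{\kappa}\in\Dpqu{\kappa}$, Remark~\ref{R0816:D-H}\eqref{R0816.d} shows that the \tsplusalpha{} $\seq{\su{j}^\splusalpha}{j}{0}{\kappa}$ also belongs to $\Dpqu{\kappa}$. By \eqref{[+]_0} we have $\su{0}^\splusalpha=\su{0}$, so in particular $(\su{0}^\splusalpha)^\MP=\su{0}^\MP$. Applying Remark~\ref{R1446} to the sequence $\seq{\su{j}^\splusalpha}{j}{0}{\kappa}$ then yields exactly \eqref{plan.A}.

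With \eqref{plan.A} in hand the simplification is immediate: $\su{0}^\seinsalpha=\su{0}\su{0}^\MP\su{1}^\splusalpha\su{0}^\MP\su{0}=\su{1}^\splusalpha$, and for $j\in\mn{1}{\kappa-1}$ the first summand collapses to $\su{j+1}^\splusalpha$ while each term in the sum simplifies via
\[
 \su{0}\su{0}^\MP\su{j-l}^\splusalpha\su{0}^\MP\su{l}^\seinsalpha
 =\su{j-l}^\splusalpha\su{0}^\MP\su{l}^\seinsalpha,
\]
giving the stated recursion. No genuine obstacle is present here; the only point requiring attention is the transfer of the first-term dominance property from $\seq{\su{j}}{j}{0}{\kappa}$ to $\seq{\su{j}^\splusalpha}{j}{0}{\kappa}$, which is exactly the content of Remark~\ref{R0816:D-H}\eqref{R0816.d}.
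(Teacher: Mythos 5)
Your proposal is correct and follows essentially the same route as the paper's own proof: invoke Lemma~\ref{L1613}, transfer first-term dominance to \(\seq{\su{j}^\splusalpha}{j}{0}{\kappa}\) via Remark~\ref{R0816:D-H}\eqref{R0816.d} together with \(\su{0}^\splusalpha=\su{0}\) from \eqref{[+]_0}, and then absorb the compressions \(\su{0}\su{0}^\MP\) and \(\su{0}^\MP\su{0}\). The only cosmetic difference is that you cite Remark~\ref{R1446}, whereas the paper cites Definition~\ref{D1658} and Remark~\ref{R1631}\eqref{R1631.b},\eqref{R1631.c} directly — but these are the same facts.
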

\begin{proof}
 \rremp{R0816:D-H}{R0816.d} yields \(\seq{\su{j}^\sra{\alpha}}{j}{0}{\kappa}\in\Dpqu{\kappa}\). Hence, in view of \rlemm{L1613}, \eqref{[+]_0}, \rdefi{D1658}, and \rpartss{R1631.c}{R1631.b} of \rrema{R1631}, we get
 \[
  s_0^\seinsalpha 
  =\su{0}\su{0}^\MP\su{1}^\splusalpha\su{0}^\MP\su{0}
  =\su{0}^\sra{\alpha}(\su{0}^\sra{\alpha})^\MP\su{1}^\sra{\alpha}(\su{0}^\sra{\alpha})^\MP\su{0}^\sra{\alpha}
  =s_1^\sra{\alpha}
 \]
 and, in the case \(\kappa\geq2\), for all \(j\in\mn{1}{\kappa-1}\), furthermore
 \bsp
  s_j^\seinsalpha
  &=\su{0}\su{0}^\MP\su{j+1}^\splusalpha\su{0}^\MP\su{0}-\sum_{l=0}^{j-1}\su{0}\su{0}^\MP\su{j-l}^\splusalpha\su{0}^\MP\su{l}^\seinsalpha\\
  &=\su{0}^\sra{\alpha}(\su{0}^\sra{\alpha})^\MP\su{j+1}^\splusalpha(\su{0}^\sra{\alpha})^\MP\su{0}^\sra{\alpha}-\sum_{l=0}^{j-1}\su{0}^\sra{\alpha}(\su{0}^\sra{\alpha})^\MP\su{j-l}^\splusalpha\su{0}^\MP\su{l}^\seinsalpha\\
  &=\su{j+1}^\sra{\alpha}-\sum_{l=0}^{j-1}\su{j-l}^\sra{\alpha}\su{0}^\MP\su{l}^\sntaa{1}{\alpha}.\qedhere
 \esp
\end{proof}

\bremal{R1315}
 Let \(\alpha\in\C\), let \(\kappa\in\N\cup\set{+\infty}\), and let \(\seq{\su{j}}{j}{0}{\kappa}\) be a sequence of complex \tpqa{matrices}. In view of \rlemm{L1437} and \rremass{R1347}{R1013}, for all \(j\in\mn{1}{\kappa}\), then
 \[
  \su{0}^\MP\su{j}^\sra{\alpha}\su{0}^\MP\su{0}
  =\su{0}^\MP\sum_{l=0}^{j-1}\su{j-1-l}^\sra{\alpha}\su{0}^\MP\su{l}^\seinsalpha.
 \]
\erema

\blemml{L1330}
 Let \(\alpha\in\C\), let \(\kappa\in\N\cup\set{+\infty}\), and let \(\seq{\su{j}}{j}{0}{\kappa}\in\Dpqu{\kappa}\). For all \(j\in\mn{1}{\kappa}\), then
 \[
  \su{j}^\sra{\alpha}
  =\sum_{l=0}^{j-1}\su{j-1-l}^\sra{\alpha}\su{0}^\MP\su{l}^\sntaa{1}{\alpha}.
 \]
\elemm
\bproof
 \rremp{R0816:D-H}{R0816.d} yields \(\seq{\su{j}^\sra{\alpha}}{j}{0}{\kappa}\in\Dpqu{\kappa}\). Hence, in view of \rdefi{D1658}, \rpartss{R1631.c}{R1631.b} of \rrema{R1631}, \eqref{[+]_0}, and \rrema{R1315}, for all \(j\in\mn{1}{\kappa}\), we get
 \bsp
  \su{j}^\sra{\alpha}
  &=\su{0}^\sra{\alpha}(\su{0}^\sra{\alpha})^\MP\su{j}^\sra{\alpha}(\su{0}^\sra{\alpha})^\MP\su{0}^\sra{\alpha}
  =\su{0}^\sra{\alpha}\su{0}^\MP\su{j}^\sra{\alpha}\su{0}^\MP\su{0}\\
  &=\su{0}^\sra{\alpha}\su{0}^\MP\sum_{l=0}^{j-1}\su{j-1-l}^\sra{\alpha}\su{0}^\MP\su{l}^\seinsalpha
  =\sum_{l=0}^{j-1}\su{0}^\sra{\alpha}(\su{0}^\sra{\alpha})^\MP\su{j-1-l}^\sra{\alpha}\su{0}^\MP\su{l}^\seinsalpha
  =\sum_{l=0}^{j-1}\su{j-1-l}^\sra{\alpha}\su{0}^\MP\su{l}^\seinsalpha.\qedhere
 \esp
\eproof

\blemml{L1411}
 Let \(\alpha\in\C\), let \(\kappa\in\N\cup\set{+\infty}\), and let \(\seq{\su{j}}{j}{0}{\kappa}\) and \(\seq{t_j}{j}{0}{\kappa}\) be sequences of complex \tpqa{matrices}. Then the following statements are equivalent:
 \baeqi{0}
  \il{L1411.i} \(\su{j}^\seinsalpha=t_j^\seinsalpha\) for all \(j\in\mn{0}{\kappa-1}\) and \(\su{0}=t_0\).
  \il{L1411.ii} \(\su{0}\su{0}^\MP\su{j}\su{0}^\MP\su{0}=t_0t_0^\MP t_jt_0^\MP t_0\) for all \(j\in\mn{0}{\kappa}\).
 \eaeqi
\elemm
\bproof
 According to \rrema{R1013}, statement~\rstat{L1411.i} is equivalent to
 \baeqi{2}
  \il{L1411.iii} \(\su{j}^\shortalpha=t_j^\shortalpha\) for all \(j\in\mn{0}{\kappa-1}\) and \(\su{0}=t_0\).
 \eaeqi
 which, in view of \rlemm{L1601}, is equivalent to~\rstat{L1411.ii}.
\eproof

For all \(m\in\NO\) and all \(w\in\C\), we easily see, in view of the block structure given in \eqref{Rb}, that
\bgl{sR-Rs}
 (\Iu{m+1}\kp\su{0})\lek\Rqua{m}{w}\rek
 =\lek\Rpua{m}{w}\rek(\Iu{m+1}\kp\su{0})
\eg
and
\bgl{R*s-sR*}
 \lek\Rpua{m}{w}\rek^\ad(\Iu{m+1}\kp\su{0})
 =(\Iu{m+1}\kp\su{0})\lek\Rqua{m}{w}\rek^\ad.
\eg

\bremal{R1020}
 Let \(\alpha\in\C\), let \(\kappa\in\N\cup\set{+\infty}\), let \(\seq{\su{j}}{j}{0}{\kappa}\) be a sequence from \(\Cpq\), and let \(n\in\mn{0}{\kappa-1}\). In view of \eqref{S}, \rrema{R1005}, \eqref{vDN}, \eqref{sR-Rs}, and \eqref{R*s-sR*}, then
 \bsp
  \Ssuo{n}{s^\seinsalpha}
  &=-\OIpu{n+1}^\ad\lrk\lek\Rpua{n+1}{\alpha}\rek(\Iu{n+2}\kp\su{0})\Ssu{n+1}^\rez(\Iu{n+2}\kp\su{0})-(\Iu{n+2}\kp\su{0})\rrk\IOqu{n+1}\\
  &=(\Tuu{1}{n}^\ad\kp\su{0})-\OIpu{n+1}^\ad\lek\Rpua{n+1}{\alpha}\rek(\Iu{n+2}\kp\su{0})\Ssu{n+1}^\rez(\Iu{n+2}\kp\su{0})\IOqu{n+1}
 \esp
 and
 \bsp
  \SSuo{n}{s^\seinsalpha}
  &=-\IOpu{n+1}^\ad\lrk(\Iu{n+2}\kp\su{0})\SSu{n+1}^\rez(\Iu{n+2}\kp\su{0})\lek\Rqua{n+1}{\ko\alpha}\rek^\ad-(\Iu{n+2}\kp\su{0})\rrk\OIqu{n+1}\\
  &=(\Tuu{1}{n}\kp\su{0})-\IOpu{n+1}^\ad(\Iu{n+2}\kp\su{0})\SSu{n+1}^\rez(\Iu{n+2}\kp\su{0})\lek\Rqua{n+1}{\ko\alpha}\rek^\ad\OIqu{n+1}.
 \esp
\erema

\bnotal{N1039}
 For all \(n\in\NO\), let \(\nudqu{n}\)\index{l@$\nudqu{n}$} (resp.\ \(\nodqu{n}\)\index{u@$\nodqu{n}$}) be the set of all complex \taaa{(n+1)q}{(n+1)q}{matrices} \(A\) which satisfy the following condition:
 \begin{itemize}
  \item If \(A=[A_{jk}]_{j,k=0}^n\) is the \tqqa{block} representation of \(A\), then \(A_{jj}=\Iq \) for all \(j\in\mn{0}{n}\) and \(A_{jk}=\Oqq \) for all \(j,k\in\mn{0}{n}\) with \(j<k\) (resp.\ \(k<j\)).
 \end{itemize}
\enota

\bremal{R1247}
 Let \(\kappa\in\NO\cup\set{+\infty}\) and let \(\seq{\su{j}}{j}{0}{\kappa}\) be a sequence of complex \tpqa{matrices}. In view of \eqref{Srez}, \rdefi{D1430}, and \rnota{N1039}, then, for all \(m\in\mn{0}{\kappa}\), the matrix
 \bgl{Dl}
  \Dluo{m}{s}
  \defg(\Iu{m+1}\kp\su{0})\Ssu{m}^\rez+\lek\Iu{m+1}\kp(\Ip-\su{0}\su{0}^\MP)\rek
 \eg
 \index{d@$\Dluo{m}{s}\), \(\Dlu{m}$}belongs to \(\nuduu{p}{m}\), the matrix
 \bgl{Dr}
  \Druo{m}{s}
  \defg\SSu{m}^\rez(\Iu{m+1}\kp\su{0})+\lek\Iu{m+1}\kp(\Iq-\su{0}^\MP\su{0})\rek
 \eg
 \index{d@$\Druo{m}{s}\), \(\Dru{m}$}belongs to \(\noduu{q}{m}\), and, according to \rrema{R1745}, in particular, \(\det\Dluo{m}{s}=1\) and \(\det\Druo{m}{s}=1\).
\erema
For short, we will also write \(\Dlu{m}\) and \(\Dru{m}\) for \(\Dluo{m}{s}\) and \(\Druo{m}{s}\), respectively.

\blemml{L1320}
 Let \(\kappa\in\NO\cup\set{+\infty}\) and let \(\seq{\su{j}}{j}{0}{\kappa}\) be a sequence of \tH{} complex \tqqa{matrices}. For all \(m\in\mn{0}{\kappa}\), then \(\Dlu{m}^\ad=\Dru{m}\).
\elemm
\bproof
 Let \(m\in\mn{0}{\kappa}\). From~\zitaa{MR3014197}{\ccoro{5.17}} we obtain then \((\su{j}^\rez)^\ad=\su{j}^\rez\) for all \(j\in\mn{0}{m}\) which, in view of \eqref{Srez} and \eqref{S}, implies \((\Ssu{m}^\rez)^\ad=\SSu{m}^\rez\). Using \rremp{R1631}{R1631.a}, we get furthermore \((\su{0}\su{0}^\MP)^\ad=(\su{0}^\MP)^\ad \su{0}^\ad=(\su{0}^\ad)^\MP \su{0}^\ad=\su{0}^\MP \su{0}\). Taking additionally into account \eqref{Dl} and \eqref{Dr}, we finally obtain the asserted equation.
\eproof

\bremal{R1339}
 Let \(\alpha\in\C\), let \(\kappa\in\NO\cup\set{+\infty}\), and let \(\seq{\su{j}}{j}{0}{\kappa}\) be a sequence of complex \tpqa{matrices} with \tsplusalphat{} \(\seq{t_j}{j}{0}{\kappa}\). In view of \eqref{Dl}, \eqref{Dr}, \eqref{[+]_0}, \rrema{R1456}, \eqref{sR-Rs}, and \eqref{R*s-sR*}, then one can easily see that, for all \(m\in\mn{0}{\kappa}\), the matrices
 \begin{align}\label{D[+]}
  \Dlu{m}^\splusalpha&\defg\Dluo{m}{t}&
  &\text{and}&
  \Dru{m}^\splusalpha&\defg\Druo{m}{t}
 \end{align}
 \index{d@$\Dlu{m}^\splusalpha$}\index{d@$\Dru{m}^\splusalpha$}can be represented via
 \begin{align*}
  \Dlu{m}^\splusalpha
  &=\lek\Rpua{m}{\alpha}\rek(\Iu{m+1}\kp\su{0})\Ssu{m}^\rez+\lek\Iu{m+1}\kp(\Ip-\su{0}\su{0}^\MP)\rek\\
 \shortintertext{and}
  \Dru{m}^\splusalpha
  &=\SSu{m}^\rez(\Iu{m+1}\kp\su{0})\lek\Rqua{m}{\ko\alpha}\rek^\ad+\lek\Iu{m+1}\kp(\Iq-\su{0}^\MP\su{0})\rek.
 \end{align*}
\erema

We now turn our attention to several block \tHankel{} matrices built from the \tseinsalpha. Let \(\alpha\in\C\), let \(\kappa\in\N\cup\set{+\infty}\), and let \(\seq{\su{j}}{j}{0}{\kappa}\) be a sequence of complex \tpqa{matrices}. Then, let
\bgl{H[1]}
 \Hu{n}^\seinsalpha
 \defg\matauuo{\su{j+k}^\seinsalpha}{j,k}{0}{n}
\eg
\index{h@$\Hu{n}^\seinsalpha$}for all \(n\in\NO\) with \(2n\leq\kappa-1\) and let
\bgl{K[1]}
 \Ku{n}^\seinsalpha
 \defg\matauuo{\su{j+k+1}^\seinsalpha}{j,k}{0}{n}
\eg
\index{k@$\Ku{n}^\seinsalpha$}for all \(n\in\NO\) with \(2n+1\leq\kappa-1\).

\bremal{R0937}
 Let \(\alpha\in\C\), let \(n\in\NO\), and let \(\seq{\su{j}}{j}{0}{2n+1}\) be a sequence of complex \tpqa{matrices}. Then because of \eqref{H[1]}, \rdefi{D1059}, and \eqref{H(1)}, we have \(\Hu{n}^\seinsalpha=(\Iu{n+1}\kp\su{0})\Hu{n}^\shortalpha(\Iu{n+1}\kp\su{0})\).
\erema

The following result contains interesting links between the block \tHankel{} matrices \(\Hu{n}^\seinsalpha\) and \(\Hau{n}\) introduced via \eqref{H[1]} and \eqref{Ha}, respectively.
\blemml{L1548}
 Let \(\alpha\in\C\), let \(n\in\NO\), and let \(\seq{\su{j}}{j}{0}{2n+1}\in\Dtpqu{2n+1}\). Let the matrices \(\Thetauu{n}{2n+1}\), \(\Dlu{n}^\splusalpha\), and \(\Dru{n}^\splusalpha\) be given via \eqref{Xi}, \eqref{D[+]}, \eqref{Dl}, and \eqref{Dr}, respectively. Then:
 \benui
  \il{L1548.a}
  \begin{align}
  \label{L1548.A} 
   \Hu{n}^\seinsalpha
   &=\lek\Rpua{n}{\alpha}\rek(\Iu{n+1}\kp\su{0})\Ssu{n}^\MP\Hau{n}\SSu{n}^\MP(\Iu{n+1}\kp\su{0})\lek\Rqua{n}{\ko\alpha}\rek^\ad\\
  \shortintertext{and}
  \label{L1548.B}
   \Hu{n}^\seinsalpha
   &=\Dlu{n}^\splusalpha(\Hau{n}-\Thetauu{n}{2n+1})\Dru{n}^\splusalpha.
  \end{align}
  \il{L1548.b} \(\rank(\Hu{n}^\seinsalpha)=\rank(\Hau{n}-\Thetauu{n}{2n+1})\).
  \il{L1548.c} If \(p=q\), then \(\det(\Hu{n}^\seinsalpha)=(\det\su{0})(\det\su{0})^\MP\det\Hau{n}\) and \(\det(\Hu{n}^\seinsalpha)=\det(\Hau{n}-\Thetauu{n}{2n+1})\).
 \eenui
\elemm
\bproof
 \eqref{L1548.a} Using \rrema{R0937}, \rlemp{L1513}{L1513.a}, \eqref{sR-Rs}, and \eqref{R*s-sR*}, then \eqref{L1548.A} follows. Since
 \(\seq{\su{j}}{j}{0}{2n+1}\) belongs to \(\Dtpqu{2n+1}\), \rdefi{D1459} yields \(\seq{\su{j}}{j}{0}{2n}\in\Dpqu{2n}\). Hence, according to \rprop{P1256}, we have
 \begin{align}\label{L1548.2}
  \Ssu{k}^\rez&=\MoPen{ \Ssu{k} }&
  &\text{and}&
  \Rezip{ \SSu{k} }&=\MoPen{ \SSu{k} }
 \end{align}
 for all \(k\in\mn{0}{2n}\). From \rdefi{D1658} and \rrema{R1631} we also see that
 \begin{align}\label{L1548.1}
  \su{0}\su{0}^\MP\su{j}&=\su{j}&
  &\text{and}&
  \su{j}\su{0}^\MP\su{0}&=\su{j}
 \end{align}
 hold true for all \(j\in\mn{0}{2n}\). In view of \eqref{Ha}, \eqref{Hs}, \eqref{s_a}, and \eqref{Xi}, the equations in \eqref{L1548.1} immediately imply
 \[
  \lek\Iu{n+1}\kp(\Ip-\su{0}\su{0}^\MP)\rek(\Hau{n}-\Thetauu{n}{2n+1})
  =\Ouu{(n+1)p}{(n+1)q}
 \]
 and
 \[
  (\Hau{n}-\Thetauu{n}{2n+1})\lek\Iu{n+1}\kp(\Iq-\su{0}^\MP\su{0})\rek
  =\Ouu{(n+1)p}{(n+1)q}.
 \]
 Consequently, \rrema{R1339} yields
 \begin{multline}\label{L1548.4}
  \Dlu{n}^\splusalpha(\Hau{n}-\Thetauu{n}{2n+1})\Dru{n}^\splusalpha\\
  =\lek\Rpua{n}{\alpha}\rek(\Iu{n+1}\kp\su{0})\Rezip{\Ssu{n}}(\Hau{n}-\Thetauu{n}{2n+1})\Rezip{\SSu{n}}(\Iu{n+1}\kp\su{0})\lek  \Rqua{n}{\ko\alpha}\rek^\ad.
 \end{multline}
 Taking \eqref{Srez}, \eqref{S}, \rdefi{D1430}, and \eqref{Xi} into account, we get \(\Rezip{\Ssu{n}}\Thetauu{n}{2n+1}\Rezip{\SSu{n}}=\Ouu{(n+1)q}{(n+1)p}\), whereas \eqref{L1548.2} for \(k=n\) shows that \(\Rezip{\Ssu{n}}=\MoPen{\Ssu{n}}\) and \(\Rezip{\SSu{n}}=\MoPen{\SSu{n}}\) hold true. Thus, the expression on the right-hand side of \eqref{L1548.4} coincides with the expression on the right-hand side of \eqref{L1548.A}. Hence, \eqref{L1548.B} is proved as well.

 \eqref{L1548.b} This follows from \eqref{L1548.B}, \eqref{D[+]}, and \rrema{R1247}.

 \eqref{L1548.c} From \eqref{Rb} we know that \(\det[\Rpua{n}{\alpha}]=1=\det([\Rqua{n}{\ko\alpha}]^\ad)\). Equations~\eqref{L1548.2} and \eqref{Srez} and \rdefi{D1430} yield
 \[
  \det\lek(\Iu{n+1}\kp\su{0})\MoPen{\Ssu{n}}\rek
  =\det(\Iu{n+1}\kp\su{0})\det(\Rezip{\Ssu{n}})
  =(\det\su{0})^{n+1}\lek\det(\su{0}^\MP )\rek^{n+1}
 \]
 and, similarly
 \[
  \det\lek\MoPen{\SSu{n}}(\Iu{n+1}\kp\su{0})\rek
  =\lek\det(\su{0}^\MP)\rek^{n+1}(\det\su{0})^{n+1}.
 \]
 Since
 \[
  (\det\su{0})^{2n + 2}\lek\det(\su{0}^\MP)\rek^{2n+2}
  =(\det\su{0})\MoPen{(\det\su{0})}
 \]
 is true, equation~\eqref{L1548.A} then implies the first equation stated in~\eqref{L1548.c}. \rrema{R1247} and \eqref{D[+]} show that \(\det\Dlu{n}^\splusalpha=1=\det\Dru{n}^\splusalpha\). Thus, \eqref{L1548.B} shows that the second equation stated in \rpart{L1548.c} is also true.
\eproof

\blemml{L1545}
 Let \(\alpha\in\C\), let \(n\in\N\), and let \(\seq{\su{j}}{j}{0}{2n}\in\Dtpqu{2n}\). Let the matrices \(\LLu{n}\), \(\Thetauu{n-1}{2n}\), \(\Dlu{n-1}\), and \(\Dru{n-1}\) be given via \eqref{LL}, \eqref{Xi}, \eqref{Dl}, and \eqref{Dr}, respectively. Then:
 \benui
  \il{L1545.a}
  \begin{align}
  \label{L1545.A}
   -\alpha\Hu{n-1}^\seinsalpha+\Ku{n-1}^\seinsalpha
   &=(\Iu{n}\kp\su{0})\Ssu{n-1}^\MP\LLu{n}\SSu{n-1}^\MP(\Iu{n}\kp\su{0})\\
  \shortintertext{and}
  \label{L1545.B}
   -\alpha\Hu{n-1}^\seinsalpha+\Ku{n-1}^\seinsalpha
   &=\Dlu{n-1}(\LLu{n}-\Thetauu{n-1}{2n})\Dru{n-1}.
  \end{align}
  \il{L1545.b} \(\rank(-\alpha\Hu{n-1}^\seinsalpha+\Ku{n-1}^\seinsalpha)=\rank(\LLu{n}-\Thetauu{n-1}{2n})\).
  \il{L1545.c} If \(p=q\), then
  \begin{align}
  \label{L1545.C}
   \det(-\alpha\Hu{n-1}^\seinsalpha+\Ku{n-1}^\seinsalpha)
   &=(\det\su{0})(\det\su{0})^\MP\det\LLu{n}\\
  \shortintertext{and}
  \label{L1545.D}
   \det(-\alpha\Hu{n-1}^\seinsalpha+\Ku{n-1}^\seinsalpha)
   &=\det(\LLu{n}-\Thetauu{n-1}{2n}).
  \end{align}
 \eenui
\elemm
\bproof
 \eqref{L1545.a} Equation~\eqref{L1545.A} immediately follows from \eqref{H[1]}, \eqref{K[1]}, \rdefi{D1059}, \eqref{H(1)}, \eqref{K(1)}, and \rlemp{L1104}{L1104.a}. Since \(n \in \N\) and \(\rFolge{s}{j}{0}{2n} \in \cDwtdd{p}{q}{2n}\) hold, we have \eqref{L1548.1} for all \(j\in\mn{0}{2n-1}\) and \(\rFolge{s}{j}{0}{n}\in\cDnwtdd{p}{q}{n}\). Thus \rprop{P1256} yields \eqref{L1548.2} for all \(k\in\mn{0}{n}\). From \eqref{Srez}, \eqref{S}, \rdefi{D1430}, and \eqref{Xi}  we also see that \(\Ssu{n-1}^\rez\Thetauu{n-1}{2n}\SSu{n}^\rez=0\) is true. Using \eqref{LL}, \eqref{G}, \eqref{yz}, \eqref{Xi}, and \eqref{L1548.1}, we get \([\Iu{n}\kp(\Ip-\su{0}\su{0}^\MP)](\LLu{n}-\Thetauu{n-1}{2n})=\Ouu{np}{nq}\) and \((\LLu{n}-\Thetauu{n-1}{2n})[\Iu{n}\kp(\Iq-\su{0}^\MP\su{0})]=\Ouu{np}{nq}\). Thus, because of \eqref{L1548.2}, from \eqref{Dl}, \eqref{Dr}, and \eqref{L1545.A} then \eqref{L1545.B} follows.

 \eqref{L1545.b}  This follows from \eqref{L1545.B} and \rrema{R1247}.

 \eqref{L1545.c} Because of \eqref{L1548.2}, \eqref{Srez}, and \rdefi{D1430}, we have
 \bsp
  \det(\Iu{n}\kp\su{0})\det(\MoPen{\Ssu{n-1}})\det(\MoPen{\SSu{n-1}})\det(\Iu{n}\kp\su{0})
  &=(\det\su{0})^n\lek\det(\su{0}^\MP)\rek^n\lek\det(\su{0}^\MP)\rek^n(\det\su{0})^n\\
  &=(\det\su{0})\MoPen{(\det\su{0})}.
 \esp
 Using this, from \eqref{L1545.A}, we get \eqref{L1545.C}. Equation~\eqref{L1545.D} follows from \eqref{L1545.B} and \rrema{R1247}.
\eproof

 In~\zitas{MR1807884,MR2038751} Chen and Hu treat the truncated matricial Stieltjes moment problem (\(\alpha=0\) in our setting). They introduce a transformation \(\Gamma_k\) which maps a sequence of length \(k+1\) of complex square matrices to a sequence of length \(k\) of complex square matrices. This transformation is defined via~\zitaa{MR1807884}{formula~(9)} (see also~\zitaa{MR2038751}{formula~(3.2)}). A closer look on~\zitaa{MR1807884}{formula~(9)} shows that this identity is essentially \eqref{L1545.A} for \(\alpha=0\) and \(p=q\). Therefore, the transformation \(\Gamma_m\) coincides for sequences from \(\Dtqqu{m}\) with the first \tascht{0}ation. To describe the respective solution sets, Chen and Hu reduce the length of the given sequence of prescribed moments in each step by 2, using the transformation \(\Gamma_{m-1}\Gamma_m\) (see~\zitaa{MR1807884}{formula~(12)} and~\zitaa{MR2038751}{formula~(3.7)}).

 Now we state the main result of this section.
\btheol{P1546}
 Let \(\alpha\in\R\), let \(\kappa\in\N\cup\set{+\infty}\), and let \(\seq{\su{j}}{j}{0}{\kappa}\) be a sequence of complex \tqqa{matrices}. Then:
 \benui
  \il{P1546.a} If \(\seq{\su{j}}{j}{0}{\kappa}\in\Kggqualpha{\kappa}\), then \(\seq{\su{j}^\seinsalpha}{j}{0}{\kappa-1}\in\Kggqualpha{\kappa-1}\).
  \il{P1546.b} If \(\seq{\su{j}}{j}{0}{\kappa}\in\Kggequalpha{\kappa}\), then \(\seq{\su{j}^\seinsalpha}{j}{0}{\kappa-1}\in\Kggequalpha{\kappa-1}\).
  \il{P1546.c} If \(\seq{\su{j}}{j}{0}{\kappa}\in\Kgqualpha{\kappa}\), then \(\seq{\su{j}^\seinsalpha}{j}{0}{\kappa-1}\in\Kgqualpha{\kappa-1}\).
  \il{P1546.d} If \(m\in\mn{0}{\kappa}\) and \(\seq{\su{j}}{j}{0}{\kappa}\in\Kggdoqualpha{m}{\kappa}\), then \(\seq{\su{j}^\seinsalpha}{j}{0}{\kappa-1}\in\Kggdoqualpha{\max\set{0,m-1}}{\kappa-1}\).
  \il{P1546.e} If \(\seq{\su{j}}{j}{0}{\kappa}\in\Kggdqualpha{\kappa}\), then \(\seq{\su{j}^\seinsalpha}{j}{0}{\kappa-1}\in\Kggdqualpha{\kappa-1}\).
 \eenui
\etheo
\bproof
 \eqref{P1546.a} Let \(\seq{\su{j}}{j}{0}{\kappa}\in\Kggqkappaalpha\). In view of \rlemp{L1738}{L1738.a}, then \(\su{0}^\ad=\su{0}\). Thus, \rdefi{D1059}, \rpropp{P1542}{P1542.a}, and \rrema{R1303} yield \(\seq{\su{j}^\seinsalpha}{j}{0}{\kappa-1}\in\Kggqualpha{\kappa-1}\).

 \eqref{P1546.b} Let \(\seq{\su{j}}{j}{0}{\kappa}\in\Kggeqkappaalpha\). Then \(\seq{\su{j}}{j}{0}{\kappa}\in\Kggqkappaalpha\). \rlemp{L1738}{L1738.a} shows that \(\su{0}^\ad=\su{0}\). Thus, \rdefi{D1059}, \rpropp{P1542}{P1542.b}, and \rrema{R1303} yield \(\seq{\su{j}^\seinsalpha}{j}{0}{\kappa-1}\in\Kggequalpha{\kappa-1}\).

 \eqref{P1546.c} Let \(\seq{\su{j}}{j}{0}{\kappa}\in\Kgqkappaalpha\). Then, in view of \eqref{H}, the matrix \(\su{0}\) is \tpH{} and hence \tH{} and \tns{}. Thus, \rdefi{D1059}, \rpropp{P1542}{P1542.c}, and~\zitaa{MR3014201}{\crema{2.15}} yield \(\seq{\su{j}^\seinsalpha}{j}{0}{\kappa-1}\in\Kgqualpha{\kappa-1}\).

 \eqref{P1546.d} Let \(m\in\mn{0}{\kappa}\) and let \(\seq{\su{j}}{j}{0}{\kappa}\in\Kggdoqualpha{m}{\kappa}\). Because of \eqref{Kggcdm}, then \(\seq{\su{j}}{j}{0}{\kappa}\in\Kggqkappaalpha\). Hence, in view of~\eqref{P1546.a}, we have \(\seq{\su{j}^\seinsalpha}{j}{0}{\kappa-1}\in\Kggqualpha{\kappa-1}\) and, because of \rlemp{L1738}{L1738.a}, furthermore \(\su{0}^\ad=\su{0}\). The application of \rpropp{P1542}{P1542.d} yields \(\seq{\su{j}^\shortalpha}{j}{0}{\kappa-1}\in\Kggdoqualpha{\max\set{0,m-1}}{\kappa-1}\) which, in view of \eqref{Kggcdm}, implies \(\seq{\su{j}^\shortalpha}{j}{0}{\max\set{0,m-1}}\in\Kggdqualpha{\max\set{0,m-1}}\). Using \rrema{R1256} and \(\su{0}^\ad=\su{0}\) we obtain \(\Kerna{\su{0}}\subseteq\bigcap_{j=0}^{\max\set{0,m-1}-1}\Kerna{\su{j}^\shortalpha}\) in the case \(\max\set{0,m-1}\geq1\). Thus, \rdefi{D1059} and~\zitaa{MR3014201}{\clemm{5.7}} yield \(\seq{\su{j}^\seinsalpha}{j}{0}{\max\set{0,m-1}}\in\Kggdqualpha{\max\set{0,m-1}}\). Hence, in view of \eqref{Kggcdm}, we obtain \(\seq{\su{j}^\seinsalpha}{j}{0}{\kappa-1}\in\Kggdoqualpha{\max\set{0,m-1}}{\kappa-1}\).
 
 \eqref{P1546.e} Let \(\seq{\su{j}}{j}{0}{\kappa}\in\Kggdqualpha{\kappa}\). Then \(\seq{\su{j}}{j}{0}{\kappa}\in\Kggqkappaalpha\). Hence, in view of \rlemp{L1738}{L1738.a}, we have \(\su{0}^\ad=\su{0}\). The application of \rpropp{P1542}{P1542.e} yields \(\seq{\su{j}^\shortalpha}{j}{0}{\kappa-1}\in\Kggdqualpha{\kappa-1}\). Using \rrema{R1256} and \(\su{0}^\ad=\su{0}\), we obtain \(\Kerna{\su{0}}\subseteq\bigcap_{j=0}^{\kappa-2}\Kerna{\su{j}^\shortalpha}\) in the case \(\kappa-1\geq1\). Thus, \rdefi{D1059} and~\zitaa{MR3014201}{\clemm{5.7}} yield \(\seq{\su{j}^\seinsalpha}{j}{0}{\kappa-1}\in\Kggdqualpha{\kappa-1}\).
\eproof

\section{A \tSchur{}-type algorithm for sequences of complex matrices}\label{S1401}
The \tascht{\alpha} introduced in \rsect{S1649} generates in a natural way a corresponding algorithm for (finite or infinite) sequences of complex \tpqa{matrices}. The investigation of this algorithm is the central point of this section. First we are going to extend \rdefi{D1059}.
\bdefil{D1632}
 Let \(\alpha\in\C\), let \(\kappa\in\NO\cup\set{+\infty}\), and let \(\seq{\su{j}}{j}{0}{\kappa}\) be a sequence of complex \tpqa{matrices}. The sequence \(\seq{\su{j}^\sntaa{0}{\alpha}}{j}{0}{\kappa}\) given by \(\su{j}^\sntaa{0}{\alpha}\defg\su{j}\) for all \(j\in\mn{0}{\kappa}\) is called the \emph{\(0\)\nobreakdash-th \tlasnta{\alpha}{\seq{\su{j}}{j}{0}{\kappa}}}. In the case \(\kappa\geq1\), for all \(k\in\mn{1}{\kappa}\), the \emph{\(k\)\nobreakdash-th \tlasnt{\alpha} \(\seq{\su{j}^\sntaa{k}{\alpha}}{j}{0}{\kappa-k}\)\index{$\seq{\su{j}^\sntaa{k}{\alpha}}{j}{0}{\kappa-k}$} of \(\seq{\su{j}}{j}{0}{\kappa}$} is recursively defined by \(\su{j}^\sntaa{k}{\alpha}\defg t_j^\seinsalpha\) for all \(j\in\mn{0}{\kappa-k}\), where \(\seq{t_j}{j}{0}{\kappa-(k-1)}\) denotes the \((k-1)\)\nobreakdash-th \tlasnta{\alpha}{\seq{\su{j}}{j}{0}{\kappa}}.
\edefi

\bremal{R1501}
 Let \(\alpha\in\C\), let \(\kappa\in\NO\cup\set{+\infty}\), let \(\seq{\su{j}}{j}{0}{\kappa}\) be a sequence of complex \tpqa{matrices}, and let \(k\in\mn{0}{\kappa}\). Denote by \(\seq{\su{j}^\sntaa{k}{\alpha}}{j}{0}{\kappa-k}\) the \(k\)\nobreakdash-th \tlasnta{\alpha}{\seq{\su{j}}{j}{0}{\kappa}}. From \rdefi{D1632} and \rrema{R1012} one can see then that, for all \(m\in\mn{k}{\kappa}\), the sequence \(\seq{\su{j}^\sntaa{k}{\alpha}}{j}{0}{m-k}\) depends only on the matrices \(\su{0},\su{1},\dotsc,\su{m}\) and, hence, it coincides with the \(k\)\nobreakdash-th \tlasnta{\alpha}{\seq{\su{j}}{j}{0}{m}}. In particular, the sequence \(\seq{\su{j}^\saalpha{k}}{j}{0}{0}\) depends only on the matrices \(\su{0},\su{1},\dotsc,\su{k}\) and is the \tsaalphaa{k}{\seq{\su{j}}{j}{0}{k}}.
\erema

\bremal{R1510}
 Let \(\alpha\in\C\), let \(\kappa\in\NO\cup\set{+\infty}\), and let \(\seq{\su{j}}{j}{0}{\kappa}\) be a sequence of complex \tpqa{matrices}. From \rdefi{D1632} then it is immediately obvious that, for all \(k\in\mn{0}{\kappa}\) and all \(l\in\mn{0}{\kappa-k}\), the \((k+l)\)\nobreakdash-th \tlasnt{\alpha} \(\seq{\su{j}^\sntaa{k+l}{\alpha}}{j}{0}{\kappa-(k+l)}\) of \(\seq{\su{j}}{j}{0}{\kappa}\) is exactly the \(l\)\nobreakdash-th \tlasnt{\alpha} of the \(k\)\nobreakdash-th \tlasnt{\alpha} \(\seq{\su{j}^\sntaa{k}{\alpha}}{j}{0}{\kappa-k}\) of \(\seq{\su{j}}{j}{0}{\kappa}\).
\erema

\blemml{L1627}
 Let \(\alpha\in\C\), let \(\kappa\in\N\cup\set{+\infty}\), let \(\seq{\su{j}}{j}{0}{\kappa}\) be a sequence of complex \tpqa{matrices}, and let \(k\in\mn{0}{\kappa-1}\). Then \(\bigcup_{l=1}^{\kappa-k}[\bigcup_{j=0}^{\kappa-(k+l)}\Bilda{\su{j}^\saalpha{k+l}}]\subseteq\Bilda{\su{0}^\saalpha{k}}\) and \(\Kerna{\su{0}^\saalpha{k}}\subseteq\bigcap_{l=1}^{\kappa-k}[\bigcap_{j=0}^{\kappa-(k+l)}\Kerna{\su{j}^\saalpha{k+l}}]\).
\elemm
\bproof
 Let \(l\in\mn{1}{\kappa-k}\). Then \(\kappa-(k+l-1)\geq1\). Hence, having in mind \rdefi{D1632}, the application of \rrema{R1040} to the sequence \(\seq{\su{j}^\sntaalpha{k+l-1}}{j}{0}{\kappa-(k+l-1)}\) yields
 \begin{align*}
  \bigcup_{j=0}^{\kappa-(k+l)}\Bilda{\su{j}^\sntaalpha{k+l}}&\subseteq\Bilda{\su{0}^\sntaalpha{k+l-1}}&
  &\text{and}&
  \Kerna{\su{0}^\sntaalpha{k+l-1}}&\subseteq\bigcap_{j=0}^{\kappa-(k+l)}\Kerna{\su{j}^\sntaalpha{k+l}}.
 \end{align*}
 Thus, it is sufficient to show \(\Bilda{\su{0}^\sntaalpha{k+l-1}}\subseteq\Bilda{\su{0}^\sntaalpha{k}}\) and \(\Kerna{\su{0}^\sntaalpha{k}}\subseteq\Kerna{\su{0}^\sntaalpha{k+l-1}}\). However, keeping in mind that the case \(l=1\) is trivial, these inclusions follow by induction using \rdefi{D1632} and \rrema{R1040}.
\eproof

\bremal{R1116}
 Let \(\alpha\in\C\), let \(\kappa\in\N\cup\set{+\infty}\), and let \((s_j)_{j=0}^\kappa\) be a sequence of complex \tpqa{matrices}. In view of \rlemm{L1627}, for all \(k\in\mn{0}{\kappa-1}\), all \(l\in\mn{1}{\kappa-k}\), and all \(j\in\mn{0}{\kappa-(k+l)}\), then \(\Bilda{s_j^\sntaa{k+l}{\alpha}}\subseteq\Bilda{s_0^\sntaa{k}{\alpha}}\) and \(\Kerna{s_0^\sntaa{k}{\alpha}}\subseteq\Kerna{s_j^\sntaa{k+l}{\alpha}}\) and, in particular, \(\rank(s_j^\sntaa{k+l}{\alpha})\leq\rank(s_0^\sntaa{k}{\alpha})\).
\erema

\blemml{L1643}
 Let \(\alpha\in\C\), let \(\kappa\in\NO\cup\set{+\infty}\), let \(\seq{\su{j}}{j}{0}{\kappa}\) be a sequence of complex \tpqa{matrices}, and let \(k\in\mn{0}{\kappa}\). Then:
 \benui
  \il{L1643.a} If \(\gamma\in\C\), then \(\seq{(\gamma\su{j})^\saalpha{k}}{j}{0}{\kappa-k}=\seq{\gamma\su{j}^\saalpha{k}}{j}{0}{\kappa-k}\) and \(\seq{(\gamma^j\su{j})^\saalpha{k}}{j}{0}{\kappa-k}=\seq{\gamma^{j+k}\su{j}^\saalpha{k}}{j}{0}{\kappa-k}\).
  \il{L1643.b} If \(m,n\in\N\), \(U\in\Coo{m}{p}\) with \(U^\ad U=\Ip\), and \(V\in\Coo{q}{n}\) with \(VV^\ad=\Iq\), then \(\seq{(U\su{j}V)^\saalpha{k}}{j}{0}{\kappa-k}=\seq{U\su{j}^\saalpha{k}V}{j}{0}{\kappa-k}\).
  \il{L1643.c} If the sequence \(\seq{t_j}{j}{0}{\kappa}\) is given by \(t_j\defg\su{j}^\ad\) for all \(j\in\mn{0}{\kappa}\), then \((\su{j}^\saalpha{k})^\ad=t_j^\saa{k}{\ko\alpha}\) for all \(j\in\mn{0}{\kappa-k}\).
 \eenui
\elemm
\bproof
 Use \rdefi{D1632} and \rlemm{L1228}.
\eproof

\bremal{R1320}
 Let \(\alpha\in\R\), let \(\kappa\in\NO\cup\set{+\infty}\), let \((s_j)_{j=0}^{\kappa}\) be a sequence of \tH{} complex \tqqa{matrices}, and let \(k\in\mn{0}{\kappa}\). In view of \rlemp{L1643}{L1643.c}, then \((\su{j}^\saalpha{k})^\ad=\su{j}^\saalpha{k}\) for all \(j\in\mn{0}{\kappa-k}\).
\erema

\bremal{R1642}
 Let \(\alpha\in\C\), let \(\kappa\in\NO\cup\set{+\infty}\), let \(k\in\mn{0}{\kappa}\), and let \(n\in\N\). For all \(m\in\mn{1}{n}\), let \(p_m,q_m\in\N\), and let \(\seq{\su{j}^{(m)}}{j}{0}{\kappa}\) be a sequence of complex \taaa{p_m}{q_m}{matrices} with \tsaalpha{k} \(\seq{t_j^{(m)}}{j}{0}{\kappa-k}\). In view of \rdefi{D1632} and \rrema{R1229}, then \(\seq{\diag\matauuo{t_j^{(m)}}{m}{1}{n}}{j}{0}{\kappa-k}\) is exactly the \tsaalphaa{k}{\seq{\diag\matauuo{\su{j}^{(m)}}{m}{1}{n}}{j}{0}{\kappa}}.
\erema

\bremal{R1323}
 Let \(\alpha\in\C\), let \(\kappa\in\N\cup\set{+\infty}\), let \(\seq{\su{j}}{j}{0}{\kappa}\) and \(\seq{t_j}{j}{0}{\kappa}\) be sequences of complex \tpqa{matrices} such that \(\su{0}\su{0}^\MP\su{j}\su{0}^\MP\su{0}=t_0t_0^\MP t_jt_0^\MP t_0\) for all \(j\in\mn{0}{\kappa}\), and let \(k\in\mn{1}{\kappa}\). In view of \rdefi{D1632} and \rlemm{L1411}, then \(\su{j}^\saalpha{k}=t_j^\saalpha{k}\) for all \(j\in\mn{0}{\kappa-k}\).
\erema

 Now we are going to study the \tSchur{}-type algorithm introduced in \rdefi{D1632} for sequences belonging to the class \(\Kggqkappaalpha\) and its distinguished subclasses.
 
 Now we state the main result of this section.
\btheol{P1410}
 Let \(\alpha\in\R\), let \(\kappa\in\NO\cup\set{+\infty}\), let \(\seq{\su{j}}{j}{0}{\kappa}\) be a sequence of complex \tqqa{matrices}, and let \(k\in\mn{0}{\kappa}\). Then:
 \benui
  \il{P1410.a} If \(\seq{\su{j}}{j}{0}{\kappa}\in\Kggqualpha{\kappa}\), then \(\seq{\su{j}^\sntaa{k}{\alpha}}{j}{0}{\kappa-k}\in\Kggqualpha{\kappa-k}\).
  \il{P1410.b} If \(\seq{\su{j}}{j}{0}{\kappa}\in\Kggequalpha{\kappa}\), then \(\seq{\su{j}^\sntaa{k}{\alpha}}{j}{0}{\kappa-k}\in\Kggequalpha{\kappa-k}\).
  \il{P1410.c} If \(\seq{\su{j}}{j}{0}{\kappa}\in\Kgqualpha{\kappa}\), then \(\seq{\su{j}^\sntaa{k}{\alpha}}{j}{0}{\kappa-k}\in\Kgqualpha{\kappa-k}\).
  \il{P1410.d} If \(m\in\mn{0}{\kappa}\) and \(\seq{\su{j}}{j}{0}{\kappa}\in\Kggdoqualpha{m}{\kappa}\), then \(\seq{\su{j}^\sntaa{k}{\alpha}}{j}{0}{\kappa-k}\in\Kggdoqualpha{\max\set{0,m-k}}{\kappa-k}\).
  \il{P1410.e} If \(\seq{\su{j}}{j}{0}{\kappa}\in\Kggdqualpha{\kappa}\), then \(\seq{\su{j}^\sntaa{k}{\alpha}}{j}{0}{\kappa-k}\in\Kggdqualpha{\kappa-k}\).
 \eenui
\etheo
\bproof
 In view of \rdefi{D1632}, the case \(k=0\) is trivial. Thus, there is an \(l\in\mn{0}{\kappa}\) such that the stated implications hold true for all \(k\in\mn{0}{l}\). If \(l=\kappa\), then the proof is complete. Assume that \(l<\kappa\). Then \rdefi{D1632} and \rtheo{P1546} show that the stated implications also hold true for \(k=l+1\). Thus, the assertion inductively follows.
\eproof

\bremal{R1515}
 Let \(\alpha\in\R\), let \(\kappa\in\NO\cup\set{+\infty}\), and let \(\seq{\su{j}}{j}{0}{\kappa}\in\Kggqualpha{\kappa}\). In view of \rthmp{P1410}{P1410.a} and \rlemp{L1738}{L1738.b}, then \(\su{0}^\sntaa{k}{\alpha}\in\Cggq\) for all \(k\in\mn{0}{\kappa}\).
\erema

\section{On the \hrasp{\alpha} against to the background of the \tSchur{}-type algorithm}\label{S1402}
In this section, we mainly concentrate our considerations to the class \(\Kggqkappaalpha\) or one of its distinguished subclasses \(\Kggeqkappaalpha\), \(\Kgqkappaalpha\), and \(\Kggdqkappaalpha\). In this situation, we will recognize that the \traspa{\alpha}{\seq{\su{j}}{j}{0}{\kappa}} introduced in \rdefi{D1021} is generated by the \tSchur{}-type algorithm applied to \(\seq{\su{j}}{j}{0}{\kappa}\). The following considerations are aimed at establishing block \(LDU\)~decompositions of several block Hankel matrices. In order to realize this goal, we first derive some matrix identities.

\blemml{L1705}
 Let \(\alpha\in\C\), let \(n\in\N\), and let \(\seq{\su{j}}{j}{0}{2n}\in\Dtpqu{2n}\). Let \(\Dlu{n}\) and \(\Dru{n}\) be given via \eqref{Dl} and \eqref{Dr}, respectively, and let \(\Thetauu{n}{2n}\) be given via \eqref{Xi}. Then
 \bgl{L1705.A}
  \Dlu{n}\Hu{n}\Dru{n}
  =\diaga{\su{0},-\alpha\Hu{n-1}^\seinsalpha+\Ku{n-1}^\seinsalpha}+\Thetauu{n}{2n}.
 \eg
\elemm
\bproof
 From \rlemm{L1611} we get \eqref{L1611.A}. In view of \rdefi{D1459}, we have \(\seq{\su{j}}{j}{0}{2n-1}\in\Dpqu{2n-1}\). According to \rprop{P1256}, thus \(\Ssu{n}^\MP=\Ssu{n}^\rez\) and \(\SSu{n}^\MP=\SSu{n}^\rez\). Furthermore, \(\su{0}^\rez=\su{0}^\MP\) by \rdefi{D1430}. Taking additionally into account \eqref{L1611.A}, \eqref{H(1)}, \eqref{K(1)}, \rdefi{D1059}, \eqref{H[1]}, and \eqref{K[1]} we obtain
 \[
  (\Iu{n+1}\kp\su{0})\Ssu{n}^\rez\Hu{n}\SSu{n}^\rez(\Iu{n+1}\kp\su{0})
  =\diaga{\su{0},-\alpha\Hu{n-1}^\seinsalpha+\Ku{n-1}^\seinsalpha}.
 \]
 In view of \(\seq{\su{j}}{j}{0}{2n-1}\in\Dpqu{2n-1}\), we get from \rdefi{D1658} and \rpartss{R1631.c}{R1631.b} of \rrema{R1631} furthermore \(\su{0}\su{0}^\MP\su{j}=\su{j}\) and \(\su{j}\su{0}^\MP\su{0}=\su{j}\) for all \(j\in\mn{0}{2n-1}\). Because of \eqref{H}, it follows \([\Iu{n+1}\kp(\Ip-\su{0}\su{0}^\MP)]\Hu{n}=\diag\brow\Ouu{np}{nq},\su{2n}-\su{0}\su{0}^\MP\su{2n}\erow\) and \(\Hu{n}[\Iu{n+1}\kp(\Iq-\su{0}^\MP\su{0})]=\diag\brow\Ouu{np}{nq},\su{2n}-\su{2n}\su{0}^\MP\su{0}\erow\).
 Hence,
 \begin{multline*}
  \lek\Iu{n+1}\kp(\Ip-\su{0}\su{0}^\MP)\rek\Hu{n}\lek\Iu{n+1}\kp(\Iq-\su{0}^\MP\su{0})\rek\\
  =\diag\brow\Ouu{np}{nq},\su{2n}-\su{2n}\su{0}^\MP\su{0}-\su{0}\su{0}^\MP\su{2n}+\su{0}\su{0}^\MP\su{2n}\su{0}^\MP\su{0}\erow
 \end{multline*}
 and, taking into account \eqref{Srez}, \eqref{S}, and \(\su{0}^\rez=\su{0}^\MP\), we obtain furthermore
 \[
  \lek\Iu{n+1}\kp(\Ip-\su{0}\su{0}^\MP)\rek\Hu{n}\SSu{n}^\rez(\Iu{n+1}\kp\su{0})
  =\diag\brow\Ouu{np}{nq},\su{2n}\su{0}^\MP\su{0}-\su{0}\su{0}^\MP\su{2n}\su{0}^\MP\su{0}\erow
 \]
 and
 \[
  (\Iu{n+1}\kp\su{0})\Ssu{n}^\rez\Hu{n}\lek\Iu{n+1}\kp(\Iq-\su{0}^\MP\su{0})\rek
  =\diag\brow\Ouu{np}{nq},\su{0}\su{0}^\MP\su{2n}-\su{0}\su{0}^\MP\su{2n}\su{0}^\MP\su{0}\erow.
 \]
 Using \eqref{Dl}, \eqref{Dr}, and \eqref{Xi}, we get consequently \eqref{L1705.A}.
\eproof

We now turn our attention to block \tHankel{} matrices built from the \tsaalpha{k}. Let \(\alpha\in\C\), let \(\kappa\in\NO\cup\set{+\infty}\), let \(\seq{\su{j}}{j}{0}{\kappa}\) be a sequence of complex \tpqa{matrices}, and let \(k\in\mn{0}{\kappa}\). Then, let
\bgl{H[k]}
 \Hu{n}^\sntaa{k}{\alpha}
 \defg\matauuo{\su{l+m}^\sntaa{k}{\alpha}}{l,m}{0}{n}
\eg
\index{h@$\Hu{n}^\sntaa{k}{\alpha}$}for all \(n\in\NO\) with \(2n\leq\kappa-k\) and let
\bgl{K[k]}
 \Ku{n}^\sntaa{k}{\alpha}
 \defg\matauuo{\su{l+m+1}^\sntaa{k}{\alpha}}{l,m}{0}{n}
\eg
\index{k@$\Ku{n}^\sntaa{k}{\alpha}$}for all \(n\in\NO\) with \(2n+1\leq\kappa-k\).

\blemml{L1441}%
 Let \(\alpha\in\R\), let \(n\in\N\), and let \(\seq{\su{j}}{j}{0}{2n}\in\Kggquu{2n}{\alpha}\). Denote by \(\seq{t_j}{j}{0}{2n-1}\) the first \tlasnta{\alpha}{\seq{\su{j}}{j}{0}{2n}} and by \(\seq{u_j}{j}{0}{2n-1}\) the \tsplusalphata{\seq{t_j}{j}{0}{2n-1}}. Then
 \[
  \lrk\diaga{\Iq,\Dluo{n-1}{u}}\rrk\Dlu{n}\Hu{n}\Dru{n}\lrk\diaga{\Iq,\Druo{n-1}{u}}\rrk
  =\diaga{\su{0},\Hu{n-1}^\sntaa{2}{\alpha}}+\Thetauu{n}{2n}+\Thetauuo{n}{2n-1}{t}.
 \]
\elemm
\bproof
 Because of \rpropp{P1442}{P1442.b}, the sequence \(\seq{\su{j}}{j}{0}{2n}\) belongs to \(\Dtqqu{2n}\). By virtue of \rlemm{L1705}, \requa{L1705.A} holds true. In view of \eqref{Xi}, this implies
 \bgl{L1441.1}
  \Dlu{n}\Hu{n}\Dru{n}
  =\diaga{\su{0},-\alpha\Huo{n-1}{t}+\Kuo{n-1}{t}+\Thetauu{n-1}{2n}}.
 \eg
 Taking into account \rthmp{P1546}{P1546.a} and \rpropp{P1442}{P1442.c}, we see that the sequence \(\seq{t_j}{j}{0}{2n-1}\) belongs to \(\Dtqqu{2n-1}\). Thus, the application of \rlemp{L1548}{L1548.a} (, more precisely, the application of equation~\eqref{L1548.B} to the sequence \(\seq{t_j}{j}{0}{2n-1}\),) provides us then
 \[
  \Dluo{n-1}{u}(-\alpha\Huo{n-1}{t}+\Kuo{n-1}{t}-\Thetauuo{n-1}{2n-1}{t})\Druo{n-1}{u}
  =\Huo{n-1}{v},
 \]
 where the sequence \(\seq{v_j}{j}{0}{2n-2}\) is given by \(v_j\defg t_j^\seinsalpha\) for all \(j\in\mn{0}{2n-2}\). In view of \rrema{R1247} and \eqref{Xi}, we have
 \[
  \Dluo{n-1}{u}(\Thetauuo{n-1}{2n-1}{t}+\Thetauu{n-1}{2n})\Druo{n-1}{u}
  =\Thetauuo{n-1}{2n-1}{t}+\Thetauu{n-1}{2n}.
 \]
 From \rdefi{D1632} we see that \(\seq{v_j}{j}{0}{2n-2}\) is the second \tlasnta{\alpha}{\seq{\su{j}}{j}{0}{2n}} which, in view of \eqref{H[k]}, implies \(\Huo{n-1}{v}=\Hu{n-1}^\sntaa{2}{\alpha}\). Hence, we obtain
 \bgl{L1441.2}
  \begin{split}
   &\Dluo{n-1}{u}(-\alpha\Huo{n-1}{t}+\Kuo{n-1}{t}+\Thetauu{n-1}{2n})\Druo{n-1}{u}\\
   &=\Dluo{n-1}{u}(-\alpha\Huo{n-1}{t}+\Kuo{n-1}{t}-\Thetauuo{n-1}{2n-1}{t})\Druo{n-1}{u}+\Dluo{n-1}{u}(\Thetauuo{n-1}{2n-1}{t}+\Thetauu{n-1}{2n})\Druo{n-1}{u}\\
   &=\Huo{n-1}{v}+\Thetauuo{n-1}{2n-1}{t}+\Thetauu{n-1}{2n}
   =\Hu{n-1}^\sntaa{2}{\alpha}+\Thetauu{n-1}{2n}+\Thetauuo{n-1}{2n-1}{t}
  \end{split}
 \eg
 and, taking \eqref{L1441.1}, \eqref{L1441.2}, and \eqref{Xi} into account, finally
 \[
  \begin{split}
   &\lrk\diaga{\Iq,\Dluo{n-1}{u}}\rrk\Dlu{n}\Hu{n}\Dru{n}\lrk\diaga{\Iq,\Druo{n-1}{u}}\rrk\\
   &=\diagA{\su{0},\Dluo{n-1}{u}(-\alpha\Huo{n-1}{t}+\Kuo{n-1}{t}+\Thetauu{n-1}{2n})\Druo{n-1}{u}}\\
   &=\diaga{\su{0},\Hu{n-1}^\sntaa{2}{\alpha}+\Thetauu{n-1}{2n}+\Thetauuo{n-1}{2n-1}{t}}
   =\diaga{\su{0},\Hu{n-1}^\sntaa{2}{\alpha}}+\Thetauu{n}{2n}+\Thetauuo{n}{2n-1}{t}.\qedhere
  \end{split}
 \]
\eproof

\blemml{L1007}%
 Let \(\alpha\in\R\), let \(n\in\N\), and let \(\seq{\su{j}}{j}{0}{2n+1}\in\Kggquu{2n+1}{\alpha}\).  Denote by \(\seq{r_j}{j}{0}{2n+1}\) the \tsplusalphata{\seq{\su{j}}{j}{0}{2n+1}} and by \(\seq{t_j}{j}{0}{2n}\) the first \tlasnta{\alpha}{\seq{\su{j}}{j}{0}{2n+1}}. Then
 \[
  \Dluo{n}{t}\Dluo{n}{r}(-\alpha\Hu{n}+\Ku{n})\Druo{n}{r}\Druo{n}{t}
  =\diaga{\su{0}^\sntaa{1}{\alpha},-\alpha\Hu{n-1}^\sntaa{2}{\alpha}+\Ku{n-1}^\sntaa{2}{\alpha}}+\Thetauu{n}{2n+1}+\Thetauuo{n}{2n}{t}.
 \]
\elemm
\bproof
 Because of \rpropp{P1442}{P1442.c} the sequence \(\seq{\su{j}}{j}{0}{2n+1}\) belongs to \(\Dtqqu{2n+1}\). Thus, by virtue of \rlemp{L1548}{L1548.a}, we obtain then the equation \(\Hu{n}^\sntaa{1}{\alpha}=\Dlu{n}^\splusalpha(\Hau{n}-\Thetauu{n}{2n+1})\Dru{n}^\splusalpha\) which, in view of \eqref{D[+]} and \eqref{-aH+K=Ha}, implies
 \bgl{NCM47}
  \Dluo{n}{r}(-\alpha\Hu{n}+\Ku{n})\Druo{n}{r}
  =\Hu{n}^\sntaa{1}{\alpha}+\Dluo{n}{r}\Thetauu{n}{2n+1}\Druo{n}{r}.
 \eg
 According to  \rthmp{P1546}{P1546.a} and \rpropp{P1442}{P1442.b}, we have \(\seq{t_j}{j}{0}{2n}\in\Dtqqu{2n}\). Using \eqref{H[1]} and \rlemm{L1705}, we then obtain
 \bgl{NCM48}
  \Dluo{n}{t}\Hu{n}^\sntaa{1}{\alpha}\Druo{n}{t}
  =\Dluo{n}{t}\Huo{n}{t}\Druo{n}{t}
  =\diaga{t_0,-\alpha\Huo{n-1}{v}+\Kuo{n-1}{v}}+\Thetauuo{n}{2n}{t},
 \eg
 where the sequence \(\seq{v_j}{j}{0}{2n-1}\) is given by \(v_j\defg t_j^\seinsalpha\) for all \(j\in\mn{0}{2n-1}\). In view of \rrema{R1247} and \eqref{Xi}, we have
 \bgl{JSB}
  \Dluo{n}{t}\Dluo{n}{r}\Thetauu{n}{2n+1}\Druo{n}{r}\Druo{n}{t}
  =\Dluo{n}{t}\Thetauu{n}{2n+1}\Druo{n}{t}
  =\Thetauu{n}{2n+1}.
 \eg
 From \rdefi{D1632} we see that \(\seq{v_j}{j}{0}{2n-1}\) is the second \tlasnta{\alpha}{\seq{\su{j}}{j}{0}{2n+1}} which, in view of \eqref{H[k]} and \eqref{K[k]}, implies
 \bgl{LVB}
  -\alpha\Huo{n-1}{v}+\Kuo{n-1}{v}
  =-\alpha\Hu{n-1}^\sntaa{2}{\alpha}+\Ku{n-1}^\sntaa{2}{\alpha}.
 \eg
 Finally, using \eqref{NCM47}, \eqref{NCM48}, \eqref{JSB}, and \eqref{LVB} we obtain then
 \[
  \begin{split}
   \Dluo{n}{t}\Dluo{n}{r}(-\alpha\Hu{n}+\Ku{n})\Druo{n}{r}\Druo{n}{t}
   &=\Dluo{n}{t}\Hu{n}^\sntaa{1}{\alpha}\Druo{n}{t}+\Dluo{n}{t}\Dluo{n}{r}\Thetauu{n}{2n+1}\Druo{n}{r}\Druo{n}{t}\\
   &=\diaga{t_0,-\alpha\Huo{n-1}{v}+\Kuo{n-1}{v}}+\Thetauuo{n}{2n}{t}+\Thetauu{n}{2n+1}\\
   &=\diaga{\su{0}^\sntaa{1}{\alpha},-\alpha\Hu{n-1}^\sntaa{2}{\alpha}+\Ku{n-1}^\sntaa{2}{\alpha}}+\Thetauu{n}{2n+1}+\Thetauuo{n}{2n}{t}.\qedhere
  \end{split}
 \]
\eproof

 Now we introduce further particular matrices, which play an essential role in our further considerations. We will see that these matrices indicate in some sense how far a sequence \(\seq{\su{j}}{j}{0}{\kappa}\in\Kggqkappaalpha\) is away from the class \(\Kggeqkappaalpha\). Let \(\alpha\in\C\), let \(\kappa\in\NO\cup\set{+\infty}\), and let \(\seq{\su{j}}{j}{0}{\kappa}\) be a sequence of complex \tpqa{matrices}. Then let
\bgl{P[k]}
 \Puu{j}{k}
 \defg\su{j}^\sntaa{k}{\alpha}-\su{0}^\sntaa{k}{\alpha}(\su{0}^\sntaa{k}{\alpha})^\MP \su{j}^\sntaa{k}{\alpha}(\su{0}^\sntaa{k}{\alpha})^\MP \su{0}^\sntaa{k}{\alpha}
\eg
\index{e@$\Puu{j}{k}$}for all \(k\in\mn{0}{\kappa}\) and all \(j\in\mn{0}{\kappa-k}\). Furthermore, for all \(m\in\mn{0}{\kappa}\) and all \(l\in\mn{m}{\kappa}\), let\index{r@$\Zuu{l}{m}$}
\bgl{Z}
  \Zuu{l}{m}
  \defg
  \begin{cases}
   \Oqq\incase{m=0}\\
   \sum_{k=0}^{m-1}\Puu{l-k}{k}\incase{m\geq1}
  \end{cases}.
\eg

\blemml{L1143}
 Let \(\alpha\in\R\), let \(\kappa\in\NO\cup\set{+\infty}\), and let \(\seq{\su{j}}{j}{0}{\kappa}\in\Kggequu{\kappa}{\alpha}\). Then \(\Puu{j}{k}=\Oqq\) for all \(k\in\mn{0}{\kappa}\) and all \(j\in\mn{0}{\kappa-k}\). Furthermore, \(\Zuu{l}{m}=\Oqq\) for all \(m\in\mn{0}{\kappa}\) and all \(l\in\mn{m}{\kappa}\).
\elemm
\bproof
 Let \(k\in\mn{0}{\kappa}\). According to \rthmp{P1410}{P1410.b}, we have \(\seq{\su{j}^\sntaa{k}{\alpha}}{j}{0}{\kappa-k}\in\Kggequu{\kappa-k}{\alpha}\), which, in view of \rpropp{P1442}{P1442.a}, implies \(\seq{\su{j}^\sntaa{k}{\alpha}}{j}{0}{\kappa-k}\in\Dqqu{\kappa-k}\). From \eqref{P[k]}, \rdefi{D1658}, and \rpartss{R1631.c}{R1631.b} of \rrema{R1631}, we obtain then \(\Puu{j}{k}=\Oqq\) for all \(j\in\mn{0}{\kappa-k}\). In view of \eqref{Z}, this implies \(\Zuu{l}{m}=\Oqq\) for all \(m\in\mn{0}{\kappa}\) and all \(l\in\mn{m}{\kappa}\).
\eproof

\bremal{R1622}
 Let \(\alpha\in\R\), let \(\kappa\in\N\cup\set{+\infty}\), and let \(\seq{\su{j}}{j}{0}{\kappa}\in\Kggquu{\kappa}{\alpha}\). Because of \eqref{P[k]}, \eqref{Z}, \rrema{R1501}, and \rlemm{L1143}, then \(\Puu{j}{k}=\Oqq\) for all \(k\in\mn{0}{\kappa-1}\) and all \(j\in\mn{0}{\kappa-1-k}\) and, furthermore, \(\Zuu{l}{m}=\Oqq\) for all \(m\in\mn{0}{\kappa-1}\) and all \(l\in\mn{m}{\kappa-1}\).
\erema

\blemml{L1602}
 Let \(\alpha\in\C\), let \(\kappa\in\N\cup\set{+\infty}\), and let \(\seq{\su{j}}{j}{0}{\kappa}\) be a sequence of complex \tpqa{matrices}. For all \(k\in\mn{0}{\kappa-1}\), then \(-\alpha\su{0}^\sntaa{k}{\alpha}+\su{1}^\sntaa{k}{\alpha}=\su{0}^\sntaa{k+1}{\alpha}+\Puu{1}{k}\).
\elemm
\bproof
 Let \(k\in\mn{0}{\kappa-1}\). Denote by \(\seq{t_j}{j}{0}{\kappa-k}\) the \tsaalphaa{k}{\seq{\su{j}}{j}{0}{\kappa}}. The application of \rlemm{L1613} to the sequence \(\seq{t_j}{j}{0}{\kappa-k}\) yields then \(t_0^\sntaa{1}{\alpha}=t_0t_0^\MP t_1^\splusalpha t_0^\MP t_0\) which, in view of \rdefi{D1455}, implies
 \[
  t_0^\sntaa{1}{\alpha}
  =t_0t_0^\MP(-\alpha t_0 +t_1 )t_0^\MP t_0 
  =-\alpha t_0 t_0^\MP t_0 t_0^\MP t_0 +t_0 t_0^\MP t_1 t_0^\MP t_0 
  =-\alpha t_0 +t_0 t_0^\MP t_1 t_0^\MP t_0 .
 \]
 Taking additionally \rdefi{D1632} and \eqref{P[k]} into account, this implies
 \[
  -\alpha\su{0}^\sntaa{k}{\alpha}+\su{1}^\sntaa{k}{\alpha}
  =-\alpha t_0+t_1
  =t_0^\sntaa{1}{\alpha}-t_0t_0^\MP t_1t_0^\MP t_0+t_1
  =\su{0}^\sntaa{k+1}{\alpha}+\Puu{1}{k}.\qedhere
 \]
\eproof

\bremal{R1610}
 Let \(\alpha\in\C\) and let \(\seq{\su{j}}{j}{0}{0}\) be a sequence of complex \tpqa{matrices}. In view of \eqref{H}, \rdefi{D1632}, and \eqref{Z}, then \(\Hu{0}=\su{0}^\sntaa{0}{\alpha}+\Zuu{0}{0}\).
\erema

\bremal{R1613}
 Let \(\alpha\in\C\) and let \(\seq{\su{j}}{j}{0}{1}\) be a sequence from \(\Cpq\). In view of \eqref{H}, \eqref{K}, \rdefi{D1632}, \rlemm{L1602}, and \eqref{Z}, then \(-\alpha\Hu{0}+\Ku{0}=\su{0}^\sntaa{1}{\alpha}+\Zuu{1}{1}\).
\erema

The following two lemmas play a key role in the proof of central results of this paper, because they provide the block \(LDU\)~decompositions we are striving for. Note that the sets \(\nudqu{n}\) and \(\nodqu{n}\) were introduced in \rnota{N1039}. 
\blemml{L1440}
 Let \(\alpha\in\R\), let \(n\in\N\), and let \(\seq{\su{j}}{j}{0}{2n}\in\Kggquu{2n}{\alpha}\). For all \(k\in\mn{0}{2n}\), denote by \(\seq{t_j^{(k)}}{j}{0}{2n-k}\) the \tsaalphaa{k}{\seq{\su{j}}{j}{0}{2n}} and by \(\seq{u_j^{(k)}}{j}{0}{2n-k}\) the \tsplusalphata{\seq{t_j^{(k)}}{j}{0}{2n-k}}. For all \(l\in\mn{0}{n-1}\), let
 \[
  \Vlu{l}
  \defg
  \begin{cases}
   \diaga{\Iq,\Dluo{n-1}{u^{(2l+1)}}}\cdot\Dluo{n}{t^{(2l)}}\ifa{l=0}\\
   \diaga{I_{lq},\diaga{\Iq,\Dluo{n-l-1}{u^{(2l+1)}}}\cdot\Dluo{n-l}{t^{(2l)}}}\ifa{l\geq1}
  \end{cases}
 \]
 and
 \[
  \Vru{l}
  \defg
  \begin{cases}
   \Druo{n}{t^{(2l)}}\cdot\diaga{\Iq,\Druo{n-1}{u^{(2l+1)}}}\ifa{l=0}\\
   \diaga{I_{lq},\Druo{n-l}{t^{(2l)}}\cdot\diaga{\Iq,\Druo{n-l-1}{u^{(2l+1)}}}}\ifa{l\geq1}
  \end{cases}.
 \]
 \benui
  \il{L1440.a} For all \(m\in\mn{1}{n}\), the matrix \(\VVlu{m}\defg\Vlu{m-1}\Vlu{m-2} \dotsm \Vlu{0}\) belongs to \(\nudqu{n}\) and the matrix \(\VVru{m}\defg\Vru{0}\Vru{1} \dotsm \Vru{m-1}\) belongs to \(\nodqu{n}\).
  \il{L1440.b} \(\VVlu{n}H_n\VVru{n}=\diaga{\su{0}^\sntaa{0}{\alpha},\su{0}^\sntaa{2}{\alpha},\dotsc,\su{0}^\sntaa{2(n-1)}{\alpha},\su{0}^\sntaa{2n}{\alpha}+\Zuu{2n}{2n}}\).
  \il{L1440.c} If \(\seq{\su{j}}{j}{0}{2n}\in\Kggequu{2n}{\alpha}\), then \(\VVlu{n}H_n\VVru{n}=\diaga{\su{0}^\sntaa{0}{\alpha},\su{0}^\sntaa{2}{\alpha},\dotsc,\su{0}^\sntaa{2n}{\alpha}}\).
 \eenui
\elemm
\bproof
 \eqref{L1440.a} Using \rremasss{R1247}{R1745}{R1042}, we easily see that \(\Vlu{l}\in\nudqu{n}\) and that \(\Vru{l}\in\nodqu{n}\) hold true for all \(l\in\mn{0}{n-1}\) which, in view of \rrema{R1745}, implies \(\VVlu{n}\in\nudqu{n}\) and \(\VVru{n}\in\nodqu{n}\).
 
 \eqref{L1440.b} We prove~\eqref{L1440.b} by induction. In view of \eqref{Xi}, \eqref{P[k]}, and \eqref{Z}, there is, according to \rlemm{L1441} and \rdefi{D1632}, some \(m\in\mn{1}{n}\) such that for all \(k\in\mn{1}{m}\) the following equation holds:
 \begin{itemize}
  \item[(I\(_k\))] \(\VVlu{k}H_n\VVru{k}=\diaga{\su{0}^\sntaa{0}{\alpha},\su{0}^\sntaa{2}{\alpha}, \dotsc ,\su{0}^\sntaa{2(k-1)}{\alpha},\Hu{n-k}^\sntaa{2k}{\alpha}}+\diaga{\Ouu{nq}{nq},\Zuu{2n}{2k}}\).
 \end{itemize}
 According to \eqref{H[k]}, we have furthermore
 \bgl{L1440.0}
  \Hu{0}^\sntaa{2n}{\alpha}
  =\su{0}^\sntaa{2n}{\alpha}.
 \eg
 
 If \(m=n\), then~\eqref{L1440.b} immediately follows from~(I\(_m\)) and \eqref{L1440.0}.
 
 Now we consider the case \(m<n\). Obviously, using~(I\(_m\)), we easily see that
 \bspl{L1440.1}
  &\VVlu{m+1}H_n\VVru{m+1}
  =\Vlu{m}\VVlu{m}H_n\VVru{m}\Vru{m}\\
  &=\diag\Bigl[\su{0}^\sntaa{0}{\alpha},\su{0}^\sntaa{2}{\alpha}, \dotsc ,\su{0}^\sntaa{2(m-1)}{\alpha},\\
  &\hspace{90pt}\diaga{\Iq,\Dluo{n-m-1}{u^{(2m+1)}}}\Dluo{n-m}{t^{(2m)}}\Hu{n-m}^\sntaa{2m}{\alpha}\Druo{n-m}{t^{(2m)}}\diaga{\Iq,\Druo{n-m-1}{u^{(2m+1)}}}\Bigr]\\
  &\qquad+\Vlu{m}\diaga{\Ouu{nq}{nq},\Zuu{2n}{2m}}\Vru{m}.
 \esp
 Due to \rthmp{P1410}{P1410.a}, the sequence \(\seq{\su{j}^\sntaa{2m}{\alpha}}{j}{0}{2(n-m)}\) belongs to \(\Kggquu{2(n-m)}{\alpha}\). The application of \rlemm{L1441} to the sequence \(\seq{\su{j}^\sntaa{2m}{\alpha}}{j}{0}{2(n-m)}\) yields, in view of \eqref{H[k]} and \rdefi{D1632}, then
 \begin{multline}\label{L1440.4}
  \diaga{\Iq,\Dluo{n-m-1}{u^{(2m+1)}}}\Dluo{n-m}{t^{(2m)}}\Hu{n-m}^\sntaa{2m}{\alpha}\Druo{n-m}{t^{(2m)}}\diaga{\Iq,\Druo{n-m-1}{u^{(2m+1)}}}\\
  =\diaga{\su{0}^\sntaa{2m}{\alpha},\Hu{n-m-1}^\sntaa{2m+2}{\alpha}}+\Thetauuo{n-m}{2(n-m)}{t^{(2m)}}+\Thetauuo{n-m}{2(n-m)-1}{t^{(2m+1)}}.
 \end{multline}
 Because of \(\Vlu{m}\in\nudqu{n}\) and \(\Vru{m}\in\nodqu{n}\), we have furthermore
 \bgl{L1440.5}
  \Vlu{m}\diaga{\Ouu{nq}{nq},\Zuu{2n}{2m}}\Vru{m}
  =\diaga{\Ouu{nq}{nq},\Zuu{2n}{2m}}.
 \eg
 Taking into account \eqref{L1440.1}, \eqref{L1440.4}, \eqref{L1440.5}, and \eqref{Xi}, thus
 \begin{multline}\label{L1440.2}
  \VVlu{m+1}H_n\VVru{m+1}
  =\diaga{\su{0}^\sntaa{0}{\alpha},\su{0}^\sntaa{2}{\alpha}, \dotsc ,\su{0}^\sntaa{2(m-1)}{\alpha},\su{0}^\sntaa{2m}{\alpha},\Hu{n-(m+1)}^\sntaa{2(m+1)}{\alpha}}\\
  +\Thetauuo{n}{2(n-m)}{t^{(2m)}}+\Thetauuo{n}{2(n-m)-1}{t^{(2m+1)}}+\diaga{\Ouu{nq}{nq},\Zuu{2n}{2m}}.
 \end{multline}
 In view of \eqref{Xi}, \eqref{P[k]}, and \eqref{Z}, we have moreover
 \bgl{L1440.3}
  \Thetauuo{n}{2(n-m)}{t^{(2m)}}+\Thetauuo{n}{2(n-m)-1}{t^{(2m+1)}}+\diaga{\Ouu{nq}{nq},\Zuu{2n}{2m}}
  =\diaga{\Ouu{nq}{nq},\Zuu{2n}{2(m+1)}}.
 \eg
 From \eqref{L1440.2} and \eqref{L1440.3} it follows~(I\(_{m+1}\)). Hence,~(I\(_n\)) is inductively proved. In view of \eqref{L1440.0}, the proof is of~\eqref{L1440.b} finished.
 
 \eqref{L1440.c} Combine~\eqref{L1440.b} and \rlemm{L1143}.
\eproof

\blemml{L1306}
 Let \(\alpha\in\R\), let \(n\in\N\), and let \(\seq{\su{j}}{j}{0}{2n+1}\in\Kggquu{2n+1}{\alpha}\). For all \(k\in\mn{0}{2n+1}\), denote by \(\seq{t_j^{(k)}}{j}{0}{2n+1-k}\) the \tsaalphaa{k}{\seq{\su{j}}{j}{0}{2n+1}} and by \(\seq{u_j^{(k)}}{j}{0}{2n+1-k}\) the \tsplusalphata{\seq{t_j^{(k)}}{j}{0}{2n+1-k}}. For all \(l\in\mn{0}{n-1}\), let
 \[
  \Wlu{l}
  \defg
  \begin{cases}
   \Dluo{n}{t^{(2l+1)}}\Dluo{n}{u^{(2l)}}\incase{l=0}\\
   \diaga{\Iu{lq},\Dluo{n-l}{t^{(2l+1)}}\Dluo{n-l}{u^{(2l)}}}\incase{l\geq1}
  \end{cases}
 \]
 and
 \[
  \Wru{l}
  \defg
  \begin{cases}
   \Druo{n}{u^{(2l)}}\Druo{n}{t^{(2l+1)}}\incase{l=0}\\
   \diaga{I_{lq},\Druo{n-l}{u^{(2l)}}\Druo{n-l}{t^{(2l+1)}}}\incase{l\geq1}
  \end{cases}.
 \]
 \benui
  \il{L1306.a} For all \(m\in\mn{1}{n}\), the matrices \(\WWlu{m}\defg\Wlu{m-1}\Wlu{m-2}\dotsm\Wlu{0}\) and \(\WWru{m}\defg\Wru{0}\Wru{1}\dotsm\Wru{m-1}\) belong to \(\nudqu{n}\) and to \(\nodqu{n}\), respectively.
  \il{L1306.b} \(\WWlu{n}(-\alpha\Hu{n}+\Ku{n})\WWru{n}=\diaga{\su{0}^\sntaa{1}{\alpha},\su{0}^\sntaa{3}{\alpha}, \dotsc,\su{0}^\sntaa{2n-1}{\alpha} ,\su{0}^\sntaa{2n+1}{\alpha}+\Zuu{2n+1}{2n+1}}\).
  \il{L1306.c} If \(\seq{\su{j}}{j}{0}{2n+1}\in\Kggequu{2n+1}{\alpha}\), then \(\WWlu{n}(-\alpha\Hu{n}+\Ku{n})\WWru{n}=\diaga{\su{0}^\sntaa{1}{\alpha},\su{0}^\sntaa{3}{\alpha}, \dotsc,\su{0}^\sntaa{2n+1}{\alpha}}\).
 \eenui
\elemm
\bproof
 \eqref{L1306.a} Using \rremasss{R1247}{R1745}{R1042}, we easily see that \(\Wlu{l}\in\nudqu{n}\) and that \(\Wru{l}\in\nodqu{n}\) hold true for all \(l\in\mn{0}{n-1}\) which, in view of \rrema{R1745}, implies \(\WWlu{n}\in\nudqu{n}\) and \(\WWru{n}\in\nodqu{n}\).
 
 \eqref{L1306.b} We prove~\eqref{L1306.b} by induction. According to \rlemm{L1007}, there is, in view of \eqref{Xi}, \eqref{P[k]}, and \eqref{Z}, some \(m\in\mn{1}{n}\) such that for all \(k\in\mn{1}{m}\) the following equation holds:
 \begin{itemize}
  \item[(I\(_k\))] \(\WWlu{k}H_n\WWru{k}=\diaga{\su{0}^\sntaa{1}{\alpha},\dotsc,\su{0}^\sntaa{2k-1}{\alpha},-\alpha\Hu{n-k}^\sntaa{2k}{\alpha}+\Ku{n-k}^\sntaa{2k}{\alpha}}+\diaga{\Ouu{nq}{nq},\Zuu{2n+1}{2k}}\).
 \end{itemize}
 The application of \rlemm{L1602} yields \(-\alpha\su{0}^\sntaa{2n}{\alpha}+\su{1}^\sntaa{2n}{\alpha}=\su{0}^\sntaa{2n+1}{\alpha}+\Puu{1}{2n}\). Taking additionally into account \eqref{H[k]} and \eqref{K[k]}, we obtain then
 \begin{equation}\label{L1306.2}
  -\alpha\Hu{0}^\sntaa{2n}{\alpha}+\Ku{0}^\sntaa{2n}{\alpha}
  =-\alpha\su{0}^\sntaa{2n}{\alpha}+\su{1}^\sntaa{2n}{\alpha}
  =\su{0}^\sntaa{2n+1}{\alpha}+\Puu{1}{2n}.
 \end{equation}
 In view of \eqref{Z}, we have furthermore
 \bgl{L1306.1}
  \Puu{1}{2n}+\Zuu{2n+1}{2n}
  =\Zuu{2n+1}{2n+1}.
 \eg
 
 If \(m=n\), then~\eqref{L1306.b} immediately follows from~(I\(_m\)), \eqref{L1306.2}, and \eqref{L1306.1}.
 
 Now we consider the case \(m<n\). Obviously, from~(I\(_m\)) we have
 \bspl{L1306.3}
  &\WWlu{m+1}H_n\WWru{m+1}
  =\Wlu{m}\WWlu{m}H_n\WWru{m}\Wru{m}\\
  &=\diag\Bigl[\su{0}^\sntaa{1}{\alpha},\su{0}^\sntaa{3}{\alpha}, \dotsc ,\su{0}^\sntaa{2m-1}{\alpha},\\
  &\hspace{103pt}\Dluo{n-m}{t^{(2m+1)}}\Dluo{n-m}{u^{(2m)}}(-\alpha\Hu{n-m}^\sntaa{2m}{\alpha}+\Ku{n-m}^\sntaa{2m}{\alpha})\Druo{n-m}{u^{(2m)}}\Druo{n-m}{t^{(2m+1)}}\Bigr]\\
  &\qquad+\Wlu{m}\diaga{\Ouu{nq}{nq},\Zuu{2n+1}{2m}}\Wru{m}.
 \esp
 According to \rthmp{P1410}{P1410.a}, the sequence \(\seq{\su{j}^\sntaa{2m}{\alpha}}{j}{0}{2(n-m)+1}\) belongs to \(\Kggquu{2(n-m)+1}{\alpha}\). The application of \rlemm{L1007} to the sequence \(\seq{\su{j}^\sntaa{2m}{\alpha}}{j}{0}{2(n-m)+1}\) yields, in view of \eqref{H[k]}, \eqref{K[k]}, and  \rdefi{D1632}, then
 \begin{multline*}
  \Dluo{n-m}{t^{(2m+1)}}\Dluo{n-m}{u^{(2m)}}(-\alpha\Hu{n-m}^\sntaa{2m}{\alpha}+\Ku{n-m}^\sntaa{2m}{\alpha})\Druo{n-m}{u^{(2m)}}\Druo{n-m}{t^{(2m+1)}}\\
  =\diaga{\su{0}^\sntaa{2m+1}{\alpha},-\alpha\Hu{n-m-1}^\sntaa{2m+2}{\alpha}+\Ku{n-m-1}^\sntaa{2m+2}{\alpha}}+\Thetauuo{n-m}{2(n-m)+1}{t^{(2m)}}+\Thetauuo{n-m}{2(n-m)}{t^{(2m+1)}}.
 \end{multline*}
 Because of \(\Wlu{m}\in\nudqu{n}\) and \(\Wru{m}\in\nodqu{n}\), furthermore we get \(\Wlu{m}\cdot\diaga{\Ouu{nq}{nq},\Zuu{2n+1}{2m}}\Wru{m}=\diaga{\Ouu{nq}{nq},\Zuu{2n+1}{2m}}\). Taking into account \eqref{L1306.3} and \eqref{Xi}, thus we have
 \begin{multline}\label{L1306.4}
  \WWlu{m+1}H_n\WWru{m+1}\\
  =\diaga{\su{0}^\sntaa{1}{\alpha},\su{0}^\sntaa{3}{\alpha}, \dotsc ,\su{0}^\sntaa{2m-1}{\alpha},\su{0}^\sntaa{2m+1}{\alpha},-\alpha\Hu{n-(m+1)}^\sntaa{2(m+1)}{\alpha}+\Ku{n-(m+1)}^\sntaa{2(m+1)}{\alpha}}\\
  +\Thetauuo{n}{2(n-m)+1}{t^{(2m)}}+\Thetauuo{n}{2(n-m)}{t^{(2m+1)}}+\diaga{\Ouu{nq}{nq},\Zuu{2n+1}{2m}}.
 \end{multline}
 In view of \eqref{Xi}, \eqref{P[k]}, and \eqref{Z}, we have moreover
 \bgl{L1306.5}
  \Thetauuo{n}{2(n-m)+1}{t^{(2m)}}+\Thetauuo{n}{2(n-m)}{t^{(2m+1)}}+\diaga{\Ouu{nq}{nq},\Zuu{2n+1}{2m}}
  =\diaga{\Ouu{nq}{nq},\Zuu{2n+1}{2(m+1)}}.
 \eg
 From \eqref{L1306.4} and \eqref{L1306.5} it follows~(I\(_{m+1}\)). Hence,~(I\(_n\)) is inductively proved. In view of \eqref{L1306.2} and \eqref{L1306.1}, the proof of~\eqref{L1306.b} is finished.
 
 \eqref{L1306.c} Combine~\eqref{L1306.b} and \rlemm{L1143}.
\eproof

\bremal{R1500}
 Let \(n\in\N\) and let \((s_j)_{j=0}^{2n}\in\Hggqu{2n}\). Then, according to~\zitaa{MR2805417}{\cprop{4.17}}, there exists a matrix \(\FFru{n}\in\noduu{q}{n}\) such that
 \bgl{R1500.A}
  \FFru{n}^\ad H_n \FFru{n}
  =\diaga{\Lu{0},\Lu{1},\dotsc,\Lu{n}}.
 \eg
\erema

The following \rthmss{T1016}{T1615} indicate an explicit connection between the \tlasnt{\alpha}s and the \trasp{\alpha}. These results can be considered as the first two main results of this paper.
\btheol{T1016}
 Let \(\alpha\in\R\), let \(m\in\NO\), and let \(\seq{\su{j}}{j}{0}{m}\in\Kggquu{m}{\alpha}\). Then \(\Spu{m}=\su{0}^\sntaa{m}{\alpha}+\Zuu{m}{m}\) and, in the case \(m\geq1\), furthermore \(\Spu{j}=\su{0}^\sntaa{j}{\alpha}\) for all \(j\in\mn{0}{m-1}\).
\etheo
\bproof
 First observe that, in view of \rdefi{D1021}, \eqref{L}, \eqref{H}, and \rrema{R1610}, we have
 \bgl{T1016.1}
  \Spu{0}
  =\Lu{0}
  =\su{0}
  =\Hu{0}
  =\su{0}^\sntaa{0}{\alpha}+\Zuu{0}{0}.
 \eg
 In the case \(m=0\), the proof is complete. In the case \(m\geq1\), from \rdefi{D1021}, \eqref{La}, \eqref{L}, \eqref{s_a}, \eqref{H}, \eqref{K}, and \rrema{R1613}, we see
 \bgl{T1016.2}
  \Spu{1}
  =\Lau{0}
  =\sau{0}
  =-\alpha\su{0}+\su{1}
  =-\alpha\Hu{0}+\Ku{0}
  =\su{0}^\sntaa{1}{\alpha}+\Zuu{1}{1}
 \eg
 and, in view of \rrema{R1622}, furthermore
 \bgl{T1016.3}
  \Zuu{m-1}{m-1}
  =\Oqq.
 \eg
 
 In the case \(m=1\), the assertion follows from \eqref{T1016.2}, \eqref{T1016.1}, and \eqref{T1016.3}.
 
 Now we consider the case \(m=2n\) with some \(n\in\N\). Then, from \eqref{Kgg2n} we obtain \(\seq{\su{j}}{j}{0}{2n}\in\Hggqu{2n}\). Thus, in view of \rrema{R1500}, there exists a matrix \(\FFru{n}\in\noduu{q}{n}\) such that \eqref{R1500.A} holds true. Taking into account \rdefi{D1021}, we get then
 \bgl{T1016.4}
  \FFru{n}^\ad H_n \FFru{n}
  =\diaga{\Spu{0},\Spu{2},\dotsc,\Spu{2n}}.
 \eg
 Furthermore, according to \rlemm{L1440}, there exist matrices \(\VVlu{n}\in\nudqu{n}\) and \(\VVru{n}\in\nodqu{n}\) such that
 \bgl{T1016.5}
  \VVlu{n}H_n\VVru{n}
  =\diaga{\su{0}^\sntaa{0}{\alpha},\su{0}^\sntaa{2}{\alpha},\dotsc,\su{0}^\sntaa{2(n-1)}{\alpha},\su{0}^\sntaa{2n}{\alpha}+\Zuu{2n}{2n}}.
 \eg
 Since because of \rrema{R1745} the matrices \(\FFru{n}^\ad\), \(\VVlu{n}\), \(\FFru{n}\), and \(\VVru{n}\) are \tns{} with \(\set{\FFru{n}^\invad,\VVlu{n}^\inv}\subseteq\nuduu{q}{n}\) and \(\set{\FFru{n}^\inv,\VVru{n}^\inv}\subseteq\noduu{q}{n}\), we obtain from \eqref{T1016.4}, \eqref{T1016.5}, and \rrema{R1427} that \(\Spu{2k}=\su{0}^\sntaa{2k}{\alpha}\) holds true for all \(k\in\mn{0}{n-1}\) and that \(\Spu{2n}=\su{0}^\sntaa{2n}{\alpha}+\Zuu{2n}{2n}\). In the case \(n=1\), the assertion hence follows from \eqref{T1016.2} and \eqref{T1016.3}. Now we consider the case \(n\geq2\). From \eqref{Kgg2n} we obtain \(\seq{\sau{j}}{j}{0}{2(n-1)}\in\Hggqu{2(n-1)}\). Thus, in view of \rrema{R1500}, \eqref{Ha}, and \eqref{La}, there exists a matrix \(\GGru{n-1}\in\noduu{q}{n-1}\) such that \(\GGru{n-1}^\ad\Hau{n-1}\GGru{n-1}=\diaga{\Lau{0},\Lau{1},\dotsc,\Lau{n-1}}\). From \eqref{-aH+K=Ha} and \rdefi{D1021}, we get then
 \bgl{T1016.6}
  \GGru{n-1}^\ad(-\alpha\Hu{n-1}+\Ku{n})\GGru{n-1}
  =\diaga{\Spu{1},\Spu{3},\dotsc,\Spu{2n-1}}.
 \eg
 In view of \rrema{R1440}, we have \(\seq{\su{j}}{j}{0}{2n-1}\in\Kggquu{2n-1}{\alpha}\). Thus, according to \rlemm{L1306}, there exist matrices  \(\WWlu{n-1}\in\nudqu{n-1}\) and \(\WWru{n-1}\in\nodqu{n-1}\) such that
 \begin{multline}\label{T1016.7}
  \WWlu{n-1}(-\alpha\Hu{n-1}+\Ku{n-1})\WWru{n-1}\\
  =\diaga{\su{0}^\sntaa{1}{\alpha},\su{0}^\sntaa{3}{\alpha}, \dotsc,\su{0}^\sntaa{2n-3}{\alpha} ,\su{0}^\sntaa{2n-1}{\alpha}+\Zuu{2n-1}{2n-1}}.
 \end{multline}
 Since because of \rrema{R1745} the matrices \(\GGru{n-1}^\ad\), \(\WWlu{n-1}\), \(\GGru{n-1}\), and \(\WWru{n-1}\) are \tns{} with \(\set{\GGru{n-1}^\invad,\WWlu{n-1}^\inv}\subseteq\nuduu{q}{n-1}\) and \(\set{\GGru{n-1}^\inv,\WWru{n-1}^\inv}\subseteq\noduu{q}{n-1}\), we obtain from \eqref{T1016.6}, \eqref{T1016.7}, and \rrema{R1427} that \(\Spu{2k+1}=\su{0}^\sntaa{2k+1}{\alpha}\) holds true for all \(k\in\mn{0}{n-2}\) and that \(\Spu{2n-1}=\su{0}^\sntaa{2n-1}{\alpha}+\Zuu{2n-1}{2n-1}\) which, in view of \eqref{T1016.3}, completes the proof in this case.
 
 Finally, we consider the case where \(m=2n+1\) with some \(n\in\N\). Then, from \eqref{Kgg2n+1} we obtain \(\seq{\su{j}}{j}{0}{2n}\in\Hggqu{2n}\), whereas \rrema{R1440} yields \(\seq{\su{j}}{j}{0}{2n}\in\Kggqualpha{2n}\). Similar to the above mentioned case, we conclude that \(\Spu{2k}=\su{0}^\sntaa{2k}{\alpha}\) holds true for all \(k\in\mn{0}{n-1}\) and that \(\Spu{2n}=\su{0}^\sntaa{2n}{\alpha}+\Zuu{2n}{2n}\) which, in view of \eqref{T1016.3}, implies \(\Spu{2k}=\su{0}^\sntaa{2k}{\alpha}\) for all \(k\in\mn{0}{n}\). Moreover, from \eqref{Kgg2n+1} we obtain \(\seq{\sau{j}}{j}{0}{2n}\in\Hggqu{2n}\). Again similar to the above mentioned case, we conclude that \(\Spu{2k+1}=\su{0}^\sntaa{2k+1}{\alpha}\) holds true for all \(k\in\mn{0}{n-1}\) and that \(\Spu{2n+1}=\su{0}^\sntaa{2n+1}{\alpha}+\Zuu{2n+1}{2n+1}\). This completes the proof.
\eproof

\bcorol{C1500}
 Let \(\alpha\in\R\), let \(n\in\N\), and let \(\seq{\su{j}}{j}{0}{2n}\in\Kggquu{2n}{\alpha}\). For all \(k\in\mn{0}{2n}\) denote by \(\seq{\su{j}^\sntaa{k}{\alpha}}{j}{0}{2n-k}\) the \tsaalphaa{k}{\seq{\su{j}}{j}{0}{2n}}. Then
 \[
  \rank\Hu{n}
  =\lrk\sum_{k=0}^{n-1}\rank\su{0}^\sntaa{2k}{\alpha}\rrk+\rank(\su{0}^\sntaa{2n}{\alpha}+\Zuu{2n}{2n})
 \]
 and
 \[
  \det\Hu{n}
  =\lrk\prod_{k=0}^{n-1}\det\su{0}^\sntaa{2k}{\alpha}\rrk\det(\su{0}^\sntaa{2n}{\alpha}+\Zuu{2n}{2n}).
 \]
 Furthermore,
 \begin{align*}
  \rank\Hau{n-1}
  &=\sum_{k=0}^{n-1}\rank\su{0}^\sntaa{2k+1}{\alpha}&
  &\text{and}&
  \det\Hau{n-1}
  &=\prod_{k=0}^{n-1}\det\su{0}^\sntaa{2k+1}{\alpha}.
 \end{align*}
\ecoro
\bproof
 From~\zitaa{MR3014201}{\clemm{4.11(a)}} we get the equations \(\rank\Hu{n}=\sum_{k=0}^n\rank\Spu{2k}\) and \(\det\Hu{n}=\prod_{k=0}^n\det\Spu{2k}\). In view of~\zitaa{MR3014201}{\clemm{4.11(b)}}, we see furthermore that \(\rank\Hau{n-1}=\sum_{k=0}^{n-1}\rank\Spu{2k+1}\) and \(\det\Hau{n-1}=\prod_{k=0}^{n-1}\det\Spu{2k+1}\). Applying \rtheo{T1016}, we obtain the asserted equations.
\eproof

\bcorol{C1623}
 Let \(\alpha\in\R\), let \(n\in\N\), and let \(\seq{\su{j}}{j}{0}{2n+1}\in\Kggquu{2n+1}{\alpha}\). For all \(k\in\mn{0}{2n+1}\) denote by \(\seq{\su{j}^\sntaa{k}{\alpha}}{j}{0}{2n+1-k}\) the \tsaalphaa{k}{\seq{\su{j}}{j}{0}{2n+1}}. Then
 \begin{align*}
  \rank\Hu{n}&=\sum_{k=0}^n\rank\su{0}^\sntaa{2k}{\alpha}&
  &\text{and}&
  \det\Hu{n}&=\prod_{k=0}^n\det\su{0}^\sntaa{2k}{\alpha}.
 \end{align*}
 Furthermore,
 \[
  \rank\Hau{n}
  =\lrk\sum_{k=0}^{n-1}\rank\su{0}^\sntaa{2k+1}{\alpha}\rrk+\rank(\su{0}^\sntaa{2n+1}{\alpha}+\Zuu{2n+1}{2n+1})
 \]
 and
 \[
  \det\Hau{n}
  =\lrk\prod_{k=0}^{n-1}\det\su{0}^\sntaa{2k+1}{\alpha}\rrk\det(\su{0}^\sntaa{2n+1}{\alpha}+\Zuu{2n+1}{2n+1}).
 \]
\ecoro
\bproof
 From~\zitaa{MR3014201}{\clemm{4.11(a)}} we get the equations \(\rank\Hu{n}=\sum_{k=0}^n\rank\Spu{2k}\) and \(\det\Hu{n}=\prod_{k=0}^n\det\Spu{2k}\). In view of~\zitaa{MR3014201}{\clemm{4.11(b)}}, we see furthermore that \(\rank\Hau{n}=\sum_{k=0}^n\rank\Spu{2k+1}\) and \(\det\Hau{n}=\prod_{k=0}^n\det\Spu{2k+1}\). Using \rtheo{T1016}, we obtain the asserted equations.
\eproof

 Now we obtain the second main result of this paper, which indicates that \rtheo{T1016} can be simplified for the subclass \(\Kggeqkappaalpha\):
\btheol{T1615}
 Let \(\alpha\in\R\), let \(\kappa\in\NO\cup\set{+\infty}\), and let \(\seq{\su{j}}{j}{0}{\kappa}\in\Kggequu{\kappa}{\alpha}\). Then \(\seq{\su{0}^\sntaa{j}{\alpha}}{j}{0}{\kappa}\) is exactly the \trasp{\alpha} of \(\seq{\su{j}}{j}{0}{\kappa}\).
\etheo
\bproof
 Obviously, we have \(\seq{\su{j}}{j}{0}{\kappa}\in\Kggquu{\kappa}{\alpha}\). Denote by \(\seq{\Spu{j}}{j}{0}{\kappa}\) the \traspa{\alpha}{\seq{\su{j}}{j}{0}{\kappa}}.
 \baeqi{0}
  \il{T1615.C1} First we consider the case \(\kappa\in\NO\). Because of \(\seq{\su{j}}{j}{0}{\kappa}\in\Kggquu{\kappa}{\alpha}\) and \rtheo{T1016}, then \(\Spu{\kappa}=\su{0}^\sntaa{\kappa}{\alpha}+\Zuu{\kappa}{\kappa}\) and, in the case \(\kappa\geq1\), furthermore \(\Spu{j}=\su{0}^\sntaa{j}{\alpha}\) for all \(j\in\mn{0}{\kappa-1}\). In view of \(\seq{\su{j}}{j}{0}{\kappa}\in\Kggequu{\kappa}{\alpha}\) and \rlemm{L1143}, we obtain moreover \(\Zuu{\kappa}{\kappa}=\Oqq\), which completes the proof in this case.
  \il{T1615.C2} Finally, we consider the case \(\kappa=+\infty\). Let \(l\in\NO\) and let the sequence \(\seq{r_j}{j}{0}{l}\) be given by \(r_j\defg\su{j}\) for all \(j\in\mn{0}{l}\). In view of \(\seq{\su{j}}{j}{0}{\kappa}\in\Kggquu{\kappa}{\alpha}\), then \(\seq{r_j}{j}{0}{l}\in\Kggequalpha{l}\). For all \(k\in\mn{0}{l}\), denote by \(\seq{v_j^{(k)}}{j}{0}{l-k}\) the \tsaalphaa{k}{\seq{r_j}{j}{0}{l}}. By virtue of \rrema{R1501}, then \(v_0^{(l)}=\su{0}^\sntaa{l}{\alpha}\). According to the above already proved~\ref{T1615.C1}, we get, in view of \(l\in\NO\) and \(\seq{r_j}{j}{0}{l}\in\Kggequu{l}{\alpha}\), furthermore \(R_l=v_0^{(l)}\), where \(\seq{R_j}{j}{0}{l}\) denotes the \traspa{\alpha}{\seq{r_j}{j}{0}{l}}. By \rrema{R1516}, moreover \(R_l=\Spu{l}\). Hence, \(\su{0}^\sntaa{l}{\alpha}=v_0^{(l)}=R_l=\Spu{l}\) for all \(l\in\NO\). The proof is complete.\qedhere
 \eaeqi
\eproof

\bcorol{C1624}
 Let \(\alpha\in\R\), let \(\kappa\in\NO\cup\set{+\infty}\), and let \(\seq{\su{j}}{j}{0}{\kappa}\in\Kggequu{\kappa}{\alpha}\). For all \(k\in\mn{0}{\kappa}\) denote by \(\seq{\su{j}^\sntaa{k}{\alpha}}{j}{0}{\kappa-k}\) the \tsaalphaa{k}{\seq{\su{j}}{j}{0}{\kappa}}. Then \(\rank\Hu{n}=\sum_{k=0}^n\rank\su{0}^\sntaa{2k}{\alpha}\) and \(\det\Hu{n}=\prod_{k=0}^n\det\su{0}^\sntaa{2k}{\alpha}\) for all \(n\in\NO\) with \(2n\leq\kappa\) and \(\rank\Hau{n}=\sum_{k=0}^n\rank\su{0}^\sntaa{2k+1}{\alpha}\) and \(\det\Hau{n}=\prod_{k=0}^n\det\su{0}^\sntaa{2k+1}{\alpha}\) for all \(n\in\NO\) with \(2n+1\leq\kappa\).
\ecoro
\bproof
 Obviously, we have \(\seq{\su{j}}{j}{0}{\kappa}\in\Kggquu{\kappa}{\alpha}\). From~\zitaa{MR3014201}{\clemm{4.11}} we get then that \(\rank\Hu{n}=\sum_{k=0}^n\rank\Spu{2k}\) and \(\det\Hu{n}=\prod_{k=0}^n\det\Spu{2k}\) for all \(n\in\NO\) with \(2n\leq\kappa\) and, furthermore, \(\rank\Hau{n}=\sum_{k=0}^n\rank\Spu{2k+1}\) and \(\det\Hau{n}=\prod_{k=0}^n\det\Spu{2k+1}\) for all \(n\in\NO\) with \(2n+1\leq\kappa\). Using \rtheo{T1615}, we obtain the asserted equations.
\eproof

Now we characterize the membership of a sequence from \(\Kggqkappaalpha\) to the classes \(\Kggeqkappaalpha\), \(\Kgqkappaalpha\), and \(\Kggdqkappaalpha\) in terms of the sequence of its \tlasnt{\alpha}s.
\bpropl{P1507}
 Let \(\alpha\in\R\), let \(m\in\NO\), and let \(\seq{\su{j}}{j}{0}{m}\in\Kggquu{m}{\alpha}\). For all \(k\in\mn{0}{m}\) denote by \(\seq{\su{j}^\sntaa{k}{\alpha}}{j}{0}{m-k}\) the \tsaalphaa{k}{\seq{\su{j}}{j}{0}{m}}. In view of \eqref{P[k]} and \eqref{Z}, then the following statements are equivalent:
 \baeqi{0}
  \il{P1507.i} The sequence \(\seq{\su{j}}{j}{0}{m}\) belongs to \(\Kggequu{m}{\alpha}\).
  \il{P1507.ii} \(\Kerna{\su{0}^\sntaa{k}{\alpha}}\subseteq\Kerna{\su{m-k}^\sntaa{k}{\alpha}}\) for all \(k\in\mn{0}{m-1}\) in the case \(m\geq1\).
  \il{P1507.iii} \(\Puu{m-k}{k}=\Oqq\) for all \(k\in\mn{0}{m-1}\) in the case \(m\geq1\).
  \il{P1507.iv} \(\Zuu{m}{m}=\Oqq\).
 \eaeqi
\eprop
\bproof
 In the case \(m=0\) we see from \(\Kggequalpha{0}=\Kggqualpha{0}\) and \eqref{Z} that both conditions~\rstat{P1507.i} and~\rstat{P1507.iv} are fulfilled. Furthermore, the conditions~\rstat{P1507.ii} and~\rstat{P1507.iii} are empty in this case, and hence, all four conditions~\rstat{P1507.i},~\rstat{P1507.ii},~\rstat{P1507.iii}, and~\rstat{P1507.iv} are equivalent.
 
 Now we consider the case \(m\geq1\).
 
 \bimp{P1507.i}{P1507.ii}
  Let \(k\in\mn{0}{m-1}\). Because of~\rstat{P1507.i} and \rthmp{P1410}{P1410.b}, we have \(\seq{\su{j}^\sntaa{k}{\alpha}}{j}{0}{m-k}\in\Kggequu{m-k}{\alpha}\), which, in view of \rpropp{P1442}{P1442.a}, implies \(\seq{\su{j}^\sntaa{k}{\alpha}}{j}{0}{m-k}\in\Dqqu{m-k}\). Taking into account \rdefi{D1658}, we obtain then \(\Kerna{\su{0}^\sntaa{k}{\alpha}}\subseteq\Kerna{\su{m-k}^\sntaa{k}{\alpha}}\). Hence,~\rstat{P1507.ii} is fulfilled.
 \eimp
 
 \bimp{P1507.ii}{P1507.iii}
  Let \(k\in\mn{0}{m-1}\). Because of \(\seq{\su{j}}{j}{0}{m}\in\Kggquu{m}{\alpha}\) and \rthmp{P1410}{P1410.a}, we have \(\seq{\su{j}^\sntaa{k}{\alpha}}{j}{0}{m-k}\in\Kggquu{m-k}{\alpha}\), which, in view of \rlemp{L1738}{L1738.a}, implies \(\su{j}^\sntaa{k}{\alpha}\in\CHq\) for all \(j\in\mn{0}{m-k}\). Using~\rstat{P1507.ii}, we obtain then \(\Bilda{\su{m-k}^\sntaa{k}{\alpha}}\subseteq\Bilda{\su{0}^\sntaa{k}{\alpha}}\). Taking into account \eqref{P[k]} and \rpartss{R1631.c}{R1631.b} of \rrema{R1631}, we get \(\Puu{m-k}{k}=\Oqq\). Hence~\rstat{P1507.iii} is fulfilled.
 \eimp
 
 \bimp{P1507.iii}{P1507.iv}
  Use \eqref{Z}.
 \eimp
 
 \bimp{P1507.iv}{P1507.i}
  From \rtheo{T1016} and~\rstat{P1507.iv} we see that \(\Spu{j}=\su{0}^\sntaa{j}{\alpha}\) holds true for all \(j\in\mn{0}{m}\), where \(\seq{\Spu{j}}{j}{0}{m}\) denotes the \traspa{\alpha}{\seq{\su{j}}{j}{0}{m}}. Because of \rrema{R1116}, we obtain \(\Kerna{\su{0}^\sntaa{m-1}{\alpha}}\subseteq\Kerna{\su{0}^\sntaa{m}{\alpha}}\). Thus,  \(\Kerna{\Spu{m-1}}\subseteq\Kerna{\Spu{m}}\) which, in view of \(\seq{\su{j}}{j}{0}{m}\in\Kggquu{m}{\alpha}\), \rdefi{D1021}, and \rlemmss{L0912}{L0919}, implies~\rstat{P1507.i}.
 \eimp
\eproof

\bpropl{P1626}
 Let \(\alpha\in\R\), let \(\kappa\in\NO\cup\set{+\infty}\), and let \(\seq{\su{j}}{j}{0}{\kappa}\in\Kggquu{\kappa}{\alpha}\). For all \(k\in\mn{0}{\kappa}\) denote by \(\seq{\su{j}^\sntaa{k}{\alpha}}{j}{0}{\kappa-k}\) the \tsaalphaa{k}{\seq{\su{j}}{j}{0}{\kappa}}. Then the following statements are equivalent:
 \baeqi{0}
  \il{P1626.i} The sequence \(\seq{\su{j}}{j}{0}{\kappa}\) belongs to \(\Kgquu{\kappa}{\alpha}\).
  \il{P1626.ii} For all \(k\in\mn{0}{\kappa}\), the matrix \(\su{0}^\sntaa{k}{\alpha}\) is \tns{}.
 \eaeqi
 If~\rstat{P1626.i} is fulfilled, then \(\su{0}^\sntaa{k}{\alpha}\in\Cgq\) for all \(k\in\mn{0}{\kappa}\) and, furthermore, \(\Puu{j}{k}=\Oqq\) for all \(k\in\mn{0}{\kappa}\) and all \(j\in\mn{0}{\kappa-k}\) and \(\Zuu{l}{m}=\Oqq\) for all \(m\in\mn{0}{\kappa}\) and all \(l\in\mn{m}{\kappa}\).
\eprop
\bproof
 \bimp{P1626.i}{P1626.ii}
  Because of~\rstat{P1626.i} and \rpropp{P1442}{P1442.d}, we have \(\seq{\su{j}}{j}{0}{\kappa}\in\Kggequalpha{\kappa}\). Thus, \rtheo{T1615} implies \(\su{0}^\sntaa{j}{\alpha}=\Spu{j}\) for all \(j\in\mn{0}{\kappa}\), where \(\seq{\Spu{j}}{j}{0}{\kappa}\) denotes the \traspa{\alpha}{\seq{\su{j}}{j}{0}{\kappa}}. From~\rstat{P1626.i} and \rthmp{121.P1337}{121.P1337.d} we obtain furthermore \(\Spu{j}\in\Cgq\) and hence \(\det\Spu{j}\neq0\) for all \(j\in\mn{0}{\kappa}\). Thus,~\rstat{P1626.ii} is fulfilled and \(\su{0}^\sntaa{k}{\alpha}\in\Cgq\) for all \(k\in\mn{0}{\kappa}\).
 \eimp
 
 \bimp{P1626.ii}{P1626.i}
  Because of~\rstat{P1626.ii} and \eqref{P[k]}, we have \(\Puu{j}{k}=\Oqq\) for all \(k\in\mn{0}{\kappa}\) and all \(j\in\mn{0}{\kappa-k}\) which, in view of \eqref{Z}, implies \(\Zuu{l}{m}=\Oqq\) for all \(m\in\mn{0}{\kappa}\) and all \(l\in\mn{m}{\kappa}\). From \rremasss{R1610}{R1613}{R1440}, \rcoross{C1500}{C1623}, and~\rstat{P1626.ii} we obtain then \(\det\Hu{n}   =\prod_{k=0}^n\det\su{0}^\sntaa{2k}{\alpha}\neq0\) for all \(n\in\NO\) with \(2n\leq\kappa\) and \(\det(-\alpha\Hu{n}+\Ku{n})   =\prod_{k=0}^n\det\su{0}^\sntaa{2k+1}{\alpha}\neq0\) for all \(n\in\NO\) with \(2n+1\leq\kappa\), which, because of \(\seq{\su{j}}{j}{0}{\kappa}\in\Kggquu{\kappa}{\alpha}\), implies~\rstat{P1626.i}.
 \eimp
\eproof

\bpropl{P1311}
 Let \(\alpha\in\R\), let \(m\in\NO\), and let \(\seq{\su{j}}{j}{0}{m}\in\Kggquu{m}{\alpha}\). Denote by \(\seq{\su{j}^\sntaa{m}{\alpha}}{j}{0}{0}\) the \tsaalphaa{m}{\seq{\su{j}}{j}{0}{m}}. Then \(\seq{\su{j}}{j}{0}{m}\in\Kgquu{m}{\alpha}\) if and only if \(\det\su{0}^\sntaa{m}{\alpha}\neq0\).
\eprop
\bproof
 Use \rprop{P1626} and \rrema{R1116}.
\eproof
 
\bpropl{P1544}
 Let \(\alpha\in\R\), let \(m\in\NO\), and let \(\seq{\su{j}}{j}{0}{m}\in\Kggqualpha{m}\). Denote by \(\seq{\su{j}^\sntaa{m}{\alpha}}{j}{0}{0}\) the \tsaalphaa{m}{\seq{\su{j}}{j}{0}{m}}. Then the following statements are equivalent:
 \baeqi{0}
  \il{P1544.i} The sequence \(\seq{\su{j}}{j}{0}{m}\) belongs to \(\Kggdqualpha{m}\).
  \il{P1544.ii} \(\su{0}^\sntaa{m}{\alpha}+\Zuu{m}{m}=\Oqq\).
 \eaeqi
 If~\rstat{P1544.i} is fulfilled, then \(\su{0}^\sntaa{m}{\alpha}=\Oqq\), \(\Puu{j}{k}=\Oqq\) for all \(k\in\mn{0}{m}\) and all \(j\in\mn{0}{m-k}\), and \(\Zuu{l}{n}=\Oqq\) for all \(n\in\mn{0}{m}\) and all \(l\in\mn{n}{m}\).
\eprop
\bproof
 \bimp{P1544.i}{P1544.ii}
  Because of~\rstat{P1544.i} and \rpropp{P1442}{P1442.d}, we have \(\seq{\su{j}}{j}{0}{m}\in\Kggequalpha{m}\). Thus, \rlemm{L1143} yields \(\Puu{j}{k}=\Oqq\) for all \(k\in\mn{0}{m}\) and all \(j\in\mn{0}{m-k}\), and \(\Zuu{l}{n}=\Oqq\) for all \(n\in\mn{0}{m}\) and all \(l\in\mn{n}{m}\), whereas \rtheo{T1615} yields \(\su{0}^\sntaa{m}{\alpha}=\Spu{m}\), where \(\seq{\Spu{j}}{j}{0}{m}\) denotes the \traspa{\alpha}{\seq{\su{j}}{j}{0}{m}}. From~\rstat{P1544.i} and~\zitaa{MR3014201}{\cprop{5.3}} we obtain furthermore \(\Spu{m}=\Oqq\). Hence,~\rstat{P1544.ii} is fulfilled.
 \eimp
  
 \bimp{P1544.ii}{P1544.i}
  Because of \(\seq{\su{j}}{j}{0}{m}\in\Kggqualpha{m}\), the application of \rtheo{T1016} yields \(\Spu{m}=\su{0}^\sntaa{m}{\alpha}+\Zuu{m}{m}\), which, in view of~\rstat{P1544.ii}, implies \(\Spu{m}=\Oqq\). Taking into account \(\seq{\su{j}}{j}{0}{m}\in\Kggqualpha{m}\), from~\zitaa{MR3014201}{\cprop{5.3}} we obtain then~\rstat{P1544.i}.
 \eimp
\eproof

The following example shows that in the situation of \rprop{P1544} condition~\rstat{P1544.ii} cannot be weakened by replacing it by the condition \(\su{0}^\sntaa{m}{\alpha}=\Oqq\).

\begin{exa}\label{E1723}
 Let \(\alpha\in\R\), let \(\su{0}\defg\Oqq\), and let \(\su{1}\defg\Iq\). According to \eqref{s_a}, then \(\sau{0}=\Iq\). Because of \eqref{H} and \eqref{Ha}, we have then \(\Hu{0}=\su{0}=\Oqq\in\Cggq\) and \(\Hau{0}=\sau{0}=\Iq\in\Cggq\), which, in view of \eqref{Kgg2n+1}, implies \(\seq{\su{j}}{j}{0}{1}\in\Kggqualpha{1}\). By \rdefi{D1059}, furthermore \(
  \su{0}^\sntaa{1}{\alpha}
  =-\su{0}\su{1}^\reza{\alpha}\su{0}
  =-\su{0}\su{1}^\reza{\alpha}\cdot\Oqq
  =\Oqq
 \). Because of \eqref{La} and \eqref{L}, moreover \(\Lau{0}=\sau{0}=\Iq\neq\Oqq\) which, in view of \eqref{Kggcd2n+1} and \eqref{Hggcd}, implies \(\seq{\su{j}}{j}{0}{1}\notin\Kggdqualpha{1}\). Hence, \(\seq{\su{j}}{j}{0}{1}\in\Kggqualpha{1}\setminus\Kggdqualpha{1}\) although \(\su{0}^\sntaa{1}{\alpha}=\Oqq\).
\end{exa}

\bpropl{P1633}
 Let \(\alpha\in\R\), let \(\kappa\in\N\cup\set{+\infty}\), let \(m\in\mn{0}{\kappa-1}\), and let \(\seq{\su{j}}{j}{0}{\kappa}\in\Kggqkappaalpha\). For all \(k\in\mn{m}{\kappa}\) denote by \(\seq{\su{j}^\sntaa{k}{\alpha}}{j}{0}{\kappa}\) the \tsaalphaa{k}{\seq{\su{j}}{j}{0}{\kappa}}. Then the following statements are equivalent:
 \baeqi{0}
  \il{P1633.i} The sequence \(\seq{\su{j}}{j}{0}{\kappa}\) belongs to \(\Kggdoqkappaalpha{m}\).
  \il{P1633.ii} \(\su{0}^\sntaa{m}{\alpha}=\Oqq\).
 \eaeqi
 If~\rstat{P1633.i} is fulfilled, then \(\su{j}^\sntaa{k}{\alpha}=\Oqq\) for all \(k\in\mn{m+1}{\kappa}\) and all \(j\in\mn{0}{\kappa-k}\), and \(\Puu{j}{k}=\Oqq\) for all \(k\in\mn{0}{\kappa-1}\) and all \(j\in\mn{0}{\kappa-1-k}\) and, furthermore, \(\Zuu{l}{n}=\Oqq\) for all \(n\in\mn{0}{\kappa-1}\) and all \(l\in\mn{n}{\kappa-1}\).
\eprop
\bproof
 Because of \(\seq{\su{j}}{j}{0}{\kappa}\in\Kggqkappaalpha\) and \(m\in\mn{0}{\kappa-1}\), we have \(\seq{\su{j}}{j}{0}{m}\in\Kggqualpha{m}\) and, in view of \(\kappa\in\N\cup\set{+\infty}\) and \rrema{R1622}, furthermore \(\Puu{j}{k}=\Oqq\) for all \(k\in\mn{0}{\kappa-1}\) and all \(j\in\mn{0}{\kappa-1-k}\) and \(\Zuu{l}{n}=\Oqq\) for all \(n\in\mn{0}{\kappa-1}\) and all \(l\in\mn{n}{\kappa-1}\). In particular, \(\Zuu{m}{m}=\Oqq\).
 
 \bimp{P1633.i}{P1633.ii}
  From~\rstat{P1633.i} and \eqref{Kggcdm} we get \(\seq{\su{j}}{j}{0}{m}\in\Kggdqualpha{m}\). Thus, \rprop{P1544} yields~\rstat{P1633.ii}. From \rrema{R1116} we obtain furthermore \(\rank\su{j}^\sntaa{k}{\alpha}\leq\rank\su{0}^\sntaa{m}{\alpha}=0\) and thus \(\su{j}^\sntaa{k}{\alpha}=\Oqq\) for all \(k\in\mn{m+1}{\kappa}\) and all \(j\in\mn{0}{\kappa-k}\).
 \eimp
  
 \bimp{P1633.ii}{P1633.i}
  Because of~\rstat{P1633.ii} and \(\Zuu{m}{m}=\Oqq\), we have \(\su{0}^\sntaa{m}{\alpha}+\Zuu{m}{m}=\Oqq\). The application of \rprop{P1544} yields then \(\seq{\su{j}}{j}{0}{m}\in\Kggdqualpha{m}\), which, in view of \eqref{Kggcdm}, implies~\rstat{P1633.i}.
 \eimp
\eproof

 The following example shows that in the situation of \rprop{P1633} we can from condition~\rstat{P1633.i} in general not conclude that \(\su{j}^\sntaa{m}{\alpha}=\Oqq\) for all \(j\in\mn{1}{\kappa-m}\).

\begin{exa}\label{E0849}
 Let \(\alpha\in\R\), let \(\su{0}\defg\Oqq\), and let \(\su{1}\defg\Iq\). According to \rexam{E1723}, then \(\seq{\su{j}}{j}{0}{1}\in\Kggqualpha{1}\). In particular, \(\seq{\su{j}}{j}{0}{0}\in\Kggqualpha{0}\). Because of \eqref{L}, we have \(\Lu{0}=\su{0}=\Oqq\). In view of \eqref{Kggcd2n} and \eqref{Hggcd}, thus \(\seq{\su{j}}{j}{0}{0}\in\Kggdqualpha{0}\) and, according to \eqref{Kggcdm}, we see that \(\seq{\su{j}}{j}{0}{1}\in\Kggdoqualpha{0}{1}\). Taking into account \rdefi{D1632}, moreover \(\su{1}^\sntaa{0}{\alpha}=\su{1}=\Iq\). Hence, \(\su{1}^\sntaa{0}{\alpha}\neq\Oqq\) although \(\seq{\su{j}}{j}{0}{1}\in\Kggdoqualpha{0}{1}\).
\end{exa}

\bpropl{P1541}
 Let \(\alpha\in\R\) and let \(\seq{\su{j}}{j}{0}{\infty}\in\Kggqinfalpha\). For all \(k\in\NO\), denote by \(\seq{\su{j}^\sntaa{k}{\alpha}}{j}{0}{\infty}\) the \tsaalphaa{k}{\seq{\su{j}}{j}{0}{\infty}}. Then the following statements are equivalent:
 \baeqi{0}
  \il{P1541.i} The sequence \(\seq{\su{j}}{j}{0}{\infty}\) belongs to \(\Kggdqinfalpha\).
  \il{P1541.ii} There exists an \(m\in\NO\) such that \(\su{0}^\sntaa{m}{\alpha}=\Oqq\).
 \eaeqi
 If~\rstat{P1541.ii} is fulfilled and if \(m\in\NO\) is such that \(\su{0}^\sntaa{m}{\alpha}=\Oqq\), then \(\su{j}^\sntaa{k}{\alpha}=\Oqq\) for all \(j\in\NO\) and all \(k\in\mn{m}{\infty}\), \(\Puu{j}{k}=\Oqq\) for all \(j,k\in\NO\), and \(\Zuu{l}{n}=\Oqq\) for all \(n\in\NO\) and all \(l\in\mn{n}{\infty}\).
\eprop
\bproof
 The equivalence of~\rstat{P1541.i} and~\rstat{P1541.ii} is a consequence of \eqref{G1023} and \rprop{P1633}. Now suppose that~\rstat{P1541.ii} is fulfilled and let \(m\in\NO\) be such that \(\su{0}^\sntaa{m}{\alpha}=\Oqq\). \rthmp{P1410}{P1410.a} yields then \(\seq{\su{j}^\sntaa{m}{\alpha}}{j}{0}{\infty}\in\Kggqinfalpha\), which, in view of \rlemp{L1738}{L1738.c}, implies \(\Bilda{\su{j}^\sntaa{m}{\alpha}}\subseteq\Bilda{\su{0}^\sntaa{m}{\alpha}}=\set{\Ouu{q}{1}}\) and hence \(\su{j}^\sntaa{m}{\alpha}=\Oqq\) for all \(j\in\NO\). From \rrema{R1116} we obtain furthermore \(\rank\su{j}^\sntaa{k}{\alpha}\leq\rank\su{0}^\sntaa{m}{\alpha}=0\) and thus \(\su{j}^\sntaa{k}{\alpha}=\Oqq\) for all \(k\in\mn{m+1}{\infty}\) and all \(j\in\NO\). Because of \(\seq{\su{j}}{j}{0}{\infty}\in\Kggqinfalpha=\Kggeqinfalpha\) and \rlemm{L1143}, we have \(\Puu{j}{k}=\Oqq\) for all \(j,k\in\NO\) and \(\Zuu{l}{n}=\Oqq\) for all \(n\in\NO\) and all \(l\in\mn{n}{\infty}\), which completes the proof.
\eproof

 The following two \rthmss{T1000}{T1658} are further main results of this paper. They describe the connection between the \trasp{\alpha}s of a sequence from \(\Kggquu{m}{\alpha}\) and \(\Kggeqkappaalpha\) and its \taaSSt{k}{\alpha}.
\begin{thm}\label{T1000}
 Let \(\alpha\in\R\), let \(m\in\NO\), let \(\seq{\su{j}}{j}{0}{ m}\in\Kggquu{ m}{\alpha}\), and let \(k\in\mn{0}{ m}\). Denote by \(\seq{\Spu{j}}{j}{0}{ m}\) and \((T_j)_{j=0}^{ m-k}\) the \trasp{\alpha}s of \(\seq{\su{j}}{j}{0}{m}\) and \((t_j)_{j=0}^{ m-k}\), respectively, where \((t_j)_{j=0}^{ m-k}\) denotes the \tsaalphaa{k}{\seq{\su{j}}{j}{0}{ m}}. Then
 \[
  T_{m-k}
  =
  \begin{cases}
   \Spu{m}\incase{k=0}\\
   \Spu{m}-\sum_{r=0}^{k-1}\Puu{m-r}{r}\incase{k\geq1}
  \end{cases}
 \]
 and, in the case \(k<m\), furthermore \(T_j=\Spu{k+j}\) for all \(j\in\mn{0}{m-k-1}\).
\end{thm}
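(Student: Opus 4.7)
The strategy is to reduce the statement to Theorem~\ref{T1016} applied twice: once to the original sequence $\seq{\su{j}}{j}{0}{m}$ and once to its $k$-th $\alpha$-Schur-Stieltjes transform $\seq{t_j}{j}{0}{m-k}$. The case $k=0$ is trivial since then $T_j=\Spu{j}$ by definition, so we may assume $k\geq1$.

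First I would invoke Theorem~\ref{P1410}\,\eqref{P1410.a} to ensure that $\seq{t_j}{j}{0}{m-k}$ belongs to $\Kggquu{m-k}{\alpha}$, so that Theorem~\ref{T1016} is applicable to this sequence. This yields
\[
 T_{m-k}=t_0^\sntaa{m-k}{\alpha}+\Zuuo{m-k}{m-k}{t}
 \quad\text{and}\quad
 T_j=t_0^\sntaa{j}{\alpha}\text{ for all }j\in\mn{0}{m-k-1},
\]
where $\Zuuo{\cdot}{\cdot}{t}$ denotes the matrices defined via \eqref{P[k]}--\eqref{Z} built from $\seq{t_j}{j}{0}{m-k}$. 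Applying Remark~\ref{R1510} iteratively, we see that $t_j^\sntaa{l}{\alpha}=\su{j}^\sntaa{k+l}{\alpha}$ for all admissible $j$ and $l$, hence in particular $t_0^\sntaa{l}{\alpha}=\su{0}^\sntaa{k+l}{\alpha}$.

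Now for $j\in\mn{0}{m-k-1}$ we have $k+j\leq m-1$, so Theorem~\ref{T1016} applied to $\seq{\su{j}}{j}{0}{m}$ gives $\Spu{k+j}=\su{0}^\sntaa{k+j}{\alpha}=t_0^\sntaa{j}{\alpha}=T_j$, which settles the second part of the claim. For the first part, combining $t_0^\sntaa{m-k}{\alpha}=\su{0}^\sntaa{m}{\alpha}$ with the identity $\Spu{m}=\su{0}^\sntaa{m}{\alpha}+\Zuu{m}{m}$ from Theorem~\ref{T1016} yields
\[
 T_{m-k}=\Spu{m}-\Zuu{m}{m}+\Zuuo{m-k}{m-k}{t}.
\]

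The decisive (but purely combinatorial) step is to compare $\Zuuo{m-k}{m-k}{t}$ with $\Zuu{m}{m}$. From the above identification of the iterated transforms and definition \eqref{P[k]}, one checks directly that $\Puuo{m-k-r}{r}{t}=\Puu{m-(k+r)}{k+r}$ for all $r\in\mn{0}{m-k-1}$. Substituting $r'=k+r$ in the defining sum \eqref{Z} then gives
\[
 \Zuuo{m-k}{m-k}{t}=\sum_{r=0}^{m-k-1}\Puu{m-(k+r)}{k+r}=\sum_{r'=k}^{m-1}\Puu{m-r'}{r'}=\Zuu{m}{m}-\sum_{r=0}^{k-1}\Puu{m-r}{r}.
\]
Inserting this into the expression for $T_{m-k}$ yields the asserted formula. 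The only non-routine point is the index-shift bookkeeping identifying $\Puuo{\cdot}{\cdot}{t}$ with the appropriate $\Puu{\cdot}{\cdot}$ for $\seq{\su{j}}{j}{0}{m}$; once the iterative identity $t_j^\sntaa{l}{\alpha}=\su{j}^\sntaa{k+l}{\alpha}$ is in hand, everything else is direct substitution.
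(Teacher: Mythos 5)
Your proof is correct and takes essentially the same route as the paper's own argument: both invoke Theorem~\ref{P1410} to make Theorem~\ref{T1016} applicable to the transformed sequence, identify the iterated transforms via Remark~\ref{R1510} (giving $t_j^\sntaa{l}{\alpha}=\su{j}^\sntaa{k+l}{\alpha}$), and finish with the same index shift turning the sum for the transformed sequence into $\Zuu{m}{m}-\sum_{r=0}^{k-1}\Puu{m-r}{r}$. The only difference is presentational: you package the bookkeeping through the quantities \eqref{P[k]} and \eqref{Z} built from $(t_j)_{j=0}^{m-k}$ and treat $k\geq1$ uniformly, while the paper writes the sums out explicitly and splits into the cases $k=m$ and $k<m$.
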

\begin{proof}
 From \rthmp{P1410}{P1410.a} we obtain \(\seq{t_j}{j}{0}{ m-k}\in\Kggqualpha{ m-k}\). For all \(l\in\mn{0}{m-k}\), denote by \(\seq{t_j^\sntaa{l}{\alpha}}{j}{0}{ m-k-l}\) the \tsaalphaa{l}{\seq{t_j}{j}{0}{ m-k}}. According to \rrema{R1510}, we have \(\su{j}^\sntaa{k+l}{\alpha}=t_j^\sntaa{l}{\alpha}\) for all \(l\in\mn{0}{m-k}\) and all \(j\in\mn{0}{m-k-l}\). The application of \rtheo{T1016} yields \(\Spu{m}=\su{0}^\sntaa{m}{\alpha}+\Zuu{m}{m}\). 
 
 We first consider the case \(k=m\). The application of \rtheo{T1016} to the sequence \(\seq{t_j}{j}{0}{ m-k}\) yields, in view of \(m-k=0\) and \eqref{Z}, then \(T_{m-k}=t_0^\sntaa{m-k}{\alpha}+\Oqq\). Hence, \(
  T_{m-k}
  =t_0^\sntaa{m-k}{\alpha}
  =\su{0}^\sntaa{m}{\alpha}
  =\Spu{m}-\Zuu{m}{m}
  =\Spu{m}-\Zuu{m}{k}
 \). Taking into account \eqref{Z}, this implies \(T_{m-k}=\Spu{m}\) in the case \(k=0\) and \(T_{m-k}=\Spu{m}-\sum_{r=0}^{k-1}\Puu{m-r}{r}\) in the case \(k\geq1\) which completes the proof in the case \(k=m\).
 
 Now suppose \(k<m\). Then \(m\geq1\) and \(m-k\geq1\). The application of \rtheo{T1016} to the sequence \(\seq{t_j}{j}{0}{ m-k}\) yields, in view of \eqref{Z} and \eqref{P[k]}, thus
 \[
  T_{m-k}
  =t_0^\sntaa{m-k}{\alpha}+\sum_{l=0}^{m-k-1}\lek t_{m-k-l}^\sntaa{l}{\alpha}- t_{0}^\sntaa{l}{\alpha}( t_{0}^\sntaa{l}{\alpha})^\MP  t_{m-k-l}^\sntaa{l}{\alpha}( t_{0}^\sntaa{l}{\alpha})^\MP  t_{0}^\sntaa{l}{\alpha}\rek
 \]
 and \(T_j=t_0^\sntaa{j}{\alpha}\) for all \(j\in\mn{0}{m-k-1}\). Furthermore, \(m\geq1\) and \rtheo{T1016} yield \(\Spu{j}=\su{0}^\sntaa{j}{\alpha}\) for all \(j\in\mn{0}{m-1}\). Taking into account \eqref{P[k]} and \eqref{Z}, we have thus
 \[
  \begin{split}
   T_{m-k}
   &=\su{0}^\sntaa{m}{\alpha}+\sum_{l=0}^{m-k-1}\lek \su{m-k-l}^\sntaa{k+l}{\alpha}- \su{0}^\sntaa{k+l}{\alpha}( \su{0}^\sntaa{k+l}{\alpha})^\MP  \su{m-k-l}^\sntaa{k+l}{\alpha}( \su{0}^\sntaa{k+l}{\alpha})^\MP  \su{0}^\sntaa{k+l}{\alpha}\rek\\
   &=\Spu{m}-\Zuu{m}{m}+\sum_{l=0}^{m-k-1}\Puu{m-k-l}{k+l}
   =\Spu{m}-\sum_{r=0}^{m-1}\Puu{m-r}{r}+\sum_{r=k}^{m-1}\Puu{m-r}{r}
  \end{split}
 \]
 and \(T_j=\su{0}^\sntaa{k+j}{\alpha}=\Spu{k+j}\) for all \(j\in\mn{0}{ m-k-1}\). In particular, \(T_{m-k}=\Spu{m}\) in the case \(k=0\) and \(T_{m-k}=\Spu{m}-\sum_{r=0}^{k-1}\Puu{m-r}{r}\) in the case \(k\geq1\), which completes the proof.
\end{proof}

Now we consider for a sequence \(\seq{\su{j}}{j}{0}{\kappa}\in\Kggequu{\kappa}{\alpha}\) the same task as it was done in \rtheo{T1000} for the case of \taSnnd{\alpha} sequences.
\begin{thm}\label{T1658}
 Let \(\alpha\in\R\), let \(\kappa\in\NO\cup\set{+\infty}\), let \(\seq{\su{j}}{j}{0}{\kappa}\in\Kggequu{\kappa}{\alpha}\), and let \(k\in\mn{0}{\kappa}\). Denote by \(\seq{t_j}{j}{0}{\kappa-k}\) the \tsaalphaa{k}{\seq{\su{j}}{j}{0}{\kappa}} and by \(\seq{\Spu{j}}{j}{0}{\kappa}\) the \traspa{\alpha}{\seq{\su{j}}{j}{0}{\kappa}}. Then \(\seq{\Spu{k+j}}{j}{0}{\kappa-k}\) is exactly the \traspa{\alpha}{\seq{t_j}{j}{0}{\kappa-k}}.
\end{thm}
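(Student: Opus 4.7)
The plan is to deduce Theorem~\ref{T1658} as a clean consequence of the already established Theorem~\ref{T1615} together with the semigroup property of the $\alpha$-S-transformation formulated in Remark~\ref{R1510}. The core idea is that once we know, for sequences in $\Kggequu{\kappa}{\alpha}$, the \trasp{\alpha} is nothing but the sequence of leading matrices of the iterated \tlasnt{\alpha}s, the statement reduces to a trivial re-indexing.

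More concretely, I would proceed as follows. First, invoke \rthmp{P1410}{P1410.b}: since $\seq{\su{j}}{j}{0}{\kappa}\in\Kggequu{\kappa}{\alpha}$, its \tsaalphaa{k} \(\seq{t_j}{j}{0}{\kappa-k}\) again belongs to $\Kggequu{\kappa-k}{\alpha}$. This ensures Theorem~\ref{T1615} is applicable to both sequences. Applying Theorem~\ref{T1615} to the original sequence gives $\Spu{j}=\su{0}^\sntaa{j}{\alpha}$ for all $j\in\mn{0}{\kappa}$, while applying it to \(\seq{t_j}{j}{0}{\kappa-k}\) yields that its \trasp{\alpha}, say \(\seq{T_j}{j}{0}{\kappa-k}\), satisfies $T_j=t_0^\sntaa{j}{\alpha}$ for all $j\in\mn{0}{\kappa-k}$.

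Now by \rrema{R1510}, the $j$-th \tlasnt{\alpha} of the \tsaalphaa{k}{\seq{\su{j}}{j}{0}{\kappa}} coincides with the $(k+j)$-th \tlasnt{\alpha} of \(\seq{\su{j}}{j}{0}{\kappa}\). In particular, evaluating both at index $0$ gives $t_0^\sntaa{j}{\alpha}=\su{0}^\sntaa{k+j}{\alpha}$ for all $j\in\mn{0}{\kappa-k}$. Combining the three identities, we obtain
\[
 T_j=t_0^\sntaa{j}{\alpha}=\su{0}^\sntaa{k+j}{\alpha}=\Spu{k+j}
\]
for every $j\in\mn{0}{\kappa-k}$, which is exactly the asserted equality of sequences.

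There is essentially no obstacle here; the only subtle point is to verify that the hypotheses for Theorem~\ref{T1615} are preserved under the $\alpha$-S-transformation, which is granted by \rthmp{P1410}{P1410.b}. Everything else is bookkeeping via \rrema{R1510}. The proof will therefore be short — three lines after pulling the above pieces together.
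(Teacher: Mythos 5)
Your proposal is correct and follows essentially the same route as the paper's own proof: the paper likewise applies \rtheo{T1615} twice (once to \(\seq{\su{j}}{j}{0}{\kappa}\) and once to \(\seq{t_j}{j}{0}{\kappa-k}\), the latter justified by \rthmp{P1410}{P1410.b}), and then identifies \(t_0^\sntaa{j}{\alpha}=\su{0}^\sntaa{k+j}{\alpha}\) via \rrema{R1510}. There is nothing to add; the argument is complete as you state it.
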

\begin{proof}
 Let \(l\in\mn{0}{\kappa-k}\). Then \(k+l\in\mn{0}{\kappa}\), and the application of \rtheo{T1615} yields \(\su{0}^\sntaa{k+l}{\alpha}=\Spu{k+l}\). According to \rrema{R1510}, we have \(\su{0}^\sntaa{k+l}{\alpha}=t_0^\sntaa{l}{\alpha}\), where \(\seq{t_j^\sntaa{l}{\alpha}}{j}{0}{\kappa-k-l}\) denotes the \tsaalphaa{l}{\seq{t_j}{j}{0}{\kappa-k}}. From \rthmp{P1410}{P1410.b} we obtain \(\seq{t_j}{j}{0}{\kappa-k}\in\Kggequalpha{\kappa-k}\). The application of \rtheo{T1615} to the sequence \(\seq{t_j}{j}{0}{\kappa-k}\) yields then \(t_0^\sntaa{l}{\alpha}=T_l\), where \((T_j)_{j=0}^{\kappa-k}\) is the \traspa{\alpha}{(t_j)_{j=0}^{\kappa-k}}. Thus, we have finally \(
  \Spu{k+j}
  =\su{0}^\sntaa{k+j}{\alpha}
  =t_0^\sntaa{j}{\alpha}
  =T_j
 \) for all \(j\in\mn{0}{\kappa-k}\), which completes the proof.
\end{proof}
\rthmss{T1000}{T1658} indicate that against to the background of our \tSchur{}-type algorithm the \trasp{\alpha} can be interpreted as a \tSchur{}-type parametrization.

\section{Recovering the original sequence from its first \hascht{\alpha} and its first matrix}\label{S1009}
The considerations in \rsect{S1649} suggest the study of a natural inverse problem associated with the \tseinsalpha{} of a (finite or infinite) sequence of complex \tpqa{matrices}. The main theme of this section is the treatment of this inverse problem, which will be explained below in more detail. It should be mentioned that a similar task was treated in~\zitaa{MR3014199}{\cSect{10}} against to the background of the \tSchur{}-type algorithm studied there. The first study of inverse problems of this kind goes back to the papers~\zita{MR2276733MR2302055}, where the inverse problem associated with the \tSchur{}-Potapov algorithm for strict \tpqa{\tSchur{}} sequences was handled.

Let \(\alpha\in\C\) and \(\kappa\in\N\cup\set{+\infty}\). For each sequence \(\seq{\su{j}}{j}{0}{\kappa}\) of complex \tpqa{matrices} the \tseinsalpha{} \(\seq{\su{j}^\sntaa{1}{\alpha}}{j}{0}{\kappa-1}\) is given by \rdefi{D1059}. Conversely, we consider the question: If the \tseinsalpha{} \(\seq{\su{j}^\sntaa{1}{\alpha}}{j}{0}{\kappa-1}\) and the matrix \(\su{0}\) are known, how one can recover the original sequence \(\seq{\su{j}}{j}{0}{\kappa}\). If \(m\in\N\) and if \(\seq{\su{j}}{j}{0}{m}\) belongs to \(\Kggqualpha{m}\) then \rtheo{T1016}, \rdefi{D1632}, and \rrema{R0929} yield such a possibility to express \(\seq{\su{j}}{j}{0}{m}\) by \(\seq{\su{j}^\sntaa{1}{\alpha}}{j}{0}{m-1}\) and \(\su{0}\). In view of the definition of the \trasp{\alpha} and the formulas \eqref{L}, \eqref{The}, \eqref{Thea}, and \eqref{La}, this way of computation is not very comfortable. That's why it seems to be more advantageous to construct a recursive procedure to recover the original sequence \(\seq{\su{j}}{j}{0}{m}\) from its \tseinsalpha{} and the matrix \(\su{0}\). To realize this aim, first we introduce the central construction of this section.
\bdefil{D1712}
 Let \(\alpha\in\C\), let \(\kappa\in\NO\cup\set{+\infty}\), let \(\seq{t_j}{j}{0}{\kappa}\) be a sequence of complex \tpqa{matrices}, and let \(A\) be a complex \tpqa{matrix}. The sequence \(\seq{t_j^\sminuseinsalphaa{A}}{j}{0}{\kappa+1}\)\index{$\seq{t_j^\sminuseinsalphaa{A}}{j}{0}{\kappa+1}$} recursively defined by
 \begin{align*}
  t_0^\sminuseinsalphaa{A}&\defg A&
 &\text{and}&
  t_j^\sminuseinsalphaa{A}&\defg\alpha^jA+\sum_{l=1}^j\alpha^{j-l}AA^\MP\lek\sum_{k=0}^{l-1}t_{l-k-1}A^\MP(t_k^\sminuseinsalphaa{A})^\splusalpha\rek
 \end{align*}
 for all \(j\in\mn{1}{\kappa+1}\) is called the \emph{\tsminuseinsalphaaa{\seq{t_j}{j}{0}{\kappa}}{A}}.
\edefi

\bremal{R1504}
 Let \(\alpha\in\C\), let \(\kappa\in\NO\cup\set{+\infty}\), let \(\seq{t_j}{j}{0}{\kappa}\) be a sequence of complex \tpqa{matrices}, and let \(A\) be a complex \tpqa{matrix}. Denote by \(\seq{\su{j}}{j}{0}{\kappa+1}\) the \tsminuseinsalphaaa{\seq{t_j}{j}{0}{\kappa}}{A}. In view of \rdefi{D1712}, one can easily see that, for all \(m\in\mn{0}{\kappa}\), the sequence \(\seq{\su{j}}{j}{0}{m+1}\) depends only on the matrices \(A\) and \(t_0,t_1,\dotsc,t_m\) and is hence exactly the \tsminuseinsalphaaa{\seq{t_j}{j}{0}{m}}{A}.
\erema

The following observation expresses an essential feature of our construction.
\bremal{R1514}
 Let \(\alpha\in\C\), let \(\kappa\in\NO\cup\set{+\infty}\), let \(\seq{t_j}{j}{0}{\kappa}\) be a sequence of complex \tpqa{matrices}, and let \(A\) be a complex \tpqa{matrix}. Denote by \(\seq{\su{j}}{j}{0}{\kappa+1}\) the \tsminuseinsalphaaa{\seq{t_j}{j}{0}{\kappa}}{A}. From \rdefiss{D1712}{D1658} we easily see then that \(\seq{\su{j}}{j}{0}{\kappa+1}\) belongs to \(\Dpqu{\kappa+1}\).
\erema

\begin{lem}\label{L0831}
 Let \(\alpha\in\C\), let \(\kappa\in\NO\cup\set{+\infty}\), let \(\seq{t_j}{j}{0}{\kappa}\) be a sequence of complex \tpqa{matrices}, and let \(A\) be a complex \tpqa{matrix}. Denote by \(\seq{\su{j}}{j}{0}{\kappa+1}\) the \tsminuseinsalphaaa{\seq{t_j}{j}{0}{\kappa}}{A} and by \(\seq{\su{j}^\splusalpha}{j}{0}{\kappa+1}\) the \tsplusalphata{\seq{\su{j}}{j}{0}{\kappa+1}}. Then \(\su{0}=A\) and \(s_j=\alpha\su{j-1}+AA^\MP\sum_{k=0}^{j-1}t_{j-1-k}A^\MP\su{k}^\splusalpha\) for all \(j\in\mn{1}{\kappa+1}\).
\end{lem}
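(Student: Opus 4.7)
The plan is to verify the asserted identity directly from \rdefi{D1712}, without invoking anything deeper. The first equation \(\su{0}=A\) is nothing but the initialization clause of \rdefi{D1712}, so the work lies entirely in the recursive identity for \(j\in\mn{1}{\kappa+1}\). To keep the bookkeeping transparent I would introduce, for all \(l\in\mn{1}{\kappa+1}\), the abbreviation
\[
 C_l\defg AA^\MP\sum_{k=0}^{l-1}t_{l-k-1}A^\MP\su{k}^\splusalpha,
\]
so that the defining formula from \rdefi{D1712} reads
\[
 \su{l}
 =\alpha^lA+\sum_{r=1}^l\alpha^{l-r}C_r
 \qquad\text{for all }l\in\mn{1}{\kappa+1}.
\]
The target identity is then precisely \(\su{j}=\alpha\su{j-1}+C_j\).

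The verification amounts to isolating the \(r=j\) term and factoring one power of \(\alpha\) out of the remaining sum. Namely,
\[
 \su{j}
 =\alpha^jA+\sum_{r=1}^{j-1}\alpha^{j-r}C_r+C_j
 =\alpha\Bigl(\alpha^{j-1}A+\sum_{r=1}^{j-1}\alpha^{(j-1)-r}C_r\Bigr)+C_j.
\]
In the case \(j\geq2\), recognizing the bracket as \(\su{j-1}\) (again via \rdefi{D1712}) delivers the claim. In the corner case \(j=1\) the inner sum is empty and one reads off \(\su{1}=\alpha A+C_1=\alpha\su{0}+C_1\) directly.

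Because the entire argument is a single reindexing of the recursion defining the \(\scriptstyle[-1,\alpha]\)-transform, there is no substantive obstacle; the point of the lemma is precisely to repackage \rdefi{D1712} into a one-step recursion involving the \(\splusalpha\)-transform of the already constructed initial segment, which is the form in which it will be applied in the inverse-problem considerations of this section.
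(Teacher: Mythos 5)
Your argument is correct and is essentially the paper's own proof: both are a single reindexing of the recursion in \rdefi{D1712}, the paper verifying the identity by expanding \(\su{j-1}\) inside \(\alpha\su{j-1}+AA^\MP\sum_{k=0}^{j-1}t_{j-1-k}A^\MP\su{k}^\splusalpha\) and merging the two sums, while you equivalently peel the \(l=j\) term off the defining expansion of \(\su{j}\) and factor out one power of \(\alpha\). Your abbreviation \(C_l\) (which depends only on \(A\), the \(t_j\), and \(\su{0},\dotsc,\su{l-1}\), hence is the same matrix in both expansions) is a harmless streamlining, and your separate treatment of the empty-sum case \(j=1\) matches the paper's handling of \(j=1\) before the inductive-style computation for \(j\geq2\).
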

\begin{proof}
 Using \rdefi{D1712}, we obtain \(\su{0}=A\) and \(\su{1}=\alpha A+AA^\MP t_0A^\MP\su{0}^\splusalpha=\alpha\su{0}+AA^\MP t_0A^\MP\su{0}^\splusalpha\) and, in the case \(\kappa\geq1\), for all \(j\in\mn{2}{\kappa+1}\), furthermore
 \[
  \begin{split}
   &\alpha\su{j-1}+AA^\MP\sum_{k=0}^{j-1}t_{j-1-k}A^\MP\su{k}^\splusalpha\\
   &=\alpha\lek\alpha^{j-1}A+\sum_{l=1}^{j-1}\alpha^{j-1-l}AA^\MP\lrk\sum_{k=0}^{l-1}t_{l-k-1}A^\MP\su{k}^\splusalpha\rrk\rek+AA^\MP\sum_{k=0}^{j-1}t_{j-1-k}A^\MP\su{k}^\splusalpha\\
   &=\alpha^jA+\sum_{l=1}^{j-1}\alpha^{j-l}AA^\MP\lrk\sum_{k=0}^{l-1}t_{l-k-1}A^\MP\su{k}^\splusalpha\rrk+AA^\MP\sum_{k=0}^{j-1}t_{j-k-1}A^\MP\su{k}^\splusalpha\\
   &=\alpha^jA+\sum_{l=1}^j\alpha^{j-l}AA^\MP\lrk\sum_{k=0}^{l-1}t_{l-k-1}A^\MP\su{k}^\splusalpha\rrk
   =s_j,
  \end{split}
 \]
 which completes the proof.
\end{proof}

For a given number \(\alpha\in\C\), a given \(\kappa\in\NO\cup\set{+\infty}\), and a given sequence \(\seq{t_j}{j}{0}{\kappa}\) of complex \tpqa{matrices}, we want to determine a complex \tpqa{matrix} \(A\) such that the sequence \(\seq{t_j}{j}{0}{\kappa}\) turns out to be exactly the \tseinsalpha{} of the \tsminuseinsalpha{} \(\seq{t_j^\sminuseinsalphaa{A}}{j}{0}{\kappa+1}\) corresponding to \([\seq{t_j}{j}{0}{\kappa},A]\). To realize this aim, we still need a little preparation.

\blemml{L1538}
 Let \(\alpha\in\C\), let \(\kappa\in\NO\cup\set{+\infty}\), let \(\seq{t_j}{j}{0}{\kappa}\) be a sequence of complex \tpqa{matrices}, and let \(A\) be a complex \tpqa{matrix}. Denote by \(\seq{\su{j}}{j}{0}{\kappa+1}\) the \tsminuseinsalphaaa{\seq{t_j}{j}{0}{\kappa}}{A} and let the sequence \(\seq{r_j}{j}{0}{\kappa+1}\) be given by \(r_j\defg\su{j}^\srautealpha\) for all \(j\in\mn{0}{\kappa+1}\). For all \(j\in\mn{0}{\kappa+1}\), then
 \begin{align*}
  r_j&=
  \begin{cases}
   A^\MP\incase{j=0}\\
   -A^\MP t_{j-1}A^\MP\incase{j\geq1}
  \end{cases}&
  &\text{and}&
  r_j^\rez&=\su{j}^\splusalpha.
 \end{align*}
\elemm
\bproof
 Let the sequence \(\seq{u_j}{j}{0}{\kappa+1}\) be given by \(u_0\defg A^\MP\) and \(u_j\defg-A^\MP t_{j-1}A^\MP\) for all \(j\in\mn{1}{\kappa+1}\) and let the sequence \(\seq{v_j}{j}{0}{\kappa+1}\) be given by \(v_j\defg\su{j}^\splusalpha\) for all \(j\in\mn{0}{\kappa+1}\). Using \eqref{[+]_0}, \rdefi{D1712}, \rremp{R1631}{R1631.a}, and \rdefi{D1430}, we get then \(
  v_0
  =\su{0}
  =A
  =(A^\MP)^\MP
  =u_0^\MP
  =u_0^\rez
 \) and, taking additionally this and \rdefi{D1455} into account, furthermore
 \bsp
  v_1
  &=-\alpha \su{0}+\su{1}
  =-\alpha A+\alpha^1A+\sum_{l=1}^1\alpha^{1-l}AA^\MP\lrk\sum_{k=0}^{l-1}t_{l-k-1}A^\MP v_k\rrk\\
  &=-(A^\MP)^\MP(-A^\MP t_{1-1}A^\MP)v_0
  =-u_0^\MP u_1u_0^\rez
  =u_1^\rez.
 \esp
 
 Consequently, if \(\kappa=0\), for all \(j\in\mn{0}{\kappa+1}\), then
 \bgl{L1538.1}
  v_j
  =u_j^\sraute.
 \eg
 
 Let us consider the case \(\kappa\geq1\). Then we have already proved that there exists a number \(m\in\mn{1}{\kappa}\) such that \eqref{L1538.1} is satisfied for all \(j\in\mn{0}{m}\). Because of \rdefiss{D1455}{D1712}, \eqref{L1538.1}, \rremp{R1631}{R1631.a}, and \rdefi{D1430}, we conclude that
 \bsp
  v_{m+1}
  &=-\alpha \su{m}+\su{m+1}\\
  &=-\alpha\lek\alpha^mA+\sum_{l=1}^m\alpha^{m-l}AA^\MP\lrk\sum_{k=0}^{l-1}t_{l-k-1}A^\MP v_k\rrk\rek\\
  &\qquad+\lek\alpha^{m+1}A+\sum_{l=1}^{m+1}\alpha^{(m+1)-l}AA^\MP\lrk\sum_{k=0}^{l-1}t_{l-k-1}A^\MP v_k\rrk\rek\\
  &=-\sum_{l=1}^m\alpha^{m-l+1}AA^\MP\lrk\sum_{k=0}^{l-1}t_{l-k-1}A^\MP u_k^\rez\rrk+\sum_{l=1}^{m+1}\alpha^{m-l+1}AA^\MP\lrk\sum_{k=0}^{l-1}t_{l-k-1}A^\MP u_k^\rez\rrk\\
  &=AA^\MP\sum_{k=0}^mt_{m-k}A^\MP u_k^\rez
  =-(A^\MP)^\MP\sum_{k=0}^m(-A^\MP t_{m-k}A^\MP)u_k^\rez
  =-u_0^\MP\sum_{k=0}^mu_{m-k+1}u_k^\rez
  =u_{m+1}^\rez.
 \esp
 Hence, \eqref{L1538.1} is proved inductively for all \(j\in\mn{0}{\kappa+1}\). In view of \rdefi{D1658}, the sequence \(\seq{u_j}{j}{0}{\kappa+1}\) obviously belongs to \(\Dpqu{\kappa+1}\). Taking additionally into account \eqref{L1538.1}, then~\zitaa{MR3014197}{\ccoro{4.22}} yields \(v_j^\rez=u_j\) for all \(j\in\mn{0}{\kappa+1}\). In view of \eqref{reza}, we have thus \(
  r_j
  =\su{j}^\rezalpha
  =v_j^\rez
  =u_j
 \) for all \(j\in\mn{0}{\kappa+1}\). Using this and \eqref{L1538.1} then \(
  r_j^\rez
  =u_j^\rez
  =v_j
  =\su{j}^\splusalpha
 \) follows for all \(j\in\mn{0}{\kappa+1}\).
\eproof

\blemml{L1455}
 Let \(\alpha\in\C\), let \(\kappa\in\NO\cup\set{+\infty}\), let \(\seq{t_j}{j}{0}{\kappa}\) be a sequence of complex \tpqa{matrices}, and let \(A\) be a complex \tpqa{matrix}. Denote by \(\seq{\su{j}}{j}{0}{\kappa+1}\) the \tsminuseinsalphaaa{\seq{t_j}{j}{0}{\kappa}}{A} and by \(\seq{\su{j}^\seinsalpha}{j}{0}{\kappa}\) the \tseinsalphaa{\seq{\su{j}}{j}{0}{\kappa+1}}. Then:
 \benui
  \il{L1455.a} \(\su{j}^\seinsalpha=AA^\MP t_jA^\MP A\) for all \(j\in\mn{0}{\kappa}\).
  \il{L1455.b} If \(\Kerna{A}\subseteq\bigcap_{j=0}^\kappa\Kerna{t_j}\) and \(\bigcup_{j=0}^\kappa\Bilda{t_j}\subseteq\Bilda{A}\), then \(\su{j}^\seinsalpha=t_j\) for all \(j\in\mn{0}{\kappa}\).
 \eenui
\elemm
\bproof
 Using \rdefiss{D1059}{D1712} and \rlemm{L1538}, we obtain
 \[
  \su{j}^\seinsalpha
  =-\su{0}\su{j+1}^\srautealpha\su{0}
  =-A(-A^\MP t_{j+1-1}A^\MP)A
  =AA^\MP t_jA^\MP A
 \]
 for all \(j\in\mn{0}{\kappa}\). Thus \rpart{L1455.a} is proved. \rPart{L1455.b} is an immediate consequence of~\eqref{L1455.a} and \rpartss{R1631.c}{R1631.b} of \rrema{R1631}.
\eproof

\bremal{R1324}
 Let \(\alpha\in\C\), let \(\kappa\in\NO\cup\set{+\infty}\), let \(\seq{t_j}{j}{0}{\kappa}\in\Dpqkappa\), and let \(A\in\Cpq\) be such that \(\Kerna{A}\subseteq\Kerna{t_0}\) and \(\Bilda{t_0}\subseteq\Bilda{A}\). In view of \rdefi{D1658}, then \(\Kerna{A}\subseteq\bigcap_{j=0}^\kappa\Kerna{t_j}\) and \(\bigcup_{j=0}^\kappa\Bilda{t_j}\subseteq\Bilda{A}\).
\erema

\bpropl{P1313}
 Let \(\alpha\in\C\), let \(\kappa\in\NO\cup\set{+\infty}\), let \(\seq{t_j}{j}{0}{\kappa}\in\Dpqkappa\), and let \(A\in\Cpq\) be such that \(\Kerna{A}\subseteq\Kerna{t_0}\) and \(\Bilda{t_0}\subseteq\Bilda{A}\). Denote by \(\seq{\su{j}}{j}{0}{\kappa+1}\) the \tsminuseinsalphaaa{\seq{t_j}{j}{0}{\kappa}}{A}. Then \(\seq{t_j}{j}{0}{\kappa}\) is exactly the \tseinsalphaa{\seq{\su{j}}{j}{0}{\kappa+1}}.
\eprop
\bproof
 Use \rrema{R1324} and \rlemp{L1455}{L1455.b}.
\eproof

Our next considerations can be sketched as follows. Let \(\alpha\in\C\), \(\kappa\in\NO\cup\set{+\infty}\), and \(\seq{\su{j}}{j}{0}{\kappa+1}\in\Dpqu{\kappa+1}\). Then we want to recover \(\seq{\su{j}}{j}{0}{\kappa+1}\) from its \tseinsalpha{} and its first matrix \(\su{0}\). For this reason, we still need a little preparation.

\blemml{L1041}
 Let \(\alpha\in\C\), let \(\kappa\in\NO\cup\set{+\infty}\), and let \(\seq{\su{j}}{j}{0}{\kappa+1}\) be a sequence of complex \tpqa{matrices}. Denote by \(\seq{t_j}{j}{0}{\kappa}\) the \tseinsalphaa{\seq{\su{j}}{j}{0}{\kappa+1}} and by \(\seq{w_j}{j}{0}{\kappa+1}\) the \tsminuseinsalphaaa{\seq{t_j}{j}{0}{\kappa}}{\su{0}}. Then \(w_j=\su{0}\su{0}^\MP\su{j}\su{0}^\MP\su{0}\) for all \(j\in\mn{0}{\kappa+1}\).
\elemm
\bproof
 Let the sequence \(\seq{r_j}{j}{0}{\kappa+1}\) be given by \(r_j\defg w_j^\srautealpha\) for all \(j\in\mn{0}{\kappa+1}\). \rlemm{L1538} yields then \(r_0=\su{0}^\MP\) and \(r_j=-\su{0}^\MP t_{j-1}\su{0}^\MP\) for all \(j\in\mn{1}{\kappa+1}\). Using \rlemm{L1445} and \rdefi{D1430} we obtain \(\su{0}^\srautealpha=\su{0}^\MP\).  For all \(j\in\mn{1}{\kappa+1}\) we get from \rdefi{D1059} furthermore \(-\su{0}^\MP t_{j-1}\su{0}^\MP=\su{0}^\MP\su{0}\su{j}^\srautealpha\su{0}\su{0}^\MP\). According to \rlemm{L1352}, the sequence \(\seq{\su{j}^\rezalpha}{j}{0}{\kappa+1}\) belongs to \(\Dqpu{\kappa+1}\). Taking additionally into account \(\su{0}^\srautealpha=\su{0}^\MP\) and \rdefi{D1658}, we obtain \(\bigcup_{j=0}^{\kappa+1}\Bilda{\su{j}^\srautealpha}\subseteq\Bilda{\su{0}^\MP}\)\ and \(\Kerna{\su{0}^\MP}\subseteq\bigcap_{j=0}^{\kappa+1}\Kerna{\su{j}^\srautealpha}\) which, in view of \rpartsss{R1631.a}{R1631.c}{R1631.b} of \rrema{R1631}, implies \(\su{0}^\MP\su{0}\su{j}^\srautealpha\su{0}\su{0}^\MP=\su{j}^\srautealpha\) for all \(j\in\mn{1}{\kappa+1}\). We have, for all \(j\in\mn{0}{\kappa+1}\), thus \(r_j=\su{j}^\srautealpha\) and hence \(w_j^\srautealpha=\su{j}^\srautealpha\). The application of \rlemm{L1454} then yields \(w_0w_0^\MP w_jw_0^\MP w_0=\su{0}\su{0}^\MP\su{j}\su{0}^\MP\su{0}\) for all \(j\in\mn{0}{\kappa+1}\). According to \rrema{R1514}, the sequence \(\seq{w_j}{j}{0}{\kappa+1}\) belongs to \(\Dpqu{\kappa+1}\). Taking additionally into account \rdefi{D1658} and \rpartss{R1631.c}{R1631.b} of \rrema{R1631}, we get consequently \(w_0w_0^\MP w_jw_0^\MP w_0=w_j\) for all \(j\in\mn{0}{\kappa+1}\), which completes the proof.
\eproof

\bpropl{P0930}
 Let \(\alpha\in\C\), let \(\kappa\in\NO\cup\set{+\infty}\), and let \(\seq{\su{j}}{j}{0}{\kappa+1}\in\Dpqu{\kappa+1}\). Denote by \(\seq{t_j}{j}{0}{\kappa}\) the \tseinsalphaa{\seq{\su{j}}{j}{0}{\kappa+1}}. Then \(\seq{\su{j}}{j}{0}{\kappa+1}\) is exactly the \tsminuseinsalphaaa{\seq{t_j}{j}{0}{\kappa}}{\su{0}}.
\eprop
\bproof
 Use \rlemm{L1041}, \rdefi{D1658}, and \rpartss{R1631.c}{R1631.b} of \rrema{R1631}.
\eproof
\rpropss{P1313}{P0930} indicate the particular role of the class \(\Dpqkappa\) of \tftd{} sequences of complex \tpqa{matrices} (see \rdefi{D1658}) in the context of this section. Roughly speaking, the aim of our following considerations can be described as follows: Let \(\alpha\in\R\), let \(\kappa\in\NO\cup\set{+\infty}\), and let \(\seq{t_j}{j}{0}{\kappa}\) be a sequence which belongs to \(\Kggqkappaalpha\) or to one of its distinguished subclasses \(\Kggeqkappaalpha\), \(\Kgqkappaalpha\), and \(\Kggdqkappaalpha\). Then we are looking for matrices \(A\in\Cqq\) such that the \tsminuseinsalphaaa{\seq{t_j}{j}{0}{\kappa}}{A} belongs to \(\Kggqualpha{\kappa+1}\), \(\Kggequalpha{\kappa+1}\), \(\Kgqualpha{\kappa+1}\), and \(\Kggdqualpha{\kappa+1}\), respectively. First we derive some formulas which express useful interrelations between essential block \tHankel{} matrices occurring in our considerations.

\bpropl{P1052}
 Let \(\alpha\in\C\), let \(\kappa\in\NO\cup\set{+\infty}\), let \(\seq{t_j}{j}{0}{\kappa}\) be a sequence of complex \tpqa{matrices}, and let \(A\) be a complex \tpqa{matrix}. Denote by \(\seq{\su{j}}{j}{0}{\kappa+1}\) the \tsminuseinsalphaaa{\seq{t_j}{j}{0}{\kappa}}{A}. Then:
 \benui
  \il{P1052.a} \(\Hu{0}=A\).
  \il{P1052.b} Let \(n\in\N\) with \(2n-1\leq\kappa\). Then the matrices \(\Dlu{n}\) and \(\Dru{n}\) given via \eqref{Dl} and \eqref{Dr} are invertible and
  \bgl{P1052.A}
   \Hu{n}
   =\Dlu{n}^\inv\lrk\diagA{A,\lek\Iu{n}\kp(AA^\MP)\rek(-\alpha\Huo{n-1}{t}+\Kuo{n-1}{t})\lek\Iu{n}\kp(A^\MP A)\rek}\rrk\Dru{n}^\inv.
  \eg
  In particular,
  \bgl{P1052.C}
   \rank\Hu{n}
   =\rank(A)+\rank\lrk\lek\Iu{n}\kp(AA^\MP)\rek(-\alpha\Huo{n-1}{t}+\Kuo{n-1}{t})\lek\Iu{n}\kp(A^\MP A)\rek\rrk
  \eg
  and, in the case \(p=q\), furthermore \(\det\Hu{n}=\det(A)\det(-\alpha\Huo{n-1}{t}+\Kuo{n-1}{t})\).
  \il{P1052.c} Let \(n\in\NO\) with \(2n\leq\kappa\). Then the matrices \(\Dlu{n}^\splusalpha\) and \(\Dru{n}^\splusalpha\) given via \eqref{D[+]} are invertible and
  \bgl{P1052.B}
   \Hau{n}
   =(\Dlu{n}^\splusalpha)^\inv\lek\Iu{n+1}\kp(AA^\MP)\rek\Huo{n}{t}\lek\Iu{n+1}\kp(A^\MP A)\rek(\Dru{n}^\splusalpha)^\inv.
  \eg
  In particular,
  \bgl{P1052.D}
   \rank\Hau{n}
   =\rank\lrk\lek\Iu{n+1}\kp(AA^\MP)\rek\Huo{n}{t}\lek\Iu{n+1}\kp(A^\MP A)\rek\rrk
  \eg
  and, in the case \(p=q\), furthermore \(\det\Hau{n}=(\det A)(\det A)^\MP\det\Huo{n}{t}\).
 \eenui
\eprop
\bproof
 First observe that \rdefi{D1712} yields \(\su{0}=A\), that \rrema{R1514} yields \(\seq{\su{j}}{j}{0}{\kappa+1}\in\Dpqu{\kappa+1}\), and that \rlemp{L1455}{L1455.a} yields \(\su{j}^\seinsalpha=AA^\MP t_jA^\MP A\) for all \(j\in\mn{0}{\kappa}\), where \(\seq{\su{j}^\seinsalpha}{j}{0}{\kappa}\) denotes the \tseinsalphaa{\seq{\su{j}}{j}{0}{\kappa+1}}.
 
 \eqref{P1052.a}  Use \eqref{H} and \(\su{0}=A\).
 
 \eqref{P1052.b} In view of \rrema{R1247}, the matrices \(\Dlu{n}\) and \(\Dru{n}\) are invertible. Because of \(\seq{\su{j}}{j}{0}{\kappa+1}\in\Dpqu{\kappa+1}\) and \rprop{P1256}, we have \(\seq{\su{j}}{j}{0}{2n}\in\Dpqu{2n}\) which, in view of \rrema{R1608}, implies \(\Thetauu{n}{2n}=\Ouu{(n+1)p}{(n+1)q}\). Since \(\seq{\su{j}}{j}{0}{2n}\in\Dpqu{2n}\) holds, \rpropp{P1442}{P1442.a} implies \(\seq{\su{j}}{j}{0}{2n}\in\Dtpqu{2n}\). Thus, \rlemm{L1705} and \rrema{R1012} yield \eqref{L1705.A}. Since \(\su{j}^\seinsalpha=AA^\MP t_jA^\MP A\) holds true for all \(j\in\mn{0}{\kappa}\), we have, in view of \eqref{Hs} and \eqref{Ks}, furthermore \(-\alpha\Hu{n-1}^\seinsalpha+\Ku{n-1}^\seinsalpha=[\Iu{n}\kp(AA^\MP)](-\alpha\Huo{n-1}{t}+\Kuo{n-1}{t})[\Iu{n}\kp(A^\MP A)]\). Using \(\su{0}=A\), we thus get
 \begin{multline*}
   \diagA{A,\lek\Iu{n}\kp(AA^\MP)\rek(-\alpha\Huo{n-1}{t}+\Kuo{n-1}{t})\lek\Iu{n}\kp(A^\MP A)\rek}\\
   =\diaga{\su{0},-\alpha\Hu{n-1}^\seinsalpha+\Ku{n-1}^\seinsalpha}
   =\diaga{\su{0},-\alpha\Hu{n-1}^\seinsalpha+\Ku{n-1}^\seinsalpha}+\Thetauu{n}{2n}
   =\Dlu{n}\Hu{n}\Dru{n}.
 \end{multline*}
 Since the matrices \(\Dlu{n}\) and \(\Dru{n}\) are invertible, then \eqref{P1052.A} 
 and \eqref{P1052.C}
 follow.

 Now suppose \(p=q\). In view of \rrema{R1247} we obtain then \(\det(\Dlu{n}\Hu{n}\Dru{n})=\det\Hu{n}\). Furthermore, a straightforward calculation shows that
 \begin{multline*}
   \det\lrk\diagA{A,\lek\Iu{n}\kp(AA^\MP)\rek(-\alpha\Huo{n-1}{t}+\Kuo{n-1}{t})\lek\Iu{n}\kp(A^\MP A)\rek}\rrk\\
   =\det(A)\det(-\alpha\Huo{n-1}{t}+\Kuo{n-1}{t}).
 \end{multline*}
 Hence, \(\det\Hu{n}=\det(A)\det(-\alpha\Huo{n-1}{t}+\Kuo{n-1}{t})\) follows.
 
 \eqref{P1052.c} In view of \eqref{D[+]} and \rrema{R1247}, the matrices \(\Dlu{n}^\splusalpha\) and \(\Dru{n}^\splusalpha\) are invertible. Because of \(\seq{\su{j}}{j}{0}{\kappa+1}\in\Dpqu{\kappa+1}\) and \rprop{P1256}, we have \(\seq{\su{j}}{j}{0}{2n+1}\in\Dpqu{2n+1}\), which, in view of \rrema{R1608}, implies \(\Thetauu{n}{2n+1}=\Ouu{(n+1)p}{(n+1)q}\). Since \(\seq{\su{j}}{j}{0}{2n+1}\in\Dpqu{2n+1}\) holds, \rpropp{P1442}{P1442.a} implies \(\seq{\su{j}}{j}{0}{2n+1}\in\Dtpqu{2n+1}\). Thus, \rlemp{L1548}{L1548.a} and \rrema{R1012} yield \(\Hu{n}^\seinsalpha=\Dlu{n}^\splusalpha(\Hau{n}-\Thetauu{n}{2n+1})\Dru{n}^\splusalpha\). Since \(\su{j}^\seinsalpha=AA^\MP t_jA^\MP A\) holds true for all \(j\in\mn{0}{\kappa}\), we have, in view of \eqref{Hs}, furthermore \(\Hu{n}^\seinsalpha=[\Iu{n+1}\kp(AA^\MP)]\Huo{n}{t}[\Iu{n+1}\kp(A^\MP A)]\). Thus, we get
 \[
  \begin{split}
   \lek\Iu{n+1}\kp(AA^\MP)\rek\Huo{n}{t}\lek\Iu{n+1}\kp(A^\MP A)\rek
   &=\Hu{n}^\seinsalpha
   =\Dlu{n}^\splusalpha(\Hau{n}-\Thetauu{n}{2n+1})\Dru{n}^\splusalpha\\
   &=\Dlu{n}^\splusalpha\Hau{n}\Dru{n}^\splusalpha.
  \end{split}
 \]
 Since the matrices \(\Dlu{n}^\splusalpha\) and \(\Dru{n}^\splusalpha\) are invertible, then \eqref{P1052.B} 
 and \eqref{P1052.D} 
 follow.
 
 Now suppose \(p=q\). In view of \eqref{D[+]} and \rrema{R1247}, we obtain then the equation \(\det(\Dlu{n}^\splusalpha\Hau{n}\Dru{n}^\splusalpha)=\det\Hau{n}\). Furthermore, we easily get
 \[
  \det\lrk\lek\Iu{n+1}\kp(AA^\MP)\rek\Huo{n}{t}\lek\Iu{n+1}\kp(A^\MP A)\rek\rrk
  =(\det A)(\det A)^\MP\det\Huo{n}{t}.
 \]
 Hence, \(\det\Hau{n}=(\det A)(\det A)^\MP\det\Huo{n}{t}\) follows.
\eproof

\bcorol{C1036}
 Let \(\alpha\in\C\), let \(\kappa\in\NO\cup\set{+\infty}\), let \(\seq{t_j}{j}{0}{\kappa}\) be a sequence of complex \tpqa{matrices}, and let \(A\) be a complex \tpqa{matrix} such that \(\Kerna{A}\subseteq\bigcap_{j=0}^\kappa\Kerna{t_j}\) and \(\bigcup_{j=0}^\kappa\Bilda{t_j}\subseteq\Bilda{A}\). Denote by \(\seq{\su{j}}{j}{0}{\kappa+1}\) the \tsminuseinsalphaaa{\seq{t_j}{j}{0}{\kappa}}{A}. Then:
 \benui
  \il{C1036.a} \(\Hu{0}=A\).
  \il{C1036.b} Let \(n\in\N\) with \(2n-1\leq\kappa\). Then the matrices \(\Dlu{n}\) and \(\Dru{n}\) are invertible and
  \[
   \Hu{n}
   =\Dlu{n}^\inv\lrk\diaga{A,-\alpha\Huo{n-1}{t}+\Kuo{n-1}{t})}\rrk\Dru{n}^\inv.
  \]
  In particular, \(\rank\Hu{n}=\rank(A)+\rank(-\alpha\Huo{n-1}{t}+\Kuo{n-1}{t})\) and, in the case \(p=q\), furthermore \(\det\Hu{n}   =\det(A)\det(-\alpha\Huo{n-1}{t}+\Kuo{n-1}{t})\).
  \il{C1036.c} Let \(n\in\NO\) with \(2n\leq\kappa\). Then the matrices \(\Dlu{n}^\splusalpha\) and \(\Dru{n}^\splusalpha\) are invertible and
  \bgl{C1036.A}
   \Hau{n}
   =(\Dlu{n}^\splusalpha)^\inv\Huo{n}{t}(\Dru{n}^\splusalpha)^\inv.
  \eg
  In particular,
  \bgl{C1036.B}
   \rank\Hau{n}
   =\rank\Huo{n}{t}
  \eg
  and, in the case \(p=q\), furthermore \(\det\Hau{n}=\det\Huo{n}{t}\).
 \eenui
\ecoro
\bproof
 Using \rpartss{R1631.c}{R1631.b} of \rrema{R1631} we obtain \(AA^\MP t_jA^\MP A=t_j\) for all \(j\in\mn{0}{\kappa}\) which, in view of \eqref{Hs} and \eqref{Ks}, implies \([\Iu{n+1}\kp(AA^\MP)]\Huo{n}{t}[\Iu{n+1}\kp(A^\MP A)]=\Huo{n}{t}\) for all \(n\in\NO\) with \(2n\leq\kappa\) and \([\Iu{n}\kp(AA^\MP)](-\alpha\Huo{n-1}{t}+\Kuo{n-1}{t})[\Iu{n}\kp(A^\MP A)]=-\alpha\Huo{n-1}{t}+\Kuo{n-1}{t}\) for all \(n\in\N\) with \(2n-1\leq\kappa\). The application of \rprop{P1052} yields then \eqref{C1036.a}, \eqref{C1036.b}, \eqref{C1036.A}, and \eqref{C1036.B}. For all \(n\in\NO\) with \(2n\leq\kappa\), from \eqref{C1036.A}, \eqref{D[+]}, and \rrema{R1247} we get in the case \(p=q\) that \(\det\Hau{n}=\det\Huo{n}{t}\).
\eproof

\blemml{L1508}
 Let \(\alpha\in\R\), let \(\kappa\in\NO\cup\set{+\infty}\), let \(\seq{t_j}{j}{0}{\kappa}\) be a sequence of \tH{} complex \tqqa{matrices}, and let \(A\) be a \tH{} complex \tqqa{matrix}. Then the \tsminuseinsalphaaa{\seq{t_j}{j}{0}{\kappa}}{A} is a sequence of \tH{} complex \tqqa{matrices}.
\elemm
\bproof
 Denote by \(\seq{\su{j}}{j}{0}{\kappa+1}\) the \tsminuseinsalphaaa{\seq{t_j}{j}{0}{\kappa}}{A} and let the sequence \(\seq{r_j}{j}{0}{\kappa+1}\) be given by \(r_j\defg\su{j}^\rezalpha\) for all \(j\in\mn{0}{\kappa+1}\). From \rlemm{L1538} we infer then \(r_{j}^\rez=\su{j}^\splusalpha\) for all \(j\in\mn{0}{\kappa+1}\) and, furthermore, \(r_{0}=A^\MP\) and \(r_{j}=-A^\MP t_{j-1}A^\MP\) for all \(j\in\mn{1}{\kappa+1}\). In view of \rremp{R1631}{R1631.a}, we have \((A^\MP)^\ad=(A^\ad)^\MP=A^\MP\). We then conclude that \(r_{j}^\ad=r_{j}\) holds true for all \(j\in\mn{0}{\kappa+1}\). By virtue of~\zitaa{MR3014197}{\ccoro{5.17}}, then \((r_{j}^\rez)^\ad=r_{j}^\rez\) for all \(j\in\mn{0}{\kappa+1}\). \rremp{R0816:D-H}{R0816.h} yields then \(\su{j}^\ad=\su{j}\) for all \(j\in\mn{0}{\kappa+1}\).
\eproof

 The final investigations in this section are aimed at determining conditions which ensure that the \tsminuseinsalpha{} belongs to the class \(\Kggqualpha{\kappa+1}\) or one of its prominent subclasses.

\bpropl{P1539}
 Let \(\alpha\in\R\), let \(\kappa\in\NO\cup\set{+\infty}\), let \(\seq{t_j}{j}{0}{\kappa}\in\Kggqkappaalpha\), and let \(A\in\Cggq\). Then the \tsminuseinsalphaaa{\seq{t_j}{j}{0}{\kappa}}{A} belongs to \(\Kggqualpha{\kappa+1}\).
\eprop
\bproof
 Denote by \(\seq{\su{j}}{j}{0}{\kappa+1}\) the \tsminuseinsalphaaa{\seq{t_j}{j}{0}{\kappa}}{A}. It is sufficient to check that, for all \(n\in\NO\) with \(2n\leq\kappa+1\), the matrix \(\Hu{n}\) is \tnnH{} and that, for all \(n\in\NO\) with \(2n\leq\kappa\), the matrix \(\Hau{n}\) is \tnnH{} as well. According to \rpropp{P1052}{P1052.a} and \(A\in\Cggq\), we have \(\Hu{0}=A\in\Cggq\). Now we consider an arbitrary \(n\in\N\) with \(2n\leq\kappa+1\). Because of \rpropp{P1052}{P1052.b} the matrices \(\Dlu{n}\) and \(\Dru{n}\) are invertible and \eqref{P1052.A} holds true. 
 From \rlemp{L1738}{L1738.a} we know that \(\seq{t_j}{j}{0}{\kappa}\) is a sequence of \tH{} complex \tqqa{matrices}. Taking additionally into account \(A\in\Cggq\) and \rlemm{L1508}, we see then that \(\seq{\su{j}}{j}{0}{\kappa+1}\) is a sequence of \tH{} complex \tqqa{matrices}. In view of \rlemm{L1320}, hence \(\Dlu{n}^\ad=\Dru{n}\). Using \rremp{R1631}{R1631.a} and \(A\in\Cggq\), we obtain \((AA^\MP)^\ad=A^\MP A\). Thus, \eqref{P1052.A} implies
 \bgl{P1539.1}
  \Hu{n}
  =\Dlu{n}^\inv\lrk\diagA{A,\lek\Iu{n}\kp(AA^\MP)\rek(-\alpha\Huo{n-1}{t}+\Kuo{n-1}{t})\lek\Iu{n}\kp(AA^\MP)\rek^\ad}\rrk\Dlu{n}^\invad.
 \eg
 Using \rrema{R1440}, we get \(\seq{t_j}{j}{0}{2n-1}\in\Kggqualpha{2n-1}\), which, in view of \eqref{s_a} and \eqref{Kgg2n+1}, implies \(\seq{-\alpha t_j+t_{j+1}}{j}{0}{2n-2}\in\Hggqu{2n-2}\). Taking into account \eqref{Hs} and \eqref{Ks}, we see then that \(-\alpha\Huo{n-1}{t}+\Kuo{n-1}{t}\) is a \tnnH{} matrix. In view of \(A\in\Cggq\) and \eqref{P1539.1}, it follows that \(\Hu{n}\) is a \tnnH{} matrix. Thus, for all \(n\in\NO\) with \(2n\leq\kappa+1\), the matrix \(\Hu{n}\) is proved to be \tnnH.
 
 Finally, we consider an arbitrary \(n\in\NO\) with \(2n\leq\kappa\). Because of \rpropp{P1052}{P1052.c}, the matrices \(\Dlu{n}^\splusalpha\) and \(\Dru{n}^\splusalpha\) are invertible and \eqref{P1052.B} holds true.
 We know that \(\seq{\su{j}}{j}{0}{\kappa+1}\) is a sequence of \tH{} complex \tqqa{matrices}. Because of \rremp{R0816:D-H}{R0816.h}, we see then that \(\seq{\su{j}^\splusalpha}{j}{0}{\kappa+1}\) is a sequence of \tH{} complex \tqqa{matrices}. In view of \eqref{D[+]} and \rlemm{L1320}, hence \((\Dlu{n}^\splusalpha)^\ad=\Dru{n}^\splusalpha\). Taking additionally into account \(A^\MP A=(AA^\MP)^\ad\), thus
 \bgl{P1539.2}
  \Hau{n}
  =(\Dlu{n}^\splusalpha)^\inv\lek\Iu{n+1}\kp(AA^\MP)\rek\Huo{n}{t}\lek\Iu{n+1}\kp(AA^\MP)\rek^\ad(\Dlu{n}^\splusalpha)^\invad
 \eg
 follows. Using \rrema{R1440}, we get \(\seq{t_j}{j}{0}{2n}\in\Kggqualpha{2n}\), which, in view of \eqref{Kgg2n}, implies \(\seq{t_j}{j}{0}{2n}\in\Hggqu{2n}\). From \eqref{Hs} we see then that \(\Huo{n}{t}\) is a \tnnH{} matrix. Hence, \eqref{P1539.2} implies that \(\Hau{n}\) is a \tnnH{} matrix. Thus, for all \(n\in\NO\) with \(2n\leq\kappa\), the matrix \(\Hau{n}\) is proved to be \tnnH.
\eproof

\bpropl{P1555}
 Let \(\alpha\in\R\), let \(\kappa\in\NO\cup\set{+\infty}\), let \(\seq{t_j}{j}{0}{\kappa}\in\Kggeqkappaalpha\), and let \(A\in\Cggq\). Then the \tsminuseinsalphaaa{\seq{t_j}{j}{0}{\kappa}}{A} belongs to \(\Kggequalpha{\kappa+1}\).
\eprop
\bproof
 In the case \(\kappa=+\infty\), the assertion immediately follows from \rprop{P1539}. Now let \(\kappa\in\NO\). Then there exists a complex \tqqa{matrix} \(t_{\kappa+1}\) such that \(\seq{t_j}{j}{0}{\kappa+1}\in\Kggqualpha{\kappa+1}\). From \rprop{P1539} we know then that the \tsminuseinsalphaaa{\seq{t_j}{j}{0}{\kappa+1}}{A} belongs to \(\Kggqualpha{\kappa+2}\). In view of \rrema{R1504}, the proof is complete.
\eproof

Our next theme is to study the \tsminuseinsalpha{} under the view of \trasp{\alpha}. The two following results can be considered in some sense as inverse ones with respect to \rtheo{T1658}.

\begin{thm}\label{T1331}
 Let \(\alpha\in\R\), let \(m\in\NO\), let \(\seq{t_j}{j}{0}{m}\in\Kggqualpha{m}\), and let \(A\in\Cggq\) be such that \(\Kerna{A}\subseteq[\Kerna{t_0}]\cap[\Kerna{t_m}]\). Denote by \(\seq{\su{j}}{j}{0}{m+1}\) the \tsminuseinsalphaaa{\seq{t_j}{j}{0}{m}}{A} and by \(\seq{T_j}{j}{0}{m}\) and \(\seq{\Spu{j}}{j}{0}{m+1}\) the \trasp{\alpha}s of \(\seq{t_j}{j}{0}{m}\) and \(\seq{\su{j}}{j}{0}{m+1}\), respectively. Then \(\Spu{0}=A\) and \(\Spu{j}=T_{j-1}\) for all \(j\in\mn{1}{m+1}\).
\end{thm}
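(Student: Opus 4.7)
\bproof[Proof plan]
The strategy is to show that the first $\alpha$\nobreakdash-Schur-Stieltjes transform of $(s_j)_{j=0}^{m+1}$ coincides with $(t_j)_{j=0}^{m}$, and then to transport information about the \trasp{\alpha} between these two sequences using \rthmss{T1016}{T1615} and the iteration property of the \tSchur{}-type algorithm. By \rprop{P1539}, the hypothesis $A\in\Cggq$ ensures that $(s_j)_{j=0}^{m+1}\in\Kggqualpha{m+1}$, so \rtheo{T1016} becomes applicable and yields $\Spu{j}=s_0^\saalpha{j}$ for $j\in\mn{0}{m}$ together with $\Spu{m+1}=s_0^\saalpha{m+1}+\Zuu{m+1}{m+1}(s)$. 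Since $\rdefi{D1712}$ gives $s_0=A$, the equality $\Spu{0}=A$ is immediate, so the remaining task is to identify $s_0^\saalpha{j}$ with $T_{j-1}$ for $j\in\mn{1}{m+1}$ (up to the correction term appearing for $j=m+1$).

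The decisive ingredient is the inclusion chain $\Kerna{A}\subseteq\bigcap_{j=0}^{m}\Kerna{t_j}$ together with its dual $\bigcup_{j=0}^{m}\Bilda{t_j}\subseteq\Bilda{A}$. \rlemp{L1738}{L1738.a} ensures that every $t_j$ is Hermitian, so by passing to orthogonal complements it suffices to prove the kernel inclusion. \rlemp{L1738}{L1738.c} applied to $(t_j)_{j=0}^{m}\in\Kggqualpha{m}$ gives $\Kerna{t_0}\subseteq\Kerna{t_j}$ for all $j\in\mn{0}{2\lfloor m/2\rfloor-1}$, which together with the hypothesis covers all indices in the case $m=2n$. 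In the case $m=2n+1$ the indices $j\in\mn{0}{2n-1}$ and $j=2n+1$ are covered, and the remaining index $j=2n$ is the main obstacle. I would handle it by exploiting the $2\times2$ principal block of the \tnnH{} matrix $\Hau{n}=-\alpha\Hu{n}+\Ku{n}$ indexed by the last two rows and columns, namely $\bigl[\begin{smallmatrix}\sau{2n-2}&\sau{2n-1}\\\sau{2n-1}&\sau{2n}\end{smallmatrix}\bigr]$: for any $u\in\Kerna{A}$, the already established kernel inclusions force $u^\ad\sau{2n-2}u=0$, hence nonnegativity of the $2\times2$ block yields $\sau{2n-1}u=0$, which combined with $t_{2n-1}u=0$ expands to $\su{2n}u=0$.

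Once these inclusions are in place, \rlemp{L1455}{L1455.b} yields $s_j^\seinsalpha=t_j$ for all $j\in\mn{0}{m}$, and \rrema{R1510} iteratively gives $s_j^\saalpha{k}=t_j^\saalpha{k-1}$ for every $k\geq1$. Applying \rtheo{T1016} to $(t_j)_{j=0}^{m}$ then produces $T_{j-1}=t_0^\saalpha{j-1}=s_0^\saalpha{j}=\Spu{j}$ for $j\in\mn{1}{m}$. The remaining identity $\Spu{m+1}=T_m$ reduces to verifying $\Zuu{m+1}{m+1}(s)=\Zuu{m}{m}(t)$: the $k=0$ summand $\Puu{m+1}{0}(s)$ vanishes because \rrema{R1514} guarantees $(s_j)_{j=0}^{m+1}\in\Dpqu{m+1}$, while the remaining summands match termwise after reindexing $k\mapsto k-1$ via the substitution $s_j^\saalpha{k}=t_j^\saalpha{k-1}$. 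The principal difficulty of the whole argument lies in bridging the gap between the pointwise hypothesis on $t_0$ and $t_m$ and the global inclusions on all intermediate $t_j$; the $2\times2$ block trick in the odd case is the essential new observation required.
\eproof
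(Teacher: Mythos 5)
Your argument is correct and follows the same skeleton as the paper's proof: show that \(\seq{t_j}{j}{0}{m}\) is exactly the \tseinsalpha{} of \(\seq{\su{j}}{j}{0}{m+1}\) via \rlemp{L1455}{L1455.b}, then transfer the \trasp{\alpha}s, with the correction term \(\Puu{m+1}{0}\) vanishing because \rrema{R1514} puts \(\seq{\su{j}}{j}{0}{m+1}\) into \(\Dqqu{m+1}\). You deviate in two technical spots. First, for the domination inclusions \(\Kerna{A}\subseteq\bigcap_{j=0}^m\Kerna{t_j}\) and \(\bigcup_{j=0}^m\Bilda{t_j}\subseteq\Bilda{A}\) you invoke \rlemp{L1738}{L1738.c}, which reaches only \(j\leq2\lfloor m/2\rfloor-1\), and you patch the leftover index \(j=2n\) in the odd case \(m=2n+1\) (for \(n\geq1\); the case \(m=1\) has no gap) by a positivity argument on the trailing \(2q\times2q\) principal submatrix of \(-\alpha\Huo{n}{t}+\Kuo{n}{t}\). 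That patch is sound: writing \(u_j\defg-\alpha t_j+t_{j+1}\), for \(v\in\Kerna{A}\) the already covered indices give \(u_{2n-2}v=0\); nonnegativity of \(\bigl[\begin{smallmatrix}u_{2n-2}&u_{2n-1}\\u_{2n-1}&u_{2n}\end{smallmatrix}\bigr]\) then forces \(u_{2n-1}v=0\), which together with \(t_{2n-1}v=0\) yields \(t_{2n}v=0\). But it is avoidable: \rpropp{P1442}{P1442.c} already gives \(\seq{t_j}{j}{0}{m}\in\Dtqqu{m}\), hence \(\seq{t_j}{j}{0}{m-1}\in\Dqqu{m-1}\), covering all \(j\in\mn{0}{m-1}\) uniformly; this is the paper's route, and your block trick in effect reproves a special case of the kernel-chain computation already contained in the proof of \rprop{P1442}. (Your reading of why the hypothesis names exactly \(t_0\) and \(t_m\) is on target: the final index is covered only by the assumption on \(t_m\).) Second, instead of citing \rtheo{T1000} with \(k=1\), you inline its proof: you derive \(\Spu{j}=\su{0}^{\saalpha{j}}=t_0^{\saalpha{j-1}}=T_{j-1}\) from \rtheo{T1016} and \rrema{R1510}, and you match \(\Zuu{m+1}{m+1}\) for \(\seq{\su{j}}{j}{0}{m+1}\) against \(\Zuu{m}{m}\) for \(\seq{t_j}{j}{0}{m}\) by the reindexing of \(\Puu{m+1-k}{k}\) into \(\Puu{m+1-k}{k-1}\) for \(k\geq1\); this is precisely the mechanism the paper packaged into \rtheo{T1000}, so it is correct but buys nothing new. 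One cosmetic flaw: in the block-trick passage you write \(\sau{2n-2}\), \(\sau{2n-1}\), and \(\su{2n}\) where the shifted sequence built from \(\seq{t_j}{j}{0}{m}\) and the matrix \(t_{2n}\) are meant; as written the symbols point to the wrong sequence, though the intended argument is unambiguous.
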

\begin{proof}
 Because of \rdefi{D1021}, \eqref{L}, and \rdefi{D1712}, we get \(\Spu{0}=\Lu{0}=\su{0}=A\). According to \rprop{P1539}, the sequence \(\seq{\su{j}}{j}{0}{m+1}\) belongs to \(\Kggqualpha{m+1}\). By virtue of \rlemp{L1738}{L1738.a}, we have \(t_0^\ad=t_0\) and \(t_m^\ad=t_m\), which, in view of \(A\in\Cggq\) and \(\Kerna{A}\subseteq[\Kerna{t_0}]\cap[\Kerna{t_m}]\), implies \([\Bilda{t_0}]\cup[\Bilda{t_m}]\subseteq\Bilda{A}\). \rpropp{P1442}{P1442.c} yields furthermore \(\seq{t_j}{j}{0}{m}\in\Dtqqu{m}\). Taking additionally into account \rdefiss{D1459}{D1658}, we conclude that \(\Kerna{A}\subseteq\bigcap_{j=0}^m\Kerna{t_j}\) and \(\bigcup_{j=0}^m\Bilda{t_j}\subseteq\Bilda{A}\).
 From \rlemp{L1455}{L1455.b} we see then that \(\seq{t_j}{j}{0}{m}\) is exactly the \tseinsalphaa{\seq{\su{j}}{j}{0}{m+1}}. The application of \rtheo{T1000} with \(k=1\) yields then \(T_m=\Spu{m+1}-\Puu{m+1}{0}\) (where \(\Puu{m+1}{0}\) is given via \eqref{P[k]}) and, in the case \(m\geq1\), furthermore \(T_l=\Spu{1+l}\) for all \(l\in\mn{0}{m-1}\). \rrema{R1514} yields \(\seq{\su{j}}{j}{0}{m+1}\in\Dqqu{m+1}\). Taking into account \eqref{P[k]}, \rdefiss{D1632}{D1658}, and \rpartss{R1631.c}{R1631.b} of \rrema{R1631}, we obtain then \(\Puu{m+1}{0}=\Oqq\). Thus, \(T_l=\Spu{1+l}\) for all \(l\in\mn{0}{m}\), which completes the proof.
\end{proof}

\begin{thm}\label{T0957}
 Let \(\alpha\in\R\), let \(\kappa\in\NO\cup\set{+\infty}\), let \(\seq{t_j}{j}{0}{\kappa}\in\Kggequalpha{\kappa}\), and let \(A\in\Cggq\) be such that \(\Kerna{A}\subseteq\Kerna{t_0}\). Denote by \(\seq{\su{j}}{j}{0}{\kappa+1}\) the \tsminuseinsalphaaa{\seq{t_j}{j}{0}{\kappa}}{A} and by \(\seq{T_j}{j}{0}{\kappa}\) and \(\seq{\Spu{j}}{j}{0}{\kappa+1}\) the \trasp{\alpha} of \(\seq{t_j}{j}{0}{\kappa}\) and \(\seq{\su{j}}{j}{0}{\kappa+1}\), respectively. Then \(\Spu{0}=A\) and \(\Spu{j}=T_{j-1}\) for all \(j\in\mn{1}{\kappa+1}\).
\end{thm}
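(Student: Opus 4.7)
My plan is to mirror the strategy used in the proof of \rtheo{T1331}, exploiting the stronger hypothesis $\seq{t_j}{j}{0}{\kappa}\in\Kggequalpha{\kappa}$ to avoid the additional rank-zero term that appeared there, and to let everything extend to the case $\kappa=+\infty$.

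First I would verify that $\seq{\su{j}}{j}{0}{\kappa+1}$ is a legitimate object on which to apply the machinery from \rsect{S1402}. By \rpropp{P1442}{P1442.a} we have $\seq{t_j}{j}{0}{\kappa}\in\Dqqkappa$, so \rdefi{D1658} yields $\Kerna{t_0}\subseteq\bigcap_{j=0}^{\kappa}\Kerna{t_j}$ and $\bigcup_{j=0}^{\kappa}\Bilda{t_j}\subseteq\Bilda{t_0}$. Since $A\in\Cggq$ is Hermitian and $\Kerna{A}\subseteq\Kerna{t_0}$, \rlemp{L1738}{L1738.a} gives $t_0\in\CHq$, hence $\Bilda{t_0}=[\Kerna{t_0}]^{\bot}\subseteq[\Kerna{A}]^{\bot}=\Bilda{A}$. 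Combining, $\Kerna{A}\subseteq\bigcap_{j=0}^{\kappa}\Kerna{t_j}$ and $\bigcup_{j=0}^{\kappa}\Bilda{t_j}\subseteq\Bilda{A}$.

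With these inclusions established, \rprop{P1313} tells us that $\seq{t_j}{j}{0}{\kappa}$ is exactly the \tseinsalphaa{\seq{\su{j}}{j}{0}{\kappa+1}}, and \rprop{P1555} ensures that $\seq{\su{j}}{j}{0}{\kappa+1}\in\Kggequalpha{\kappa+1}$. The key step is then to apply \rtheo{T1658} to the sequence $\seq{\su{j}}{j}{0}{\kappa+1}$ with the choice $k=1$: its first \tascht{\alpha}, which by \rdefi{D1632} is precisely the \tseinsalphaa{\seq{\su{j}}{j}{0}{\kappa+1}}, coincides with $\seq{t_j}{j}{0}{\kappa}$, and \rtheo{T1658} then forces $\seq{\Spu{1+j}}{j}{0}{\kappa}$ to be the \traspa{\alpha}{\seq{t_j}{j}{0}{\kappa}}. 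By \rrema{R0929} the \trasp{\alpha} uniquely determines (and is determined by) a sequence, so $T_j=\Spu{1+j}$ for all $j\in\mn{0}{\kappa}$, i.e.\ $\Spu{j}=T_{j-1}$ for all $j\in\mn{1}{\kappa+1}$.

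It remains to handle $\Spu{0}$. By \rdefi{D1021} and \eqref{L} we have $\Spu{0}=\Luo{0}{s}=\su{0}$, and \rdefi{D1712} gives $\su{0}=A$, completing the proof. The only genuine obstacle is establishing the two-sided inclusion between $\Kerna{A}$, $\Bilda{A}$ and the kernels/ranges of the $t_j$'s, which requires the Hermitian nature of $t_0$ and the first-term-dominance coming from \rpropp{P1442}{P1442.a}; once that is in place, the argument reduces to a clean application of \rthmss{T1658} and~\ref{P1313}, with the class-preservation \rprop{P1555} guaranteeing that \rtheo{T1658} is applicable to the extended sequence.
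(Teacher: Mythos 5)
Your proposal is correct and follows essentially the same route as the paper's own proof: establish \(\Spu{0}=\su{0}=A\) via \rdefiss{D1021}{D1712}, use \rlemp{L1738}{L1738.a} together with \(A\in\Cggq\) to get \(\Bilda{t_0}\subseteq\Bilda{A}\), invoke \rpropp{P1442}{P1442.a} and \rprop{P1313} to identify \(\seq{t_j}{j}{0}{\kappa}\) as the \tseinsalphaa{\seq{\su{j}}{j}{0}{\kappa+1}}, then apply \rprop{P1555} and \rtheo{T1658} with \(k=1\). The only cosmetic deviations are that the full inclusions \(\Kerna{A}\subseteq\bigcap_{j=0}^{\kappa}\Kerna{t_j}\) and \(\bigcup_{j=0}^{\kappa}\Bilda{t_j}\subseteq\Bilda{A}\) are more than \rprop{P1313} requires, and the appeal to \rrema{R0929} is superfluous since the \trasp{\alpha} is by \rdefi{D1021} a function of the sequence.
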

\begin{proof}
 Because of \rdefi{D1021}, \eqref{L}, and \rdefi{D1712}, we have \(\Spu{0}=\Lu{0}=\su{0}=A\). According to \rprop{P1555}, the sequence \(\seq{\su{j}}{j}{0}{\kappa+1}\) belongs to \(\Kggequalpha{\kappa+1}\). Obviously, we have \(\seq{t_j}{j}{0}{\kappa}\in\Kggqualpha{\kappa}\). Because of \rlemp{L1738}{L1738.a}, hence \(t_0^\ad=t_0\), which, in view of \(A\in\Cggq\) and \(\Kerna{A}\subseteq\Kerna{t_0}\), implies \(\Bilda{t_0}\subseteq\Bilda{A}\). \rpropp{P1442}{P1442.a} yields furthermore \(\seq{t_j}{j}{0}{\kappa}\in\Dqqu{\kappa}\).
 From \rprop{P1313} we see then that \(\seq{t_j}{j}{0}{\kappa}\) is exactly the \tseinsalphaa{\seq{\su{j}}{j}{0}{\kappa+1}}. The application of \rtheo{T1658} with \(k=1\) yields then \(T_l=\Spu{1+l}\) for all \(l\in\mn{0}{\kappa}\), which completes the proof.
\end{proof}

\bpropl{P1615}
 Let \(\alpha\in\R\), let \(\kappa\in\NO\cup\set{+\infty}\), let \(\seq{t_j}{j}{0}{\kappa}\in\Kgqkappaalpha\), and let \(A\in\Cgq\). Then the \tsminuseinsalphaaa{\seq{t_j}{j}{0}{\kappa}}{A} belongs to \(\Kgqualpha{\kappa+1}\).
\eprop
\bproof
 From \rpropp{P1442}{P1442.d} we obtain \(\seq{t_j}{j}{0}{\kappa}\in\Kggequalpha{\kappa}\). Since \(A\) belongs to \(\Cgq\), we have \(A\in\Cggq\) and \(\det A\neq0\). In particular, \(\Kerna{A}\subseteq\Kerna{t_0}\). Denote by \(\seq{\su{j}}{j}{0}{\kappa+1}\) the \tsminuseinsalphaaa{\seq{t_j}{j}{0}{\kappa}}{A}. From \rtheo{T0957} we get then \(\Spu{0}=A\) and \(\Spu{j}=T_{j-1}\) for all \(j\in\mn{1}{\kappa+1}\), where  \(\seq{T_j}{j}{0}{\kappa}\) and \(\seq{\Spu{j}}{j}{0}{\kappa+1}\) denote the \trasp{\alpha}s of \(\seq{t_j}{j}{0}{\kappa}\) and \(\seq{\su{j}}{j}{0}{\kappa+1}\), respectively. According to \rthmp{121.P1337}{121.P1337.d}, we have \(T_l\in\Cgq\) for all \(l\in\mn{0}{\kappa}\). In view of \(A\in\Cgq\), thus \(\Spu{j}\in\Cgq\) for all \(j\in\mn{0}{\kappa+1}\) which, in view of \rthmp{121.P1337}{121.P1337.d}, implies \(\seq{\su{j}}{j}{0}{\kappa+1}\in\Kgqualpha{\kappa+1}\).
\eproof

\bpropl{P1459}
 Let \(\alpha\in\R\), let \(m\in\NO\), let \(\seq{t_j}{j}{0}{m}\in\Kggdqualpha{m}\), and let \(A\in\Cggq\) be such that \(\Kerna{A}\subseteq\Kerna{t_0}\) in the case \(m\geq2\). Then the \tsminuseinsalphaaa{\seq{t_j}{j}{0}{m}}{A} belongs to \(\Kggdqualpha{m+1}\).
\eprop
\bproof
 Because of \rpropp{P1442}{P1442.d}, we have \(\seq{t_j}{j}{0}{m}\in\Kggequalpha{m}\). The application of \rprop{P1555} yields then \(\seq{\su{j}}{j}{0}{m+1}\in\Kggequalpha{m+1}\), where \(\seq{\su{j}}{j}{0}{m+1}\) denotes the \tsminuseinsalphaaa{\seq{t_j}{j}{0}{m}}{A}. In particular, \(\seq{t_j}{j}{0}{m}\in\Kggqualpha{m}\) and \(\seq{\su{j}}{j}{0}{m+1}\in\Kggqualpha{m+1}\). We show now that \(\Spu{m+1}=\Oqq\) holds true, where \(\seq{\Spu{j}}{j}{0}{m+1}\) denotes the \trasp{\alpha} of \(\seq{\su{j}}{j}{0}{m+1}\).
 
 First we consider the case \(m=0\). Because of \eqref{Kggcd2n} then \(\seq{t_j}{j}{0}{0}\) belongs to \(\Hggdqu{0}\) which, in view of \eqref{L} and \eqref{Hggcd}, implies \(t_0=\Luo{0}{t}=\Oqq\). From \rlemm{L0831} we obtain hence \(\su{0}=A\) and \(s_1=\alpha\su{0}+AA^\MP t_0A^\MP\su{0}^\splusalpha=\alpha A\). In view of \rdefi{D1021}, \eqref{La}, \eqref{L}, and \eqref{s_a}, this implies \(
  \Spu{1}
  =\Lau{0}
  =\sau{0}
  =-\alpha\su{0}+\su{1}
  =-\alpha A+\alpha A
  =\Oqq
 \).
 
 Now we consider the case \(m=1\). Because of \eqref{s_a} and \eqref{Kggcd2n+1}, then the sequence \(\seq{u_j}{j}{0}{0}\) given by \(u_0\defg-\alpha t_0+t_1\) belongs to \(\Hggdqu{0}\) which, in view of \eqref{L} and \eqref{Hggcd}, implies \(u_0=\Luo{0}{u}=\Oqq\). Thus, \(t_1=\alpha t_0\). Using \rdefi{D1455} and \rlemm{L0831}, then \(\su{0}=A\),
 \[
  \su{1}
  =\alpha\su{0}+AA^\MP t_0A^\MP\su{0}^\splusalpha
  =\alpha\su{0}+AA^\MP t_0A^\MP\su{0}
  =\alpha A+AA^\MP t_0A^\MP A,
 \]
 and
 \[
  \begin{split}
   \su{2}
   &=\alpha\su{1}+AA^\MP(t_1A^\MP\su{0}^\splusalpha+t_0A^\MP\su{1}^\splusalpha)
   =\alpha\su{1}+AA^\MP\lek t_1A^\MP\su{0}+t_0A^\MP(-\alpha\su{0}+\su{1})\rek\\
   &=\alpha\su{1}+AA^\MP(\alpha t_0A^\MP\su{0}-\alpha t_0A^\MP\su{0}+t_0A^\MP\su{1})
   =\alpha\su{1}+AA^\MP t_0A^\MP\su{1}\\
   &=\alpha(\alpha A+AA^\MP t_0A^\MP A)+AA^\MP t_0A^\MP(\alpha A+AA^\MP t_0A^\MP A)\\
   &=\alpha^2 A+2\alpha AA^\MP t_0A^\MP A+AA^\MP t_0A^\MP AA^\MP t_0A^\MP A\\
   &=(\alpha A+AA^\MP t_0A^\MP A)A^\MP(\alpha A+AA^\MP t_0A^\MP A)
   =\su{1}\su{0}^\MP\su{1}.
  \end{split}
 \]
 In view of \rdefi{D1021}, \eqref{L}, \eqref{yz}, \eqref{H}, and \eqref{s_a}, this implies \(
  \Spu{2}
  =\Lu{1}
  =\su{2}-\zuu{1}{1}\Hu{0}^\MP\yuu{1}{1}
  =\su{2}-\su{1}\su{0}^\MP\su{1}
  =\Oqq
 \).
 
 Finally, we  consider the case \(m\geq2\). From \rtheo{T0957} we obtain then \(\Spu{m+1}=T_m\), where  \(\seq{T_j}{j}{0}{m}\) denotes the \trasp{\alpha} of \(\seq{t_j}{j}{0}{m}\). According to~\zitaa{MR3014201}{\cprop{5.3}}, we have \(T_m=\Oqq\) and hence \(\Spu{m+1}=\Oqq\).
 
 Thus, in each case, \(\Spu{m+1}=\Oqq\) holds true, which, in view of \(\seq{\su{j}}{j}{0}{m+1}\in\Kggqualpha{m+1}\) and~\zitaa{MR3014201}{\cprop{5.3}}, implies \(\seq{\su{j}}{j}{0}{m+1}\in\Kggdqualpha{m+1}\).
\eproof

 The following example shows that, in the case \(m\geq2\), the assumption \(\Kerna{A}\subseteq\Kerna{t_0}\) in \rprop{P1459} cannot be omitted.

\begin{exa}\label{E1518}
 Let \(q\defg2\), let \(\alpha\defg-1\), let \(t_0\defg[\begin{smallmatrix}1&0\\0&1\end{smallmatrix}\bigr]\), let \(t_1\defg\bigl[\begin{smallmatrix}1&1\\1&0\end{smallmatrix}\bigr]\), let \(t_2\defg\bigl[\begin{smallmatrix}2&1\\1&1\end{smallmatrix}\bigr]\), and let \(A\defg\bigl[\begin{smallmatrix}1&0\\0&0\end{smallmatrix}\bigr]\). According to \eqref{Hs}, then
 \[
  \Huo{1}{t}
  =
  \begin{bmatrix}
   t_0&t_1\\
   t_1&t_2
  \end{bmatrix}
  =
  \begin{bmatrix}
   1&0&1&1\\
   0&1&1&0
  \end{bmatrix}^\ad
  \begin{bmatrix}
   1&0&1&1\\
   0&1&1&0
  \end{bmatrix}
  \in\Cggo{4},
 \]
 which implies \(\seq{t_j}{j}{0}{2}\in\Hggqu{2}\). Obviously, \(
  -\alpha t_0+t_1
  =
  \bigl[\begin{smallmatrix}
   2&1\\
   1&1
  \end{smallmatrix}\bigr]
  \in\Cggo{2}
 \), which, in view of \eqref{Hs}, implies \(\seq{-\alpha t_j+t_{j+1}}{j}{0}{0}\in\Hggqu{0}\). According to \eqref{s_a} and \eqref{Kgg2n}, thus \(\seq{t_j}{j}{0}{2}\in\Kggqualpha{2}\). Because of \eqref{L}, \eqref{yz}, and \eqref{Hs}, furthermore \(
  \Luo{1}{t}
  =t_2-\zuuo{1}{1}{t}(\Huo{0}{t})^\MP\yuuo{1}{1}{t}
  =t_2-t_1t_0^\MP t_1
  =\Ouu{2}{2}
 \). By \eqref{Hggcd}, hence \(\seq{t_j}{j}{0}{2}\in\Hggdqu{2}\). According to \eqref{Kggcd2n}, then \(\seq{t_j}{j}{0}{2}\in\Kggdqualpha{2}\). Obviously, \(A\in\Cggo{2}\) and \(\Kerna{A}\nsubseteq\Kerna{t_0}\). Denote by \(\seq{\su{j}}{j}{0}{3}\) the \tsminuseinsalphaaa{\seq{t_j}{j}{0}{2}}{A}. Using \(A^2=A\), \(A^\MP=A\), \(At_1 A=A\), \(At_2A=2A\), and \rdefi{D1455}, we obtain from \rlemm{L0831} then \(\su{0}=A\),
 \begin{align*}
  \su{1}&=\alpha\su{0}+AA^\MP t_0A^\MP\su{0}^\splusalpha
  =\alpha\su{0}+AA^\MP t_0A^\MP\su{0}
  =-A+AA^\MP\cdot\Iu{2}\cdot A^\MP A
  =\Ouu{2}{2},\\
  \su{2}
  &=\alpha\su{1}+AA^\MP(t_1A^\MP\su{0}^\splusalpha+t_0A^\MP\su{1}^\splusalpha)
  =\alpha\su{1}+AA^\MP\lek t_1A^\MP\su{0}+t_0A^\MP(-\alpha\su{0}+\su{1})\rek\\
  &=-\Ouu{2}{2}+AA^\MP\lek t_1A^\MP A+\Iu{2}\cdot A^\MP(A+\Ouu{2}{2})\rek
  =2A,\\
 \shortintertext{and, consequently,}
  \su{3}
  &=\alpha\su{2}+AA^\MP(t_2A^\MP\su{0}^\splusalpha+t_1A^\MP\su{1}^\splusalpha+t_0A^\MP\su{2}^\splusalpha)\\
  &=\alpha\su{2}+AA^\MP\lek t_2A^\MP\su{0}+t_1A^\MP(-\alpha\su{0}+\su{1})+t_0A^\MP(-\alpha\su{1}+\su{2})\rek\\
  &=-2A+AA^\MP\lek t_2A^\MP A+t_1A^\MP(A+\Ouu{2}{2})+\Iu{2}\cdot A^\MP(\Ouu{2}{2}+2A)\rek
  =3A.
 \end{align*}
 In view of \eqref{s_a}, then \(\sau{0}=A\) and \(\sau{1}=2A\), and \(\sau{2}=5A\), which, in view of \eqref{La}, \eqref{L}, \eqref{yaza}, \eqref{Ha}, \eqref{yz}, and \eqref{Hs}, implies \(
  \Lau{1}
  =\sau{2}-\zauu{1}{1}(\Hau{0})^\MP\yauu{1}{1}\\
  =A
  \neq\Ouu{2}{2}
 \). By \eqref{Hggcd}, hence \(\seq{\sau{j}}{j}{0}{2}\notin\Hggdqu{2}\). According to \eqref{Kggcd2n+1}, then \(\seq{\su{j}}{j}{0}{3}\notin\Kggdqualpha{3}\).
\end{exa}

\bpropl{P0915}
 Let \(\alpha\in\R\), let \(\kappa\in\NO\cup\set{+\infty}\), let \(m\in\mn{0}{\kappa}\), let \(\seq{t_j}{j}{0}{\kappa}\in\Kggdoqkappaalpha{m}\), and let \(A\in\Cggq\) be such that \(\Kerna{A}\subseteq\Kerna{t_0}\) in the case \(m\geq2\). Then  the \tsminuseinsalphaaa{\seq{t_j}{j}{0}{\kappa}}{A} belongs to \(\Kggdoqualpha{m+1}{\kappa+1}\).
\eprop
\bproof
 Because of \eqref{Kggcdm}, we have \(\seq{t_j}{j}{0}{\kappa}\in\Kggqkappaalpha\) and \(\seq{t_j}{j}{0}{m}\in\Kggdqualpha{m}\). The application of \rprop{P1539} yields \(\seq{\su{j}}{j}{0}{\kappa+1}\in\Kggqualpha{\kappa+1}\), where \(\seq{\su{j}}{j}{0}{\kappa+1}\) denotes the \tsminuseinsalphaaa{\seq{t_j}{j}{0}{\kappa}}{A}. From \rprop{P1459} we obtain, in view of \rrema{R1504}, furthermore \(\seq{\su{j}}{j}{0}{m+1}\in\Kggdqualpha{m+1}\). According to \eqref{Kggcdm}, we have then \(\seq{\su{j}}{j}{0}{\kappa+1}\in\Kggdoqualpha{m+1}{\kappa+1}\).
\eproof
 
\bcorol{P1111}
 Let \(\alpha\in\R\), let \(\kappa\in\NO\cup\set{+\infty}\), let \(\seq{t_j}{j}{0}{\kappa}\in\Kggdqualpha{\kappa}\), and let \(A\in\Cggq\) be such that \(\Kerna{A}\subseteq\Kerna{t_0}\). Then the \tsminuseinsalphaaa{\seq{t_j}{j}{0}{\kappa}}{A} belongs to \(\Kggdqualpha{\kappa+1}\).
\ecoro
\bproof
 Use \rprop{P1459}, \eqref{G1023}, and \rprop{P0915}.
\eproof

\appendix
\section{Some facts from matrix theory}

\bremal{R1631}
 Let \(A\) be a complex \tpqa{matrix} and let \(m,n\in\N\). Then:
 \benui
  \il{R1631.a} \((A^\MP)^\MP=A\), \((A^\MP)^\ad=(A^\ad)^\MP\), \(\Kerna{A^\MP}=\Kerna{A^\ad}\), and \(\Bilda{A^\MP}=\Bilda{A^\ad}\).
  \il{R1631.b} If \(B\) is a complex \taaa{m}{q}{matrix}, then \(\Kerna{A}\subseteq\Kerna{B}\) if and only if \(BA^\MP A=B\).
  \il{R1631.c} If \(C\) is a complex \taaa{p}{n}{matrix}, then \(\Bilda{C}\subseteq\Bilda{A}\) if and only if \(AA^\MP C=C\).
  \il{R1631.d} If \(U\) is complex \taaa{m}{p}{matrix} with \(U^\ad U =\Ip\) and if \(V\) is a complex \taaa{q}{n}{matrix} with \(VV^\ad =\Iq\), then \((UAV)^\MP=V^\ad A^\MP U^\ad\).
 \eenui
\erema

Note that the sets \(\nudqu{n}\) and \(\nodqu{n}\) are defined in \rnota{N1039}.
\bremal{R1745}
 Let \(n\in\NO\). Then \(\nudqu{n}\) and \(\nodqu{n}\) are subgroups of the general linear group \(\GLaa{(n+1)q}{\C}\) of all \tns{} complex \taaa{(n+1)q}{(n+1)q}{matrices} with \(\nudqu{n}\cap\nodqu{n}=\set{\Iu{(n+1)q}}\). Furthermore, \(\det A=1\) for all \(A\in\nudqu{n}\cup\nodqu{n}\), \(\nodqu{n}=\setaa{L^\ad}{L\in\nudqu{n}}\), and \(\nudqu{n}=\setaa{U^\ad}{U\in\nodqu{n}}\).
\erema

\bremal{R1042}
 Let \(m,n\in\NO\). In view of \rnota{N1039}, then \(\diaga{L,M}\in\nudqu{m+n+1}\) for all \(L\in\nudqu{m}\) and all \(M\in\nudqu{n}\) and \(\diaga{U,V}\in\nodqu{m+n+1}\) for all \(U\in\nodqu{m}\) and all \(V\in\nodqu{n}\).
\erema

\bremal{R1427}
 Let \(n\in\NO\), let \((A_j)_{j=0}^n\)  and \((B_j)_{j=0}^n\) be  sequences of complex \tpqa{matrices}, and let \(E,V\in \nuduu{p}{n}\) and \(F,W\in\nodqu{n}\) be such that \(
  E\cdot\diaga{A_0, A_1,\dotsc, A_n}\cdot F
  =V\cdot\diaga{B_0,B_1, \dotsc, B_n}\cdot W
 \). Then we easily see that \(A_j=B_j\) holds true for all \(j\in\mn{0}{n}\).
\erema

\bibliography{114arxiv}
\bibliographystyle{abbrv}

\vfill\noindent
\begin{minipage}{0.5\textwidth}
 Universit\"at Leipzig\\
Fakult\"at f\"ur Mathematik und Informatik\\
PF~10~09~20\\
D-04009~Leipzig
\end{minipage}
\begin{minipage}{0.49\textwidth}
 \begin{flushright}
  \texttt{
   fritzsche@math.uni-leipzig.de\\
   kirstein@math.uni-leipzig.de\\
   maedler@math.uni-leipzig.de
  } 
 \end{flushright}
\end{minipage}

\end{document}